\definecolor{ColBlack}{RGB}{0,0,0} % Black.
\definecolor{ColWhite}{RGB}{255,255,255} % White.
\definecolor{Col1}{RGB}{133,6,6} % Rouge sang.
\definecolor{Col2}{RGB}{198,8,0} % Rouge ponceau.
\definecolor{Col3}{RGB}{174,74,52} % Rouge tomette.
\definecolor{Col4}{RGB}{103,113,121} % Gris de Payne.
\definecolor{Col5}{RGB}{90,94,107} % Ardoise.
\definecolor{Col6}{RGB}{70,63,50} % Taupe.
\newcommand{\ColA}[1]{\textcolor{Col1}{#1}}
\newcommand{\ColD}[1]{\textcolor{Col4}{#1}}
\renewcommand{\arraystretch}{1.4}
\numberwithin{equation}{subsection}
\def\l@section{\@tocline{1}{3pt}{1pc}{5pc}{}}
\def\l@subsection{\@tocline{2}{2pt}{2pc}{5pc}{}}
\newtheorem{Theorem}{Theorem}[subsection]
\newtheorem{Proposition}[Theorem]{Proposition}
\newtheorem{Lemma}[Theorem]{Lemma}
\renewcommand{\leq}{\leqslant}
\renewcommand{\geq}{\geqslant}
\title[Bud generating systems]
{Colored operads, \\ series on colored operads, \\ and combinatorial
generating systems}
\keywords{Tree; Formal power series; Combinatorial generation; Grammar;
Colored operad.}
\subjclass[2010]{05C05, 18D50, 68Q42, 32A05.}
\date{\today}
\author{Samuele Giraudo}
\address{\scriptsize Université Paris-Est, LIGM (UMR $8049$), CNRS,
    ENPC, ESIEE Paris, UPEM, F-$77454$, Marne-la-Vallée, France}
\email{samuele.giraudo@u-pem.fr}
\newcommand{\Bca}{\mathcal{B}}
\newcommand{\Cca}{\mathcal{C}}
\newcommand{\Gca}{\mathcal{G}}
\newcommand{\Oca}{\mathcal{O}}
\newcommand{\Mca}{\mathcal{M}}
\newcommand{\Tca}{\mathcal{T}}
\newcommand{\Afr}{\mathfrak{a}}
\newcommand{\Bfr}{\mathfrak{b}}
\newcommand{\Sfr}{\mathfrak{s}}
\newcommand{\Tfr}{\mathfrak{t}}
\newcommand{\Rfr}{\mathfrak{R}}
\newcommand{\Fbf}{\mathbf{f}}
\newcommand{\Gbf}{\mathbf{g}}
\newcommand{\Hbf}{\mathbf{h}}
\newcommand{\Ubf}{\mathbf{u}}
\newcommand{\Ibf}{\mathbf{i}}
\newcommand{\Rbf}{\mathbf{r}}
\newcommand{\Sbf}{\mathbf{s}}
\newcommand{\Tbf}{\mathbf{t}}
\newcommand{\Xbf}{\mathbf{x}}
\newcommand{\Ccr}{\mathscr{C}}
\newcommand{\Xbb}{\mathbb{X}}
\newcommand{\Ybb}{\mathbb{Y}}
\newcommand{\Zbb}{\mathbb{Z}}
\newcommand{\VarX}{\mathsf{x}}
\newcommand{\VarY}{\mathsf{y}}
\newcommand{\VarZ}{\mathsf{z}}
\newcommand{\Att}{\mathtt{a}}
\newcommand{\Btt}{\mathtt{b}}
\newcommand{\Ctt}{\mathtt{c}}
\newcommand{\K}{\mathbb{K}}
\newcommand{\N}{\mathbb{N}}
\newcommand{\Q}{\mathbb{Q}}
\newcommand{\Bud}{\mathbf{Bud}}
\newcommand{\As}{\mathbf{As}}
\newcommand{\Dias}{\mathbf{Dias}}
\newcommand{\Mag}{\mathbf{Mag}}
\newcommand{\Motz}{\mathbf{Motz}}
\newcommand{\Tree}{\mathbf{Tree}}
\newcommand{\BBalTree}{\Bca_{\mathrm{bbt}}}
\newcommand{\BMotz}{\Bca_{\mathrm{p}}}
\newcommand{\BDias}[1]{\Bca_{\mathrm{w}, #1}}
\newcommand{\BBTree}[1]{\Bca_{\mathrm{bt}, #1}}
\newcommand{\BUBTree}{\Bca_{\mathrm{bu}}}
\newcommand{\Unit}{\mathds{1}}
\newcommand{\Free}{\mathbf{Free}}
\newcommand{\Eval}{\mathrm{ev}}
\newcommand{\In}{\mathrm{in}}
\newcommand{\Out}{\mathrm{out}}
\newcommand{\Angle}[1]{\left\langle#1\right\rangle}
\newcommand{\AAngle}[1]{\Angle{\Angle{#1}}}
\newcommand{\Supp}{\mathrm{Supp}}
\newcommand{\PreLie}{\curvearrowleft}
\newcommand{\Compo}{\odot}
\newcommand{\Deriv}{\to}
\newcommand{\SyncDeriv}{\leadsto}
\newcommand{\Lang}{\mathrm{L}}
\newcommand{\SyncLang}{\mathrm{L}_{\mathrm{S}}}
\newcommand{\DerivGraph}{\mathrm{G}}
\newcommand{\SyncDerivGraph}{\mathrm{G}_{\mathrm{S}}}
\newcommand{\Hook}{\mathrm{hook}}
\newcommand{\Synt}{\mathrm{synt}}
\newcommand{\Sync}{\mathrm{sync}}
\newcommand{\INodes}{\mathcal{N}}
\newcommand{\Colors}{\mathfrak{col}}
\newcommand{\ColorTypes}{\mathfrak{colt}}
\newcommand{\Prune}{\mathfrak{pru}}
\newcommand{\OpAs}{\star}
\newcommand{\Perfect}{\mathrm{perf}}
\newcommand{\Type}{\mathrm{type}}
\newcommand{\Charac}[1]{\bar{#1}}
\newcommand{\Height}{\mathrm{ht}}
\newcommand{\CFG}{\mathrm{CFG}}
\newcommand{\RTG}{\mathrm{RTG}}
\newcommand{\SG}{\mathrm{SG}}
\newcommand{\MultOutIn}{\chi}
\newcommand{\Types}{\Tca}
\newcommand{\HookSystem}{\mathrm{HS}}
\newcommand{\GenS}{\Sbf}
\newcommand{\Hilb}{\Hbf}
\newcommand{\Hide}[1]{\textcolor{Col4}{\tt [hidden]}}
\newcommand{\Par}[1]{\left(#1\right)}
\newcommand{\Bra}[1]{\left\{#1\right\}}
\newcommand{\Han}[1]{\left[#1\right]}
\newcommand{\Def}[1]{\textcolor{Col3}{\em #1}}
\newcommand{\OEIS}[1]{\href{http://oeis.org/#1}{{\bf #1}}}
\tikzstyle{Centering}=[{baseline={([yshift=-0.5ex]current
\tikzstyle{Node}=[circle,draw=Col1!80,fill=Col1!8,inner sep=1pt,
\tikzstyle{LabeledNode}=[Node,inner sep=.5pt,minimum size=3mm]
\tikzstyle{NodeST}=[font=\footnotesize]
\tikzstyle{Edge}=[draw=Col2!80,cap=round,thick]
\tikzstyle{Leaf}=[rectangle,draw=ColBlack!70,fill=ColBlack!16,
\tikzstyle{NodeClear}=[Node,fill=ColWhite!100]
\tikzstyle{EdgeLabel}=[midway,inner sep=1pt,fill=ColWhite!0,
\tikzstyle{LeafLabel}=[font=\scriptsize,node distance=2mm]
\tikzstyle{EdgeValue}=[regular polygon,regular polygon sides=6,
\tikzstyle{Grid} = [color=ColBlack!30]
\tikzstyle{PathNode}=[circle,draw=Col1!90,fill=Col1!30,thick,
\tikzstyle{PathStep}=[color=Col1!60,thick]
\tikzstyle{PathGrid}=[color=ColBlack!30]
\tikzstyle{EdgeDeriv}=[Col2!80,cap=round,line width=1pt,->]
\tikzstyle{Mark4}=[draw=Col4!80,fill=Col4!10]
\tikzstyle{Mark6}=[draw=Col6!80,fill=Col6!10]
\newcommand{\TreeLeaf}{\,
\begin{tikzpicture}[xscale=.13,yscale=.12,Centering]
    \node[Leaf](0)at(0.00,-1.50){};
    \draw[Edge](0)--(0,0);
\end{tikzpicture}\,}
\newcommand{\CorollaTwo}[1]{\,
\begin{tikzpicture}[xscale=.13,yscale=.15,Centering]
    \node[Leaf](0)at(0.00,-1.50){};
    \node[Leaf](2)at(2.00,-1.50){};
    \node[Node](1)at(1.00,0.00){\begin{math}#1\end{math}};
    \draw[Edge](0)--(1);
    \draw[Edge](2)--(1);
    \node(r)at(1.00,2){};
    \draw[Edge](r)--(1);
\end{tikzpicture}\,}
\newcommand{\FreeCorollaOne}[1]{\,
\begin{tikzpicture}[xscale=.13,yscale=.25,Centering]
    \node(0)at(1.00,-1.50){};
    \node[NodeST](1)at(1.00,0.00){\begin{math}#1\end{math}};
    \draw[Edge](0)--(1);
    \node(r)at(1.00,1.5){};
    \draw[Edge](r)--(1);
\end{tikzpicture}\,}
\newcommand{\FreeCorollaTwo}[1]{\,
\begin{tikzpicture}[xscale=.11,yscale=.1,Centering]
    \node(0)at(0.00,-1.50){};
    \node(2)at(2.00,-1.50){};
    \node[NodeST](1)at(1.00,0.00){\begin{math}#1\end{math}};
    \draw[Edge](0)--(1);
    \draw[Edge](2)--(1);
    \node(r)at(1.00,2.2){};
    \draw[Edge](r)--(1);
\end{tikzpicture}\,}
\newcommand{\MotzHoriz}{\,
\begin{tikzpicture}[scale=.28,Centering]
    \draw[PathGrid](0,0) grid (1,0);
    \node[PathNode](Motz1)at(0,0){};
    \node[PathNode](Motz2)at(1,0){};
    \draw[PathStep](Motz1)--(Motz2);
\end{tikzpicture}\,}
\newcommand{\MotzPeak}{\,
\begin{tikzpicture}[scale=.23,Centering]
    \draw[PathGrid](0,0) grid (2,1);
    \node[PathNode](Motz1)at(0,0){};
    \node[PathNode](Motz2)at(1,1){};
    \node[PathNode](Motz3)at(2,0){};
    \draw[PathStep](Motz1)--(Motz2);
    \draw[PathStep](Motz2)--(Motz3);
\end{tikzpicture}\,}
\begin{document}

%%%%%%%%%%%%%%%%%%%%%%%%%%%%%%%%%%%%%%%%%%%%%%%%%%%%%%%%%%%%%%%%%%%%%%%%
%%%%%%%%%%%%%%%%%%%%%%%%%%%%%%%%%%%%%%%%%%%%%%%%%%%%%%%%%%%%%%%%%%%%%%%%
%%%%%%%%%%%%%%%%%%%%%%%%%%%%%%%%%%%%%%%%%%%%%%%%%%%%%%%%%%%%%%%%%%%%%%%%
\begin{abstract}
    We introduce bud generating systems, which are used for
    combinatorial generation. They specify sets of various kinds of
    combinatorial objects, called languages. They can emulate
    context-free grammars, regular tree grammars, and synchronous
    grammars, allowing us to work with all these generating systems in
    a unified way. The theory of bud generating systems uses colored
    operads. Indeed, an object is generated by a bud generating system
    if it satisfies a certain equation in a colored operad. To compute
    the generating series of the languages of bud generating systems, we
    introduce formal power series on colored operads and several
    operations on these. Series on colored operads are crucial to
    express the languages specified by bud generating systems and
    allow us to enumerate combinatorial objects with respect to some
    statistics. Some examples of bud generating systems are constructed;
    in particular to specify some sorts of balanced trees and to obtain
    recursive formulas enumerating these.
\end{abstract}

\maketitle

\begin{scriptsize}
    \tableofcontents
\end{scriptsize}

%%%%%%%%%%%%%%%%%%%%%%%%%%%%%%%%%%%%%%%%%%%%%%%%%%%%%%%%%%%%%%%%%%%%%%%%
%%%%%%%%%%%%%%%%%%%%%%%%%%%%%%%%%%%%%%%%%%%%%%%%%%%%%%%%%%%%%%%%%%%%%%%%
%%%%%%%%%%%%%%%%%%%%%%%%%%%%%%%%%%%%%%%%%%%%%%%%%%%%%%%%%%%%%%%%%%%%%%%%
\section*{Introduction}
Coming from theoretical computer science and formal language theory,
formal grammars~\cite{Har78,HMU06} are powerful tools having many
applications in several fields of mathematics. A formal grammar is a
device which describes---more or less concisely and with more or less
restrictions---a set of words, called a language. There are several
variations in the definitions of formal grammars and some types of these
are classified by the Chomsky-Schützenberger hierarchy~\cite{Cho59,CS63}
according to four different categories, taking into account their
expressive power. In an increasing order of power, there are the classes
of Type-$3$ to Type-$0$ grammars, known respectively as regular
grammars, context-free grammars, context-sensitive grammars, and
unrestricted grammars. One of the most striking similarities between all
these variations of formal grammars is that they work by constructing
words by applying rewrite rules~\cite{BN98}. Indeed, a word of the
language described by a formal grammar is obtained by considering a
starting word and by iteratively altering it in accordance with the
production rules of the grammar.
\smallbreak

Similar mechanisms and ideas are translatable into the world of trees,
instead only those of words. Grammars of trees~\cite{CDGJLLTT07} are
thus the natural counterpart of formal grammars to describe sets of
trees, and here also, there exist several very different types of
grammars. One can cite for instance tree grammars, regular tree
grammars~\cite{GS84}, and synchronous grammars~\cite{Gir12}, which
describe sets of various kinds of treelike structures. These grammars,
like the previous ones, work by applying rewrite rules on trees. In
this framework, trees are constructed by growing from the root to the
leaves by replacing some subtrees by other ones.
\smallbreak

In contrast, the theory of operads seems to have no link with formal
grammars. Operads are algebraic structures introduced in the context of
algebraic topology~\cite{May72,BV73} (see also~\cite{Mar08,LV12,Men15}
for a modern conspectus of the theory). Recently, many links between
the theory of operads and combinatorics have been developed (see, for
instance~\cite{CL01,Cha08,CG14}). Many operads involving various sets of
combinatorial objects have been defined, so that almost every classical
object can be seen as an element of at least one operad (see the
previous references and for instance~\cite{Zin12,Gir15,Gir16,FFM18}).
From an intuitive point of view, an operad is a set of abstract
operators with several inputs and one output. More precisely, if $x$ is
an operator with $n$ inputs and $y$ is an operator with $m$ inputs,
$x \circ_i y$ denotes the operator with $n + m - 1$ inputs obtained by
gluing the output of $y$ to the $i$-th input of $x$. Operads are
algebraic structures related to trees in the same way that monoids are
algebraic structures related to words. For this reason, the study of
operads has many connections with combinatorial properties of trees.
\smallbreak

The initial spark of this work was the following simple observation. The
partial composition $x \circ_i y$ of two elements $x$ and $y$ of an
operad $\Oca$ can be regarded as the application of a rewrite rule on
$x$ to obtain a new element of $\Oca$---the rewrite rule being encoded
essentially by $y$. This leads to the idea of using an operad $\Oca$ to
define grammars generating some subsets of $\Oca$. In this way,
according to the nature of the elements of $\Oca$, this provides a way
to define grammars which generate objects different to words and trees.
We use colored operads~\cite{BV73,Yau16}, a generalization of operads.
In a colored operad $\Cca$, every input and every output for the
elements of $\Cca$ has a color. These colors create constraints for the
partial compositions of two elements. Indeed, $x \circ_i y$ is defined
only if the color of the output of $y$ is the same as the color of the
$i$-th input of $x$. Colored operads enable us to define a new kind of
grammar, since the restrictions provided by the colors allow precise
control on how the rewrite rules can be applied.
\smallbreak

We introduce a new kind of grammar, the bud generating system. This is
defined mainly from a ground operad $\Oca$, a set $\Ccr$ of colors, and
a set $\Rfr$ of production rules. A bud generating system describes a
subset of $\Bud_\Ccr(\Oca)$---the colored operad obtained by augmenting
the elements of $\Oca$ with input and output colors taken from $\Ccr$.
An element is generated by iteratively altering an element $x$ of
$\Bud_\Ccr(\Oca)$ by composing it with an element $y$ of $\Rfr$. In this
context, the colors play the role analogous to nonterminal symbols in
formal grammars and in grammars of trees. Any bud generating system
$\Bca$ specifies two sets of objects: its language $\Lang(\Bca)$ and its
synchronous language $\SyncLang(\Bca)$. For instance, they can be used
to describe sets of Motkzin paths with some constraints,  the set of
$\{2, 3\}$-perfect trees~\cite{MPRS79,CLRS09} and some of their
generalizations, and the set of balanced binary trees~\cite{AVL62}.
Indeed, bud generating systems can emulate both context-free grammars
and regular tree grammars, and allow us to see these in a unified
manner.
\smallbreak

This paper is organized as follows. Section~\ref{sec:colored_operads} is
devoted to set our notations and definitions about operads and colored
operads, and to introduce the construction $\Bud_\Ccr(\Oca)$ producing
a colored operad from a noncolored one $\Oca$ and a set $\Ccr$ of
colors. Section~\ref{sec:bud_generating_systems} contains the main
definition: bud generating systems. We establish some properties of
these. Next, we introduce formal power series on colored operads in
Section~\ref{sec:series_colored_operads}, define several products on
these, and explain how these series can be used to obtain enumerative
results from bud generating systems. This article ends with
Section~\ref{sec:examples}, which contains a collection of examples for
most of the notions introduced by this work. We have taken the freedom
to put all the examples in this section. For this reason, the reader is
encouraged to consult this section whilst reading the first ones, by
following the references.
\medbreak

%%%%%%%%%%%%%%%%%%%%%%%%%%%%%%%%%%%%%%%%%%%%%%%%%%%%%%%%%%%%%%%%%%%%%%%%
\subsubsection*{Acknowledgements}
The author would like to thank the anonymous referees for their very
valuable suggestions, improving the presentation of this paper.
\medbreak

%%%%%%%%%%%%%%%%%%%%%%%%%%%%%%%%%%%%%%%%%%%%%%%%%%%%%%%%%%%%%%%%%%%%%%%%
\subsubsection*{General notations and conventions}
We denote by $\delta_{x, y}$ the Kronecker delta function (that is, for
any elements $x$ and $y$ of a same set, $\delta_{x, y} = 1$ if $x = y$
and $\delta_{x, y} = 0$ otherwise). For any integers $a$ and $c$,
$[a, c]$ denotes the set $\Bra{b \in \N : a \leq b \leq c}$ and $[n]$,
the set $[1, n]$. The cardinality of a finite set $S$ is denoted by
$\# S$. For any finite multiset $S := \lbag s_1, \dots, s_n \rbag$ of
nonnegative integers, we denote by $\sum S$ the sum
\begin{equation}
    \sum S := s_1 + \dots + s_n
\end{equation}
of its elements and by $S!$ the multinomial coefficient
\begin{equation}
    S! := \binom{\sum S}{s_1, \dots, s_n}.
\end{equation}
\medbreak

For any set $A$, $A^*$ denotes the set of all finite sequences, called
words, of elements of $A$. We denote by $A^+$ the subset of $A^*$
consisting in nonempty words. For any $n \geq 0$, $A^n$ is the set of
all words on $A$ of length $n$. If $u$ is a word, its letters are
indexed from left to right from $1$ to its length $|u|$. For any
$i \in [|u|]$, $u_i$ is the letter of $u$ at position $i$. If $a$ is a
letter and $n$ is a nonnegative integer, $a^n$ denotes the word
consisting in $n$ occurrences of $a$. Notice that $a^0$ is the empty
word~$\epsilon$.
\medbreak

In our graphical representations of trees, the uppermost nodes are
always roots. Moreover, internal nodes are represented by circles
\scalebox{1.1}{\begin{tikzpicture}[Centering]
    \node[Node]{};
\end{tikzpicture}},
leaves by squares
\scalebox{1.3}{\begin{tikzpicture}[Centering]
    \node[Leaf]{};
\end{tikzpicture}},
and edges by segments
\begin{tikzpicture}[Centering]
    \draw[Edge](0,0)--(0,.25);
\end{tikzpicture}.
To distinguish trees and syntax trees, we shall draw the latter
without circles for internal nodes and without squares for leaves (only
the labels of the nodes are depicted).
\medbreak

In graphical representations of multigraphs, labels of edges denote
their multiplicities. All unlabeled edges have $1$ as multiplicity.
\medbreak

%%%%%%%%%%%%%%%%%%%%%%%%%%%%%%%%%%%%%%%%%%%%%%%%%%%%%%%%%%%%%%%%%%%%%%%%
%%%%%%%%%%%%%%%%%%%%%%%%%%%%%%%%%%%%%%%%%%%%%%%%%%%%%%%%%%%%%%%%%%%%%%%%
%%%%%%%%%%%%%%%%%%%%%%%%%%%%%%%%%%%%%%%%%%%%%%%%%%%%%%%%%%%%%%%%%%%%%%%%
\section{Colored operads and bud operads} \label{sec:colored_operads}
The aim of this section is to set our notations about operads, colored
operads, and colored syntax trees. We also establish some properties
of treelike expressions in colored operads and present a construction
producing colored operads from operads.
\medbreak

%%%%%%%%%%%%%%%%%%%%%%%%%%%%%%%%%%%%%%%%%%%%%%%%%%%%%%%%%%%%%%%%%%%%%%%%
%%%%%%%%%%%%%%%%%%%%%%%%%%%%%%%%%%%%%%%%%%%%%%%%%%%%%%%%%%%%%%%%%%%%%%%%
\subsection{Colored operads}
Let us recall here the definitions of colored graded collections and
colored operads.
\medbreak

%%%%%%%%%%%%%%%%%%%%%%%%%%%%%%%%%%%%%%%%%%%%%%%%%%%%%%%%%%%%%%%%%%%%%%%%
\subsubsection{Colored graded collections}
Let $\Ccr$ be a finite set, called \Def{set of colors}. A
\Def{$\Ccr$-colored graded collection} is a graded set
\begin{equation}
    C := \bigsqcup_{n \geq 1} C(n)
\end{equation}
together with two maps
\begin{math}
    \Out : C \to \Ccr
\end{math}
and
\begin{math}
    \In : C(n) \to \Ccr^n, n \geq 1,
\end{math}
respectively sending any $x \in C(n)$ to its \Def{output color}
$\Out(x)$ and to its \Def{word of input colors} $\In(x)$. The
\Def{$i$-th input color} of $x$ is the $i$-th letter of $\In(x)$,
denoted by $\In_i(x)$. For any $n \geq 1$ and $x \in C(n)$, the
\Def{arity} $|x|$ of $x$ is $n$. We say that $C$ is \Def{locally finite}
if for all $n \geq 1$, the $C(n)$ are finite sets. A \Def{monochrome
graded collection} is a $\Ccr$-colored graded collection where $\Ccr$
is a singleton. If $C_1$ and $C_2$ are two $\Ccr$-colored graded
collections, a map $\phi : C_1 \to C_2$ is a \Def{$\Ccr$-colored graded
collection morphism} if it preserves arities. Besides, $C_2$ is a
\Def{$\Ccr$-colored graded subcollection} of $C_1$ if for all
$n \geq 1$, $C_2(n) \subseteq C_1(n)$, and $C_1$ and $C_2$ have the same
maps $\Out$ and~$\In$.
\medbreak

%%%%%%%%%%%%%%%%%%%%%%%%%%%%%%%%%%%%%%%%%%%%%%%%%%%%%%%%%%%%%%%%%%%%%%%%
\subsubsection{Hilbert series} \label{subsubsec:hilbert_series}
In all this work, we consider that $\Ccr$ has cardinal $k$ and that the
colors of $\Ccr$ are arbitrarily indexed so that
$\Ccr = \Bra{c_1, \dots, c_k}$. Let
$\Xbb_\Ccr := \Bra{\VarX_{c_1}, \dots, \VarX_{c_k}}$ and
$\Ybb_\Ccr := \Bra{\VarY_{c_1}, \dots, \VarY_{c_k}}$ be two alphabets of
mutually commutative parameters and $\N[[\Xbb_\Ccr \sqcup \Ybb_\Ccr]]$
be the set of commutative multivariate series on
$\Xbb_\Ccr \sqcup \Ybb_\Ccr$ with nonnegative integer coefficients.
As usual, if $\Sbf$ is a series of $\N[[\Xbb_\Ccr \sqcup \Ybb_\Ccr]]$,
$\Angle{m, \Sbf}$ denotes the coefficient of the monomial $m$ in $\Sbf$.
\medbreak

For any $\Ccr$-colored graded collection $C$, the \Def{Hilbert series}
$\Hilb_C$ of $C$ is the series of
$\N[[\Xbb_\Ccr \sqcup \Ybb_\Ccr]]$ defined by
\begin{equation} \label{equ:hilbers_series_colored_graded_collection}
    \Hilb_C := \sum_{x \in C}
    \Par{\VarX_{\Out(x)} \prod_{i \in [|x|]} \VarY_{\In_i(x)}}.
\end{equation}
The coefficient of
$\VarX_{a} \VarY_{c_1}^{\alpha_1} \dots \VarY_{c_k}^{\alpha_k}$ in
$\Hilb_C$ thus counts the elements of $C$ having $a$ as output color
and $\alpha_j$ inputs of color $c_j$ for any~$j \in [k]$. Note
that~\eqref{equ:hilbers_series_colored_graded_collection} is defined
only if there are only finitely many such elements for any $a \in \Ccr$
and any $\alpha_j \geq 0$, $j \in [k]$. This is the case when $C$ is
locally finite.
\medbreak

Besides, the \Def{generating series} of $C$ is the series $\GenS_C$ of
$\N[[t]]$ defined as the specialization of $\Hilb_C$ at $\VarX_a := 1$
and $\VarY_a := t$ for all $a \in \Ccr$. Therefore, for any $n \geq 1$
the coefficient $\Angle{t^n, \GenS_C}$ counts the elements of arity $n$
in~$C$.
\medbreak

%%%%%%%%%%%%%%%%%%%%%%%%%%%%%%%%%%%%%%%%%%%%%%%%%%%%%%%%%%%%%%%%%%%%%%%%
\subsubsection{Colored operads} \label{subsubsec:colored_operads}
A \Def{nonsymmetric colored set-operad} on $\Ccr$, or a
\Def{$\Ccr$-colored operad} for short, is a colored graded collection
$\Cca$ together with partially defined maps
\begin{equation}
    \circ_i : \Cca(n) \times \Cca(m) \to \Cca(n + m - 1),
    \qquad 1 \leq i \leq n, \enspace 1 \leq m,
\end{equation}
called \Def{partial compositions}, and a subset
$\Bra{\Unit_a : a \in \Ccr}$ of $\Cca(1)$ such that any $\Unit_a$,
$a \in \Ccr$, is called \Def{unit of color $a$} and satisfies
$\Out\Par{\Unit_a} = \In\Par{\Unit_a} = a$. This data has to satisfy,
for any $x, y, z \in \Cca$, the following constraints. First, for any
$i \in [|x|]$, $x \circ_i y$ is defined if and only if
$\Out(y) = \In_i(x)$. Moreover, the relations
\begin{subequations}
\begin{equation} \label{equ:operad_axiom_1}
    \Par{x \circ_i y} \circ_{i + j - 1} z = x \circ_i \Par{y \circ_j z},
    \qquad
    1 \leq i \leq |x|, 1 \leq j \leq |y|,
\end{equation}
\begin{equation} \label{equ:operad_axiom_2}
    \Par{x \circ_i y} \circ_{j + |y| - 1} z
    = \Par{x \circ_j z} \circ_i y,
    \qquad
    1 \leq i < j \leq |x|,
\end{equation}
\begin{equation} \label{equ:operad_axiom_3}
    \Unit_a \circ_1 x = x = x \circ_i \Unit_b,
    \qquad 1 \leq i \leq |x|, \enspace a, b \in \Ccr,
\end{equation}
\end{subequations}
have to hold when they are well-defined.
\medbreak

The \Def{complete composition map} of $\Cca$ is the partially defined
map
\begin{equation}
    \circ : \Cca(n) \times \Cca\Par{m_1}
        \times \dots \times \Cca\Par{m_n}
    \to \Cca\Par{m_1 + \dots + m_n},
\end{equation}
defined from the partial composition maps in the following way. For any
$x \in \Cca(n)$ and $y_1, \dots, y_n \in \Cca$ such that
$\Out\Par{y_i} = \In_i(y)$ for all $i \in [n]$, we set
\begin{equation}
    x \circ \Han{y_1, \dots, y_n}
    := \Par{\dots \Par{\Par{x \circ_n y_n}
        \circ_{n - 1} y_{n - 1}} \dots} \circ_1 y_1.
\end{equation}
\medbreak

Let $\Cca_1$ and $\Cca_2$ are two $\Ccr$-colored operads. A
$\Ccr$-colored graded collection morphism $\phi : \Cca_1 \to \Cca_2$ is
a \Def{$\Ccr$-colored operad morphism} if it sends any unit of color
$a \in \Ccr$ of $\Cca_1$ to the unit of color $a$ of $\Cca_2$,
if it commutes with partial composition maps and, if
for any $x, y \in \Cca_1$ and $i \in [|x|]$, if $x \circ_i y$ is
defined in $\Cca_1$, then $\phi(x) \circ_i \phi(y)$ is defined in
$\Cca_2$. Besides, $\Cca_2$ is a \Def{colored suboperad} of $\Cca_1$ if
$\Cca_2$ is a $\Ccr$-colored graded subcollection of $\Cca_1$ and
$\Cca_1$ and $\Cca_2$ have the same colored units and the same partial
composition maps. If $G$ is a $\Ccr$-colored graded subcollection of
$\Cca$, we denote by $\Cca^G$ the \Def{$\Ccr$-colored operad generated}
by $G$, that is the smallest $\Ccr$-colored suboperad of $\Cca$
containing $G$. When the $\Ccr$-colored operad generated by $G$ is
$\Cca$ itself, $G$ is a \Def{generating $\Ccr$-colored graded
collection} of $\Cca$. Moreover, when $G$ is minimal with respect to
inclusion among the $\Ccr$-colored graded subcollections of $\Cca$
satisfying this property, $G$ is a \Def{minimal generating
$\Ccr$-colored graded collection of~$\Cca$}. We say that $\Cca$ is
\Def{locally finite} if, as a colored graded collection, $\Cca$ is
locally finite.
\medbreak

A \Def{monochrome operad} (or an \Def{operad} for short) $\Oca$ is a
$\Ccr$-colored operad with a monochrome graded collection as underlying
set. In this case, $\Ccr$ is a singleton $\Bra{c_1}$ and, since for all
$x \in \Oca(n)$, we necessarily have $\Out(x) = c_1$ and
$\In(x) = c_1^n$, for all $x, y \in \Oca$ and $i \in [|x|]$, all partial
compositions $x \circ_i y$ are defined. In this case, $\Ccr$ and its
single element $c_1$ do not play any role. For this reason, in the
future definitions of monochrome operads, we shall not define their
set of colors~$\Ccr$.
\medbreak

%%%%%%%%%%%%%%%%%%%%%%%%%%%%%%%%%%%%%%%%%%%%%%%%%%%%%%%%%%%%%%%%%%%%%%%%
%%%%%%%%%%%%%%%%%%%%%%%%%%%%%%%%%%%%%%%%%%%%%%%%%%%%%%%%%%%%%%%%%%%%%%%%
\subsection{Free colored operads}
Free colored operads and more particularly colored syntax trees play an
important role in this work. We recall here the definitions of these two
notions and establish some of their properties.
\medbreak

%%%%%%%%%%%%%%%%%%%%%%%%%%%%%%%%%%%%%%%%%%%%%%%%%%%%%%%%%%%%%%%%%%%%%%%%
\subsubsection{Colored syntax trees}
Unless otherwise specified, we use in the sequel the standard
terminology ({\em i.e.}, \Def{node}, \Def{edge}, \Def{root},
\Def{parent}, \Def{child}, \Def{path}, {\em etc.}) about planar rooted
trees~\cite{Knu97}. Let $\Ccr$ be a set of colors and $C$ be a
$\Ccr$-colored graded collection. A \Def{$\Ccr$-colored $C$-syntax tree}
is a planar rooted tree $\Tfr$ such that, for any $n \geq 1$, any
internal node of $\Tfr$ having $n$ children is labeled by an element of
arity $n$ of $C$ and, for any internal nodes $u$ and $v$ of $\Tfr$ such
that $v$ is the $i$-th child of $u$, $\Out(y) = \In_i(x)$ where $x$
(resp. $y$) is the label of $u$ (resp. $v$). In our graphical
representations of a $\Ccr$-colored $C$-syntax tree $\Tfr$, we write the
colors of the leaves of $\Tfr$ below them and the color of the edge
exiting the root of $\Tfr$ above it (see
Figure~\ref{fig:example_colored_syntax_tree}).
\begin{figure}[ht]
    \centering
    \subfloat[][The degree of this $\Ccr$-colored $C$-syntax tree is
    $5$, its arity is~$8$, and its height is~$3$]{
    \begin{minipage}[c]{.45\textwidth}
    \centering
    \begin{equation*}
        \begin{tikzpicture}[xscale=.37,yscale=.15,Centering]
            \node(0)at(0.00,-6.50){};
            \node(10)at(8.00,-9.75){};
            \node(12)at(10.00,-9.75){};
            \node(2)at(2.00,-6.50){};
            \node(4)at(3.00,-3.25){};
            \node(5)at(4.00,-9.75){};
            \node(7)at(5.00,-9.75){};
            \node(8)at(6.00,-9.75){};
            \node[NodeST](1)at(1.00,-3.25){\begin{math}\Btt\end{math}};
            \node[NodeST](11)at(9.00,-6.50){\begin{math}\Att\end{math}};
            \node[NodeST](3)at(3.00,0.00){\begin{math}\Ctt\end{math}};
            \node[NodeST](6)at(5.00,-6.50){\begin{math}\Ctt\end{math}};
            \node[NodeST](9)at(7.00,-3.25){\begin{math}\Att\end{math}};
            \node(r)at(3.00,2.75){};
            \draw[Edge](0)--(1);
            \draw[Edge](1)--(3);
            \draw[Edge](10)--(11);
            \draw[Edge](11)--(9);
            \draw[Edge](12)--(11);
            \draw[Edge](2)--(1);
            \draw[Edge](4)--(3);
            \draw[Edge](5)--(6);
            \draw[Edge](6)--(9);
            \draw[Edge](7)--(6);
            \draw[Edge](8)--(6);
            \draw[Edge](9)--(3);
            \draw[Edge](r)--(3);
            \node[LeafLabel,above of=r]{\begin{math}1\end{math}};
            \node[LeafLabel,below of=0]{\begin{math}2\end{math}};
            \node[LeafLabel,below of=2]{\begin{math}1\end{math}};
            \node[LeafLabel,below of=4]{\begin{math}2\end{math}};
            \node[LeafLabel,below of=5]{\begin{math}2\end{math}};
            \node[LeafLabel,below of=7]{\begin{math}2\end{math}};
            \node[LeafLabel,below of=8]{\begin{math}1\end{math}};
            \node[LeafLabel,below of=10]{\begin{math}1\end{math}};
            \node[LeafLabel,below of=12]{\begin{math}1\end{math}};
        \end{tikzpicture}
    \end{equation*}
    \end{minipage}
    \label{subfig:example_colored_syntax_tree_1}}
    \subfloat[][A perfect $\Ccr$-colored $C$-syntax tree. The degree of
    this colored syntax tree is $8$, its arity is $11$, and its height
    is~$3$.]{
    \begin{minipage}[c]{.5\textwidth}
    \centering
    \begin{equation*}
        \begin{tikzpicture}[xscale=.29,yscale=.11,Centering]
            \node(0)at(0.00,-14.25){};
            \node(11)at(10.00,-14.25){};
            \node(13)at(11.00,-14.25){};
            \node(14)at(12.00,-14.25){};
            \node(16)at(14.00,-14.25){};
            \node(18)at(16.00,-14.25){};
            \node(2)at(2.00,-14.25){};
            \node(4)at(3.00,-14.25){};
            \node(6)at(5.00,-14.25){};
            \node(7)at(6.00,-14.25){};
            \node(9)at(8.00,-14.25){};
            \node[NodeST](1)at(1.00,-9.50){\begin{math}\Btt\end{math}};
            \node[NodeST](10)at(9.00,0.00){\begin{math}\Att\end{math}};
            \node[NodeST](12)at(11.00,-9.50)
                {\begin{math}\Ctt\end{math}};
            \node[NodeST](15)at(13.00,-4.75)
                {\begin{math}\Att\end{math}};
            \node[NodeST](17)at(15.00,-9.50)
                {\begin{math}\Att\end{math}};
            \node[NodeST](3)at(4.00,-4.75){\begin{math}\Ctt\end{math}};
            \node[NodeST](5)at(4.00,-9.50){\begin{math}\Btt\end{math}};
            \node[NodeST](8)at(7.00,-9.50){\begin{math}\Att\end{math}};
            \draw[Edge](0)--(1);
            \draw[Edge](1)--(3);
            \draw[Edge](11)--(12);
            \draw[Edge](12)--(15);
            \draw[Edge](13)--(12);
            \draw[Edge](14)--(12);
            \draw[Edge](15)--(10);
            \draw[Edge](16)--(17);
            \draw[Edge](17)--(15);
            \draw[Edge](18)--(17);
            \draw[Edge](2)--(1);
            \draw[Edge](3)--(10);
            \draw[Edge](4)--(5);
            \draw[Edge](5)--(3);
            \draw[Edge](6)--(5);
            \draw[Edge](7)--(8);
            \draw[Edge](8)--(3);
            \draw[Edge](9)--(8);
            \node(r)at(9.00,3.25){};
            \draw[Edge](r)--(10);
            \node[LeafLabel,above of=r]{\begin{math}1\end{math}};
            \node[LeafLabel,below of=0]{\begin{math}2\end{math}};
            \node[LeafLabel,below of=2]{\begin{math}1\end{math}};
            \node[LeafLabel,below of=4]{\begin{math}2\end{math}};
            \node[LeafLabel,below of=6]{\begin{math}1\end{math}};
            \node[LeafLabel,below of=7]{\begin{math}1\end{math}};
            \node[LeafLabel,below of=9]{\begin{math}1\end{math}};
            \node[LeafLabel,below of=11]{\begin{math}2\end{math}};
            \node[LeafLabel,below of=13]{\begin{math}2\end{math}};
            \node[LeafLabel,below of=14]{\begin{math}1\end{math}};
            \node[LeafLabel,below of=16]{\begin{math}1\end{math}};
            \node[LeafLabel,below of=18]{\begin{math}1\end{math}};
        \end{tikzpicture}
    \end{equation*}
    \end{minipage}
    \label{subfig:example_colored_syntax_tree_2}}
    \caption{\footnotesize
    Two $\Ccr$-colored $C$-syntax trees, where $\Ccr$ is the set of
    colors $\{1, 2\}$ and $C$ is the $\Ccr$-graded colored collection
    defined by $C := C(2) \sqcup C(3)$ with $C(2) := \{\Att, \Btt\}$,
    $C(3) := \{\Ctt\}$, $\Out(\Att) := 1$, $\Out(\Btt) := 2$,
    $\Out(\Ctt) := 1$, $\In(\Att) := 11$, $\In(\Btt) := 21$, and
    $\In(\Ctt) := 221$.}
    \label{fig:example_colored_syntax_tree}
\end{figure}
\medbreak

Let $\Tfr$ be a $\Ccr$-colored $C$-syntax tree. The \Def{arity} of an
internal node $v$ of $\Tfr$ is its number $|v|$ of children and its
\Def{label} is the element of $C$ labeling it and denoted by $\Tfr(v)$.
The \Def{degree} $\deg(\Tfr)$ (resp. \Def{arity} $|\Tfr|$) of $\Tfr$ is
its number of internal nodes (resp. leaves). We say that $\Tfr$ is
a \Def{corolla} if $\deg(\Tfr) = 1$. The \Def{height} of $\Tfr$ is the
length $\Height(\Tfr)$ of a longest path connecting the root of $\Tfr$
to one of its leaves. For instance, the height of a colored syntax tree
of degree $0$ is $0$ and the one of a corolla is $1$. The set of all
internal nodes of $\Tfr$ is denoted by $\INodes(\Tfr)$. For any
$v \in \INodes(\Tfr)$, $\Tfr_v$ is the subtree of $\Tfr$ rooted at the
node $v$. We say that $\Tfr$ is \Def{perfect} if all paths connecting
the root of $\Tfr$ to its leaves have the same length. Finally, $\Tfr$
is a \Def{monochrome $C$-syntax tree} if $C$ is a monochrome graded
collection.
\medbreak

%%%%%%%%%%%%%%%%%%%%%%%%%%%%%%%%%%%%%%%%%%%%%%%%%%%%%%%%%%%%%%%%%%%%%%%%
\subsubsection{Free colored operads}
The \Def{free $\Ccr$-colored operad over $C$} is the operad $\Free(C)$
wherein for any $n \geq 1$, $\Free(C)(n)$ is the set of all
$\Ccr$-colored $C$-syntax trees of arity $n$. For any
$\Tfr \in \Free(C)$, $\Out(\Tfr)$ is the output color of the label of
the root of $\Tfr$ and $\In(\Tfr)$ is the word obtained by reading, from
left to right, the input colors of the leaves of $\Tfr$. For any
$\Sfr, \Tfr \in \Free(C)$, the partial composition $\Sfr \circ_i \Tfr$,
defined if and only if the output color of $\Tfr$ is the input color of
the $i$-th leaf of $\Sfr$, is the tree obtained by grafting the root of
$\Tfr$ to the $i$-th leaf of $\Sfr$. For instance, with the
$\Ccr$-colored graded collection $C$ defined in
Figure~\ref{fig:example_colored_syntax_tree}, one has in~$\Free(C)$,
\begin{equation}
    \begin{tikzpicture}[xscale=.32,yscale=.16,Centering]
        \node(0)at(0.00,-5.33){};
        \node(2)at(2.00,-5.33){};
        \node(4)at(4.00,-5.33){};
        \node(6)at(5.00,-5.33){};
        \node(7)at(6.00,-5.33){};
        \node[NodeST](1)at(1.00,-2.67){\begin{math}\Att\end{math}};
        \node[NodeST](3)at(3.00,0.00){\begin{math}\Att\end{math}};
        \node[NodeST](5)at(5.00,-2.67){\begin{math}\Ctt\end{math}};
        \node(r)at(3.00,2.25){};
        \draw[Edge](0)--(1);
        \draw[Edge](1)--(3);
        \draw[Edge](2)--(1);
        \draw[Edge](4)--(5);
        \draw[Edge](5)--(3);
        \draw[Edge](6)--(5);
        \draw[Edge](7)--(5);
        \draw[Edge](r)--(3);
        \node[LeafLabel,above of=r]{\begin{math}1\end{math}};
        \node[LeafLabel,below of=0]{\begin{math}1\end{math}};
        \node[LeafLabel,below of=2]{\begin{math}1\end{math}};
        \node[LeafLabel,below of=4]{\begin{math}2\end{math}};
        \node[LeafLabel,below of=6]{\begin{math}2\end{math}};
        \node[LeafLabel,below of=7]{\begin{math}1\end{math}};
    \end{tikzpicture}
    \enspace \circ_3 \enspace
    \begin{tikzpicture}[xscale=.3,yscale=.24,Centering]
        \node(0)at(0.00,-1.67){};
        \node(2)at(2.00,-3.33){};
        \node(4)at(4.00,-3.33){};
        \node[NodeST](1)at(1.00,0.00)
            {\begin{math}\Btt\end{math}};
        \node[NodeST](3)at(3.00,-1.67)
            {\begin{math}\Att\end{math}};
        \node(r)at(1.00,1.65){};
        \draw[Edge,Mark4](0)--(1);
        \draw[Edge,Mark4](2)--(3);
        \draw[Edge,Mark4](3)--(1);
        \draw[Edge,Mark4](4)--(3);
        \draw[Edge,Mark4](r)--(1);
        \node[LeafLabel,above of=r]{\begin{math}2\end{math}};
        \node[LeafLabel,below of=0]{\begin{math}2\end{math}};
        \node[LeafLabel,below of=2]{\begin{math}1\end{math}};
        \node[LeafLabel,below of=4]{\begin{math}1\end{math}};
    \end{tikzpicture}
    \enspace = \enspace
    \begin{tikzpicture}[xscale=.28,yscale=.18,Centering]
        \node(0)at(0.00,-4.80){};
        \node(10)at(9.00,-4.80){};
        \node(11)at(10.00,-4.80){};
        \node(2)at(2.00,-4.80){};
        \node(4)at(4.00,-7.20){};
        \node(6)at(6.00,-9.60){};
        \node(8)at(8.00,-9.60){};
        \node[NodeST](1)at(1.00,-2.40){\begin{math}\Att\end{math}};
        \node[NodeST](3)at(3.00,0.00){\begin{math}\Att\end{math}};
        \node[NodeST](5)at(5.00,-4.80){\begin{math}\Btt\end{math}};
        \node[NodeST](7)at(7.00,-7.20){\begin{math}\Att\end{math}};
        \node[NodeST](9)at(9.00,-2.40){\begin{math}\Ctt\end{math}};
        \node(r)at(3.00,2.25){};
        \draw[Edge](0)--(1);
        \draw[Edge](1)--(3);
        \draw[Edge](10)--(9);
        \draw[Edge](11)--(9);
        \draw[Edge](2)--(1);
        \draw[Edge,Mark4](4)--(5);
        \draw[Edge,Mark6](5)--(9);
        \draw[Edge,Mark4](6)--(7);
        \draw[Edge,Mark4](7)--(5);
        \draw[Edge,Mark4](8)--(7);
        \draw[Edge](9)--(3);
        \draw[Edge](r)--(3);
        \node[LeafLabel,above of=r]{\begin{math}1\end{math}};
        \node[LeafLabel,below of=0]{\begin{math}1\end{math}};
        \node[LeafLabel,below of=2]{\begin{math}1\end{math}};
        \node[LeafLabel,below of=4]{\begin{math}2\end{math}};
        \node[LeafLabel,below of=6]{\begin{math}1\end{math}};
        \node[LeafLabel,below of=8]{\begin{math}1\end{math}};
        \node[LeafLabel,below of=10]{\begin{math}2\end{math}};
        \node[LeafLabel,below of=11]{\begin{math}1\end{math}};
    \end{tikzpicture}\,.
\end{equation}
\medbreak

%%%%%%%%%%%%%%%%%%%%%%%%%%%%%%%%%%%%%%%%%%%%%%%%%%%%%%%%%%%%%%%%%%%%%%%%
\subsubsection{Treelike expressions and finitely factorizing sets}
For any $\Ccr$-colored operad $\Cca$, the \Def{evaluation map} of $\Cca$
is the map
\begin{math}
    \Eval_\Cca : \Free(\Cca) \to \Cca,
\end{math}
defined as the unique surjective morphism of colored operads satisfying
$\Eval_\Cca(\Tfr) = x$ where $\Tfr$ is a tree of degree $1$ having its
root labeled by $x$. If $S$ is a colored graded subcollection of $\Cca$,
an \Def{$S$-treelike expression} of $x \in \Cca$ is a tree $\Tfr$ of
$\Free(\Cca)$ such that $\Eval_\Cca(\Tfr) = x$ and all internal nodes of
$\Tfr$ are labeled on~$S$.
\medbreak

Besides, when $S$ is such that any $x \in \Cca$ admits finitely many
$S$-treelike expressions, we say that $S$ \Def{finitely factorizes}
$\Cca$. This notion is important in the sequel and is used as sufficient
condition for the well-definition of some formal power series on
colored operads.
\medbreak

\begin{Lemma} \label{lem:finitely_factorizing_sets}
    Let $\Cca$ be a locally finite $\Ccr$-colored operad and $S$ be a
    $\Ccr$-colored graded subcollection of $\Cca$ such that $S(1)$
    finitely factorizes $\Cca$. Then, $S$ finitely factorizes~$\Cca$.
\end{Lemma}
\begin{proof}
    Since $\Cca$ is locally finite and $S(1)$ finitely factorizes
    $\Cca$, there is a nonnegative integer $k$ such that $k$ is the
    degree of a $\Ccr$-colored $S(1)$-syntax tree with a maximal
    number of internal nodes. Let $x$ be an element of $\Cca$ of arity
    $n$ admitting an $S$-treelike expression $\Tfr$. Observe
    first that $\Tfr$ has at most $n - 1$ non-unary internal nodes and
    at most $2n - 1$ edges. Moreover, by the pigeonhole principle, if
    $\Tfr$ would have more than  $(2n - 1)k$ unary internal nodes, there
    would be a chain made of more than $k$ unary internal nodes in
    $\Tfr$. This cannot happen since, by hypothesis, it is not possible
    to form any $\Ccr$-colored $S(1)$-syntax tree with more than $k$
    nodes. Therefore, we have shown that all $S$-treelike expressions
    of $x$ are of degrees at most $n - 1 + (2n - 1)k$. Moreover, since
    $\Cca$ is locally finite and $S$ is a $\Ccr$-colored graded
    subcollection of $\Cca$, all $S(m)$ are finite for all $m \geq 1$.
    Therefore, there are finitely many $S$-treelike expressions of~$x$.
\end{proof}
\medbreak

Assume now that $\Cca$ is locally finite and that $S$ is a
$\Ccr$-colored graded subcollection of $\Cca$ such that $S(1)$ finitely
factorizes $\Cca$. For any element $x$ of $\Cca^S$, the colored
suboperad of $\Cca$ generated by $S$, the \Def{$S$-degree} of $x$ is
defined by
\begin{equation}
    \deg_S(x) := \max \Bra{\deg(\Tfr) : \Tfr \in \Free(S)
        \mbox{ and } \Eval_\Cca(\Tfr) = x}.
\end{equation}
Thanks to the fact that, by hypothesis, $x$ admits at least one
$S$-treelike expression and, by
Lemma~\ref{lem:finitely_factorizing_sets}, the fact that $x$ admits
finitely many $S$-treelike expressions, $\deg_S(x)$ is well-defined.
\medbreak

%%%%%%%%%%%%%%%%%%%%%%%%%%%%%%%%%%%%%%%%%%%%%%%%%%%%%%%%%%%%%%%%%%%%%%%%
\subsubsection{Left expressions and hook-length formula}
Let $S$ be a $\Ccr$-colored graded subcollection of $\Cca$ and
$x \in \Cca$. An \Def{$S$-left expression} of $x$ is an expression for
$x$ in $\Cca$ of the form
\begin{equation} \label{equ:left_expression}
    x = \Par{\dots \Par{\Par{\Unit_{\Out(x)} \circ_1 s_1}
        \circ_{i_1} s_2} \circ_{i_2} \dots} \circ_{i_{\ell - 1}} s_\ell
\end{equation}
where $s_1, \dots, s_\ell \in S$ and $i_1, \dots, i_{\ell - 1} \in \N$.
Besides, if $\Tfr$ is an $S$-treelike expression of $x$ such that
$\INodes(\Tfr) = \Bra{e_1, \dots, e_\ell}$, a sequence
$\Par{e_1, \dots, e_\ell}$ is a \Def{linear extension} of $\Tfr$ if the
sequence is a linear extension of the poset induced by $\Tfr$ seen as an
Hasse diagram where the root of $\Tfr$ is the smallest element.
\medbreak

\begin{Lemma} \label{lem:left_expressions_linear_extensions}
    Let $\Cca$ be a locally finite $\Ccr$-colored operad and $S$ be a
    $\Ccr$-colored graded subcollection of $\Cca$. Then, for any
    $x \in \Cca$, the set of all $S$-left expressions of $x$ is in
    one-to-one correspondence with the set of all pairs $(\Tfr, e)$
    where $\Tfr$ is an $S$-treelike expression of $x$ and $e$ is a
    linear extension of~$\Tfr$.
\end{Lemma}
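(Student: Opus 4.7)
The plan is to exhibit explicit mutually inverse maps between the two sets, by reading an $S$-left expression step by step as the recipe for grafting a sequence of corollas onto a leaf tree.

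Given an $S$-left expression
\begin{equation*}
    x = (\cdots ((\Unit_{\Out(x)} \circ_1 s_1) \circ_{i_1} s_2) \cdots) \circ_{i_{\ell-1}} s_\ell,
\end{equation*}
I would construct inductively a sequence of trees $\Tfr^{(0)}, \dots, \Tfr^{(\ell)}$ in $\Free(\Cca)$: let $\Tfr^{(0)}$ be the leaf tree of color $\Out(x)$, and for $j \in [\ell]$, let $\Tfr^{(j)} := \Tfr^{(j-1)} \circ_{i_{j-1}} \Sfr_j$ (with the convention $i_0 := 1$), where $\Sfr_j$ denotes the corolla whose root is labeled by $s_j$. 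Setting $\Tfr := \Tfr^{(\ell)}$ and letting $v_j$ be the internal node of $\Tfr$ originating as the root of $\Sfr_j$, every $v_j$ is labeled in $S$, and since $\Eval_\Cca$ is a colored-operad morphism sending any corolla labeled by $s$ to $s$, an easy induction yields $\Eval_\Cca(\Tfr^{(j)}) = (\cdots (\Unit_{\Out(x)} \circ_1 s_1) \cdots) \circ_{i_{j-1}} s_j$, and in particular $\Eval_\Cca(\Tfr) = x$. For $j \geq 2$, the parent of $v_j$ in $\Tfr$ is the internal node of $\Tfr^{(j-1)}$ whose $i_{j-1}$-th leaf was replaced by $\Sfr_j$, hence lies in $\{v_1, \dots, v_{j-1}\}$; this proves that $(v_1, \dots, v_\ell)$ is a linear extension of~$\Tfr$.

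For the inverse map, given a pair $(\Tfr, (v_1, \dots, v_\ell))$, I would set $s_j := \Label(v_j) \in S$ and define $\Tfr^{(j)}$ as the tree obtained from $\Tfr$ by replacing the subtree rooted at each $v_k$ with $k > j$ by a single leaf of color $\Out(s_k)$. Since $(v_1, \dots, v_\ell)$ is a linear extension, each initial segment $\{v_1, \dots, v_j\}$ is down-closed in $\leq_\Tfr$, so the parent of every pruned node lies in $\{v_1, \dots, v_j\}$ and $\Tfr^{(j)}$ is a well-defined $S$-treelike expression with internal nodes exactly $\{v_1, \dots, v_j\}$. The node $v_j$ then corresponds to a leaf of $\Tfr^{(j-1)}$, sitting at a position $i_{j-1}$ determined by the left-to-right ordering of the leaves of $\Tfr^{(j-1)}$, and $\Tfr^{(j)} = \Tfr^{(j-1)} \circ_{i_{j-1}} \Sfr_j$ by construction. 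Applying $\Eval_\Cca$ then produces an $S$-left expression of $x$ with parameters $(s_1, i_1, \dots, i_{\ell-1}, s_\ell)$.

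The two maps will be mutually inverse because, in either direction, the intermediate trees $\Tfr^{(j)}$ and the parameters $(s_j, i_{j-1})$ are determined in exactly the same way at each step. The only conceptual point requiring attention is the equivalence between the order in which corollas can be grafted and the linear-extension condition: a subset of $\INodes(\Tfr)$ arises as the first $j$ corollas grafted in some valid process if and only if it is down-closed in $\leq_\Tfr$, simply because every grafting occurs at an existing leaf, so the parent of a newly inserted internal node is necessarily one of the previously inserted internal nodes.
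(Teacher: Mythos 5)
Your proposal is correct and takes essentially the same approach as the paper: reading an $S$-left expression as a recipe for grafting corollas $\Sfr_1,\dots,\Sfr_\ell$ onto a leaf tree in $\Free(\Cca)$, then observing that valid grafting orders correspond exactly to linear extensions of the resulting $S$-treelike expression. The paper's argument for the inverse direction is terse (it simply appeals to induction on the degree of $\Tfr$), whereas you spell it out explicitly through the pruning construction of the partial trees $\Tfr^{(j)}$; this is the same idea, just made more concrete. One small imprecision worth noting: when you write that down-closure of $\{v_1,\dots,v_j\}$ implies ``the parent of every pruned node lies in $\{v_1,\dots,v_j\}$,'' this holds only for the $\leq_\Tfr$-minimal pruned nodes (those at which the leaves are actually substituted); the deeper ones have pruned parents. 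This does not affect the argument, since all that is needed is that $\{v_1,\dots,v_j\}$ forms a root-containing subtree of $\INodes(\Tfr)$, which is exactly the down-closure property.
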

\begin{proof}
    Let $\phi_x$ be the map sending any $S$-left expression of the
    form~\eqref{equ:left_expression} of $x \in \Cca$ to the pair
    $(\Tfr, e)$ where $\Tfr$ is the colored syntax tree of $\Free(S)$
    obtained by interpreting~\eqref{equ:left_expression} in $\Free(S)$,
    {\em i.e.}, by replacing any $s_j$, $j \in [\ell]$,
    in~\eqref{equ:left_expression} by a corolla $\Sfr_j$ of $\Free(S)$
    labeled by $s_j$, and where $e$ is the sequence
    $\Par{e_1, \dots, e_\ell}$ of the internal nodes of $\Tfr$,
    where any $e_j$, $j \in [\ell]$, is the node of $\Tfr$ coming from
    $\Sfr_j$. We then have
    \begin{equation} \label{equ:left_expressions_linear_extensions_2}
        \Tfr =
        \Par{\dots \Par{\Par{\Unit_{\Out(x)} \circ_1 \Sfr_1}
        \circ_{i_1} \Sfr_2}
        \circ_{i_2} \dots} \circ_{i_{\ell - 1}} \Sfr_\ell
    \end{equation}
    and by construction, $\Tfr$ is an $S$-treelike expression of $x$.
    Moreover, immediately from the definition of the partial composition
    in free $\Ccr$-colored operads, $\Par{e_1, \dots, e_\ell}$ is a
    linear extension of $\Tfr$. Therefore, we have shown that $\phi_x$
    sends any $S$-left expression of $x$ to a pair $(\Tfr, e)$ where
    $\Tfr$ is an $S$-treelike expression of $x$ and $e$ is a linear
    extension of~$\Tfr$.
    \smallbreak

    Let $\Tfr$ be an $S$-treelike expression of $x \in \Cca$ and $e$ be
    a linear extension $\Par{e_1, \dots, e_\ell}$ of $\Tfr$. It follows
    by induction on the degree $\ell$ of $\Tfr$ that $\Tfr$ can be
    expressed by an expression of the
    form~\eqref{equ:left_expressions_linear_extensions_2} where any
    $e_j$, $j \in [\ell]$, is the node of $\Tfr$ coming from $\Sfr_j$.
    Now, the interpretation
    of~\eqref{equ:left_expressions_linear_extensions_2} in $\Cca$,
    {\em i.e.}, by replacing any corolla $\Sfr_j$, $j \in [\ell]$,
    in~\eqref{equ:left_expressions_linear_extensions_2} by its label
    $s_j$, is an $S$-left expression of the
    form~\eqref{equ:left_expression} for $x$.
    Since~\eqref{equ:left_expression} is the only antecedent of
    $(\Tfr, e)$ by $\phi_x$, it follows that $\phi_x$, with domain the
    set of all $S$-left expressions of $x$ and with codomain the set of
    all pairs $(\Tfr, e)$ where $\Tfr$ is an $S$-treelike expression of
    $x$ and $e$ is a linear extension of $\Tfr$, is a bijection.
\end{proof}
\medbreak

A famous result of Knuth~\cite{Knu98}, known as the
\Def{hook-length formula for trees}, stated here in our setting, says
that given a $\Ccr$-colored syntax tree $\Tfr$, the number of linear
extensions of $\Tfr$ is
\begin{equation} \label{equ:hook_length_syntax_tree}
    \frac{\deg(\Tfr)!}
        {\prod\limits_{v \in \INodes(\Tfr)} \deg\Par{\Tfr_v}}.
\end{equation}
When $S(1)$ finitely factorizes $\Cca$, by
Lemma~\ref{lem:finitely_factorizing_sets}, the number of $S$-treelike
expressions for any $x \in \Cca$ is finite. Hence, in this case, we
deduce from Lemma~\ref{lem:left_expressions_linear_extensions}
and~\eqref{equ:hook_length_syntax_tree} that the number of $S$-left
expressions of $x$ is
\begin{equation} \label{equ:number_left_expressions}
    \sum_{\substack{
        \Tfr \in \Free(S) \\
        \Eval_\Cca(\Tfr) = x
    }}
    \frac{\deg(\Tfr)!}
        {\prod\limits_{v \in \INodes(\Tfr)} \deg\Par{\Tfr_v}}.
\end{equation}
\medbreak

%%%%%%%%%%%%%%%%%%%%%%%%%%%%%%%%%%%%%%%%%%%%%%%%%%%%%%%%%%%%%%%%%%%%%%%%
%%%%%%%%%%%%%%%%%%%%%%%%%%%%%%%%%%%%%%%%%%%%%%%%%%%%%%%%%%%%%%%%%%%%%%%%
\subsection{Bud operads}
Let us now present a simple construction producing colored operads from
operads.
\medbreak

%%%%%%%%%%%%%%%%%%%%%%%%%%%%%%%%%%%%%%%%%%%%%%%%%%%%%%%%%%%%%%%%%%%%%%%%
\subsubsection{From monochrome operads to colored operads}%
\label{subsubsec:operad_to_colored_operads}
If $\Oca$ is a monochrome operad and $\Ccr$ is a finite set of colors,
we denote by $\Bud_\Ccr(\Oca)$ the $\Ccr$-colored graded collection
defined by
\begin{equation}
    \Bud_\Ccr(\Oca)(n) := \Ccr \times \Oca(n) \times \Ccr^n,
    \qquad n \geq 1,
\end{equation}
and for all $(a, x, u) \in \Bud_\Ccr(\Oca)$, $\Out((a, x, u)) := a$ and
$\In((a, x, u)) := u$. We endow $\Bud_\Ccr(\Oca)$ with the partially
defined partial composition $\circ_i$ satisfying, for all triples
$(a, x, u)$ and $(b, y, v)$ of $\Bud_\Ccr(\Oca)$, and $i \in [|x|]$ such
that $ \Out((b, y, v)) = \In_i((a, x, u))$,
\begin{equation}
    (a, x, u) \circ_i (b, y, v)
    := \Par{a, x \circ_i y, u \mapsfrom_i v},
\end{equation}
where $u \mapsfrom_i v$ is the word obtained by replacing the $i$-th
letter of $u$ by $v$. Besides, if $\Oca_1$ and $\Oca_2$ are two operads
and $\phi : \Oca_1 \to \Oca_2$ is an operad morphism, we denote by
$\Bud_\Ccr(\phi)$ the map
\begin{equation}
    \Bud_\Ccr(\phi) : \Bud_\Ccr\Par{\Oca_1} \to \Bud_\Ccr\Par{\Oca_2}
\end{equation}
defined by
\begin{equation}
    \Bud_\Ccr(\phi)((a, x, u)) := \Par{a, \phi(x), u}.
\end{equation}
\medbreak

\begin{Proposition} \label{prop:functor_bud_operads}
    For any set of colors $\Ccr$, the construction
    \begin{math}
        (\Oca, \phi) \mapsto
        \Par{\Bud_\Ccr(\Oca), \Bud_\Ccr(\phi)}
    \end{math}
    is a functor from the category of monochrome operads to the category
    of $\Ccr$-colored operads.
\end{Proposition}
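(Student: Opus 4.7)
The plan is to establish the statement in three stages corresponding to the three conditions for a functor: first that $\Bud_\Ccr(\Oca)$ is indeed a $\Ccr$-colored operad, second that $\Bud_\Ccr(\phi)$ is a $\Ccr$-colored operad morphism whenever $\phi$ is an operad morphism, and third that the assignment preserves identities and composition.

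For the first stage, I would designate the colored units by setting $\Unit_a := (a, \Unit_\Oca, a)$ for each $a \in \Ccr$, where $\Unit_\Oca$ is the unique element of $\Oca(1)$ acting as the unit. One then checks the three axioms \eqref{equ:operad_axiom_1}, \eqref{equ:operad_axiom_2}, \eqref{equ:operad_axiom_3} by separating each identity into its three components: the output color (trivially preserved since it is always the first component $a$ of the outermost triple), the underlying operad element (which follows directly from the corresponding axiom in $\Oca$), and the word of input colors. The word component requires verifying the compatibility relations
\begin{equation*}
    (u \mapsfrom_i v) \mapsfrom_{i + j - 1} w = u \mapsfrom_i (v \mapsfrom_j w)
    \quad \text{and} \quad
    (u \mapsfrom_i v) \mapsfrom_{j + m - 1} w = (u \mapsfrom_j w) \mapsfrom_i v
\end{equation*}
for $i < j$ and $|v| = m$, which are elementary properties of positional substitution in words. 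The unit axiom reduces to the tautology $u \mapsfrom_i a = u$ when $u_i = a$, combined with $a \mapsfrom_1 u = u$.

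For the second stage, let $\phi : \Oca_1 \to \Oca_2$ be an operad morphism. The map $\Bud_\Ccr(\phi)$ manifestly preserves arities and the output and input color maps, since it acts as the identity on the color components. It sends the unit $(a, \Unit_{\Oca_1}, a)$ of color $a$ to $(a, \phi(\Unit_{\Oca_1}), a) = (a, \Unit_{\Oca_2}, a)$ because $\phi$ preserves units. Compatibility with partial composition follows from the computation
\begin{equation*}
    \Bud_\Ccr(\phi)\bigl((a, x, u) \circ_i (b, y, v)\bigr)
    = (a, \phi(x \circ_i y), u \mapsfrom_i v)
    = (a, \phi(x) \circ_i \phi(y), u \mapsfrom_i v),
\end{equation*}
which equals $\Bud_\Ccr(\phi)((a, x, u)) \circ_i \Bud_\Ccr(\phi)((b, y, v))$, and the color condition for the left-hand composition is the same as that for the right-hand one.

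For the third stage, functoriality is immediate: $\Bud_\Ccr(\mathrm{id}_\Oca)$ leaves every triple unchanged and thus equals the identity of $\Bud_\Ccr(\Oca)$, while for composable $\phi$ and $\psi$, both $\Bud_\Ccr(\psi \circ \phi)$ and $\Bud_\Ccr(\psi) \circ \Bud_\Ccr(\phi)$ send $(a, x, u)$ to $(a, \psi(\phi(x)), u)$. The main obstacle, if any, is the bookkeeping for the word-substitution identities used in the associativity axioms of stage one; these are entirely combinatorial and follow at once from the fact that the positions $i$ and $j+m-1$ (resp. $i$ and $j$) target disjoint or properly nested letters of the relevant word, mirroring exactly the operadic situation in $\Oca$.
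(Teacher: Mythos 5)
Your proof is correct and complete; the paper itself omits the argument, declaring it ``very straightforward,'' so there is nothing to compare against, but the three-stage decomposition you give (well-definedness of the colored operad $\Bud_\Ccr(\Oca)$, morphism property of $\Bud_\Ccr(\phi)$, preservation of identities and composition) is exactly the natural route and nothing is missing. The reduction of the operad axioms in $\Bud_\Ccr(\Oca)$ to their counterparts in $\Oca$ componentwise, with the only genuinely new content being the two word-substitution identities for $\mapsfrom_i$, is the right observation, and both identities are correctly stated and hold for the reasons you give (nested versus disjoint positions). One tiny slip of notation: in the unit axiom you write $u \mapsfrom_i a = u$ when $u_i = a$, whereas the relevant unit is $\Unit_b$, so the letter should be $b$; this does not affect the argument.
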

\medbreak

We omit the proof of Proposition~\ref{prop:functor_bud_operads} since it
is very straightforward. This result shows that $\Bud_\Ccr$ is a
functorial construction producing colored operads from monochrome ones.
We call $\Bud_\Ccr(\Oca)$ the \Def{$\Ccr$-bud operad}
of~$\Oca$%
\footnote{See examples of monochrome operads and their bud operads in
Section~\ref{subsec:examples_monochrome_operads}.}%
.
When $\Ccr$ is a singleton, $\Bud_\Ccr(\Oca)$ is by definition a
monochrome operad isomorphic to~$\Oca$. For this reason, in this case,
we identify $\Bud_\Ccr(\Oca)$ with~$\Oca$.
\medbreak

As a side observation, remark that in general, the bud operad
$\Bud_\Ccr(\Oca)$ of a free operad $\Oca$ is not a free $\Ccr$-colored
operad. Indeed, consider for instance the bud operad
$\Bud_{\{1, 2\}}(\Oca)$, where $\Oca := \Free(C)$ and $C$ is the
monochrome graded collection defined by $C := C(1) := \{\Att\}$. Then, a
minimal generating set of $\Bud_{\{1, 2\}}(\Oca)$ is
\begin{equation}
    \Bra{
    \Par{1, \FreeCorollaOne{\Att}, 1},
    \Par{1, \FreeCorollaOne{\Att}, 2},
    \Par{2, \FreeCorollaOne{\Att}, 1},
    \Par{2, \FreeCorollaOne{\Att}, 2}
    }.
\end{equation}
These elements are subjected to the nontrivial relations
\begin{equation}
    \Par{d, \FreeCorollaOne{\Att}, 1}
    \circ_1
    \Par{1, \FreeCorollaOne{\Att}, e}
    =
    \Par{d,
    \begin{tikzpicture}[xscale=.13,yscale=.15,Centering]
        \node(0)at(1.00,-4.5){};
        \node[NodeST](1)at(1.00,0.00){\begin{math}\Att\end{math}};
        \node[NodeST](2)at(1.00,-2){\begin{math}\Att\end{math}};
        \draw[Edge](1)--(2);
        \draw[Edge](0)--(2);
        \node(r)at(1.00,2.5){};
        \draw[Edge](r)--(1);
    \end{tikzpicture}\,,
    e}
    =
    \Par{d, \FreeCorollaOne{\Att}, 2}
    \circ_1
    \Par{2, \FreeCorollaOne{\Att}, e},
\end{equation}
where $d, e \in \{1, 2\}$, implying that $\Bud_{\{1, 2\}}(\Oca)$ is not
free.
\medbreak

%%%%%%%%%%%%%%%%%%%%%%%%%%%%%%%%%%%%%%%%%%%%%%%%%%%%%%%%%%%%%%%%%%%%%%%%
\subsubsection{The associative operad}
The \Def{associative operad} $\As$ is the monochrome operad defined by
$\As(n) := \Bra{\OpAs_n}$, $n \geq 1$, and wherein partial composition
maps are defined by
\begin{equation}
    \OpAs_n \circ_i \OpAs_m := \OpAs_{n + m - 1},
    \qquad 1 \leq i \leq n, \enspace 1 \leq m.
\end{equation}
For any set of colors $\Ccr$, the bud operad $\Bud_\Ccr(\As)$ is the
set of all triples
\begin{equation}
    \Par{a, \OpAs_n, u_1 \dots u_n}
\end{equation}
where $a \in \Ccr$ and $u_1, \dots, u_n \in \Ccr$. For
$\Ccr := \{1, 2, 3\}$, one has for instance the partial composition
\begin{equation}
    \Par{2, \OpAs_4, \ColA{3{\bf 1}12}}
    \circ_2
    \Par{1, \OpAs_3, \ColD{233}} =
    \Par{2, \OpAs_6, \ColA{3}\ColD{233} \ColA{12}}.
\end{equation}
\medbreak

The associative operad and its bud operads will play an important role
in the sequel. For this reason, to gain readability, we shall simply
denote by $(a, u)$ any element $\Par{a, \OpAs_{|u|}, u}$ of
$\Bud_\Ccr(\As)$ without any loss of information.
\medbreak

%%%%%%%%%%%%%%%%%%%%%%%%%%%%%%%%%%%%%%%%%%%%%%%%%%%%%%%%%%%%%%%%%%%%%%%%
\subsubsection{Pruning map}
Here, we use the fact that any monochrome operad $\Oca$ can be seen as
a $\Ccr$-colored operad where all output and input colors of its
elements are equal to $c_1$, where $c_1$ is the first color of $\Ccr$
(see Section~\ref{subsubsec:colored_operads}). Let
\begin{equation}
    \Prune : \Bud_\Ccr(\Oca) \to \Oca
\end{equation}
be the morphism of $\Ccr$-colored operads defined, for any
$(a, x, u) \in \Bud_\Ccr(\Oca)$, by
\begin{equation}
    \Prune((a, x, u)) := x.
\end{equation}
We call $\Prune$ the \Def{pruning map} on $\Bud_\Ccr(\Oca)$.
\medbreak

%%%%%%%%%%%%%%%%%%%%%%%%%%%%%%%%%%%%%%%%%%%%%%%%%%%%%%%%%%%%%%%%%%%%%%%%
%%%%%%%%%%%%%%%%%%%%%%%%%%%%%%%%%%%%%%%%%%%%%%%%%%%%%%%%%%%%%%%%%%%%%%%%
%%%%%%%%%%%%%%%%%%%%%%%%%%%%%%%%%%%%%%%%%%%%%%%%%%%%%%%%%%%%%%%%%%%%%%%%
\section{Bud generating systems and combinatorial generation}%
\label{sec:bud_generating_systems}
In this section, we introduce bud generating systems. A bud generating
system relies on an operad $\Oca$, a set of colors $\Ccr$, and the bud
operad $\Bud_\Ccr(\Oca)$. The principal interest of these objects is
that they allow us to specify sets of objects of $\Bud_\Ccr(\Oca)$. We
shall also establish some first properties of bud generating systems by
showing that they can emulate context-free grammars, regular tree
grammars, and synchronous grammars.
\medbreak

%%%%%%%%%%%%%%%%%%%%%%%%%%%%%%%%%%%%%%%%%%%%%%%%%%%%%%%%%%%%%%%%%%%%%%%%
%%%%%%%%%%%%%%%%%%%%%%%%%%%%%%%%%%%%%%%%%%%%%%%%%%%%%%%%%%%%%%%%%%%%%%%%
\subsection{Bud generating systems}
We introduce here the main definitions and the main tools about bud
generating systems.
\medbreak

%%%%%%%%%%%%%%%%%%%%%%%%%%%%%%%%%%%%%%%%%%%%%%%%%%%%%%%%%%%%%%%%%%%%%%%%
\subsubsection{Bud generating systems}
A \Def{bud generating system} is a tuple
$\Bca := (\Oca, \Ccr, \Rfr, I, T)$ where $\Oca$ is an operad called
\Def{ground operad}, $\Ccr$ is a finite set of colors, $\Rfr$ is a
finite $\Ccr$-colored graded subcollection of $\Bud_\Ccr(\Oca)$ called
\Def{set of rules}, $I$ is a subset of $\Ccr$ called \Def{set of initial
colors}, and $T$ is a subset of $\Ccr$ called \Def{set of terminal
colors}.
\medbreak

A \Def{monochrome bud generating system} is a bud generating system
whose set $\Ccr$ of colors contains a single color, and whose sets of
initial and terminal colors are equal to $\Ccr$. In this case, as
explained in Section~\ref{subsubsec:operad_to_colored_operads},
$\Bud_\Ccr(\Oca)$ and $\Oca$ are identified. These particular
generating systems are thus simply denoted by pairs $(\Oca, \Rfr)$.
\medbreak

Let us explain how bud generating systems specify, in two different
ways, two $\Ccr$-colored graded subcollections of $\Bud_\Ccr(\Oca)$. In
what follows, $\Bca := (\Oca, \Ccr, \Rfr, I, T)$ is a bud generating
system.
\medbreak

%%%%%%%%%%%%%%%%%%%%%%%%%%%%%%%%%%%%%%%%%%%%%%%%%%%%%%%%%%%%%%%%%%%%%%%%
\subsubsection{Generation} \label{subsubsec:generation}
We say that $x_2 \in \Bud_\Ccr(\Oca)$ is \Def{derivable in one step}
from $x_1 \in \Bud_\Ccr(\Oca)$ if there is a rule $r \in \Rfr$ and an
integer $i$ such that $x_2 = x_1 \circ_i r$. We denote this property by
$x_1 \Deriv x_2$. When $x_1, x_2 \in \Bud_\Ccr(\Oca)$ are such that
$x_1 = x_2$ or there are $y_1, \dots, y_{\ell - 1} \in \Bud_\Ccr(\Oca)$,
$\ell \geq 1$, satisfying
\begin{equation}
    x_1 \Deriv y_1 \Deriv \cdots \Deriv y_{\ell - 1} \Deriv x_2,
\end{equation}
we say that $x_2$ is \Def{derivable} from $x_1$. Moreover, $\Bca$
\Def{generates} $x \in \Bud_\Ccr(\Oca)$ if there is a color $a$ of $I$
such that $x$ is derivable from $\Unit_a$ and all colors of $\In(x)$
are in $T$. The \Def{language} $\Lang(\Bca)$ of $\Bca$ is the set of all
the elements of $\Bud_\Ccr(\Oca)$ generated by~$\Bca$.
\medbreak

The \Def{derivation graph} of $\Bca$ is the oriented multigraph
$\DerivGraph(\Bca)$ with the set of elements derivable from $\Unit_a$,
$a \in I$, as set of vertices. In $\DerivGraph(\Bca)$, for any
$x_1, x_2 \in \Lang(\Bca)$ such that $x_1 \Deriv x_2$, there are $\ell$
edges from $x_1$ to $x_2$, where $\ell$ is the number of pairs
$(i, r) \in \N \times \Rfr$ such that $x_2 = x_1 \circ_i r$%
\footnote{See examples of bud generating systems and derivation
graphs in Sections~\ref{subsubsec:bud_generating_system_dias_gamma},
\ref{subsubsec:example_bmotz}, and~\ref{subsubsec:example_bubtree}.}%
.
\medbreak

%%%%%%%%%%%%%%%%%%%%%%%%%%%%%%%%%%%%%%%%%%%%%%%%%%%%%%%%%%%%%%%%%%%%%%%%
\subsubsection{Synchronous generation}%
\label{subsubsec:synchronous_generation}
We say that $x_2 \in \Bud_\Ccr(\Oca)$ is \Def{synchronously derivable
in one step} from $x_1 \in \Bud_\Ccr(\Oca)$ if there are rules $r_1$,
\dots, $r_{|x_1|}$ of $\Rfr$ such that
\begin{math}
    x_2 = x_1 \circ \Han{r_1, \dots, r_{|x_1|}}.
\end{math}
We denote this
property by $x_1 \SyncDeriv x_2$. When $x_1, x_2 \in \Bud_\Ccr(\Oca)$
are such that $x_1 = x_2$ or there are
\begin{math}
    y_1, \dots, y_{\ell - 1} \in \Bud_\Ccr(\Oca)$, $\ell \geq 1,
\end{math}
satisfying
\begin{equation}
    x_1 \SyncDeriv y_1 \SyncDeriv \cdots
    \SyncDeriv y_{\ell - 1} \SyncDeriv x_2,
\end{equation}
we say that $x_2$ is \Def{synchronously derivable} from $x_1$. Moreover,
$\Bca$ \Def{synchronously generates} $x \in \Bud_\Ccr(\Oca)$ if there
is a color $a$ of $I$ such that $x$ is synchronously derivable from
$\Unit_a$ and all colors of $\In(x)$ are in $T$. The \Def{synchronous
language} $\SyncLang(\Bca)$ of $\Bca$ is the set of all the elements of
$\Bud_\Ccr(\Oca)$ synchronously generated by~$\Bca$.
\medbreak

The \Def{synchronous derivation graph} of $\Bca$ is the oriented
multigraph $\SyncDerivGraph(\Bca)$ with the set of elements
synchronously derivable from $\Unit_a$, $a \in I$, as set of vertices.
In $\SyncDerivGraph(\Bca)$, for any $x_1, x_2 \in \SyncLang(\Bca)$ such
that $x_1 \SyncDeriv x_2$, there are $\ell$ edges from $x_1$ to $x_2$,
where $\ell$ is the number of tuples
\begin{math}
    \Par{r_1, \dots, r_{|x_1|}} \in \Rfr^{|x_1|}
\end{math}
such that
\begin{math}
    x_2 = x_1 \circ \Han{r_1, \dots, r_{|x_1|}}.
\end{math}%
\footnote{See examples of bud generating systems and synchronous
derivation graphs in Sections~\ref{subsubsec:example_bbtree}
and~\ref{subsubsec:example_bbaltree}.}%
.
\medbreak

%%%%%%%%%%%%%%%%%%%%%%%%%%%%%%%%%%%%%%%%%%%%%%%%%%%%%%%%%%%%%%%%%%%%%%%%
%%%%%%%%%%%%%%%%%%%%%%%%%%%%%%%%%%%%%%%%%%%%%%%%%%%%%%%%%%%%%%%%%%%%%%%%
\subsection{First properties}
We state now two properties about the languages and the synchronous
languages of bud generating systems.
\medbreak

\begin{Lemma} \label{lem:language_treelike_expressions}
    Let $\Bca := (\Oca, \Ccr, \Rfr, I, T)$ be a bud generating system.
    Then, for any $x \in \Bud_\Ccr(\Oca)$, $x$ belongs to $\Lang(\Bca)$
    if and only if $x$ admits an $\Rfr$-treelike expression with output
    color in $I$ and all input colors in~$T$.
\end{Lemma}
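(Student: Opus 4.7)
The plan is to translate the derivation-based definition of $\Lang(\Bca)$ into the language of $\Rfr$-left expressions, and then invoke Lemma~\ref{lem:left_expressions_linear_extensions} to pass from left expressions to treelike expressions.

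First I would observe that by the definition of the derivation relation $\Deriv$ (see Section~\ref{subsubsec:generation}), any sequence $\Unit_a = x_0 \Deriv x_1 \Deriv \cdots \Deriv x_\ell = x$ witnessing the derivability of $x$ from $\Unit_a$ is exactly the data of elements $r_1, \dots, r_\ell \in \Rfr$ and integers $i_1, \dots, i_\ell$ such that
\begin{equation*}
    x = (\dots ((\Unit_a \circ_1 r_1) \circ_{i_2} r_2) \circ_{i_3} \dots)
    \circ_{i_\ell} r_\ell.
\end{equation*}
Since partial composition preserves the output color and since $\Out(\Unit_a) = a$, the initial color $a$ coincides with $\Out(x)$. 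Hence, the statement ``$x$ is derivable from $\Unit_a$ for some $a \in I$'' is equivalent to the existence of an $\Rfr$-left expression of $x$ together with the condition $\Out(x) \in I$.

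For the forward direction, assuming $x \in \Lang(\Bca)$, I would extract such an $\Rfr$-left expression from a derivation witnessing $x \in \Lang(\Bca)$ and apply Lemma~\ref{lem:left_expressions_linear_extensions} to obtain a pair $(\Tfr, e)$ where $\Tfr \in \Free(\Rfr)$ satisfies $\Eval_{\Bud_\Ccr(\Oca)}(\Tfr) = x$. The output color of $\Tfr$ equals $\Out(x) \in I$, and its input colors, which form the word $\In(\Tfr) = \In(x)$, all lie in $T$ by the definition of $\Lang(\Bca)$.

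For the converse, starting from an $\Rfr$-treelike expression $\Tfr$ of $x$ with $\Out(\Tfr) \in I$ and $\In(\Tfr)$ a word over $T$, I would pick any linear extension of $\Tfr$ and again invoke Lemma~\ref{lem:left_expressions_linear_extensions} to obtain an $\Rfr$-left expression
\begin{equation*}
    x = (\dots ((\Unit_{\Out(x)} \circ_1 s_1) \circ_{i_1} s_2)
    \circ_{i_2} \dots) \circ_{i_{\ell - 1}} s_\ell
\end{equation*}
with $s_1, \dots, s_\ell \in \Rfr$. Reading this expression from left to right yields a derivation $\Unit_{\Out(x)} \Deriv \cdots \Deriv x$, so $x$ is derivable from $\Unit_a$ with $a := \Out(x) \in I$. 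Since the input colors of $x$ are those of $\Tfr$, they all lie in $T$, so $x \in \Lang(\Bca)$. The argument is essentially a bookkeeping translation through Lemma~\ref{lem:left_expressions_linear_extensions}; the only delicate point, easily handled once noticed, is to confirm that the color constraints on $\Tfr$ match exactly those imposed on derivations of $\Bca$, which they do because partial compositions preserve the output color and act on the input colors by substitution.
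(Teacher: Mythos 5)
Your proposal is correct and follows essentially the same route as the paper's proof: both directions pass from derivations to $\Rfr$-left expressions and then invoke Lemma~\ref{lem:left_expressions_linear_extensions} to translate between left expressions and treelike expressions, with the color bookkeeping handled as you describe. The only difference is that you spell out the derivation-to-left-expression correspondence more explicitly than the paper does, which is a harmless (indeed helpful) elaboration.
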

\begin{proof}
    Assume that $x$ belongs to $\Lang(\Bca)$. Then, by definition of the
    derivation relation $\Deriv$, $x$ admits an $\Rfr$-left expression.
    Lemma~\ref{lem:left_expressions_linear_extensions} implies in
    particular that $x$ admits an $\Rfr$-treelike expression $\Tfr$.
    Moreover, since $\Tfr$ is a treelike expression for $x$, $\Tfr$ has
    the same output and input colors as those of $x$. Hence, because $x$
    belongs to $\Lang(\Bca)$, its output color is in $I$ and all its
    input colors are in $T$. Thus, $\Tfr$ satisfies the required
    properties.
    \smallbreak

    Conversely, assume that $x$ is an element of $\Bud_\Ccr(\Oca)$
    admitting an $\Rfr$-treelike expression $\Tfr$ with output color in
    $I$ and all input colors in $T$.
    Lemma~\ref{lem:left_expressions_linear_extensions} implies in
    particular that $x$ admits an $\Rfr$-left expression. Hence, by
    definition of the derivation relation $\Deriv$, $x$ is derivable
    from $\Unit_{\Out(x)}$ and all its input colors are in $T$.
    Therefore, $x$ belongs to~$\Lang(\Bca)$.
\end{proof}
\medbreak

\begin{Lemma} \label{lem:synchronous_language_treelike_expressions}
    Let $\Bca := (\Oca, \Ccr, \Rfr, I, T)$ be a bud generating system.
    Then, for any $x \in \Bud_\Ccr(\Oca)$, $x$ belongs to
    $\SyncLang(\Bca)$ if and only if $x$ admits an $\Rfr$-treelike
    expression with output color in $I$ and all input colors in $T$ and
    which is a perfect tree.
\end{Lemma}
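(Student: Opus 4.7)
The plan is to mirror the structure of Lemma~\ref{lem:language_treelike_expressions}, exploiting the fact that the complete composition $y \circ [r_1, \dots, r_{|y|}]$ used in synchronous derivation corresponds precisely to simultaneously grafting corollas labeled by $r_1, \dots, r_{|y|}$ onto \emph{all} leaves of a treelike expression for $y$. This operation increases the height of a perfect tree by exactly one while preserving perfectness, so that synchronous derivation steps are in correspondence with the layers of a perfect $\Rfr$-treelike expression.

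For the ``only if'' direction, assume $x \in \SyncLang(\Bca)$, so that there is a chain $\Unit_a = y_0 \SyncDeriv y_1 \SyncDeriv \cdots \SyncDeriv y_\ell = x$ with $a \in I$ and all input colors of $x$ in $T$. I would prove by induction on $j \in [0, \ell]$ that $y_j$ admits a perfect $\Rfr$-treelike expression $\Tfr_j$ of height $j$ with output color $a$. The base case $j = 0$ is witnessed by $\Tfr_0 := \Unit_a$, which is a perfect tree of height $0$. For the inductive step, given rules $r_1, \dots, r_{|y_{j-1}|}$ with $y_j = y_{j-1} \circ [r_1, \dots, r_{|y_{j-1}|}]$, graft the corolla labeled by $r_i$ at the $i$-th leaf of $\Tfr_{j-1}$ for every $i$; since all leaves of $\Tfr_{j-1}$ are at depth $j-1$ and each grafted corolla has height $1$, the result $\Tfr_j$ is a perfect $\Rfr$-treelike expression of $y_j$ of height $j$. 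Taking $j = \ell$ yields the required tree $\Tfr_\ell$ for $x$, whose output color is $a \in I$ and whose input colors are those of $x$, hence in $T$.

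For the ``if'' direction, assume that $x$ admits a perfect $\Rfr$-treelike expression $\Tfr$ of height $h$ with output color $a \in I$ and all input colors in $T$. I would prove by induction on $h$ that $\Unit_a \SyncDeriv \cdots \SyncDeriv x$ in exactly $h$ synchronous steps. The base $h = 0$ is immediate from $\Tfr = \Unit_a$ and $x = \Unit_a$. For $h \geq 1$, let $e_1, \dots, e_k$ be the internal nodes of $\Tfr$ at depth $h-1$ read from left to right, labeled by rules $r_1, \dots, r_k \in \Rfr$ respectively; by perfectness of $\Tfr$, these are precisely the internal nodes all of whose children are leaves. Construct $\Sfr$ from $\Tfr$ by replacing each $e_i$ and its children by a single leaf of color $\Out(r_i)$. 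Then $\Sfr$ is a perfect $\Rfr$-treelike expression of height $h - 1$ for $y := \Eval_{\Bud_\Ccr(\Oca)}(\Sfr)$, still with output color $a$, and by construction the $i$-th input color of $y$ equals $\Out(r_i)$, so that $x = y \circ [r_1, \dots, r_k]$ and thus $y \SyncDeriv x$. The induction hypothesis applied to $(\Sfr, y)$ yields a chain $\Unit_a \SyncDeriv \cdots \SyncDeriv y$ of length $h-1$, which extends to a chain of length $h$ ending at $x$; combined with $a \in I$ and the hypothesis on the input colors of $x$, this gives $x \in \SyncLang(\Bca)$.

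The main obstacle is the ``if'' direction, where one must identify a decomposition of a perfect treelike expression that corresponds to a single synchronous derivation step. Stripping the deepest layer of internal nodes works cleanly precisely because perfectness forces those nodes to share the same depth and to have only leaf children, matching the simultaneous replacement at \emph{all} inputs required by $\SyncDeriv$; any non-perfect tree would force one to graft rules at some leaves but not others, which is not expressible by a single step of~$\SyncDeriv$.
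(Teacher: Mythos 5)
Your proof is correct, and it is more explicit than what the paper provides: the paper's own proof of this lemma is a single sentence pointing to Lemma~\ref{lem:language_treelike_expressions} and asserting that replacing partial compositions by the complete composition map forces the treelike expressions to be perfect. That lemma in turn routes through $\Rfr$-left expressions and Lemma~\ref{lem:left_expressions_linear_extensions}, a bijection designed for the one-step $\circ_i$ setting, so the paper leaves it to the reader to see why the analogue goes through when a whole layer is grafted at once. Your double induction (on the number of synchronous steps for the forward direction, on the height of a perfect $\Rfr$-treelike expression for the converse) makes that correspondence concrete and self-contained, and it correctly identifies the one nontrivial point: in a perfect tree of height $h$, the internal nodes at depth $h-1$ are exactly those whose children are all leaves, so peeling off that layer is a single well-defined $\SyncDeriv$-step and preserves perfectness. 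The only thing worth flagging is a minor cosmetic one: in the converse base case one should say that $\Tfr = \Unit_a$ is the degree-$0$ colored syntax tree consisting of a single leaf of output and input color $a$, which the paper does treat as an element of $\Free(\Rfr)$; you implicitly use this convention and it is consistent with the paper's.
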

\begin{proof}
    The proof of the statement of the lemma is very similar to the one
    of Lemma~\ref{lem:language_treelike_expressions}. The only
    difference lies on the fact that the definition of synchronous
    languages uses the complete composition map $\circ$ instead of
    partial composition maps $\circ_i$, intervening in the definition of
    languages. Hence, in this context, $\Rfr$-treelike expressions are
    perfect trees.
\end{proof}
\medbreak

\begin{Proposition} \label{prop:language_sub_operad_generated}
    Let $\Bca := (\Oca, \Ccr, \Rfr, I, T)$ be a bud generating system.
    Then, the language of $\Bca$ satisfies
    \begin{equation}
        \Lang(\Bca) =
        \Bra{x \in \Bud_\Ccr(\Oca)^\Rfr :
            \Out(x) \in I \mbox{ and } \In(x) \in T^+},
    \end{equation}
    where $\Bud_\Ccr(\Oca)^\Rfr$ is the colored suboperad of
    $\Bud_\Ccr(\Oca)$ generated by $\Rfr$.
\end{Proposition}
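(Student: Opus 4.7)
The plan is to combine Lemma~\ref{lem:language_treelike_expressions} with an auxiliary characterization of the generated suboperad $\Bud_\Ccr(\Oca)^\Rfr$ in terms of $\Rfr$-treelike expressions. More precisely, I would first establish the intermediate identity
\[
\Bud_\Ccr(\Oca)^\Rfr = \left\{x \in \Bud_\Ccr(\Oca) : x \text{ admits an } \Rfr\text{-treelike expression}\right\}.
\]
Once this is in hand, the proposition is immediate: by Lemma~\ref{lem:language_treelike_expressions}, an element $x$ of $\Bud_\Ccr(\Oca)$ belongs to $\Lang(\Bca)$ if and only if it admits an $\Rfr$-treelike expression and satisfies $\Out(x) \in I$ and $\In(x) \in T^+$, and the first of these three conditions is, by the identity above, precisely membership in $\Bud_\Ccr(\Oca)^\Rfr$.

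For the forward inclusion of the intermediate identity, I would observe that the set on the right contains every colored unit (as the evaluation of the degree-$0$ tree $\Unit_a$), contains every element of $\Rfr$ (via the corolla labeled by that element), and is stable under partial composition: grafting an $\Rfr$-treelike expression of $y$ onto the $i$-th leaf of an $\Rfr$-treelike expression of $x$ produces an $\Rfr$-treelike expression whose evaluation, by the colored operad morphism property of $\Eval_{\Bud_\Ccr(\Oca)}$, is $x \circ_i y$. This right-hand set is thus a $\Ccr$-colored suboperad of $\Bud_\Ccr(\Oca)$ containing $\Rfr$, and by the minimality of $\Bud_\Ccr(\Oca)^\Rfr$, the inclusion follows.

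For the reverse inclusion, I would proceed by induction on the degree of an $\Rfr$-treelike expression $\Tfr$ of $x$. In degree zero, $\Tfr$ is a colored unit and its evaluation already lies in $\Bud_\Ccr(\Oca)^\Rfr$, since any suboperad contains all colored units. In positive degree, writing $r \in \Rfr$ for the label of the root of $\Tfr$ and $\Tfr_{s_1}, \dots, \Tfr_{s_n}$ for the subtrees rooted at the root's children, each $\Tfr_{s_j}$ is an $\Rfr$-treelike expression of strictly smaller degree whose evaluation lies in $\Bud_\Ccr(\Oca)^\Rfr$ by the induction hypothesis; the recursive definition of $\Eval_{\Bud_\Ccr(\Oca)}$ then exhibits the evaluation of $\Tfr$ as a complete composition of $r$ with these evaluations, which lies in $\Bud_\Ccr(\Oca)^\Rfr$ by stability of a suboperad under partial composition.

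The logic of the argument is routine; the main obstacle I anticipate is purely bookkeeping, in particular being careful that the base and inductive cases cover treelike expressions possibly containing arity-one internal nodes and that the colored output and input conditions inherited from $\Tfr$ match those of $x = \Eval_{\Bud_\Ccr(\Oca)}(\Tfr)$, both of which follow directly from the definitions of $\Out$ and $\In$ for elements of $\Free(\Bud_\Ccr(\Oca))$.
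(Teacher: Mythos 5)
Your proof is correct and follows the same route as the paper: both rest on the identification of $\Bud_\Ccr(\Oca)^\Rfr$ with the set of evaluations of $\Ccr$-colored $\Rfr$-syntax trees, combined with Lemma~\ref{lem:language_treelike_expressions}. The paper simply asserts that identification as immediate from the definition of a generated suboperad, whereas you spell out both inclusions (closure argument one way, induction on the degree of a treelike expression the other), but the underlying argument is the same.
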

\begin{proof}
    By definition of suboperads generated by a set, as a $\Ccr$-colored
    graded collection, $\Bud_\Ccr(\Oca)^\Rfr$ consists in all the
    elements obtained by evaluating in $\Bud_\Ccr(\Oca)$ all
    $\Ccr$-colored $\Rfr$-syntax trees. Therefore, the statement of the
    proposition is a consequence of
    Lemma~\ref{lem:language_treelike_expressions}.
\end{proof}
\medbreak

\begin{Proposition} \label{prop:synchronous_language_subset_language}
    Let $\Bca := (\Oca, \Ccr, \Rfr, I, T)$ be a bud generating system.
    Then, the synchronous language of $\Bca$ is a subset of the
    language of~$\Bca$. Moreover, when $\Rfr$ contains all the colored
    units of $\Bud_\Ccr(\Oca)$, these two languages are equal.
\end{Proposition}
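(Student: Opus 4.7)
The plan is to reduce the claim to the two characterization lemmas (Lemma~\ref{lem:language_treelike_expressions} and Lemma~\ref{lem:synchronous_language_treelike_expressions}) that have just been established. Both results describe membership in the respective languages via the existence of an $\Rfr$-treelike expression satisfying certain conditions on the output color (which must lie in $I$) and on the input colors (which must lie in $T$). The only difference between the two characterizations is that the one for $\SyncLang(\Bca)$ additionally requires the $\Rfr$-treelike expression to be a perfect tree, while the one for $\Lang(\Bca)$ places no such restriction.

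First, I would take an arbitrary $x \in \SyncLang(\Bca)$ and apply Lemma~\ref{lem:synchronous_language_treelike_expressions} to obtain a perfect $\Rfr$-treelike expression $\Tfr$ of $x$ whose output color lies in $I$ and whose input colors all lie in $T$. Then I would observe that, since a perfect $\Rfr$-treelike expression is in particular an $\Rfr$-treelike expression (no other property is demanded), the very same tree $\Tfr$ satisfies the hypotheses of Lemma~\ref{lem:language_treelike_expressions}. Applying that lemma in the converse direction yields $x \in \Lang(\Bca)$, and the inclusion $\SyncLang(\Bca) \subseteq \Lang(\Bca)$ follows.

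No step is really an obstacle here, since the content of the proposition is essentially contained in the two preceding lemmas; the argument is a one-line observation that perfectness is an additional constraint on $\Rfr$-treelike expressions rather than a replacement of any of the constraints already imposed. If one prefers an argument bypassing the lemmas, a direct route is to note that, by the definition of the complete composition of Section~\ref{subsubsec:colored_operads}, any synchronous one-step derivation $x_1 \SyncDeriv x_2$ with $x_2 = x_1 \circ [r_1, \dots, r_{|x_1|}]$ decomposes as the sequence of partial compositions $(\dots ((x_1 \circ_{|x_1|} r_{|x_1|}) \circ_{|x_1|-1} r_{|x_1|-1}) \dots) \circ_1 r_1$, each of which is a one-step derivation in the sense of Section~\ref{subsubsec:generation}; iterating, any synchronous derivation from $\Unit_a$ unfolds into an ordinary derivation from $\Unit_a$, and the conditions on $I$ and $T$ are identical in both definitions.
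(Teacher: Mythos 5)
Your proposal is correct and follows essentially the same route as the paper's proof: both invoke Lemma~\ref{lem:language_treelike_expressions} and Lemma~\ref{lem:synchronous_language_treelike_expressions} and observe that perfectness of the $\Rfr$-treelike expression is simply an extra constraint, so every element of $\SyncLang(\Bca)$ automatically satisfies the characterization of $\Lang(\Bca)$.
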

\begin{proof}
    By Lemma~\ref{lem:language_treelike_expressions} the language of
    $\Bca$ is the set of the elements obtained by evaluating in
    $\Bud_\Ccr(\Oca)$ all $\Ccr$-colored $\Rfr$-syntax trees satisfying
    some conditions for their output and input colors.
    Lemma~\ref{lem:synchronous_language_treelike_expressions} says
    that the synchronous language of $\Bca$ is the set of the elements
    obtained by evaluating in $\Bud_\Ccr(\Oca)$ some $\Ccr$-colored
    $\Rfr$-syntax trees satisfying at least the previous conditions.
    Hence, this implies the statement of the proposition.
    \smallbreak

    The second part of the proposition follows from the fact that, if
    $x_1 \Deriv x_2$ for two elements $x_1$ and $x_2$ of
    $\Bud_\Ccr(\Oca)$, there is by definition $r \in \Rfr$ and an
    integer $i$ such that $x_2 = x_1 \circ_i r$. Then, one has
    \begin{equation}
        x_2 = x_1 \circ
        \Han{
            \Unit_{\In_1\Par{x_1}}, \dots, \Unit_{\In_{i - 1}\Par{x_1}},
            r,
            \Unit_{\In_{i + 1}\Par{x_1}}, \dots,
            \Unit_{\In_{|x_1|}\Par{x_1}}
        }.
    \end{equation}
    Since by hypothesis, all the colored units of
    $\Bud_\Ccr(\Oca)$ are in $\Rfr$, this implies $x_1 \SyncDeriv x_2$.
    Hence, as binary relations, $\Deriv$ and $\SyncDeriv$ are equal,
    establishing the second part of the statement of the proposition.
\end{proof}
\medbreak

%%%%%%%%%%%%%%%%%%%%%%%%%%%%%%%%%%%%%%%%%%%%%%%%%%%%%%%%%%%%%%%%%%%%%%%%
%%%%%%%%%%%%%%%%%%%%%%%%%%%%%%%%%%%%%%%%%%%%%%%%%%%%%%%%%%%%%%%%%%%%%%%%
\subsection{Links with other generating systems}%
\label{subsec:other_generating_systems}
Context-free grammars, regular tree grammars, and synchronous grammars
are already existing generating systems describing sets of words for the
first, and sets of trees for the last two. We show here that any of
these grammars can be emulated by bud generating systems. In the first
case, context-free grammars are emulated by bud generating systems
with the associative operad $\As$ as ground operad, and in the second
and third cases, regular tree grammars and synchronous grammars are
emulated by bud generating systems with free operads $\Free(C)$ as
ground operads, where $C$ are suitable set of generators.
\medbreak

%%%%%%%%%%%%%%%%%%%%%%%%%%%%%%%%%%%%%%%%%%%%%%%%%%%%%%%%%%%%%%%%%%%%%%%%
\subsubsection{Context-free grammars}
A \Def{context-free grammar}~\cite{Har78,HMU06} is a tuple
$\Gca := (V, T, P, s)$ where $V$ is a finite alphabet of
\Def{variables}, $T$ is a finite alphabet of \Def{terminal symbols},
$P$ is a finite subset of $V \times (V \sqcup T)^*$ called \Def{set of
productions}, and $s$ is a variable of $V$ called \Def{start symbol}. If
$x_1$ and $x_2$ are two words of $(V \sqcup T)^*$, $x_2$ is
\Def{derivable in one step} from $x_1$ if $x_1$ is of the form
$x_1 = u a v$ and $x_2$ is of the form $x_2 = u w v$ where
$u, v \in (V \sqcup T)^*$ and $(a, w)$ is a production of $P$. This
property is denoted by $x_1 \Deriv x_2$, so that $\Deriv$ is a binary
relation on $(V \sqcup T)^*$. The reflexive and transitive closure of
$\Deriv$ is the \Def{derivation relation}. A word $x \in T^*$ is
\Def{generated} by $\Gca$ if $x$ is derivable from the word~$s$. The
\Def{language} of $\Gca$ is the set of all words generated by $\Gca$.
We say that $\Gca$ is \Def{proper} if, for any $(a, w) \in P$, $w$ is
not the empty word.
\medbreak

If $\Gca := (V, T, P, s)$ is a proper context-free grammar, we denote
by $\CFG(\Gca)$ the bud generating system
\begin{equation}
    \CFG(\Gca) := (\As, V \sqcup T, \Rfr, \{s\}, T)
\end{equation}
wherein $\Rfr$ is the set of rules
\begin{equation}
    \Rfr := \Bra{(a, u) \in \Bud_{V \sqcup T}(\As) : (a, u) \in P}.
\end{equation}
\medbreak

\begin{Proposition} \label{prop:emulation_grammars}
    Let $\Gca$ be a proper context-free grammar. Then, the restriction
    of the map $\In$, sending any $(a, u) \in \Bud_{V \sqcup T}(\As)$
    to $u$, on the domain $\Lang(\CFG(\Gca))$ is a bijection between
    $\Lang(\CFG(\Gca))$ and the language of~$\Gca$.
\end{Proposition}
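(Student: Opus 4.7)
The plan is to exhibit a step-by-step correspondence between derivations in the grammar $\Gca$ starting from its start symbol $s$ and derivations in the bud generating system $\CFG(\Gca)$ starting from $\Unit_s$, and then to deduce the bijection from this correspondence. The central observation is that, because the ground operad of $\CFG(\Gca)$ is $\As$, every element of $\Bud_{V \sqcup T}(\As)$ is uniquely specified by a pair $(a, u)$ with $a \in V \sqcup T$ and $u \in (V \sqcup T)^+$, and the partial composition satisfies $(a, u) \circ_i (b, w) = (a, u \mapsfrom_i w)$ whenever $u_i = b$. Since the rules of $\CFG(\Gca)$ are exactly the pairs $(a, u)$ with $(a, u) \in P$, a single derivation step $x_1 \Deriv x_2 = x_1 \circ_i r$ in $\CFG(\Gca)$ corresponds, after applying $\In$, to a single grammar derivation step $\In(x_1) \Deriv \In(x_2)$, and the correspondence is reversible.

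Injectivity of the restricted map is immediate: by definition of $\Lang(\CFG(\Gca))$, every element has output color in $I = \{s\}$, hence is of the form $(s, u)$, and two such elements sharing the same second component coincide in $\Bud_{V \sqcup T}(\As)$. Therefore $\In$ separates points on $\Lang(\CFG(\Gca))$.

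For the image, I would argue in two directions. If $(s, u) \in \Lang(\CFG(\Gca))$, then there exists a chain $\Unit_s = (s, s) \Deriv y_1 \Deriv \cdots \Deriv y_{\ell - 1} \Deriv (s, u)$ in $\CFG(\Gca)$; projecting each intermediate element through $\In$ and invoking the step-by-step correspondence yields a derivation $s \Deriv \In(y_1) \Deriv \cdots \Deriv u$ in $\Gca$. Because all letters of $u$ lie in $T$ by definition of $\Lang(\CFG(\Gca))$, the word $u$ belongs to the language of $\Gca$. Conversely, any grammar derivation $s \Deriv v_1 \Deriv \cdots \Deriv u$ with $u \in T^+$ lifts, by applying the same productions at the same positions, to a chain $(s, s) \Deriv (s, v_1) \Deriv \cdots \Deriv (s, u)$ in $\CFG(\Gca)$, placing $(s, u)$ in $\Lang(\CFG(\Gca))$ with $\In((s, u)) = u$.

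The only genuine subtlety is why properness of $\Gca$ is needed: since colored graded collections, and in particular $\Bud_{V \sqcup T}(\As)$, contain no arity-zero elements, every production must have a nonempty right-hand side for the encoding of $P$ by $\Rfr$ to make sense. Once this is noted, the two directions above fit together to give the claimed bijection, and no further calculation is required.
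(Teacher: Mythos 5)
Your proof is correct and follows essentially the same route as the paper's: establish a step-for-step correspondence between derivations $\Unit_s \Deriv \cdots \Deriv (s, u)$ in $\CFG(\Gca)$ and derivations $s \Deriv \cdots \Deriv u$ in $\Gca$, then read off the bijection. Your version is slightly more detailed (explicit injectivity/surjectivity split, and the remark on why properness is needed for $\Rfr$ to be well-defined, since colored graded collections have no arity-zero elements), but the substance is the same.
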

\begin{proof}
    Let us denote by $V$ the set of variables, by $T$ the set of
    terminal symbols, by $P$ the set of productions, and by $s$ the
    start symbol of $\Gca$.
    \smallbreak

    Let $\Par{a, x} \in \Bud_{V \sqcup T}(\As)$, $\ell \geq 1$, and
    $y_1, \dots, y_{\ell - 1} \in (V \sqcup T)^*$. Then, by definition
    of $\CFG$, there are in $\CFG(\Gca)$ the derivations
    \begin{equation}
        \Unit_s \Deriv \Par{s, y_1} \Deriv \cdots
        \Deriv \Par{s, y_{\ell - 1}}
        \Deriv \Par{a, x}
    \end{equation}
    if and only if $a = s$ and there are in $\Gca$ the derivations
    \begin{equation}
        s \Deriv y_1 \Deriv \cdots \Deriv y_{\ell - 1} \Deriv x.
    \end{equation}
    Then, $\Par{a, x}$ belongs to $\Lang(\CFG(\Gca))$ if and only if
    $a = s$ and $x$ belongs to the language of $\Gca$. The fact that
    $\In\Par{\Par{s, x}} = x$ completes the proof.
\end{proof}
\medbreak

%%%%%%%%%%%%%%%%%%%%%%%%%%%%%%%%%%%%%%%%%%%%%%%%%%%%%%%%%%%%%%%%%%%%%%%%
\subsubsection{Regular tree grammars}
Let $V := V(0)$ be a finite graded alphabet of \Def{variables} and
$T := \bigsqcup_{n \geq 0} T(n)$ be a finite graded alphabet of
\Def{terminal symbols}. For any $n \geq 0$ and $a \in T(n)$, the arity
$|a|$ of $a$ is $n$. The tuple $(V, T)$ is called a \Def{signature}.
\medbreak

A \Def{$(V, T)$-tree} is an element of
$\Bud_{V \sqcup T(0)}(\Free(T \setminus T(0)))$, where
$T \setminus T(0)$ is seen as a monochrome graded collection. In other
words, a $(V, T)$-tree is a planar rooted tree $\Tfr$ such that, for any
$n \geq 1$, any internal node of $\Tfr$ having $n$ children is labeled
by an element of arity $n$ of $T$, and the output and all leaves of
$\Tfr$ are labeled on $V \sqcup T(0)$.
\medbreak

A \Def{regular tree grammar}~\cite{GS84,CDGJLLTT07} is a tuple
$\Gca := (V, T, P, s)$ where $(V, T)$ is a signature, $P$ is a set of
pairs of the form $(v, \Sfr)$ called \Def{productions} where $v \in V$
and $\Sfr$ is a $(V, T)$-tree, and $s$ is a variable of $V$ called
\Def{start symbol}. If $\Tfr_1$ and $\Tfr_2$ are two $(V, T)$-trees,
$\Tfr_2$ is \Def{derivable in one step} from $\Tfr_1$ if $\Tfr_1$ has a
leaf $y$ labeled by $a$ and the tree obtained by replacing $y$ by the
root of $\Sfr$ in $\Tfr_1$ is $\Tfr_2$, provided that $(a, \Sfr)$ is a
production of $P$. This property is denoted by $\Tfr_1 \Deriv \Tfr_2$,
so that $\Deriv$ is a binary relation on the set of all $(V, T)$-trees.
The reflexive and transitive closure of $\Deriv$ is the derivation
relation. A $(V, T)$-tree $\Tfr$ is \Def{generated} by $\Gca$ if $\Tfr$
is derivable from the tree $\Unit_s$ consisting in one leaf labeled by
$s$ and all leaves of $\Tfr$ are labeled on $T(0)$. The \Def{language}
of $\Gca$ is the set of all $(V, T)$-trees generated by~$\Gca$.
\medbreak

If $\Gca := (V, T, P, s)$ is a regular tree grammar, we denote by
$\RTG(\Gca)$ the bud generating system
\begin{equation}
    \RTG(\Gca) :=
    \Par{\Free(T \setminus T(0)), V \sqcup T(0), \Rfr, \{s\}, T(0)}
\end{equation}
wherein $\Rfr$ is the set of rules
\begin{equation}
    \Rfr :=
    \Bra{(a, \Tfr, u) \in \Bud_{V \sqcup T(0)}(\Free(T \setminus T(0)))
    : \Par{a, \Tfr_{a, u}} \in P},
\end{equation}
where, for any $\Tfr \in \Free(T \setminus T(0))$,
$a \in V \sqcup T(0)$, and $u \in (V \sqcup T(0))^{|\Tfr|}$,
$\Tfr_{a, u}$ is the $(V, T)$-tree obtained by labeling the output of
$\Tfr$ by $a$ and by labeling from left to right the leaves of $\Tfr$ by
the letters of~$u$.
\medbreak

\begin{Proposition} \label{prop:emulation_tree_grammars}
    Let $\Gca$ be a regular tree grammar. Then, the map
    $\phi : \Lang(\RTG(\Gca)) \to L$ defined by
    $\phi((a, \Tfr, u)) := \Tfr_{a, u}$ is a bijection between the
    language of $\RTG(\Gca)$ and the language $L$ of~$\Gca$.
\end{Proposition}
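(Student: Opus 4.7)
The plan is to follow the strategy of Proposition~\ref{prop:emulation_grammars}: establish a bijective correspondence between one-step derivations in $\RTG(\Gca)$ and one-step derivations in $\Gca$, mediated by the map $\phi$. Denote by $(V, T)$ the signature, by $P$ the set of productions, and by $s$ the start symbol of $\Gca$. I first note that $\phi$, viewed as a map into the set of all $(V, T)$-trees, is automatically injective: from $\Sfr = \Tfr_{a, u}$ one recovers uniquely the output label $a$, the underlying element $\Tfr \in \Free(T \setminus T(0))$ (by forgetting the labels of the output and of the leaves), and the word $u$ read left-to-right along the leaves.

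The key step I would prove is the following equivalence: given two elements $(a_1, \Tfr_1, u_1)$ and $(a_2, \Tfr_2, u_2)$ of $\Bud_{V \sqcup T(0)}(\Free(T \setminus T(0)))$, there is a one-step derivation $(a_1, \Tfr_1, u_1) \Deriv (a_2, \Tfr_2, u_2)$ in $\RTG(\Gca)$ if and only if $a_1 = a_2$ and there is a one-step derivation $\Tfr_{a_1, u_1} \Deriv \Tfr_{a_2, u_2}$ in $\Gca$. Unfolding the definitions, the former condition means that there exist a rule $(b, \Sfr, v) \in \Rfr$ and an index $i$ such that the $i$-th letter of $u_1$ is $b$ and $(a_2, \Tfr_2, u_2) = (a_1, \Tfr_1, u_1) \circ_i (b, \Sfr, v)$. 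By definition of $\Rfr$, the presence of this rule is equivalent to $(b, \Sfr_{b, v}) \in P$; and by the definition of the partial composition in the bud operad, the resulting tree $\Tfr_{a_2, u_2}$ is exactly obtained from $\Tfr_{a_1, u_1}$ by grafting the root of $\Sfr_{b, v}$ onto its $i$-th leaf (which is labeled $b$). This is precisely the definition of a one-step derivation in~$\Gca$, and the converse direction reads the same chain backwards.

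Iterating this equivalence gives that $(a, \Tfr, u)$ is derivable from $\Unit_s$ in $\RTG(\Gca)$ if and only if $a = s$ and $\Tfr_{s, u}$ is derivable from $\Unit_s$ in $\Gca$. The terminal conditions also match: the set of terminal colors in $\RTG(\Gca)$ is $T(0)$, so $\In((a, \Tfr, u)) \in T(0)^+$ if and only if all leaves of $\Tfr_{a, u}$ are labeled on $T(0)$, which is precisely the terminal condition of~$\Gca$. Hence $\phi$ restricts to a well-defined surjection from $\Lang(\RTG(\Gca))$ onto $L$, and combined with the injectivity already observed it is a bijection.

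The main technical burden I expect is the careful identification of the partial composition $\circ_i$ in $\Bud_{V \sqcup T(0)}(\Free(T \setminus T(0)))$ with the leaf-substitution mechanism of the regular tree grammar, in particular tracking how the output color of the triple is preserved, how the word of input colors is updated by splicing $v$ in place of its $i$-th letter, and how the underlying tree of $\Free(T \setminus T(0))$ is grown by grafting $\Sfr$ at its $i$-th leaf.
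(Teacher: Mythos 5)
Your proof is correct and takes essentially the same route as the paper's: establish that the derivation relation $\Deriv$ in $\RTG(\Gca)$ corresponds step-by-step, via $(a, \Tfr, u) \leftrightarrow \Tfr_{a,u}$, to the derivation relation in $\Gca$, and then match up the initial-symbol and terminal-leaf conditions. The paper states the full-length derivation correspondence directly whereas you isolate the one-step case and iterate, but this is only a matter of exposition; the substance of the argument, including the observation that $\phi$ is essentially the bijection between $\Bud_{V \sqcup T(0)}(\Free(T \setminus T(0)))$ and the set of $(V,T)$-trees, is the same.
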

\begin{proof}
    Let us denote by $(V, T)$ the underlying signature and by $s$ the
    start symbol of~$\Gca$.
    \smallbreak

    Let
    $(a, \Tfr, u) \in \Bud_{V \sqcup T(0)}(\Free(T \setminus T(0)))$,
    $\ell \geq 1$, and
    $\Sfr^{(1)}, \dots, \Sfr^{(\ell - 1)} \in \Free(T \setminus T(0))$,
    and $v_1, \dots, v_{\ell - 1} \in (V \sqcup T(0))^+$. Then, by
    definition of $\RTG$, there are in $\RTG(\Gca)$ the derivations
    \begin{equation}
        \Unit_s \Deriv \Par{s, \Sfr^{(1)}, v_1}
        \Deriv \cdots \Deriv
        \Par{s, \Sfr^{(\ell - 1)}, v_{\ell - 1}}
        \Deriv (a, \Tfr, u)
    \end{equation}
    if and only if $a = s$ and there are in $\Gca$ the derivations
    \begin{equation}
        \Unit_s \Deriv {\Sfr^{(1)}}_{s, v_1} \Deriv \cdots \Deriv
        {\Sfr^{(\ell - 1)}}_{s, v_{\ell - 1}}
        \Deriv
        \Tfr_{a, u}.
    \end{equation}
    Then, $(a, \Tfr, u)$ belongs to $\Lang(\RTG(\Gca))$ if and only if
    $a = s$ and $\Tfr_{a, u}$ belongs to the language of $\Gca$. The
    fact that $\phi((a, \Tfr, u)) = \Tfr_{a, u}$ completes the proof.
\end{proof}
\medbreak

%%%%%%%%%%%%%%%%%%%%%%%%%%%%%%%%%%%%%%%%%%%%%%%%%%%%%%%%%%%%%%%%%%%%%%%%
\subsubsection{Synchronous grammars}
\label{subsubsec:synchronous_grammars}
In this section, we shall denote by $\Tree$ the monochrome operad
defined as the free operad generated by one operation $\Att_n$ of arity
$n$ for all $n \geq 1$. The elements of this operad are planar rooted
trees where internal nodes have an arbitrary arity. Observe by the way
that $\Tree$ is not locally finite.
\medbreak

Let $B$ be a finite alphabet. A \Def{$B$-bud tree} is an element of
$\Bud_B(\Tree)$. In other words, a $B$-bud tree is a planar rooted tree
$\Tfr$ such that the output and all leaves of $\Tfr$ are labeled on $B$.
The leaves of a $B$-bud tree are indexed from $1$ from left to right.
\medbreak

A \Def{synchronous grammar}~\cite{Gir12} is a tuple $\Gca := (B, a, R)$
where $B$ is a finite alphabet of \Def{bud labels}, $a$ is an element of
$B$ called \Def{axiom}, and $R$ is a finite set of pairs of the form
$(b, \Sfr)$ called \Def{substitution rules} where $b \in B$ and $\Sfr$
is a $B$-bud tree. If $\Tfr_1$ and $\Tfr_2$ are two $B$-bud trees such
that $\Tfr_1$ is of arity $n$, $\Tfr_2$ is \Def{derivable in one step}
from $\Tfr_1$ if there are substitution rules
$\Par{b_1, \Sfr_1}, \dots, \Par{b_n, \Sfr_n}$ of $R$ such that for all
$i \in [n]$, the $i$-th leaf of $\Tfr_1$ is labeled by $b_i$ and
$\Tfr_2$ is obtained by replacing the $i$-th leaf of $\Tfr_1$ by
$\Sfr_i$ for all $i \in [n]$. This property is denoted by
$\Tfr_1 \SyncDeriv \Tfr_2$, so that $\SyncDeriv$ is a binary relation on
the set of all $B$-bud trees. The reflexive and transitive closure of
$\SyncDeriv$ is the derivation relation. A $B$-bud tree $\Tfr$ is
\Def{generated} by $\Gca$ if $\Tfr$ is derivable from the tree $\Unit_a$
consisting is one leaf labeled by $a$. The \Def{language} of $\Gca$ is
the set of all $B$-bud trees generated by~$\Gca$.
\medbreak

If $\Gca := (B, a, R)$ is a synchronous grammar, we denote by
$\SG(\Gca)$ the bud generating system
\begin{equation}
    \SG(\Gca) := (\Tree, B, \Rfr, \{a\}, B)
\end{equation}
wherein $\Rfr$ is the set of rules
\begin{equation}
    \Rfr :=
    \Bra{(b, \Tfr, u) \in \Bud_B(\Tree) : \Par{b, \Tfr_{b, u}} \in R},
\end{equation}
where, for any $\Tfr \in \Bud_B(\Tree)$, $b \in B$, and $u \in B^+$,
$\Tfr_{b, u}$ is the $B$-bud tree obtained by labeling the output of
$\Tfr$ by $b$ and by labeling from left to right the leaves of $\Tfr$ by
the letters of~$u$.
\medbreak

\begin{Proposition} \label{prop:emulation_synchronous_grammars}
    Let $\Gca$ be a synchronous grammar. Then, the map
    $\phi : \SyncLang(\SG(\Gca)) \to L$ defined by
    $\phi((b, \Tfr, u)) := \Tfr_{b, u}$ is a bijection between the
    synchronous language of $\SG(\Gca)$ and the language $L$ of~$\Gca$.
\end{Proposition}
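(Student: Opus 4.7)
The plan is to mirror the proof of Proposition~\ref{prop:emulation_tree_grammars} while replacing the one-step derivation relation $\Deriv$ and the partial composition maps $\circ_i$ by their synchronous counterparts $\SyncDeriv$ and the complete composition map $\circ$. The central observation is that by definition (Section~\ref{subsubsec:synchronous_generation}), a one-step synchronous derivation in $\SG(\Gca)$ is obtained by applying a complete composition with one rule of $\Rfr$ at each leaf simultaneously, and that this matches exactly the way a one-step derivation in the synchronous grammar $\Gca$ substitutes a bud tree at every leaf of the current tree at the same time.

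First, I would unpack the correspondence between one-step synchronous derivations. Let $(b, \Tfr, u)$ be a $B$-bud tree in $\Bud_B(\Tree)$ of arity $n$, and let $(b_1, \Sfr_1, v_1), \dots, (b_n, \Sfr_n, v_n)$ be rules of $\Rfr$. By the definition of $\Bud_B$ (Section~\ref{subsubsec:operad_to_colored_operads}), the complete composition $(b, \Tfr, u) \circ \left[(b_1, \Sfr_1, v_1), \dots, (b_n, \Sfr_n, v_n)\right]$ is defined if and only if $u_i = b_i$ for every $i \in [n]$, and when defined equals
\begin{equation*}
    \left(b,\ \Tfr \circ [\Sfr_1, \dots, \Sfr_n],\ v_1 \cdots v_n\right).
\end{equation*}
Its image under $\phi$ is the $B$-bud tree $(\Tfr \circ [\Sfr_1, \dots, \Sfr_n])_{b,\, v_1 \cdots v_n}$, which by the very definition of $\Rfr$ in terms of $R$ coincides with the tree obtained from $\Tfr_{b, u}$ by replacing each of its $n$ leaves, whose labels are $u_1 = b_1, \dots, u_n = b_n$, by the corresponding $(\Sfr_i)_{b_i, v_i}$ appearing in $R$. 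Hence the synchronous step $(b, \Tfr, u) \SyncDeriv (b, \Tfr, u) \circ \left[(b_1, \Sfr_1, v_1), \dots, (b_n, \Sfr_n, v_n)\right]$ in $\SG(\Gca)$ corresponds one-to-one with the step $\Tfr_{b, u} \SyncDeriv (\Tfr \circ [\Sfr_1, \dots, \Sfr_n])_{b,\, v_1 \cdots v_n}$ in $\Gca$.

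Iterating this correspondence by induction on the length $\ell$ of a synchronous derivation starting from $\Unit_a$, I would deduce that there are in $\SG(\Gca)$ derivations
\begin{equation*}
    \Unit_a \SyncDeriv (a, \Sfr^{(1)}, w_1) \SyncDeriv \cdots
    \SyncDeriv (a, \Sfr^{(\ell - 1)}, w_{\ell - 1})
    \SyncDeriv (b, \Tfr, u)
\end{equation*}
if and only if $b = a$ and there are in $\Gca$ derivations
\begin{equation*}
    \Unit_a \SyncDeriv \Sfr^{(1)}_{a, w_1} \SyncDeriv \cdots
    \SyncDeriv \Sfr^{(\ell - 1)}_{a, w_{\ell - 1}}
    \SyncDeriv \Tfr_{b, u}.
\end{equation*}
Therefore $(b, \Tfr, u) \in \SyncLang(\SG(\Gca))$ iff $b = a$ and $\Tfr_{b, u} \in L$. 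The map $\phi$ is injective, since the triple $(b, \Tfr, u)$ is recovered from $\Tfr_{b, u}$ by reading its root label, its underlying unlabeled tree, and its leaf labels from left to right, and is surjective onto $L$ by the equivalence just established.

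The only real obstacle is the bookkeeping needed to verify that the complete composition $\circ$ in $\Bud_B(\Tree)$ faithfully reproduces simultaneous leaf substitution in $\Gca$, and this is a direct consequence of how $\Bud_B$ is built from $\Tree$ in Section~\ref{subsubsec:operad_to_colored_operads}. No deeper difficulty is expected; the proof is essentially a synchronous adaptation of the argument given for Proposition~\ref{prop:emulation_tree_grammars}.
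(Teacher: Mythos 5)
Your proof is correct and follows essentially the same route as the paper's: establish the equivalence between synchronous derivation sequences in $\SG(\Gca)$ starting from $\Unit_a$ and synchronous derivation sequences in $\Gca$ starting from $\Unit_a$, then read off the bijection from $\phi((b, \Tfr, u)) = \Tfr_{b, u}$. You fill in more detail on the one-step correspondence (unpacking the complete composition in $\Bud_B(\Tree)$ against simultaneous leaf substitution in $\Gca$) than the paper, which simply invokes ``by definition of $\SG$,'' but the argument is the same.
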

\begin{proof}
    Let us denote by $B$ the set of bud labels and by $a$ the axiom
    of~$\Gca$.
    \smallbreak

    Let $(b, \Tfr, u) \in \Bud_B(\Tree)$, $\ell \geq 1$, and
    $\Sfr^{(1)}, \dots, \Sfr^{(\ell - 1)} \in \Tree$, and
    $v_1, \dots, v_{\ell - 1} \in B^+$. Then, by definition of $\SG$,
    there are in $\SG(\Gca)$ the synchronous derivations
    \begin{equation}
        \Unit_a \SyncDeriv \Par{a, \Sfr^{(1)}, v_1}
        \SyncDeriv \cdots \SyncDeriv
        \Par{a, \Sfr^{(\ell - 1)}, v_{\ell - 1}}
        \SyncDeriv (b, \Tfr, u)
    \end{equation}
    if and only if $b = a$ and there are in $\Gca$ the derivations
    \begin{equation}
        \Unit_a \SyncDeriv {\Sfr^{(1)}}_{a, v_1}
        \SyncDeriv \cdots \SyncDeriv
        {\Sfr^{(\ell - 1)}}_{a, v_{\ell - 1}}
        \SyncDeriv
        \Tfr_{b, u}.
    \end{equation}
    Then, $(a, \Tfr, u)$ belongs to $\SyncLang(\SG(\Gca))$ if and only
    if $b = a$ and $\Tfr_{b, u}$ belongs to the language of $\Gca$. The
    fact that $\phi((b, \Tfr, u)) = \Tfr_{b, u}$ completes the proof.
\end{proof}
\medbreak

%%%%%%%%%%%%%%%%%%%%%%%%%%%%%%%%%%%%%%%%%%%%%%%%%%%%%%%%%%%%%%%%%%%%%%%%
%%%%%%%%%%%%%%%%%%%%%%%%%%%%%%%%%%%%%%%%%%%%%%%%%%%%%%%%%%%%%%%%%%%%%%%%
%%%%%%%%%%%%%%%%%%%%%%%%%%%%%%%%%%%%%%%%%%%%%%%%%%%%%%%%%%%%%%%%%%%%%%%%
\section{Series on colored operads and bud generating systems}
\label{sec:series_colored_operads}
A very normal combinatorial question consists, given a bud generating
system $\Bca$, in computing the generating series
$\GenS_{\Lang(\Bca)}(t)$ and $\GenS_{\SyncLang(\Bca)}(t)$, respectively
counting the elements of the language and of the synchronous language of
$\Bca$ with respect to the arity of the elements. To achieve this
objective, we develop a new generalization of formal power series,
namely series on colored operads, and define several operation on these.
Any bud generating system $\Bca$ leads to the definition of three
series on colored operads: its hook generating series $\Hook(\Bca)$, its
syntactic generating series $\Synt(\Bca)$, and its synchronous
generating series $\Sync(\Bca)$. The hook generating series allows us to
define analogues of the hook-length statistic of binary trees for
objects belonging to the language of $\Bca$, possibly different than
trees. The syntactic (resp. synchronous) generating series leads to
obtain functional equations and recurrence formulas to compute the
coefficients of $\GenS_{\Lang(\Bca)}(t)$
and~$\GenS_{\SyncLang(\Bca)}(t)$.
\medbreak

From now, $\K$ is a field of characteristic zero. Moreover, in all this
section, $\Cca$ is a $\Ccr$-colored operad. Recall that the set $\Ccr$
of colors is always considered on the form $\Ccr = \{c_1, \dots, c_k\}$.
Besides, $\Bca := (\Oca, \Ccr, \Rfr, I, T)$ is a bud generating system.
\medbreak

%%%%%%%%%%%%%%%%%%%%%%%%%%%%%%%%%%%%%%%%%%%%%%%%%%%%%%%%%%%%%%%%%%%%%%%%
%%%%%%%%%%%%%%%%%%%%%%%%%%%%%%%%%%%%%%%%%%%%%%%%%%%%%%%%%%%%%%%%%%%%%%%%
\subsection{Series on colored operads}
We introduce here the main definitions about series on colored operads.
We also introduce specific notions about series on bud operads.
\medbreak

%%%%%%%%%%%%%%%%%%%%%%%%%%%%%%%%%%%%%%%%%%%%%%%%%%%%%%%%%%%%%%%%%%%%%%%%
\subsubsection{Series on colored operads}
The linear span of the underlying set of $\Cca$ is denoted by
$\K \Angle{\Cca}$. Let $\K \AAngle{\Cca}$ be the dual space of
$\K \Angle{\Cca}$. By definition, the elements of $\K \AAngle{\Cca}$ are
maps
\begin{math}
    \Fbf : \Cca \to \K,
\end{math}
called \Def{$\Cca$-formal power series} (or \Def{$\Cca$-series} for
short). Let $\Fbf \in \K \AAngle{\Cca}$. The coefficient of any
$x \in \Cca$ in $\Fbf$ is denoted by $\Angle{x, \Fbf}$. The
\Def{support} of $\Fbf$ is the set
\begin{math}
    \Supp(\Fbf) := \Bra{x \in \Cca : \Angle{x, \Fbf} \ne 0}.
\end{math}
For any $\Ccr$-colored graded subcollection $S$ of $\Cca$, the
\Def{characteristic series} of $S$ is the $\Cca$-series $\Charac{S}$
defined for any $x \in \Cca$ by $\Angle{x, \Charac{S}} := 1$ if
$x \in S$, and by $\Angle{x, \Charac{S}} := 0$ otherwise. The
\Def{series of colored units} of $\K \AAngle{\Cca}$ is the series $\Ubf$
defined as the characteristic of $\Bra{\Unit_a : a \in \Ccr}$. This
series will play a special role in the sequel. Since $\Ccr$ is finite,
$\Ubf$ is a polynomial. By exploiting the vector space structure of
$\K \AAngle{\Cca}$, any $\Cca$-series $\Fbf$ expresses as
\begin{equation} \label{equ:definition_series_as_sums}
    \Fbf = \sum_{x \in \Cca} \Angle{x, \Fbf} x.
\end{equation}
This notation using potentially infinite sums of elements of $\Cca$
accompanied with coefficients of $\K$ is common in the context of formal
power series. In the sequel, we shall define and handle some
$\Cca$-series using the notation~\eqref{equ:definition_series_as_sums}.
\medbreak

Let us provide here some bibliographical information about various types
of series. Since the introduction of formal power series, a lot of
generalizations were proposed in order to extend the range of problems
they can help to solve. The most obvious ones are multivariate series
allowing us to count objects not only with respect to their sizes but
also with respect to various other statistics. Another one consists in
considering noncommutative series on words~\cite{Eil74,SS78,BR10}, or
even, pushing the generalization one step further, on elements of a
monoid~\cite{Sak09}. Besides, as another generalization, series on trees
have been considered~\cite{BR82,Boz01}. Series on (noncolored) operads
increase the list of these generalizations. Chapoton is the first to
have considered such series on operads~\cite{Cha02,Cha08,Cha09}. Several
authors have contributed to this field by considering slight variations
in the definitions of these series. Among these, one can cite van der
Laan~\cite{VDL04}, Frabetti~\cite{Fra08}, and Loday and
Nikolov~\cite{LN13}. Our notion of series on colored operads developed
here is a natural generalization of series on operads.
\medbreak

Observe that $\Cca$-series are defined here on fields $\K$ instead on
the much more general structures of semirings, as it is the case for
series on monoids. We choose to tolerate this loss of generality
because this considerably simplifies the theory. Furthermore, we shall
use in the sequel $\Cca$-series as devices for combinatorial
enumeration, so that it is sufficient to pick $\K$ as the field
$\Q(q_0, q_1, q_2, \dots)$ of rational functions in an infinite number
of commuting parameters with rational coefficients. The parameters
$q_0, q_1, q_2, \dots$ intervene in the enumeration of colored graded
subcollections of $\Cca$ with respect to several statistics%
\footnote{See examples of series on the bud operad of the operad $\Motz$
of Motzkin paths in
Section~\ref{subsubsec:example_series_motzkin_paths}.}%
.
\medbreak

%%%%%%%%%%%%%%%%%%%%%%%%%%%%%%%%%%%%%%%%%%%%%%%%%%%%%%%%%%%%%%%%%%%%%%%
\subsubsection{Colored operad morphisms and series}
If $\Cca_1$ and $\Cca_2$ are two $\Ccr$-colored operads and
$\phi : \Cca_1 \to \Cca_2$ is a morphism of colored operads,
$\bar{\phi}$ is the map
\begin{equation}
    \bar{\phi} : \K \AAngle{\Cca_1} \to \K \AAngle{\Cca_2}
\end{equation}
defined, for any $\Fbf \in \K \AAngle{\Cca_1}$ and $y \in \Cca_2$, by
\begin{equation} \label{equ:series_morphism}
    \Angle{y, \bar{\phi}(\Fbf)} :=
    \sum_{\substack{x \in \Cca_1 \\ \phi(x) = y}} \Angle{x, \Fbf}.
\end{equation}
\medbreak

Observe first that $\bar{\phi}$ is a linear map. Moreover, notice
that~\eqref{equ:series_morphism} could be undefined for arbitrary
colored operads $\Cca_1$ and $\Cca_2$, and an arbitrary morphism of
colored operads $\phi$. However, when all fibers of $\phi$ are finite,
for any $y \in \Cca_2$, the right member
of~\eqref{equ:series_morphism} is well-defined since the sum has a
finite number of terms.
\medbreak

%%%%%%%%%%%%%%%%%%%%%%%%%%%%%%%%%%%%%%%%%%%%%%%%%%%%%%%%%%%%%%%%%%%%%%%%
\subsubsection{Pruned series and faithfulness}
\label{subsubsec:pruned_series}
Let $\Fbf$ be a $\Bud_\Ccr(\Oca)$-series. By a slight abuse of notation,
we denote by
\begin{equation}
    \Prune : \K \AAngle{\Bud_\Ccr(\Oca)} \to \K \AAngle{\Oca}
\end{equation}
the map $\bar{\Prune}$. Since $\Ccr$ is finite, the series
$\Prune(\Fbf)$ is well-defined and is called \Def{pruned series} of
$\Fbf$. Intuitively, the series $\Prune(\Fbf)$ can be seen as a version
of $\Fbf$ wherein the colors of the elements of its support are
forgotten%
\footnote{See an example of pruned series in
Section~\ref{subsubsec:example_series_motzkin_paths}.}%
. Besides, $\Fbf$ is said \Def{faithful} if all coefficients of
$\Prune(\Fbf)$ are equal to $0$ or to~$1$.
\medbreak

We say that $\Bca$ is \Def{faithful} (resp.
\Def{synchronously faithful}) if the characteristic series of
$\Lang(\Bca)$ (resp. $\SyncLang(\Bca)$) is faithful. Observe that all
monochrome bud generating systems are faithful (resp. synchronously
faithful). One of the reasons for requiring faithfulness (resp.
synchronous faithfulness) for bud generating systems appears when $\Bca$
is utilized for specifying objects of $\Oca$ by pruning the objects of
$\Lang(\Bca)$ (resp. $\SyncLang(\Bca)$). In this case, if $\Bca$ is not
faithful (resp. synchronously faithful), there would be several distinct
elements $(a, x, u)$ of $\Bud_\Ccr(\Oca)$ generated (resp. synchronously
generated) by $\Bca$ whose image by $\Prune$ is $x$. This could make
very hard the enumeration of the pruned version of the language (resp.
synchronous language) of~$\Bca$.
\medbreak

%%%%%%%%%%%%%%%%%%%%%%%%%%%%%%%%%%%%%%%%%%%%%%%%%%%%%%%%%%%%%%%%%%%%%%%%
\subsubsection{Series of colors} \label{subsubsec:series_of_colors}
Let
\begin{equation}
    \Colors : \Cca \to \Bud_\Ccr(\As)
\end{equation}
be the morphism of colored operads defined for any $x \in \Cca$ by
\begin{equation}
    \Colors(x) := \Par{\Out(x), \In(x)}.
\end{equation}
By a slight abuse of notation, we denote by
\begin{equation}
    \Colors : \K \AAngle{\Cca} \to \K \AAngle{\Bud_\Ccr(\As)}
\end{equation}
the map $\bar{\Colors}$. If $\Fbf$ is a $\Cca$-series, we call
$\Colors(\Fbf)$ the \Def{series of colors} of $\Fbf$. Intuitively, the
series $\Colors(\Fbf)$ can be seen as a version of $\Fbf$ wherein only
the colors of the elements of its support are taken into account%
\footnote{See examples of series of colors in
Section~\ref{subsubsec:example_series_trees}.}%
.
\medbreak

%%%%%%%%%%%%%%%%%%%%%%%%%%%%%%%%%%%%%%%%%%%%%%%%%%%%%%%%%%%%%%%%%%%%%%%%
\subsubsection{Series of color types}%
\label{subsubsec:series_of_color_types}
The \Def{$\Ccr$-type} of a word $u \in \Ccr^+$ is the word $\Type(u)$ of
$\N^k$ defined by
\begin{equation}
    \Type(u) := |u|_{c_1} \dots |u|_{c_k},
\end{equation}
where for any $a \in \Ccr$, $|u|_a$ is the number of occurrences of $a$
in~$u$. By extension, we shall call \Def{$\Ccr$-type} any word of $\N^k$
with at least a nonzero letter and we denote by $\Types_\Ccr$ the set of
all $\Ccr$-types. The \Def{degree} $\deg(\alpha)$ of
$\alpha \in \Types_\Ccr$ is the sum of the letters of $\alpha$. We
denote by $\Ccr^\alpha$ the word
\begin{math}
    c_1^{\alpha_1} \dots c_k^{\alpha_k}.
\end{math}
\medbreak

Assume that $\Zbb_\Ccr :=\Bra{\VarZ_{c_1}, \dots, \VarZ_{c_k}}$ is any
alphabet of commutative letters. For any type $\alpha$, we denote by
$\Zbb_\Ccr^\alpha$ the monomial
\begin{math}
    \VarZ_{c_1}^{\alpha_1} \dots \VarZ_{c_k}^{\alpha_k}
\end{math}
of $\K \Han{\Zbb_\Ccr}$. Moreover, for any two types $\alpha$ and
$\beta$, the \Def{sum} $\alpha \dot{+} \beta$ of $\alpha$ and $\beta$ is
the type satisfying $(\alpha \dot{+} \beta)_i := \alpha_i + \beta_i$ for
all $i \in [k]$. Observe that with this notation,
\begin{math}
    \Zbb_\Ccr^\alpha \Zbb_\Ccr^\beta = \Zbb_\Ccr^{\alpha \dot{+} \beta}.
\end{math}
\medbreak

Consider now the map
\begin{equation}
    \ColorTypes :
    \K \AAngle{\Cca} \to
    \K \Han{\Han{\Xbb_\Ccr \sqcup \Ybb_\Ccr}},
\end{equation}
defined for all $\alpha, \beta \in \Tca_\Ccr$ by
\begin{equation} \label{equ:definition_color_types}
    \Angle{\Xbb_\Ccr^\alpha \Ybb_\Ccr^\beta, \ColorTypes(\Fbf)}
    :=
    \sum_{\substack{
        (a, u) \in \Bud_\Ccr(\As) \\
        \Type(a) = \alpha \\
        \Type(u) =  \beta
    }}
    \Angle{(a, u), \Colors(\Fbf)}.
\end{equation}
By the definition of the map $\Colors$,
\begin{equation}
    \ColorTypes(\Fbf) =
    \sum_{x \in \Cca}
    \Angle{x, \Fbf}\enspace
    \Xbb_\Ccr^{\Type(\Out(x))}\enspace
    \Ybb_\Ccr^{\Type(\In(x))}.
\end{equation}
\medbreak

Observe that for all $\alpha, \beta \in \Types_\Ccr$ such that
$\deg(\alpha) \ne 1$, the coefficients of
$\Xbb_\Ccr^\alpha \Ybb_\Ccr^\beta$ in $\ColorTypes(\Fbf)$ are zero. In
intuitive terms, the series $\ColorTypes(\Fbf)$, called
\Def{series of color types} of $\Fbf$, can be seen as a version of
$\Colors(\Fbf)$ wherein only the output colors and the types of the
input colors of the elements of its support are taken into account, the
variables of $\Xbb_\Ccr$ encoding output colors and the variables of
$\Ybb_\Ccr$ encoding input colors%
\footnote{See an example of a series of color types in
Section~\ref{subsubsec:example_series_trees}.}%
. In the sequel, we shall be concerned by
the computation of the coefficients of $\ColorTypes(\Fbf)$ for some
$\Cca$-series~$\Fbf$.
\medbreak

%%%%%%%%%%%%%%%%%%%%%%%%%%%%%%%%%%%%%%%%%%%%%%%%%%%%%%%%%%%%%%%%%%%%%%%
\subsubsection{Elementary series of bud generating systems}
\label{subsubsec:elementary_series_bud_generating_systems}
We assume here that $\Oca$ is a locally finite monochrome operad. We
shall denote by $\Rbf$ the characteristic series of $\Rfr$, by $\Ibf$
the characteristic series of $\Bra{\Unit_a : a \in I}$, and by $\Tbf$
the characteristic series of $\Bra{\Unit_a : a \in T}$. For all colors
$a \in \Ccr$ and types $\alpha \in \Types_\Ccr$, let
\begin{equation}
    \MultOutIn_{a, \alpha} :=
    \# \Bra{r \in \Rfr : (\Out(r), \Type(\In(r))) = (a, \alpha)}.
\end{equation}
For any $a \in \Ccr$, let $\Gbf_a\Par{\VarY_{c_1}, \dots, \VarY_{c_k}}$
be the series of $\K\Han{\Han{\Ybb_\Ccr}}$ defined by
\begin{equation}
    \Gbf_a\Par{\VarY_{c_1}, \dots, \VarY_{c_k}}
    :=
    \sum_{\gamma \in \Types_\Ccr}
    \MultOutIn_{a, \gamma} \enspace \Ybb_\Ccr^\gamma
    =
    \sum_{\substack{
        r \in \Rfr \\
        \Out(r) = a
    }}
    \Ybb_\Ccr^{\Type(\In(r))}.
\end{equation}
Since $\Rfr$ is finite, this series is a polynomial%
\footnote{See examples of these definitions in
Sections~\ref{subsubsec:colt_synt_bubtree},
\ref{subsubsec:colt_sync_bbtree},
and~\ref{subsubsec:colt_sync_bbaltree}.}%
.
\medbreak

In the sequel, we shall use maps $\phi : \Ccr \times \Types_\Ccr \to \N$
such that $\phi(a, \gamma) \ne 0$ for a finite number of pairs
$(a, \gamma) \in \Ccr \times \Types_\Ccr$, to express in a concise
manner some recurrence relations for the coefficients of series on
colored operads. We shall consider the two following notations. If
$\phi$ is such a map and $a \in \Ccr$, we define $\phi^{(a)}$ as the
natural number
\begin{equation}
    \phi^{(a)} :=
    \sum_{\substack{
        b \in \Ccr \\
        \gamma \in \Types_\Ccr
    }}
    \phi(b, \gamma) \gamma_a
\end{equation}
and $\phi_a$ as the finite multiset
\begin{equation}
    \phi_a := \lbag \phi(a, \gamma) : \gamma \in \Types_\Ccr \rbag.
\end{equation}
\medbreak

%%%%%%%%%%%%%%%%%%%%%%%%%%%%%%%%%%%%%%%%%%%%%%%%%%%%%%%%%%%%%%%%%%%%%%%%
%%%%%%%%%%%%%%%%%%%%%%%%%%%%%%%%%%%%%%%%%%%%%%%%%%%%%%%%%%%%%%%%%%%%%%%%
\subsection{Products on series}
Two binary products $\PreLie$ and $\Compo$ on the space of $\Cca$-series
are presented. The product $\PreLie$ is a generalization to series and
to colored operads of a known product on monochrome operads, and
$\Compo$ is a generalization to colored operads of a known product on
series on monochrome operads.
\medbreak

%%%%%%%%%%%%%%%%%%%%%%%%%%%%%%%%%%%%%%%%%%%%%%%%%%%%%%%%%%%%%%%%%%%%%%%%
\subsubsection{Pre-Lie product}
Given two $\Cca$-series $\Fbf, \Gbf \in \K \AAngle{\Cca}$, the
\Def{pre-Lie product} of $\Fbf$ and $\Gbf$ is the $\Cca$-series
$\Fbf \PreLie \Gbf$ defined, for any $x \in \Cca$, by
\begin{equation} \label{equ:pre_lie_product}
    \Angle{x, \Fbf \PreLie \Gbf}
    := \sum_{\substack{
            y, z \in \Cca \\
            i \in [|y|] \\
            x = y \circ_i z}}
    \Angle{y, \Fbf} \Angle{z, \Gbf}.
\end{equation}
Observe that $\Fbf \PreLie \Gbf$ could be undefined for arbitrary
$\Cca$-series $\Fbf$ and $\Gbf$ on an arbitrary colored operad $\Cca$.
Besides, notice from~\eqref{equ:pre_lie_product} that $\PreLie$ is
bilinear and that $\Ubf$ is a left unit of~$\PreLie$. However, since
\begin{equation}
    \Fbf \PreLie \Ubf =
    \sum_{x \in \Cca} |x| \Angle{x, \Fbf} x,
\end{equation}
the $\Cca$-series $\Ubf$ is not a right unit of $\PreLie$. This product
is also nonassociative in the general case since we have, for instance
in~$\K \AAngle{\As}$,
\begin{equation}
    \Par{\OpAs_2 \PreLie \OpAs_2} \PreLie \OpAs_2 =
    6 \OpAs_4
    \ne
    4 \OpAs_4 =
    \OpAs_2 \PreLie \Par{\OpAs_2 \PreLie \OpAs_2}.
\end{equation}
\medbreak

Recall that a \Def{$\K$-pre-Lie algebra}~\cite{Vin63,Ger63} (see
also~\cite{CL01,Man11}) is a $\K$-vector space $V$ endowed with a
bilinear product $\PreLie$ satisfying, for all $x, y, z \in V$, the
relation
\begin{equation} \label{equ:pre_lie_relation}
    (x \PreLie y) \PreLie z - x \PreLie (y \PreLie z) =
    (x \PreLie z) \PreLie y - x \PreLie (z \PreLie y).
\end{equation}
In this case, we say that $\PreLie$ is a \Def{pre-Lie product}. Observe
that any associative product satisfies~\eqref{equ:pre_lie_relation}, so
that associative algebras are pre-Lie algebras.
\medbreak

\begin{Proposition} \label{prop:functor_series_pre_lie_algebras}
    For any locally finite colored operad $\Cca$, the space
    $\K \AAngle{\Cca}$ endowed with the binary product $\PreLie$ is a
    pre-Lie algebra.
\end{Proposition}
\medbreak

This product $\PreLie$ is a generalization of a pre-Lie product defined
in~\cite{Ger63} (see also~\cite{VDL04,Cha08}), endowing the $\K$-linear
span of the underlying monochrome graded collection of a monochrome
operad with a pre-Lie algebra structure.
Proposition~\ref{prop:functor_series_pre_lie_algebras} is based on
similar arguments as the ones contained in the previous references.
\medbreak

%%%%%%%%%%%%%%%%%%%%%%%%%%%%%%%%%%%%%%%%%%%%%%%%%%%%%%%%%%%%%%%%%%%%%%%%
\subsubsection{Composition product}
Given two $\Cca$-series $\Fbf, \Gbf \in \K \AAngle{\Cca}$, the
\Def{composition product} of $\Fbf$ and $\Gbf$ is the $\Cca$-series
$\Fbf \Compo \Gbf$ defined, for any $x \in \Cca$, by
\begin{equation} \label{equ:associative_product}
    \Angle{x, \Fbf \Compo \Gbf}
    :=
    \sum_{\substack{
        y, z_1, \dots, z_{|y|} \in \Cca \\
        x = y \circ \Han{z_1, \dots, z_{|y|}}
    }}
    \Angle{y, \Fbf}
    \prod_{i \in [|y|]} \Angle{z_i, \Gbf}.
\end{equation}
Observe that $\Fbf \Compo \Gbf$ could be undefined for arbitrary
$\Cca$-series $\Fbf$ and $\Gbf$ on an arbitrary colored operad $\Cca$.
Besides, notice from~\eqref{equ:associative_product} that $\Compo$ is
linear on the left and that the series $\Ubf$ is the left and right unit
of $\Compo$. However, this product is not linear on the right since we
have, for instance in~$\K \AAngle{\As}$,
\begin{equation}
    \OpAs_2 \Compo (\OpAs_2 + \OpAs_3) =
    \OpAs_4 + 2 \OpAs_5 + \OpAs_6
    \ne \OpAs_4 + \OpAs_6 =
    \OpAs_2 \Compo \OpAs_2 + \OpAs_2 \Compo \OpAs_3.
\end{equation}
\medbreak

\begin{Proposition} \label{prop:functor_series_monoids}
    For any locally finite colored operad $\Cca$, the space
    $\K \AAngle{\Cca}$ endowed with the binary product $\Compo$ and the
    unit $\Ubf$ is a monoid.
\end{Proposition}
\medbreak

This product $\Compo$ is a generalization of the composition product of
series on operads of~\cite{Cha02,Cha09} (see
also~\cite{VDL04,Fra08,Cha08,LV12,LN13}). In the case where $\Cca$ is a
monochrome operad concentrated in arity $1$, $\Compo$ coincides with the
Cauchy product on series of monoids considered in~\cite{Sak09}.
Proposition~\ref{prop:functor_series_monoids} is based on similar
arguments as the ones contained in the previous references.
\medbreak

\begin{Lemma} \label{lem:initial_terminal_composition}
    Let $\Bca := (\Oca, \Ccr, \Rfr, I, T)$ be a bud generating system
    and $\Fbf$ be a $\Bud_\Ccr(\Oca)$-series. Then,
    $\Ibf \Compo \Fbf \Compo \Tbf$ is the $\Bud_\Ccr(\Oca)$-series
    satisfying, for all $x \in \Bud_\Ccr(\Oca)$,
    \begin{equation}
        \Angle{x, \Ibf \Compo \Fbf \Compo \Tbf} =
        \begin{cases}
            \Angle{x, \Fbf}
                & \mbox{if } \Out(x) \in I \mbox{ and }
                    \In(x) \in T^+, \\
            0 & \mbox{otherwise}.
        \end{cases}
    \end{equation}
\end{Lemma}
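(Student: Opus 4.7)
The plan is to unfold the definition of the composition product twice, once for the left factor $\Bi$ and once for the right factor $\Bt$, and exploit the fact that both series are supported on colored units, which have arity $1$ and act almost as identities under $\Compo$.

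First, I would invoke Proposition~\ref{prop:functor_series_monoids} to guarantee that $\Compo$ is associative, so that the expression $\Bi \Compo \Bf \Compo \Bt$ is unambiguous. Then, for any $\Bud_\Ccr(\Oca)$-series $\Bg$, I compute $\Bi \Compo \Bg$ from~\eqref{equ:associative_product}. Because $\Supp(\Bi) \subseteq \{\Unit_a : a \in I\}$ and each $\Unit_a$ has arity $1$, the only contributing decompositions of $x$ are of the form $x = \Unit_a \circ_1 z_1$ with $a \in I$ and $z_1 = x$, which moreover forces $\Out(x) = a \in I$ by the color constraint of partial composition. This yields
\begin{equation*}
    \Angle{x, \Bi \Compo \Bg} =
    \begin{cases}
        \Angle{x, \Bg} & \mbox{if } \Out(x) \in I, \\
        0 & \mbox{otherwise}.
    \end{cases}
\end{equation*}

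Symmetrically, I compute $\Bg \Compo \Bt$ using~\eqref{equ:associative_product}. Since $\Supp(\Bt) \subseteq \{\Unit_a : a \in T\}$, the only contributing decompositions of $x$ are $x = y \circ [\Unit_{b_1}, \dots, \Unit_{b_{|y|}}]$ with each $b_i \in T$. The color compatibility of the complete composition forces $b_i = \In_i(y)$ for every $i$, and the operadic unit axiom~\eqref{equ:operad_axiom_3} then yields $y = x$ with $\In_i(x) \in T$ for all $i$. Hence
\begin{equation*}
    \Angle{x, \Bg \Compo \Bt} =
    \begin{cases}
        \Angle{x, \Bg} & \mbox{if } \In(x) \in T^+, \\
        0 & \mbox{otherwise}.
    \end{cases}
\end{equation*}

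Setting $\Bg := \Bf \Compo \Bt$ in the first formula (or $\Bg := \Bi \Compo \Bf$ in the second) and combining the two conditions gives the statement of the lemma. The only potential obstacle is carefully justifying that the unit axiom eliminates all but the trivial decompositions; this is routine, as the definition of $\circ_i$ in $\Bud_\Ccr(\Oca)$ requires $\Out$ of the inserted element to match $\In_i$ of the outer element, so the units force the described constraints on $\Out(x)$ and $\In(x)$.
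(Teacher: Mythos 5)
Your proof is correct and takes essentially the same approach as the paper's own (very terse) proof: the paper simply asserts that composing with $\Bi$ on the left and $\Bt$ on the right annihilates terms with the wrong output or input colors, and you have just spelled out the two unit-composition computations that justify that claim.
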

\begin{proof}
    By definition of the operation $\Compo$, composing $\Fbf$ with
    $\Ibf$ to the left and with $\Tbf$ to the right with respect to
    $\Compo$ amounts to annihilate the coefficients of the terms of
    $\Fbf$ that have an output color which is not in $I$ or an input
    color which is not in $T$. This implies the statement of the lemma.
\end{proof}
\medbreak

%%%%%%%%%%%%%%%%%%%%%%%%%%%%%%%%%%%%%%%%%%%%%%%%%%%%%%%%%%%%%%%%%%%%%%%%
%%%%%%%%%%%%%%%%%%%%%%%%%%%%%%%%%%%%%%%%%%%%%%%%%%%%%%%%%%%%%%%%%%%%%%%%
\subsection{Series and languages} \label{subsec:series_languages}
We introduce the Kleene star operation of the pre-Lie product~$\PreLie$
in order to define the hook generating series of a bud generating
system $\Bca$. We also study the inverse of the composition product
$\Compo$ in order to define the syntactic generating series of $\Bca$.
We relate both of these series with the language of $\Bca$ and provide
ways to compute its coefficients.
\medbreak

%%%%%%%%%%%%%%%%%%%%%%%%%%%%%%%%%%%%%%%%%%%%%%%%%%%%%%%%%%%%%%%%%%%%%%%%
\subsubsection{Pre-Lie star product}
For any $\Cca$-series $\Fbf \in \K \AAngle{\Cca}$ and any $\ell \geq 0$,
let $\Fbf^{\PreLie_\ell}$ be the $\Cca$-series recursively defined by
\begin{equation}
    \Fbf^{\PreLie_\ell} :=
    \begin{cases}
        \Ubf & \mbox{if } \ell = 0, \\
        \Fbf^{\PreLie_{\ell - 1}} \PreLie \Fbf & \mbox{otherwise}.
    \end{cases}
\end{equation}
Immediately from this definition and the definition of the pre-Lie
product $\PreLie$, the coefficients of $\Fbf^{\PreLie_\ell}$,
$\ell \geq 0$, satisfy for any $x \in \Cca$,
\begin{equation} \label{equ:induction_pre_lie_powers}
    \Angle{x, \Fbf^{\PreLie_\ell}} =
    \begin{cases}
        \delta_{x, \Unit_{\Out(x)}} & \mbox{if } \ell = 0, \\
        \sum\limits_{\substack{
            y, z \in \Cca \\
            i \in [|y|] \\
            x = y \circ_i z}}
        \Angle{y, \Fbf^{\PreLie_{\ell - 1}}} \Angle{z, \Fbf}
        & \mbox{otherwise}.
    \end{cases}
\end{equation}
\medbreak

\begin{Lemma} \label{lem:pre_lie_powers}
    Let $\Cca$ be a locally finite $\Ccr$-colored operad and $\Fbf$ be a
    series of $\K \AAngle{\Cca}$. Then, the coefficients of
    $\Fbf^{\PreLie_{\ell + 1}}$, $\ell \geq 0$, satisfy for any
    $x \in \Cca$,
    \begin{equation} \label{equ:pre_lie_powers}
        \Angle{x, \Fbf^{\PreLie_{\ell + 1}}} =
        \sum_{\substack{
            y_1, \dots, y_{\ell + 1} \in \Cca \\
            i_1, \dots, i_\ell \in \N \\
            x = \Par{\dots \Par{y_1 \circ_{i_1} y_2}
                \circ_{i_2} \dots} \circ_{i_\ell} y_{\ell + 1}
        }}
        \prod_{j \in [\ell + 1]} \Angle{y_j, \Fbf}.
    \end{equation}
\end{Lemma}
\begin{proof}
    By Proposition~\ref{prop:functor_series_pre_lie_algebras}, since
    $\Cca$ is locally finite, $\Fbf^{\PreLie_{\ell + 1}}$ is a
    well-defined $\Cca$-series. The statement of the lemma follows by
    induction on $\ell$ and by
    using~\eqref{equ:induction_pre_lie_powers}.
\end{proof}
\medbreak

The \Def{$\PreLie$-star} of $\Fbf$ is the series
\begin{equation}
    \Fbf^{\PreLie_*} := \sum_{\ell \geq 0} \Fbf^{\PreLie_\ell}
    = \Ubf + \Fbf + \Fbf \PreLie \Fbf + (\Fbf \PreLie \Fbf) \PreLie \Fbf +
        ((\Fbf \PreLie \Fbf) \PreLie \Fbf) \PreLie \Fbf + \cdots.
\end{equation}
Observe that $\Fbf^{\PreLie_*}$ could be undefined for an arbitrary
$\Cca$-series~$\Fbf$.
\medbreak

\begin{Proposition} \label{prop:pre_lie_star}
    Let $\Cca$ be a locally finite $\Ccr$-colored operad and $\Fbf$ be a
    series of $\K \AAngle{\Cca}$ such that $\Supp(\Fbf)(1)$ finitely
    factorizes $\Cca$. Then,
    \begin{enumerate}[label = ({\it {\roman*})}]
        \item \label{item:pre_lie_star_1}
        the series $\Fbf^{\PreLie_*}$ is well-defined;
        \item \label{item:pre_lie_star_2}
        for any $x \in \Cca$, the coefficient of $x$ in
        $\Fbf^{\PreLie_*}$ satisfies
        \begin{equation} \label{equ:coefficients_pre_lie_star}
            \Angle{x, \Fbf^{\PreLie_*}} =
            \delta_{x, \Unit_{\Out(x)}} +
            \sum_{\substack{
                y, z \in \Cca \\
                i \in [|y|] \\
                x = y \circ_i z}}
            \Angle{y, \Fbf^{\PreLie_*}} \Angle{z, \Fbf};
        \end{equation}
        \item \label{item:pre_lie_star_3}
        the equation
        \begin{equation} \label{equ:equation_pre_lie_star}
            \Xbf - \Xbf \PreLie \Fbf = \Ubf
        \end{equation}
        admits $\Xbf = \Fbf^{\PreLie_*}$ as unique solution.
    \end{enumerate}
\end{Proposition}
\begin{proof}
    Let $x \in \Cca$ and let us show that the coefficient
    $\Angle{x, \Fbf^{\PreLie_*}}$ is well-defined. Since $\Cca$ is
    locally finite and $\Supp(\Fbf)(1)$ finitely factorizes $\Cca$, by
    Lemma~\ref{lem:finitely_factorizing_sets}, there are finitely many
    $\Supp(\Fbf)$-treelike expressions for $x$. Thus, for all
    $\ell \geq \deg_{\Supp(\Fbf)}(x) + 1$ there is in particular no
    expression for $x$ of the form
    \begin{math}
        x = \Par{\dots \Par{y_1 \circ_{i_1} y_2}
                \circ_{i_2} \dots} \circ_{i_{\ell - 1}} y_\ell
    \end{math}
    where $y_1, \dots, y_\ell \in \Supp(\Fbf)$ and
    $i_1, \dots, i_{\ell - 1} \in \N$. This implies, together with
    Lemma~\ref{lem:pre_lie_powers}, that
    $\Angle{x, \Fbf^{\PreLie_\ell}} = 0$. Therefore, by virtue of this
    observation and by definition of the $\PreLie$-star operation, the
    coefficient of $x$ in $\Fbf^{\PreLie_*}$ is
    \begin{equation}
        \Angle{x, \Fbf^{\PreLie_*}} =
        \sum_{\ell \geq 0} \Angle{x, \Fbf^{\PreLie_\ell}}
        = \sum_{0 \leq \ell \leq \deg_{\Supp(\Fbf)}(x)}
        \Angle{x, \Fbf^{\PreLie_\ell}},
    \end{equation}
    showing that $\Angle{x, \Fbf^{\PreLie_*}}$ is a sum of a finite
    number of terms, all well-defined by Lemma~\ref{lem:pre_lie_powers}.
    Thus, $\Fbf^{\PreLie_*}$ is well-defined, so
    that~\ref{item:pre_lie_star_1} holds.
    \smallbreak

    Point~\ref{item:pre_lie_star_2} follows straightforwardly from the
    definition of the $\PreLie$-star operation
    and~\eqref{equ:induction_pre_lie_powers}.
    \smallbreak

    By~\eqref{equ:equation_pre_lie_star}, we have
    $\Xbf = \Ubf + \Xbf \PreLie \Fbf$ so that the coefficients of $\Xbf$
    satisfy, for any $x \in \Cca$,
    \begin{equation}
        \Angle{x, \Xbf}
        = \Angle{x, \Ubf} + \Angle{x, \Xbf \PreLie \Fbf}
        = \delta_{x, \Unit_{\Out(x)}} +
        \sum_{\substack{
            y, z \in \Cca \\
            i \in [|y|] \\
            x = y \circ_i z}}
        \Angle{y, \Xbf}
        \Angle{z, \Fbf}.
    \end{equation}
    By~\ref{item:pre_lie_star_2}, this implies
    $\Xbf =  \Fbf^{\PreLie_*}$ and the uniqueness of this solution, so
    that~\ref{item:pre_lie_star_3} is established.
\end{proof}
\medbreak

In particular, Point~\ref{item:pre_lie_star_2} of
Proposition~\ref{prop:pre_lie_star} gives a way, given a $\Cca$-series
$\Fbf$ satisfying the stated constraints, to compute recursively the
coefficients of its $\PreLie$-star~$\Fbf^{\PreLie_*}$.
\medbreak

%%%%%%%%%%%%%%%%%%%%%%%%%%%%%%%%%%%%%%%%%%%%%%%%%%%%%%%%%%%%%%%%%%%%%%%%
\subsubsection{Hook generating series}
\label{subsubsec:hook_generating_series}
The \Def{hook generating series} of $\Bca$ the $\Bud_\Ccr(\Oca)$-series
$\Hook(\Bca)$ defined by
\begin{equation} \label{equ:definition_hook_series}
    \Hook(\Bca) :=
    \Ibf \Compo \Rbf^{\PreLie_*} \Compo \Tbf.
\end{equation}
Observe that~\eqref{equ:definition_hook_series} could be undefined for
an arbitrary set of rules $\Rfr$ of $\Bca$. Nevertheless, when $\Rbf$
satisfies the conditions of Proposition~\ref{prop:pre_lie_star}, that
is, when $\Oca$ is a locally finite operad and $\Rfr(1)$ finitely
factorizes $\Bud_\Ccr(\Oca)$, $\Hook(\Bca)$ is well-defined.
\medbreak

The aim of the following is to provide an expression to compute the
coefficients of~$\Hook(\Bca)$.
\medbreak

\begin{Lemma} \label{lem:pre_lie_star_hook_series}
    Let $\Bca := (\Oca, \Ccr, \Rfr, I, T)$ be a bud generating system
    such that $\Oca$ is a locally finite operad and $\Rfr(1)$ finitely
    factorizes $\Bud_\Ccr(\Oca)$. Then, for any $x \in \Bud_\Ccr(\Oca)$,
    \begin{equation} \label{equ:pre_lie_star_hook_series}
        \Angle{x, \Rbf^{\PreLie_*}} =
        \delta_{x, \Unit_{\Out(x)}} +
        \sum_{\substack{
            y \in \Bud_\Ccr(\Oca) \\
            z \in \Rfr \\
            i \in [|y|] \\
            x = y \circ_i z}}
        \Angle{y, \Rbf^{\PreLie_*}}.
    \end{equation}
\end{Lemma}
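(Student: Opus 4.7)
The plan is a direct application of Proposition~\ref{prop:coefficients_pre_lie_star} to the characteristic series $\Br$ of $\Rfr$, after verifying that its hypotheses are met in our setting.

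First, I would check that the hypotheses of Proposition~\ref{prop:coefficients_pre_lie_star} hold for $\Cca := \Bud_\Ccr(\Oca)$ and $\Bf := \Br$. Since $\Oca$ is locally finite and $\Ccr$ is finite, the definition of $\Bud_\Ccr(\Oca)$ in Section~\ref{subsubsec:operad_to_colored_operads} immediately gives that $\Bud_\Ccr(\Oca)(n) = \Ccr \times \Oca(n) \times \Ccr^n$ is finite for every $n \geq 1$, so $\Bud_\Ccr(\Oca)$ is locally finite. Moreover, $\Supp(\Br) = \Rfr$ by definition of the characteristic series, and hence $\Supp(\Br)(1) = \Rfr(1)$, which is $\Bud_\Ccr(\Oca)$-finitely factorizing by assumption. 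Therefore the hypotheses of Proposition~\ref{prop:coefficients_pre_lie_star} are satisfied, $\Br^{\PreLie_*}$ is a well-defined series, and we have
\begin{equation*}
    \Angle{x, \Br^{\PreLie_*}} =
    \delta_{x, \Unit_{\Out(x)}} +
    \sum_{\substack{
        y, z \in \Bud_\Ccr(\Oca) \\
        i \in [|y|] \\
        x = y \circ_i z}}
    \Angle{y, \Br^{\PreLie_*}} \Angle{z, \Br}.
\end{equation*}

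The remaining step is a simplification using the fact that $\Br$ is a characteristic series. Indeed, $\Angle{z, \Br}$ equals $1$ when $z \in \Rfr$ and $0$ otherwise. Therefore only the terms with $z \in \Rfr$ contribute to the sum, each with multiplicative factor $1$. Restricting the summation index $z$ to $\Rfr$ then yields exactly~\eqref{equ:pre_lie_star_hook_series}.

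There is no real obstacle here: the whole argument is a specialization of an already-proved general result to the particular series $\Br$, and the only thing to verify is that the support condition on $\Br$ translates correctly into the hypothesis given in the lemma. The proof should therefore be essentially one invocation of Proposition~\ref{prop:coefficients_pre_lie_star} followed by the observation that the coefficients of $\Br$ are Boolean.
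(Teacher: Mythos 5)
Your proof is correct and follows essentially the same approach as the paper's: both invoke Proposition~\ref{prop:coefficients_pre_lie_star} applied to $\Br$ and then simplify using the fact that the coefficients of the characteristic series $\Br$ are $0$ or $1$; you simply make the verification of the hypotheses (local finiteness of $\Bud_\Ccr(\Oca)$, $\Supp(\Br)(1) = \Rfr(1)$) explicit where the paper leaves it implicit.
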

\begin{proof}
    Since $\Rfr(1)$ finitely factorizes $\Bud_\Ccr(\Oca)$, by
    Point~\ref{item:pre_lie_star_1} of
    Proposition~\ref{prop:pre_lie_star}, $\Rbf^{\PreLie_*}$ is a
    well-defined series. Now, \eqref{equ:pre_lie_star_hook_series} is a
    consequence of Point~\ref{item:pre_lie_star_2} of
    Proposition~\ref{prop:pre_lie_star} together with the fact that all
    coefficients of $\Rbf$ are equal to $0$ or to~$1$.
\end{proof}
\medbreak

\begin{Proposition} \label{prop:hook_series_derivation_graph}
    Let $\Bca := (\Oca, \Ccr, \Rfr, I, T)$ be a bud generating system
    such that $\Oca$ is a locally finite operad and $\Rfr(1)$ finitely
    factorizes $\Bud_\Ccr(\Oca)$. Then, for any $x \in \Bud_\Ccr(\Oca)$
    such that $\Out(x) \in I$, the coefficient
    $\Angle{x, \Rbf^{\PreLie_*}}$ is the number of multipaths from
    $\Unit_{\Out(x)}$ to $x$ in the derivation graph of~$\Bca$.
\end{Proposition}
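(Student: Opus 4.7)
The plan is to identify, for each $\ell \geq 0$, the coefficient $\Angle{x, \Br^{\PreLie_\ell}}$ with the number of multipaths of length $\ell$ from $\Unit_{\Out(x)}$ to $x$ in $\DerivGraph(\Bca)$, and then to sum over $\ell$. The hypotheses that $\Oca$ is locally finite and $\Rfr(1)$ is $\Bud_\Ccr(\Oca)$-finitely factorizing, combined with Lemma~\ref{lem:pre_lie_star_well_defined}, ensure that $\Br^{\PreLie_*}$ is well-defined and that
\[
\Angle{x, \Br^{\PreLie_*}} = \sum_{\ell \geq 0} \Angle{x, \Br^{\PreLie_\ell}}
\]
is a finite sum, which is the quantity to be interpreted.

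First I would dispose of the case $\ell = 0$: here $\Angle{x, \Br^{\PreLie_0}} = \Angle{x, \Bu} = \delta_{x, \Unit_{\Out(x)}}$, which matches the number of empty multipaths from $\Unit_{\Out(x)}$ to $x$. For $\ell \geq 1$, I would invoke Lemma~\ref{lem:pre_lie_powers} together with the fact that $\Br$ is the characteristic series of $\Rfr$ to conclude that $\Angle{x, \Br^{\PreLie_\ell}}$ enumerates the tuples $(r_1, \dots, r_\ell, i_1, \dots, i_{\ell-1})$ with $r_j \in \Rfr$, each $i_j$ in the admissible range, and
\[
x = (\dots ((r_1 \circ_{i_1} r_2) \circ_{i_2} r_3) \dots) \circ_{i_{\ell-1}} r_\ell.
\]

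The main step is to put these tuples in bijection with the length-$\ell$ multipaths $\Unit_{\Out(x)} = z_0 \to z_1 \to \dots \to z_\ell = x$ in $\DerivGraph(\Bca)$. Given such a tuple, I would set $z_1 := r_1$ (which is consistent with the identity $\Unit_{\Out(x)} \circ_1 r_1 = r_1$, using that the outermost operand of the left-nested expression has output color $\Out(x)$) and $z_j := z_{j-1} \circ_{i_{j-1}} r_j$ for $j \geq 2$. Each pair $(i_{j-1}, r_j)$ determines one of the edges from $z_{j-1}$ to $z_j$ counted with the multiplicity prescribed in Section~\ref{subsubsec:generation}. The hypothesis $\Out(x) \in I$ ensures that $\Unit_{\Out(x)}$, and hence every reachable $z_j$, lies in $\DerivGraph(\Bca)$. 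The converse direction is obtained by reading off the sequence of rules and indices along a multipath.

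The only real subtlety is reconciling the two indexing conventions: in a multipath, the first step is forced to use input index $1$ (since $|\Unit_{\Out(x)}| = 1$) and collapses to $z_1 = r_1$, which explains why the tuples appearing in Lemma~\ref{lem:pre_lie_powers} carry $\ell - 1$ composition indices rather than $\ell$. Once this correspondence is verified, summation over $\ell \geq 0$ yields $\Angle{x, \Br^{\PreLie_*}} = N(x)$, where $N(x)$ denotes the number of multipaths from $\Unit_{\Out(x)}$ to $x$, as claimed.
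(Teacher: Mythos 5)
Your proof is correct, but it takes a genuinely different route from the paper's. The paper proves the proposition by recurrence-matching: it uses Lemma~\ref{lem:pre_lie_star_hook_series} to get the relation $\Angle{x, \Br^{\PreLie_*}} = \delta_{x,\Unit_{\Out(x)}} + \sum_{y,z,i : x = y \circ_i z,\, z \in \Rfr} \Angle{y, \Br^{\PreLie_*}}$, separately shows that the multipath count $\lambda_x$ satisfies the very same relation (by summing the edge multiplicities $\mu_{y,x}$ over in-neighbors $y$), and concludes from the shared base case $\lambda_{\Unit_a}=1$ that the two quantities coincide. You instead proceed grade by grade in $\ell$: Lemma~\ref{lem:pre_lie_powers} applied to the characteristic series $\Br$ identifies $\Angle{x, \Br^{\PreLie_\ell}}$ as the number of left-nested $\Rfr$-expressions of $x$ of length $\ell$, and you then exhibit an explicit length-preserving bijection between those tuples and multipaths of length $\ell$ in $\DerivGraph(\Bca)$, before summing over $\ell$. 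The point you flag about the missing composition index --- the first step of a multipath from $\Unit_{\Out(x)}$ is forced to use $\circ_1$ and collapses to $z_1 = r_1$ by the unit axiom, accounting for why the tuples carry only $\ell - 1$ indices --- is exactly the right observation and is the crux of the bijection. What the paper's route buys is brevity and uniformity with the other star-recurrence arguments in the section; what your route buys is a finer, length-graded correspondence, and it foreshadows the proof of Theorem~\ref{thm:hook_series}, where the paper itself spells out the identification between multipaths and $\Rfr$-left expressions that you establish here directly.
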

\begin{proof}
    First, since $\Rfr(1)$ finitely factorizes $\Bud_\Ccr(\Oca)$, by
    Point~\ref{item:pre_lie_star_1} of
    Proposition~\ref{prop:pre_lie_star}, $\Rbf^{\PreLie_*}$ is a
    well-defined series. If $x = \Unit_a$ for an $a \in I$, since
    $\Angle{\Unit_a, \Rbf^{\PreLie_*}} = 1$, the statement of the
    proposition holds. Let us now assume that $x$ is different from a
    colored unit and let us denote by $\lambda_x$ the number of
    multipaths from $\Unit_{\Out(x)}$ to $x$ in the derivation graph
    $\DerivGraph(\Bca)$ of $\Bca$. By definition of $\DerivGraph(\Bca)$,
    by denoting by $\mu_{y, x}$ the number of edges from
    $y \in \Bud_\Ccr(\Oca)$ to $x$ in $\DerivGraph(\Bca)$, we have
    \begin{equation}\begin{split}
        \label{equ:hook_series_derivation_graph}
        \lambda_x
        & = \sum_{y \in \Bud_\Ccr(\Oca)} \mu_{y, x}\, \lambda_y
        =
        \sum_{y \in \Bud_\Ccr(\Oca)}
        \# \Bra{(i, r) \in \N \times \Rfr : x = y \circ_i r}\,
        \lambda_y
        =
        \sum_{\substack{
            y \in \Bud_\Ccr(\Oca) \\
            i \in [|y|] \\
            r \in \Rfr \\
            x = y \circ_i r
        }}
        \lambda_y.
    \end{split}\end{equation}
    We observe that Relation~\eqref{equ:hook_series_derivation_graph}
    satisfied by the $\lambda_x$ is the same as
    Relation~\eqref{equ:pre_lie_star_hook_series} of
    Lemma~\ref{lem:pre_lie_star_hook_series} satisfied by the
    $\Angle{x, \Rbf^{\PreLie_*}}$. This implies the statement of the
    proposition.
\end{proof}
\medbreak

\begin{Theorem} \label{thm:hook_series}
    Let $\Bca := (\Oca, \Ccr, \Rfr, I, T)$ be a bud generating system
    such that $\Oca$ is a locally finite operad and $\Rfr(1)$
    finitely factorizes $\Bud_\Ccr(\Oca)$. Then, the hook generating
    series of $\Bca$ satisfies
    \begin{equation} \label{equ:hook_series}
        \Hook(\Bca)
        = \sum_{\substack{
            \Tfr \in \Free(\Rfr) \\
            \Out(\Tfr) \in I \\
            \In(\Tfr) \in T^+
        }}
        \enspace
        \frac{\deg(\Tfr)!}
            {\prod\limits_{v \in \INodes(\Tfr)} \deg\Par{\Tfr_v}}
        \enspace
        \Eval_{\Bud_\Ccr(\Oca)}(\Tfr).
    \end{equation}
\end{Theorem}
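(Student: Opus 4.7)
The plan is to reduce the theorem to the hook-length interpretation of coefficients of $\Br^{\PreLie_*}$, and then use Lemma~\ref{lem:initial_terminal_composition} to handle the outer $\Bi \Compo - \Compo \Bt$. All well-definedness concerns are guaranteed at the outset: by Lemma~\ref{lem:pre_lie_star_well_defined}, the series $\Br^{\PreLie_*}$ is well-defined since $\Supp(\Br) = \Rfr$ and $\Rfr(1)$ is $\Bud_\Ccr(\Oca)$-finitely factorizing; and since $\Oca$ is locally finite so is $\Bud_\Ccr(\Oca)$, so the compositions $\Bi \Compo \Br^{\PreLie_*} \Compo \Bt$ are well-defined by Proposition~\ref{prop:functor_series_monoids}.

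First I would establish the following intermediate identity: for any $x \in \Bud_\Ccr(\Oca)$,
\begin{equation*}
    \Angle{x, \Br^{\PreLie_*}}
    = \sum_{\substack{\Tfr \in \Free(\Rfr) \\ \Eval_{\Bud_\Ccr(\Oca)}(\Tfr) = x}}
    \frac{\deg(\Tfr)!}{\prod_{v \in \INodes(\Tfr)} \deg(\Tfr_v)}.
\end{equation*}
Expanding $\Br^{\PreLie_*} = \sum_{\ell \geq 0} \Br^{\PreLie_\ell}$, Lemma~\ref{lem:pre_lie_powers} expresses $\Angle{x, \Br^{\PreLie_{\ell+1}}}$ as a sum over $\Rfr$-left expressions of $x$ of length $\ell+1$ (and the $\ell = 0$ term contributes $\delta_{x, \Unit_{\Out(x)}}$), so the total coefficient equals the number of $\Rfr$-left expressions of $x$, together with the unit contribution. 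By Lemma~\ref{lem:left_expressions_linear_extensions}, this number is in bijection with pairs $(\Tfr, e)$ where $\Tfr$ is an $\Rfr$-treelike expression of $x$ and $e$ is a linear extension of $\Tfr$. Applying the hook-length formula~\eqref{equ:hook_length_syntax_tree} to count linear extensions of each $\Tfr$ yields the claimed identity; the degenerate case $x = \Unit_a$ is handled correctly since $\Unit_a \in \Free(\Rfr)$ as a degree-$0$ tree, contributing $0!/1 = 1$ via the empty product.

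Next I would convert this coefficient-wise identity into the extended-notation equality for $\Br^{\PreLie_*}$:
\begin{equation*}
    \Br^{\PreLie_*} = \sum_{\Tfr \in \Free(\Rfr)}
    \frac{\deg(\Tfr)!}{\prod_{v \in \INodes(\Tfr)} \deg(\Tfr_v)}
    \enspace \Eval_{\Bud_\Ccr(\Oca)}(\Tfr),
\end{equation*}
by grouping trees $\Tfr$ according to their evaluations. Finally, apply Lemma~\ref{lem:initial_terminal_composition} to $\Bf := \Br^{\PreLie_*}$: the composition $\Bi \Compo \Br^{\PreLie_*} \Compo \Bt$ preserves exactly the coefficients of elements $x$ whose output color lies in $I$ and whose input word lies in $T^+$, and zeroes out the rest. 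Since $\Eval_{\Bud_\Ccr(\Oca)}$ preserves output and input colors, the condition $\Out(\Eval(\Tfr)) \in I$ and $\In(\Eval(\Tfr)) \in T^+$ is equivalent to $\Out(\Tfr) \in I$ and $\In(\Tfr) \in T^+$. This restriction on the index set of the sum yields~\eqref{equ:hook_series}.

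The main subtlety, rather than any hard step, is bookkeeping the equivalence between $\Rfr$-left expressions and derivation sequences of $\Br^{\PreLie_*}$ while correctly accounting for the $\ell = 0$ (trivial tree) term; once this alignment is made, the rest is a straightforward combination of Lemmas~\ref{lem:left_expressions_linear_extensions}, \ref{lem:pre_lie_powers}, \ref{lem:initial_terminal_composition}, and the hook-length formula~\eqref{equ:hook_length_syntax_tree}.
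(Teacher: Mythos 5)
Your proof is correct and follows essentially the same strategy as the paper's: establish that $\Angle{x, \Br^{\PreLie_*}}$ equals the number of $\Rfr$-left expressions of $x$, convert to treelike expressions with linear extensions via Lemma~\ref{lem:left_expressions_linear_extensions}, apply the hook-length formula~\eqref{equ:hook_length_syntax_tree}, and handle the outer $\Bi \Compo \cdot \Compo \Bt$ with Lemma~\ref{lem:initial_terminal_composition}. The one place you streamline is the first step: you invoke Lemma~\ref{lem:pre_lie_powers} directly, since with $\Bf = \Br$ its right-hand side literally enumerates $\Rfr$-left expressions of $x$ with $\ell+1$ factors, whereas the paper first passes through Proposition~\ref{prop:hook_series_derivation_graph} (the multipath count in the derivation graph) and then exhibits a bijection between multipaths and $\Rfr$-left expressions inside the proof. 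Your route avoids the derivation-graph intermediary entirely; the paper presumably takes the longer path because Proposition~\ref{prop:hook_series_derivation_graph} is of independent interest. Either way the ingredients and the logical shape of the argument coincide.
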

\begin{proof}
    By definition of $\Lang(\Bca)$ and $\DerivGraph(\Bca)$, any
    $x \in \Lang(\Bca)$ can be reached from $\Unit_{\Out(x)}$ by a
    multipath
    \begin{equation}
        \Unit_{\Out(x)} \Deriv y_1 \Deriv y_2 \Deriv \cdots
        \Deriv y_{\ell - 1} \Deriv x
    \end{equation}
    in $\DerivGraph(\Bca)$, where $y_1, \dots, y_{\ell - 1}$ are
    elements of $\Bud_\Ccr(\Oca)$ and $\Unit_{\Out(x)} \in I$. Hence,
    by definition of~$\Deriv$, $x$ admits an $\Rfr$-left expression
    \begin{equation}
        x = \Par{\dots \Par{\Par{\Unit_{\Out(x)} \circ_1 r_1}
        \circ_{i_1} r_2} \circ_{i_2} \dots} \circ_{i_{\ell - 1}} r_\ell
    \end{equation}
    where for any $j \in [\ell]$, $r_j \in \Rfr$, and for any
    $j \in [\ell - 1]$,
    \begin{equation}
        y_j = \Par{\dots \Par{\Par{\Unit_{\Out(x)} \circ_1 r_1}
        \circ_{i_1} r_2} \circ_{i_2} \dots} \circ_{i_{j - 1}} r_j
    \end{equation}
    and $i_{j} \in [|y_j|]$. This shows that the set of all multipaths
    from $\Unit_{\Out(x)}$ to $x$ in $\DerivGraph(\Bca)$ is in
    one-to-one correspondence with the set of all $\Rfr$-left
    expressions for $x$. Now, observe that since $\Rfr(1)$ finitely
    factorizes $\Bud_\Ccr(\Oca)$, by Point~\ref{item:pre_lie_star_1} of
    Proposition~\ref{prop:pre_lie_star}, $\Rbf^{\PreLie_*}$ is a
    well-defined series. If $x = \Unit_a$ for an $a \in I$, since
    $\Angle{\Unit_a, \Rbf^{\PreLie_*}} = 1$, the statement of the
    proposition holds. Let us now assume that $x$ is different from a
    colored unit and let us denote by $\lambda_x$ the number of
    multipaths from $\Unit_{\Out(x)}$ to $x$ in the derivation graph
    $\DerivGraph(\Bca)$ of $\Bca$. By d, $\Rbf^{\PreLie_*}$ is a
    well-defined series. By
    Proposition~\ref{prop:hook_series_derivation_graph},
    Lemmas~\ref{lem:finitely_factorizing_sets}
    and~\ref{lem:left_expressions_linear_extensions},
    and~\eqref{equ:number_left_expressions}, we obtain that
    \begin{equation} \label{equ:hook_series_demo}
        \Angle{x, \Rbf^{\PreLie_*}} =
        \sum_{\substack{
            \Tfr \in \Free(\Rfr) \\
            \Eval_{\Bud_\Ccr(\Oca)}(\Tfr) = x
        }}
        \frac{\deg(\Tfr)!}
            {\prod\limits_{v \in \INodes(\Tfr)} \deg(\Tfr_v)}.
    \end{equation}
    Finally, by Lemma~\ref{lem:initial_terminal_composition}, for any
    $x \in \Bud_\Ccr(\Oca)$ such that $\Out(x) \in I$ and
    $\In(x) \in T^+$, we have
    $\Angle{x, \Hook(\Bca)} = \Angle{x, \Rbf^{\PreLie_*}}$. This shows
    that the right member of~\eqref{equ:hook_series} is equal
    to~$\Hook(\Bca)$.
\end{proof}
\medbreak

An alternative way to understand $\Hook(\Bca)$ thus offered by
Theorem~\ref{thm:hook_series} consists is seeing the coefficient
$\Angle{x, \Hook(\Bca)}$, $x \in \Bud_\Ccr(\Oca)$, as the number of
$\Rfr$-left expressions of~$x$.
\medbreak

The following result establishes a link between the hook generating
series of $\Bca$ and its language.
\medbreak

\begin{Proposition} \label{prop:support_hook_series_language}
    Let $\Bca := (\Oca, \Ccr, \Rfr, I, T)$ be a bud generating system
    such that $\Oca$ is a locally finite operad and $\Rfr(1)$
    finitely factorizes $\Bud_\Ccr(\Oca)$. Then, the support of the
    hook generating series of $\Bca$ is the language of~$\Bca$.
\end{Proposition}
\begin{proof}
    This is an immediate consequence of Theorem~\ref{thm:hook_series}
    and Lemma~\ref{lem:language_treelike_expressions}.
\end{proof}
\medbreak

Bud generating systems lead to the definition of analogues of the
\Def{hook-length statistic}~\cite{Knu98} for combinatorial objects
possibly different than trees in the following way. Let $\Oca$ be a
monochrome operad, $G$ be a generating set of $\Oca$, and
$\HookSystem_{\Oca, G} := (\Oca, G)$ be a monochrome bud generating
system depending on $\Oca$ and $G$, called \Def{hook bud generating
system}. Since $G$ is a generating set of $\Oca$, by
Propositions~\ref{prop:language_sub_operad_generated}
and~\ref{prop:support_hook_series_language}, the support of
$\Hook\Par{\HookSystem_{\Oca, G}}$ is equal to
$\Lang\Par{\HookSystem_{\Oca, G}}$. We define the \Def{hook-length
coefficient} of any element $x$ of $\Oca$ as the coefficient
$\Angle{x, \Hook\Par{\HookSystem_{\Oca, G}}}$%
\footnote{See examples of definitions of a hook-length statistics for
binary trees, words of $\Dias_\gamma$, and for Motzkin paths in
Sections~\ref{subsubsec:example_hook_coeff_binary_trees},
\ref{subsubsec:example_hook_coeff_dias},
and~\ref{subsubsec:example_hook_coeff_motz}.}%
.
\medbreak

%%%%%%%%%%%%%%%%%%%%%%%%%%%%%%%%%%%%%%%%%%%%%%%%%%%%%%%%%%%%%%%%%%%%%%%%
\subsubsection{Invertible elements for the composition product}
Since by Proposition~\ref{prop:functor_series_monoids}, $\Compo$ is an
associative product and $\Ubf$ is its unit, the \Def{$\Compo$-inverse}
of a $\Cca$-series $\Fbf$ is defined as the unique $\Cca$-series $\Xbf$
satisfying
\begin{equation} \label{equ:equation_composition_inverse}
    \Fbf \Compo \Xbf = \Ubf = \Xbf \Compo \Fbf.
\end{equation}
This series $\Xbf$ could be undefined for an arbitrary $\Cca$-series
$\Fbf$. The $\Compo$-inverse of $\Fbf$ is denoted by
$\Fbf^{\Compo_{-1}}$ when it is well-defined.
\medbreak

Immediately from this definition and the definition of the composition
product $\Compo$, the coefficients of $\Fbf^{\Compo_{-1}}$ satisfy
for any $x \in \Cca$,
\begin{equation} \label{equ:induction_composition_inverse}
    \Angle{x, \Fbf^{\Compo_{-1}}}
    =
    \frac{\delta_{x, \Unit_{\Out(x)}}}{\Angle{\Unit_{\Out(x)}, \Fbf}}
     -
    \frac{1}{\Angle{\Unit_{\Out(x)}, \Fbf}}
    \sum_{\substack{
        y, z_1, \dots, z_{|y|} \in \Cca \\
        y \ne \Unit_{\Out(x)} \\
        x = y \circ \Han{z_1, \dots, z_{|y|}}}}
    \Angle{y, \Fbf}
    \prod_{i \in [|y|]} \Angle{z_i, \Fbf^{\Compo_{-1}}}.
\end{equation}
\medbreak

\begin{Proposition} \label{prop:composition_inverse}
    Let $\Cca$ be a locally finite colored $\Ccr$-operad and $\Fbf$ be a
    series of $\K \AAngle{\Cca}$ such that
    $\Supp(\Fbf) = \Bra{\Unit_a : a \in \Ccr} \sqcup S$ where $S$ is
    a $\Ccr$-colored graded subcollection of $\Cca$ such that $S(1)$ is
    finitely factorizes $\Cca$. Then,
    \begin{enumerate}[label = ({\it {\roman*})}]
        \item \label{item:composition_inverse_1}
        the series $\Fbf^{\Compo_{-1}}$ is well-defined;
        \item \label{item:composition_inverse_2}
        for any $x \in \Cca$, the coefficient of $x$ in
        $\Fbf^{\Compo_{-1}}$ satisfies
        \begin{equation} \label{equ:composition_inverse_expression}
            \Angle{x, \Fbf^{\Compo_{-1}}} =
            \frac{1}{\Angle{\Unit_{\Out(x)}, \Fbf}}
            \enspace
            \sum_{\substack{
                \Tfr \in \Free(S) \\
                \Eval_\Cca(\Tfr) = x
            }}
            (-1)^{\deg(\Tfr)}
            \prod_{v \in \INodes(\Tfr)}
            \enspace
            \frac{\Angle{\Tfr(v), \Fbf}}
                {\prod\limits_{j \in [|v|]}
                \Angle{\Unit_{\In_j(v)}, \Fbf}}.
        \end{equation}
    \end{enumerate}
\end{Proposition}
\begin{proof}
    Let us first assume that $x$ does not belong to $\Cca^S$, the
    colored suboperad of $\Cca$ generated by $S$. Hence, since there is
    no $\Tfr \in \Free(S)$ such that $\Eval_\Cca(\Tfr) = x$, the right
    member of~\eqref{equ:composition_inverse_expression} is equal to
    zero. Moreover, since $x$ does not belong to $\Cca^S$, for any
    $y \in \Cca$ and $z_1, \dots, z_{|y|} \in \Cca$ such that
    $y \ne \Unit_{\Out(x)}$ and
    $x = y \circ \Han{z_1, \dots, z_{|y|}}$, we have necessarily
    $y \notin S$ or $z_i \notin \Cca^S$ for at least one $i \in [|y|]$.
    By~\eqref{equ:induction_composition_inverse}, this implies that
    $\Angle{x, \Fbf^{\Compo_{-1}}} = 0$. This hence shows
    that~\eqref{equ:composition_inverse_expression} holds
    when~$x \notin \Cca^S$.
    \smallbreak

    Let us assume that $x$ belongs to $\Cca^S$. By
    Lemma~\ref{lem:finitely_factorizing_sets}, since $\Cca$ is locally
    finite and $S(1)$ finitely factorizes $\Cca$, the $S$-degree
    $\deg_S(x)$ of $x$ is well-defined. To
    prove~\eqref{equ:composition_inverse_expression}, we proceed by
    induction on $\deg_S(x)$ and by
    using~\eqref{equ:induction_composition_inverse}. A straightforward
    computation shows that~\eqref {equ:composition_inverse_expression}
    holds so that~\ref{item:composition_inverse_2} checks out.
    \smallbreak

    Finally, by Lemma~\ref{lem:finitely_factorizing_sets}, there is a
    finite number of $S$-treelike expressions of $x$. This shows
    that~\eqref{equ:composition_inverse_expression} is well-defined and
    then $\Fbf^{\Compo_{-1}}$ also is. Hence,
    \ref{item:composition_inverse_1} holds.
\end{proof}
\medbreak

Given a $\Cca$-series $\Fbf$ such that $\Fbf^{\Compo_{-1}}$ is
well-defined, Equation~\eqref{equ:induction_composition_inverse} (resp.
Proposition~\ref{prop:composition_inverse}) provides a recursive (resp.
direct) way to compute the coefficients of $\Fbf^{\Compo_{-1}}$.
\medbreak

Besides, the set of all the $\Cca$-series satisfying the conditions of
Proposition~\ref{prop:composition_inverse} forms a submonoid of
$\K \AAngle{C}$ for the composition product which is also a group. This
group is a generalization of the groups constructed from operads
of~\cite{Cha02,Cha09} (see also~\cite{VDL04,Fra08,Cha08,LV12,LN13}).
\medbreak

%%%%%%%%%%%%%%%%%%%%%%%%%%%%%%%%%%%%%%%%%%%%%%%%%%%%%%%%%%%%%%%%%%%%%%%%
\subsubsection{Syntactic generating series}%
\label{subsubsec:syntactic_generating_series}
The \Def{syntactic generating series} of $\Bca$ the
$\Bud_\Ccr(\Oca)$-series $\Synt(\Bca)$ defined by
\begin{equation} \label{equ:definition_syntactic_series}
    \Synt(\Bca) :=
    \Ibf \Compo (\Ubf - \Rbf)^{\Compo_{-1}} \Compo \Tbf.
\end{equation}
Observe that~\eqref{equ:definition_syntactic_series} could be undefined
for an arbitrary set of rules $\Rfr$ of $\Bca$. Nevertheless, when
$\Ubf - \Rbf$ satisfies the conditions of
Proposition~\ref{prop:composition_inverse}, $\Synt(\Bca)$ is
well-defined. Remark that this condition is satisfied whenever
$\Oca$ is locally finite and $\Rfr(1)$ factorizes
finitely~$\Bud_\Ccr(\Oca)$.
\medbreak

The aim of this section is to provide an expression to compute the
coefficients of~$\Synt(\Bca)$.
\medbreak

\begin{Lemma} \label{lem:inverse_composition_syntactic_series}
    Let $\Bca := (\Oca, \Ccr, \Rfr, I, T)$ be a bud generating system
    such that $\Oca$ is a locally finite operad and $\Rfr(1)$
    finitely factorizes $\Bud_\Ccr(\Oca)$. Then, for any
    $x \in \Bud_\Ccr(\Oca)$,
    \begin{equation} \label{equ:inverse_composition_syntactic_series}
        \Angle{x, (\Ubf - \Rbf)^{\Compo_{-1}}} =
        \delta_{x, \Unit_{\Out(x)}} +
        \sum_{\substack{
            y \in \Rfr \\
            z_1, \dots, z_{|y|} \in \Bud_\Ccr(\Oca) \\
            x = y \circ \Han{z_1, \dots, z_{|y|}}
        }}
        \enspace
        \prod_{i \in [|y|]} \Angle{z_i, (\Ubf - \Rbf)^{\Compo_{-1}}}.
    \end{equation}
\end{Lemma}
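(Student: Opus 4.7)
The plan is to derive this recursive identity directly from the characterization of $(\Bu - \Br)^{\Compo_{-1}}$ as the solution of a functional equation. First I would verify that $(\Bu - \Br)^{\Compo_{-1}}$ is a well-defined series. Under the hypotheses ($\Oca$ locally finite and $\Rfr(1)$ being $\Bud_\Ccr(\Oca)$-finitely factorizing), the support of $\Bu - \Br$ decomposes as $\{\Unit_a : a \in \Ccr\} \sqcup S$ for a $\Ccr$-colored graded subcollection $S$ (essentially $\Rfr$, modulo the elementary observation that any colored unit contained in $\Rfr$ cancels with $\Bu$) with $S(1)$ being $\Bud_\Ccr(\Oca)$-finitely factorizing. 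Hence Proposition~\ref{prop:composition_inverse_expression} guarantees that $(\Bu - \Br)^{\Compo_{-1}}$ exists.

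Next, I would apply Proposition~\ref{prop:equation_composition_inverse} to $\Bf = \Bu - \Br$, which gives the equation
\begin{equation}
    (\Bu - \Br) \Compo (\Bu - \Br)^{\Compo_{-1}} = \Bu.
\end{equation}
The key simplification is that $\Compo$ is linear on the left (as noted just after its definition) and admits $\Bu$ as left unit (established in the proof of Proposition~\ref{prop:functor_series_monoids}). Applying these two properties, the previous equation rewrites as
\begin{equation}
    (\Bu - \Br)^{\Compo_{-1}} = \Bu + \Br \Compo (\Bu - \Br)^{\Compo_{-1}}.
\end{equation}

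Finally I would extract the coefficient of an arbitrary $x \in \Bud_\Ccr(\Oca)$ on both sides. On the one hand, $\Angle{x, \Bu} = \delta_{x, \Unit_{\Out(x)}}$ by definition of $\Bu$. On the other hand, expanding $\Angle{x, \Br \Compo (\Bu - \Br)^{\Compo_{-1}}}$ through~\eqref{equ:associative_product} produces a sum over all factorizations $x = y \circ [z_1, \dots, z_{|y|}]$ weighted by $\Angle{y, \Br}\prod_i \Angle{z_i, (\Bu - \Br)^{\Compo_{-1}}}$. Since $\Br$ is the characteristic series of $\Rfr$, the coefficient $\Angle{y, \Br}$ is $1$ when $y \in \Rfr$ and $0$ otherwise, which restricts the summation to $y \in \Rfr$ and yields exactly the right-hand side of~\eqref{equ:inverse_composition_syntactic_series}.

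No real obstacle is expected: the argument is a direct unfolding of the functional equation for the $\Compo$-inverse, and every required property of the composition product has been established earlier in Section~\ref{sec:series_colored_operads}. The only mildly delicate point is the implicit well-definedness check that $\Supp(\Bu - \Br)$ has the shape required by Proposition~\ref{prop:composition_inverse_expression}, which is handled by the same combinatorial hypothesis ($\Rfr(1)$ $\Bud_\Ccr(\Oca)$-finitely factorizing) already assumed in the statement.
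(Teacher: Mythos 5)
Your proof is correct and essentially the same as the paper's: the paper invokes the coefficient recursion~\eqref{equ:equation_composition_inverse} for $(\Bu - \Br)^{\Compo_{-1}}$ directly and simplifies using the $\{0,1\}$-valuedness of $\Br$, while you re-derive that recursion from the functional equation $(\Bu - \Br)\Compo(\Bu - \Br)^{\Compo_{-1}} = \Bu$ via left-linearity and the unit property of $\Bu$; these are interchangeable packaging of the same argument.

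One small imprecision in your well-definedness check: you suggest that if a colored unit $\Unit_a$ were in $\Rfr$, it would simply ``cancel with $\Bu$'' and $\Supp(\Bu - \Br)$ would still have the required shape. That is backwards: the cancellation would remove $\Unit_a$ from $\Supp(\Bu - \Br)$, so the support would fail to contain $\{\Unit_a : a \in \Ccr\}$ and would not satisfy the hypothesis of Proposition~\ref{prop:composition_inverse_expression}. Fortunately this case cannot arise, because the standing assumption that $\Rfr(1)$ is $\Bud_\Ccr(\Oca)$-finitely factorizing already excludes colored units from $\Rfr$ (a chain of $\Unit_a$-corollas of arbitrary length would otherwise give infinitely many $\Rfr(1)$-factorizations of $\Unit_a$). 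With that observation, your argument goes through exactly.
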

\begin{proof}
    Since $\Rfr(1)$ finitely factorizes $\Bud_\Ccr(\Oca)$, by
    Point~\ref{item:composition_inverse_1} of
    Proposition~\ref{prop:composition_inverse},
    $(\Ubf - \Rbf)^{\Compo_{-1}}$ is a well-defined series. Now,
    \eqref{equ:inverse_composition_syntactic_series} is a consequence of
    Point~\ref{item:composition_inverse_2} of
    Proposition~\ref{prop:composition_inverse} and
    Equation~\eqref{equ:induction_composition_inverse} for the
    $\Compo$-inverse, together with the fact that all coefficients of
    $\Rbf$ are equal to $0$ or to~$1$.
\end{proof}
\medbreak

\begin{Theorem} \label{thm:syntactic_series}
    Let $\Bca := (\Oca, \Ccr, \Rfr, I, T)$ be a bud generating system
    such that $\Oca$ is a locally finite operad and $\Rfr(1)$
    finitely factorizes $\Bud_\Ccr(\Oca)$. Then, the syntactic
    generating series of $\Bca$ satisfies
    \begin{equation} \label{equ:syntactic_series}
        \Synt(\Bca)
        =
        \sum_{\substack{
            \Tfr \in \Free(\Rfr) \\
            \Out(\Tfr) \in I \\
            \In(\Tfr) \in T^+
        }}
        \Eval_{\Bud_\Ccr(\Oca)}(\Tfr).
    \end{equation}
\end{Theorem}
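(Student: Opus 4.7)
The plan is to evaluate the coefficients of $\Synt(\Bca)$ directly by combining Proposition~\ref{prop:composition_inverse_expression}, applied to $\Bu - \Br$, with Lemma~\ref{lem:initial_terminal_composition}, which implements the left and right $\Compo$-compositions with $\Bi$ and $\Bt$. First, I would check that under the stated hypotheses the series $(\Bu - \Br)^{\Compo_{-1}}$ is well-defined: since $\Oca$ is locally finite so is $\Bud_\Ccr(\Oca)$, and since $\Rfr(1)$ is $\Bud_\Ccr(\Oca)$-finitely factorizing, the series $\Bu - \Br$ has support of the form $\{\Unit_a : a \in \Ccr\} \sqcup \Rfr$ (where, as is standard for bud generating systems, $\Rfr$ is assumed to contain no colored unit). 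Proposition~\ref{prop:composition_inverse_expression} then applies with $S := \Rfr$.

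Second, I would specialize the expression given by Proposition~\ref{prop:composition_inverse_expression} to $\Bf := \Bu - \Br$. The key observation here is a complete sign cancellation: for any $x \in \Bud_\Ccr(\Oca)$ and any $\Tfr \in \Free(\Rfr)$ with $\Eval_{\Bud_\Ccr(\Oca)}(\Tfr) = x$, one has $\Angle{\Unit_{\Out(x)}, \Bu - \Br} = 1$, $\Angle{\Unit_{\In_j(v)}, \Bu - \Br} = 1$ for every internal node $v$ and every input position $j$, and $\Angle{\Label(v), \Bu - \Br} = -1$ since $\Label(v) \in \Rfr$. Substituting these values, the global factor $(-1)^{\deg(\Tfr)}$ cancels with $\prod_{v \in \INodes(\Tfr)} (-1) = (-1)^{\deg(\Tfr)}$, so that
\begin{equation*}
    \Angle{x, (\Bu - \Br)^{\Compo_{-1}}} = \# \left\{\Tfr \in \Free(\Rfr) : \Eval_{\Bud_\Ccr(\Oca)}(\Tfr) = x\right\}.
\end{equation*}

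Third, I would invoke Lemma~\ref{lem:initial_terminal_composition} with $\Bf := (\Bu - \Br)^{\Compo_{-1}}$, yielding that $\Angle{x, \Synt(\Bca)}$ equals the above cardinal when $\Out(x) \in I$ and $\In(x) \in T^+$, and vanishes otherwise. Since for any $\Tfr \in \Free(\Rfr)$ the tree $\Tfr$ has the same output color and the same word of input colors as $\Eval_{\Bud_\Ccr(\Oca)}(\Tfr)$, the constraints $\Out(x) \in I$ and $\In(x) \in T^+$ can be moved onto the summation index, and rewriting the coefficient-by-coefficient identity as a single sum indexed by trees gives exactly~\eqref{equ:syntactic_series}.

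The only delicate step is the sign-cancellation argument: one must verify carefully that when $\Bf = \Bu - \Br$, each hook-like fraction in the formula of Proposition~\ref{prop:composition_inverse_expression} collapses to $-1$, and that the alternating sign $(-1)^{\deg(\Tfr)}$ is exactly offset by the product of these $-1$'s over $\INodes(\Tfr)$. Once this collapse is carried out, no induction or fixed-point argument is required, and the theorem follows at once.
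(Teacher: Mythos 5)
Your proof is correct, and it takes a genuinely different route from the paper's. The paper establishes the identity $\Angle{x, (\Bu - \Br)^{\Compo_{-1}}} = \#\{\Tfr \in \Free(\Rfr) : \Eval_{\Bud_\Ccr(\Oca)}(\Tfr) = x\}$ by a fresh induction on the $\Rfr$-degree of $x$, using the recursive characterization of the coefficients of $(\Bu - \Br)^{\Compo_{-1}}$ packaged in Lemma~\ref{lem:inverse_composition_syntactic_series}. You instead read this identity off directly from the closed-form formula~\eqref{equ:composition_inverse_expression} of Proposition~\ref{prop:composition_inverse_expression}, observing that for $\Bf = \Bu - \Br$ every unit coefficient is $1$, every rule coefficient is $-1$, and the sign $\prod_{v \in \INodes(\Tfr)}(-1) = (-1)^{\deg(\Tfr)}$ exactly cancels the prefactor $(-1)^{\deg(\Tfr)}$. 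This is a tighter argument: the induction is already subsumed in the proof of Proposition~\ref{prop:composition_inverse_expression}, so there is no need to rerun it. You are also right that the hypotheses are not in tension — $\Rfr(1)$ being $\Bud_\Ccr(\Oca)$-finitely factorizing already forces $\Rfr$ to contain no colored unit (a tower of unit corollas would otherwise produce infinitely many $\Rfr(1)$-syntax trees), so $\Supp(\Bu - \Br) = \{\Unit_a : a \in \Ccr\} \sqcup \Rfr$ decomposes exactly as Proposition~\ref{prop:composition_inverse_expression} requires. What the paper's route buys is structural parallelism with the proof of Theorem~\ref{thm:synchronous_series}, whose $\Br^{\Compo_*}$ admits no comparable closed-form collapse and must be handled by induction; the two theorems are thus proved by the same template. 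Your route is shorter for this particular theorem, at the cost of that uniformity.
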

\begin{proof}
    Let, for any $x \in \Bud_\Ccr(\Oca)$, $\lambda_x$ be the number of
    $\Rfr$-treelike expressions for $x$. Since $\Rfr(1)$ finitely
    factorizes $\Bud_\Ccr(\Oca)$, by
    Lemma~\ref{lem:finitely_factorizing_sets},
    all $\lambda_x$ are well-defined integers. Moreover, since $\Rfr(1)$
    finitely factorizes $\Bud_\Ccr(\Oca)$, by
    Point~\ref{item:composition_inverse_1} of
    Proposition~\ref{prop:composition_inverse},
    $(\Ubf - \Rbf)^{\Compo_{-1}}$ is a well-defined series. Let us show
    that $\Angle{x, (\Ubf - \Rbf)^{\Compo_{-1}}} = \lambda_x$. First,
    when $x$ does not belong to $\Bud_\Ccr(\Oca)^\Rfr$, by
    Point~\ref{item:composition_inverse_2} of
    Proposition~\ref{prop:composition_inverse},
    $\Angle{x, (\Ubf - \Rbf)^{\Compo_{-1}}} = 0$. Since, in this case
    $\lambda_x = 0$, the property holds. Let us now assume that $x$
    belongs to $\Bud_\Ccr(\Oca)^\Rfr$. Again by
    Lemma~\ref{lem:finitely_factorizing_sets}, the $\Rfr$-degree of $x$
    is well-defined. Therefore, we proceed by induction on
    $\deg_\Rfr(x)$. By
    Lemma~\ref{lem:inverse_composition_syntactic_series}, when $x$ is a
    colored unit $\Unit_a$, $a \in \Ccr$, one has
    $\Angle{x, (\Ubf - \Rbf)^{\Compo_{-1}}} = 1$. Since there is exactly
    one $\Rfr$-treelike expression for $\Unit_a$, namely the syntax tree
    consisting in one leaf of output and input color $a$,
    $\lambda_{\Unit_a} = 1$ so that the base case holds. Otherwise,
    again by Lemma~\ref{lem:inverse_composition_syntactic_series}, we
    have, by using induction hypothesis,
    \begin{equation} \label{equ:syntactic_series_1}
        \Angle{x, (\Ubf - \Rbf)^{\Compo_{-1}}} =
        \sum_{\substack{
            y \in \Rfr \\
            z_1, \dots, z_{|y|} \in \Bud_\Ccr(\Oca) \\
            x = y \circ \Han{z_1, \dots, z_{|y|}}}}
        \prod_{i \in [|y|]} \lambda_{z_i}
        = \lambda_x.
    \end{equation}
    Notice that one can apply the induction hypothesis to
    state~\eqref{equ:syntactic_series_1} since one has
    $\deg_\Rfr(x) \geq 1 + \deg_\Rfr(z_i)$ for all $i \in [|y|]$.
    \smallbreak

    Now, from~\eqref{equ:syntactic_series_1} and by using
    Lemma~\ref{lem:initial_terminal_composition}, we obtain that for all
    $x \in \Bud_\Ccr(\Oca)$ such that $\Out(x) \in I$ and
    $\In(x) \in T^+$, $\Angle{x, \Synt(\Bca)} = \lambda_x$. Denoting by
    $\Fbf$ the series of the right member
    of~\eqref{equ:syntactic_series}, we have
    $\Angle{x, \Fbf} = \lambda_x$ if $\Out(x) \in I$ and
    $\In(x) \in T^+$, and $\Angle{x, \Fbf} = 0$ otherwise. This shows
    that this expression is equal to~$\Synt(\Bca)$.
\end{proof}
\medbreak

Theorem~\ref{thm:syntactic_series} explains the name of syntactic
generating series for $\Synt(\Bca)$ because this series can be
expressed following~\eqref{equ:syntactic_series} as a sum of evaluations
of syntax trees. An alternative way to see $\Synt(\Bca)$ is that for any
$x \in \Bud_\Ccr(\Oca)$, the coefficient $\Angle{x, \Synt(\Bca)}$ is the
number of $\Rfr$-treelike expressions for~$x$.
\medbreak

The following result establishes a link between the syntactic
generating series of $\Bca$ and its language.
\medbreak

\begin{Proposition} \label{prop:support_syntactic_series_language}
    Let $\Bca := (\Oca, \Ccr, \Rfr, I, T)$ be a bud generating system
    such that $\Oca$ is a locally finite operad and $\Rfr(1)$
    finitely factorizes $\Bud_\Ccr(\Oca)$. Then, the support of the
    syntactic generating series of $\Bca$ is the language of~$\Bca$.
\end{Proposition}
\begin{proof}
    This is an immediate consequence of
    Theorem~\ref{thm:syntactic_series} and
    Lemma~\ref{lem:language_treelike_expressions}.
\end{proof}
\medbreak

By Propositions~\ref{prop:support_hook_series_language}
and~\ref{prop:support_syntactic_series_language}, the series
$\Hook(\Bca)$ and $\Synt(\Bca)$ have the same support. The main
difference between these two series is that the coefficient of an
$x \in \Bud_\Ccr(\Oca)$ in $\Synt(\Bca)$ is the number of
$\Rfr$-treelike expressions for $x$, while in $\Hook(\Bca)$ this
coefficient is the number of ways to generate $x$ in $\Bca$.
\medbreak

We say that $\Bca$ is \Def{unambiguous} if all coefficients of
$\Synt(\Bca)$ are equal to $0$ or to $1$. This property is important
from a combinatorial and enumerative point of view. Indeed, when $\Bca$
is unambiguous, its syntactic generating series is the characteristic
series of its language. As a consequence, by definition of the series of
colors $\Colors$ (see Section~\ref{subsubsec:series_of_colors}) and
Proposition~\ref{prop:support_syntactic_series_language}, the
coefficient of $(a, u) \in \Bud_\Ccr(\As)$ in the series
$\Colors(\Synt(\Bca))$ is the number of elements $x$ of $\Lang(\Bca)$
such that $(\Out(x), \In(x)) = (a, u)$.
\medbreak

As a side remark, observe that Theorem~\ref{thm:syntactic_series}
implies in particular that for any bud generating system of the form
$\Bca := (\Oca, \Ccr, \Rfr, \Ccr, \Ccr)$, if $\Synt(\Bca)$ is
unambiguous, then the colored suboperad of $\Bud_\Ccr(\Oca)$ generated
by $\Rfr$ is free. The converse property does not hold.
\medbreak

Let us now describe the coefficients of
$\ColorTypes(\Synt(\Bca))$, the series of color types of the syntactic
series of $\Bca$, in the particular case when $\Bca$ is unambiguous.
We shall give two descriptions: a first one involving a system of
equations of series of $\K \Han{\Han{\Ybb_\Ccr}}$, and a second one
involving a recurrence relation on the coefficients of a series
of~$\K \Han{\Han{\Xbb_\Ccr \sqcup \Ybb_\Ccr}}$.
\medbreak

\begin{Lemma} \label{lem:colt_synt_coefficients_description}
    Let $\Bca := (\Oca, \Ccr, \Rfr, I, T)$ be an unambiguous bud
    generating system such that $\Oca$ is a locally finite operad and
    $\Rfr(1)$ finitely factorizes $\Bud_\Ccr(\Oca)$. Then, for all
    colors $a \in I$ and all types $\alpha \in \Types_\Ccr$ such that
    $\Ccr^\alpha \in T^+$, the coefficients
    $\Angle{\VarX_a \Ybb_\Ccr^\alpha, \ColorTypes(\Synt(\Bca))}$ count
    the number of elements $x$ of $\Lang(\Bca)$ such that
    $(\Out(x), \Type(\In(x))) = (a, \alpha)$.
\end{Lemma}
\begin{proof}
    By Proposition~\ref{prop:support_syntactic_series_language} and
    since $\Bca$ is unambiguous, $\Synt(\Bca)$ is the characteristic
    series of $\Lang(\Bca)$. The statement of the lemma follows
    immediately from the definition~\eqref{equ:definition_color_types}
    of~$\ColorTypes$.
\end{proof}
\medbreak

\begin{Proposition} \label{prop:functional_equation_synt}
    Let $\Bca := (\Oca, \Ccr, \Rfr, I, T)$ be an unambiguous bud
    generating system such that $\Oca$ is a locally finite operad and
    $\Rfr(1)$ finitely factorizes $\Bud_\Ccr(\Oca)$. For all
    $a \in \Ccr$, let $\Fbf_a\Par{\VarY_{c_1}, \dots, \VarY_{c_k}}$ be
    the series of $\K \Han{\Han{\Ybb_\Ccr}}$ satisfying
    \begin{equation}
        \Fbf_a\Par{\VarY_{c_1}, \dots, \VarY_{c_k}} =
        \VarY_a +
        \Gbf_a\Par{\Fbf_{c_1}\Par{\VarY_{c_1}, \dots, \VarY_{c_k}},
        \dots,
        \Fbf_{c_k}\Par{\VarY_{c_1}, \dots, \VarY_{c_k}}}.
    \end{equation}
    Then, for any color $a \in I$ and any type $\alpha \in \Types_\Ccr$
    such that $\Ccr^\alpha \in T^+$, the coefficients
    $\Angle{\VarX_a \Ybb_\Ccr^\alpha, \ColorTypes(\Synt(\Bca))}$ and
    $\Angle{\Ybb_\Ccr^\alpha, \Fbf_a}$ are equal.
\end{Proposition}
\begin{proof}
    Let us set $\Hbf := (\Ubf - \Rbf)^{\Compo_{-1}}$ and, for all
    $a \in \Ccr$, $\Hbf_a := \Unit_a \Compo \Hbf$. Since $\Rfr(1)$
    finitely factorizes $\Bud_\Ccr(\Oca)$, by
    Point~\ref{item:composition_inverse_1} of
    Proposition~\ref{prop:composition_inverse}, $\Hbf$ and $\Hbf_a$ are
    well-defined series.
    Equation~\eqref{equ:equation_composition_inverse} implies that any
    $\Hbf_a$, $a \in \Ccr$, satisfies the relation
    \begin{equation} \label{equ:functional_equation_synt_proof_1}
        \Hbf_a = \Unit_a + \Rbf_a \Compo \Hbf
    \end{equation}
    where $\Rbf_a := \Unit_a \Compo \Rbf$. Observe that for any
    $a \in \Ccr$,
    \begin{math}
        \ColorTypes\Par{\Rbf_a} =
        \Gbf_a\Par{\VarY_{c_1}, \dots, \VarY_{c_k}}.
    \end{math}
    Moreover, from the definitions of $\ColorTypes$ and the operation
    $\Compo$, we obtain that $\ColorTypes\Par{\Rbf_a \Compo \Hbf}$ can
    be computed by a functional composition of the series
    $\Gbf_a\Par{\VarY_{c_1}, \dots, \VarY_{c_k}}$ with
    $\Fbf_{c_1}\Par{\VarY_{c_1}, \dots, \VarY_{c_k}}$, \dots,
    $\Fbf_{c_k}\Par{\VarY_{c_1}, \dots, \VarY_{c_k}}$. Hence,
    Relation~\eqref{equ:functional_equation_synt_proof_1} leads to
    \begin{equation}\begin{split}
        \ColorTypes(\Hbf_a)
            & = \ColorTypes\Par{\Unit_a}
                + \ColorTypes\Par{\Rbf_a \Compo \Hbf} \\
            & = \VarY_a + \Gbf_a\Par{\Fbf_{c_1}\Par{\VarY_{c_1},
                \dots, \VarY_{c_k}},
                \dots,
                \Fbf_{c_k}\Par{\VarY_{c_1}, \dots,
                \VarY_{c_k}}} \\
            & = \Fbf_a\Par{\VarY_{c_1}, \dots, \VarY_{c_k}}.
    \end{split}\end{equation}
    Finally, Lemma~\ref{lem:initial_terminal_composition} implies that,
    when $a \in I$ and $\Ccr^\alpha \in T^+$,
    $\Angle{\VarX_a \Ybb_\Ccr^\alpha, \ColorTypes(\Synt(\Bca))}$ and
    $\Angle{\Ybb_\Ccr^\alpha, \Fbf_a}$ are equal.
\end{proof}
\medbreak

When $\Bca$ is a bud generating system satisfying the conditions of
Proposition~\ref{prop:functional_equation_synt}, the generating series
of the language of $\Bca$ satisfies
\begin{equation} \label{equ:generating_series_synt_functional_equation}
    \GenS_{\Lang(\Bca)} = \sum_{a \in I} \Fbf_a^{T},
\end{equation}
where $\Fbf_a^{T}$ is the specialization of the series
$\Fbf_a\Par{\VarY_{c_1}, \dots, \VarY_{c_k}}$ at $\VarY_b := t$ for all
$b \in T$ and at $\VarY_c := 0$ for all $c \in \Ccr \setminus T$.
Therefore, the resolution of the system of equations given by
Proposition~\ref{prop:functional_equation_synt} provides a way to
compute the coefficients of~$\GenS_{\Lang(\Bca)}$.
\medbreak

\begin{Theorem} \label{thm:algebraic_generating_series_languages}
    Let $\Bca := (\Oca, \Ccr, \Rfr, I, T)$ be an unambiguous bud
    generating system such that $\Oca$ is a locally finite operad and
    $\Rfr(1)$ finitely factorizes $\Bud_\Ccr(\Oca)$. Then, the
    generating series $\GenS_{\Lang(\Bca)}$ of the language of $\Bca$ is
    algebraic.
\end{Theorem}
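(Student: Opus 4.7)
The plan is to derive the algebraicity of $\GenS_{\Lang(\Bca)}$ from the functional equations already provided by Proposition~\ref{prop:functional_equation_synt}, combined with the fact that solutions of polynomial systems are algebraic power series. First, I would observe that since the set $\Rfr$ of rules is by hypothesis finite, each series $\Bg_a(\VarY_{c_1}, \dots, \VarY_{c_k})$, $a \in \Ccr$, is in fact a polynomial in $\K[\Ybb_\Ccr]$. Consequently, the system of functional equations
\begin{equation*}
    \Bf_a = \VarY_a + \Bg_a(\Bf_{c_1}, \dots, \Bf_{c_k}), \qquad a \in \Ccr,
\end{equation*}
provided by Proposition~\ref{prop:functional_equation_synt}, is a polynomial system in the unknowns $\Bf_{c_1}, \dots, \Bf_{c_k}$ with coefficients in $\K[\Ybb_\Ccr]$.

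Next, I would specialize this system according to the set $T$ of terminal colors by setting $\VarY_b := t$ for $b \in T$ and $\VarY_c := 0$ for $c \in \Ccr \setminus T$. This specialization is well-defined because each $\Bg_a$ is a polynomial; it yields polynomials $P_a \in \K[t, z_{c_1}, \dots, z_{c_k}]$ such that the specialized series $\Bf_a^T \in \K[[t]]$ satisfy
\begin{equation*}
    \Bf_a^T = P_a(t, \Bf_{c_1}^T, \dots, \Bf_{c_k}^T), \qquad a \in \Ccr.
\end{equation*}
By~\eqref{equ:generating_series_synt_functional_equation}, we have $\GenS_{\Lang(\Bca)} = \sum_{a \in I} \Bf_a^T$, so it suffices to establish that each $\Bf_a^T$ is algebraic over~$\K(t)$.

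Finally, I would invoke the classical fact that any component $\Bf_i \in \K[[t]]$ of a solution to a proper polynomial system of the form $\Bf_i = P_i(t, \Bf_1, \dots, \Bf_n)$, $i \in [n]$, with $P_i \in \K[t, z_1, \dots, z_n]$, is algebraic over $\K(t)$. This follows from standard elimination theory: one may iteratively eliminate variables by means of resultants to produce a nonzero polynomial in $\K[t, z_i]$ annihilating $\Bf_i$. Since algebraic power series over $\K(t)$ form a ring, the finite sum $\GenS_{\Lang(\Bca)} = \sum_{a \in I} \Bf_a^T$ is then algebraic, which concludes the proof. The only nontrivial step is the passage from the polynomial system to algebraicity of each coordinate; this is a textbook result which I would merely cite rather than re-prove.
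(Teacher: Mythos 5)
Your proof is correct and follows essentially the same route as the paper: both derive algebraicity of $\GenS_{\Lang(\Bca)}$ from the polynomial system of Proposition~\ref{prop:functional_equation_synt} together with~\eqref{equ:generating_series_synt_functional_equation}. The paper's argument is terser and leaves the elimination-theory step implicit, whereas you make explicit that the $\Bg_a$ are polynomials, that the specialization at the terminal colors is benign, and that algebraic power series form a ring closed under finite sums; these are worthwhile clarifications but do not change the substance of the argument.
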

\begin{proof}
    Proposition~\ref{prop:functional_equation_synt} shows that each
    series $\Fbf_a$ satisfies an algebraic equation involving variables
    of $\Ybb_\Ccr$ and series $\Fbf_b$, $b \in \Ccr$. Hence, $\Fbf_a$ is
    algebraic. Moreover, the fact that,
    by~\eqref{equ:generating_series_synt_functional_equation},
    $\GenS_{\Lang(\Bca)}$ is a specialized sum of some $\Fbf_a$ implies
    the statement of the theorem.
\end{proof}
\medbreak

\begin{Theorem} \label{thm:series_color_types_synt}
    Let $\Bca := (\Oca, \Ccr, \Rfr, I, T)$ be an unambiguous bud
    generating system such that $\Oca$ is a locally finite operad and
    $\Rfr(1)$ finitely factorizes $\Bud_\Ccr(\Oca)$ Let $\Fbf$ be
    the series of $\K \Han{\Han{\Xbb_\Ccr \sqcup \Ybb_\Ccr}}$
    satisfying, for any $a \in \Ccr$ and any type
    $\alpha \in \Types_\Ccr$,
    \begin{equation} \label{equ:series_color_types_synt}
        \Angle{\VarX_a \Ybb_\Ccr^\alpha, \Fbf}
        =
        \delta_{\alpha, \Type(a)} +
        \sum_{\substack{
            \phi : \Ccr \times \Types_\Ccr \to \N \\
            \alpha = \phi^{(c_1)} \dots \phi^{(c_k)}
        }}
        \MultOutIn_{a, \sum \phi_{c_1} \dots \sum \phi_{c_k}}
        \Par{\prod_{b \in \Ccr} \phi_b !}
        \Par{
        \prod_{\substack{
            b \in \Ccr \\
            \gamma \in \Types_\Ccr
        }}
        \Angle{\VarX_b \Ybb_\Ccr^\gamma, \Fbf}^{\phi(b, \gamma)}
        }.
    \end{equation}
    Then, for any color $a \in I$ and any type $\alpha \in \Types_\Ccr$
    such that $\Ccr^\alpha \in T^+$, the coefficients
    $\Angle{\VarX_a \Ybb_\Ccr^\alpha, \ColorTypes(\Synt(\Bca))}$
    and $\Angle{\VarX_a \Ybb_\Ccr^\alpha, \Fbf}$ are equal.
\end{Theorem}
\begin{proof}
    First, since $\Rfr(1)$ finitely factorizes $\Bud_\Ccr(\Oca)$, by
    Point~\ref{item:composition_inverse_1} of
    Proposition~\ref{prop:composition_inverse},
    $(\Ubf - \Rbf)^{\Compo_{-1}}$ is a well-defined series. Moreover,
    by~\eqref{equ:equation_composition_inverse},
    $(\Ubf - \Rbf)^{\Compo_{-1}}$ satisfies the identity of series
    \begin{equation} \label{equ:series_color_types_synt_proof_1}
        (\Ubf - \Rbf) \Compo (\Ubf - \Rbf)^{\Compo_{-1}} = \Ubf.
    \end{equation}
    Since the map $\Colors$ commutes with the addition of series, with
    the composition product $\Compo$, and with the inverse with respect
    to $\Compo$, \eqref{equ:series_color_types_synt_proof_1} leads to
    the equation
    \begin{equation} \label{equ:series_color_types_synt_proof_2}
        \Colors(\Ubf - \Rbf) \Compo
        \Colors(\Ubf - \Rbf)^{\Compo_{-1}}
        = \Colors(\Ubf).
    \end{equation}
    By Point~\ref{item:composition_inverse_2} of
    Proposition~\ref{prop:composition_inverse},
    by~\eqref{equ:induction_composition_inverse}, and by definition of
    the composition map of $\Bud_\Ccr(\As)$, the coefficients of
    $\Colors(\Ubf - \Rbf)^{\Compo_{-1}}$ satisfy, for all
    $(a, u) \in \Bud_\Ccr(\As)$, the recurrence relation
    \begin{equation} \label{equ:series_color_types_synt_proof_3}
        \Angle{(a, u), \Colors(\Ubf - \Rbf)^{\Compo_{-1}}} =
        \delta_{u, a} +
        \sum_{\substack{
            w \in \Ccr^+ \\
            w \ne a
        }}
        \enspace
        \lambda_{a, w}
        \sum_{\substack{
            v^{(1)}, \dots, v^{(|w|)} \in \Ccr^+ \\
            u = v^{(1)} \dots v^{(|w|)}
        }}
        \enspace
        \prod_{i \in [|w|]}
        \Angle{\Par{w_i, v^{(i)}},
        \Colors(\Ubf - \Rbf)^{\Compo_{-1}}},
    \end{equation}
    where $\lambda_{a, w}$ denotes the number of rules $r \in \Rfr$ such
    that $\Out(r) = a$ and $\In(r) = w$. By definition of $\ColorTypes$
    and by~\eqref{equ:series_color_types_synt_proof_3}, a
    straightforward computation shows that the coefficients of
    $\ColorTypes\Par{(\Ubf - \Rbf)^{\Compo_{-1}}}$ express for any
    $\alpha \in \Types_\Ccr$, as
    \begin{multline} \label{equ:series_color_types_synt_proof_4}
        \Angle{\VarX_a \Ybb_\Ccr^\alpha,
        \ColorTypes\Par{(\Ubf - \Rbf)^{\Compo_{-1}}}} \\
        =
        \delta_{\alpha, \Type(a)} +
        \sum_{\substack{
            \gamma \in \Types_\Ccr \\
            \gamma \ne \Type(a)
        }}
        \enspace
        \MultOutIn_{a, \gamma}
        \sum_{\substack{
            \beta^{(1)}, \dots, \beta^{(\deg(\gamma))}
                \in \Types_\Ccr \\
            \alpha = \beta^{(1)} \dot{+} \cdots
                \dot{+} \beta^{(\deg(\gamma))}
        }}
        \enspace
        \prod_{i \in [\deg(\gamma)]}
        \Angle{\VarX_{\Ccr^\gamma_i} \Ybb_\Ccr^{\beta^{(i)}},
        \ColorTypes\Par{(\Ubf - \Rbf)^{\Compo_{-1}}}}.
    \end{multline}
    Therefore, \eqref{equ:series_color_types_synt_proof_4} provides a
    recurrence relation for the coefficients of
    $\ColorTypes\Par{(\Ubf - \Rbf)^{\Compo_{-1}}}$. By using the
    notations introduced in
    Section~\ref{subsubsec:elementary_series_bud_generating_systems}
    about mappings $\phi : \Ccr \times \Types_\Ccr \to \N$, we obtain
    that the coefficients of
    $\ColorTypes\Par{(\Ubf - \Rbf)^{\Compo_{-1}}}$ satisfy the same
    recurrence relation~\eqref{equ:series_color_types_synt} as the ones
    of $\Fbf$. Finally, Lemma~\ref{lem:initial_terminal_composition}
    implies that, when $a \in I$ and $\Ccr^\alpha \in T^+$,
    $\Angle{\VarX_a \Ybb_\Ccr^\alpha, \ColorTypes(\Synt(\Bca))}$ and
    $\Angle{\VarX_a \Ybb_\Ccr^\alpha, \Fbf}$ are equal.
\end{proof}
\medbreak

When $\Bca$ is a bud generating system satisfying the conditions of
Theorem~\ref{thm:series_color_types_synt} (which are the same as
the ones required by Proposition~\ref{prop:functional_equation_synt}),
one has for any $n \geq 1$,
\begin{equation} \label{equ:generating_series_synt_recurrence}
    \Angle{t^n, \GenS_{\Lang(\Bca)}}
    =
    \sum_{a \in I}
    \sum_{\substack{
        \alpha \in \Types_\Ccr \\
        \alpha_i = 0, c_i \in \Ccr \setminus T
    }}
    \Angle{\VarX_a \Ybb_\Ccr^\alpha, \Fbf}.
\end{equation}
Therefore, this provides an alternative and recursive way to compute the
coefficients of $\GenS_{\Lang(\Bca)}$, different from the one of
Proposition~\ref{prop:functional_equation_synt}%
\footnote{See an example of computation of a series
$\GenS_{\Lang(\Bca)}$ in Section~\ref{subsubsec:colt_synt_bubtree}.}
.
\medbreak

%%%%%%%%%%%%%%%%%%%%%%%%%%%%%%%%%%%%%%%%%%%%%%%%%%%%%%%%%%%%%%%%%%%%%%%%
%%%%%%%%%%%%%%%%%%%%%%%%%%%%%%%%%%%%%%%%%%%%%%%%%%%%%%%%%%%%%%%%%%%%%%%%
\subsection{Series and synchronous languages}
We introduce the Kleene star operation of the composition
product~$\Compo$ in order to define the synchronous generating series of
a bud generating system $\Bca$. We relate this series with the
synchronous language of $\Bca$ and provide ways to compute its
coefficients.
\medbreak

Proofs of some results of this section are very similar to ones of
Section~\ref{subsec:series_languages}. For this reason, some proofs
are sketched here.
\medbreak

%%%%%%%%%%%%%%%%%%%%%%%%%%%%%%%%%%%%%%%%%%%%%%%%%%%%%%%%%%%%%%%%%%%%%%%%
\subsubsection{Composition star product}
For any $\Cca$-series $\Fbf \in \K \AAngle{\Cca}$ and any $\ell \geq 0$,
let $\Fbf^{\Compo_\ell}$ be the series defined by
\begin{equation} \label{equ:definition_composition_power}
    \Fbf^{\Compo_\ell} :=
    \prod_{1 \leq i \leq \ell}
    \Fbf,
\end{equation}
where the product of~\eqref{equ:definition_composition_power} denotes
the iterated version of the composition product $\Compo$. Observe
that since $\Compo$ is associative (see
Proposition~\ref{prop:functor_series_monoids}), this definition is
consistent. Immediately from this definition and the definition of the
composition product $\Compo$, the coefficient of $\Fbf^{\Compo_\ell}$,
$\ell \geq 0$, satisfies for any $x \in \Cca$,
\begin{equation} \label{equ:induction_composition_powers}
    \Angle{x, \Fbf^{\Compo_\ell}} =
    \begin{cases}
        \delta_{x, \Unit_{\Out(x)}} & \mbox{if } \ell = 0, \\
        \sum\limits_{
        \substack{
            y, z_1, \dots, z_{|y|} \in \Cca \\
            x = y \circ \Han{z_1, \dots, z_{|y|}}}}
        \Angle{y, \Fbf^{\Compo_{\ell - 1}}}
        \prod\limits_{i \in [|y|]} \Angle{z_i, \Fbf}
        & \mbox{otherwise}.
    \end{cases}
\end{equation}
\medbreak

\begin{Lemma} \label{lem:composition_powers}
    Let $\Cca$ be a locally finite $\Ccr$-colored operad and $\Fbf$ be a
    series of $\K \AAngle{\Cca}$. Then, the coefficients of
    $\Fbf^{\Compo_{\ell + 1}}$, $\ell \geq 0$, satisfy for any
    $x \in \Cca$,
    \begin{equation} \label{equ:composition_powers}
        \Angle{x, \Fbf^{\Compo_{\ell + 1}}} =
        \sum_{\substack{
            \Tfr \in \Free_\Perfect(\Cca) \\
            \Height(\Tfr) = \ell + 1 \\
            \Eval_\Cca(\Tfr) = x
        }}
        \enspace
        \prod_{v \in \INodes(\Tfr)} \Angle{\Tfr(v), \Fbf}.
    \end{equation}
\end{Lemma}
\begin{proof}
    By Proposition~\ref{prop:functor_series_monoids}, since $\Cca$ is
    locally finite, $\Fbf^{\Compo_{\ell + 1}}$ is a well-defined
    $\Cca$-series. The statement of the lemma follows by induction on
    $\ell$ and by using~\eqref{equ:induction_composition_powers}.
\end{proof}
\medbreak

The \Def{$\Compo$-star} of $\Fbf$ is the series
\begin{equation}
    \Fbf^{\Compo_*} := \sum_{\ell \geq 0} \Fbf^{\Compo_\ell}
    = \Ubf + \Fbf + \Fbf \Compo \Fbf + \Fbf \Compo \Fbf \Compo \Fbf +
         \Fbf \Compo \Fbf \Compo \Fbf \Compo \Fbf + \cdots.
\end{equation}
Observe that $\Fbf^{\Compo_*}$ could be undefined for an arbitrary
$\Cca$-series~$\Fbf$.
\medbreak

\begin{Proposition} \label{prop:composition_star}
    Let $\Cca$ be a locally finite $\Ccr$-colored operad and $\Fbf$ be a
    series of $\K \AAngle{\Cca}$ such that $\Supp(\Fbf)(1)$
    finitely factorizes $\Cca$. Then,
    \begin{enumerate}[label = ({\it {\roman*})}]
        \item \label{item:composition_star_1}
        the series $\Fbf^{\Compo_*}$ is well-defined;
        \item \label{item:composition_star_2}
        for any $x \in \Cca$, the coefficient of $x$ in
        $\Fbf^{\Compo_*}$ satisfies
        \begin{equation} \label{equ:coefficients_composition_star}
            \Angle{x, \Fbf^{\Compo_*}} =
            \delta_{x, \Unit_{\Out(x)}} +
            \sum_{\substack{
                y, z_1, \dots, z_{|y|} \in \Cca \\
                x = y \circ \Han{z_1, \dots, z_{|y|}}
            }}
            \Angle{y, \Fbf^{\Compo_*}}
            \prod_{i \in [|y|]} \Angle{z_i, \Fbf};
        \end{equation}
        \item \label{item:composition_star_3}
        the equation
        \begin{equation} \label{equ:equation_composition_star}
            \Xbf - \Xbf \Compo \Fbf = \Ubf
        \end{equation}
        admits $\Xbf = \Fbf^{\Compo_*}$ as unique solution.
    \end{enumerate}
\end{Proposition}
\begin{proof}
    The proof is similar to the one of
    Proposition~\ref{prop:pre_lie_star} and
    uses~\eqref{equ:induction_composition_powers} and
    Lemmas~\ref{lem:finitely_factorizing_sets}
    and~\ref{lem:composition_powers}.
\end{proof}
\medbreak

In particular, Point~\ref{item:composition_star_2} of
Proposition~\ref{prop:composition_star} gives a way, given a
$\Cca$-series $\Fbf$ satisfying the stated constraints, to compute
recursively the coefficients of its $\Compo$-star~$\Fbf^{\Compo_*}$.
\medbreak

%%%%%%%%%%%%%%%%%%%%%%%%%%%%%%%%%%%%%%%%%%%%%%%%%%%%%%%%%%%%%%%%%%%%%%%%
\subsubsection{Synchronous generating series}%
\label{subsubsec:synchronous_generating_series}
The \Def{synchronous generating series} of $\Bca$ the
$\Bud_\Ccr(\Oca)$-series $\Sync(\Bca)$ defined by
\begin{equation} \label{equ:definition_synchronous_series}
    \Sync(\Bca) := \Ibf \Compo \Rbf^{\Compo_*} \Compo \Tbf.
\end{equation}
Observe that~\eqref{equ:definition_synchronous_series} could be
undefined for an arbitrary set of rules $\Rfr$ of $\Bca$. Nevertheless,
when $\Rbf$ satisfies the conditions of
Proposition~\ref{prop:composition_star}, that is, when $\Oca$ is a
locally finite operad and $\Rfr(1)$ finitely factorizes
$\Bud_\Ccr(\Oca)$, $\Sync(\Bca)$ is well-defined.
\medbreak

The aim of the following is to provide an expression to compute the
coefficients of~$\Sync(\Bca)$.
\medbreak

\begin{Lemma} \label{lem:composition_star_synchronous_series}
    Let $\Bca := (\Oca, \Ccr, \Rfr, I, T)$ be a bud generating system
    such that $\Oca$ is a locally finite operad and $\Rfr(1)$
    finitely factorizes $\Bud_\Ccr(\Oca)$. Then, for any
    $x \in \Bud_\Ccr(\Oca)$,
    \begin{equation} \label{equ:composition_star_synchronous_series}
        \Angle{x, \Rbf^{\Compo_*}} =
        \delta_{x, \Unit_{\Out(x)}} +
        \sum_{\substack{
            y \in \Bud_\Ccr(\Oca) \\
            z_1, \dots, z_{|y|} \in \Rfr \\
            x = y \circ \Han{z_1, \dots, z_{|y|}}
        }}
        \Angle{y, \Rbf^{\Compo_*}}.
    \end{equation}
\end{Lemma}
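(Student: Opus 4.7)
The plan is to derive this identity as a direct specialization of Proposition~\ref{prop:coefficients_composition_star} to the series $\Br$, exploiting the fact that $\Br$, being the characteristic series of $\Rfr$, has coefficients only in $\{0,1\}$. This mirrors precisely the role played by Proposition~\ref{prop:coefficients_pre_lie_star} in establishing Lemma~\ref{lem:pre_lie_star_hook_series}, so the same strategy should go through verbatim.

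First I would check that $\Br^{\Compo_*}$ is well-defined. Since $\Oca$ is locally finite, $\Bud_\Ccr(\Oca)$ is likewise locally finite (its arity-$n$ component is $\Ccr \times \Oca(n) \times \Ccr^n$, a product of finite sets). Moreover, $\Supp(\Br) = \Rfr$, so by hypothesis $\Supp(\Br)(1) = \Rfr(1)$ is $\Bud_\Ccr(\Oca)$-finitely factorizing. Lemma~\ref{lem:composition_star_well_defined} thus applies and ensures that $\Br^{\Compo_*}$ is a well-defined $\Bud_\Ccr(\Oca)$-series.

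Next I would invoke Proposition~\ref{prop:coefficients_composition_star} with $\Cca := \Bud_\Ccr(\Oca)$ and $\Bf := \Br$, which gives
\begin{equation*}
    \Angle{x, \Br^{\Compo_*}} =
    \delta_{x, \Unit_{\Out(x)}} +
    \sum_{\substack{
        y, z_1, \dots, z_{|y|} \in \Bud_\Ccr(\Oca) \\
        x = y \circ \left[z_1, \dots, z_{|y|}\right]
    }}
    \Angle{y, \Br^{\Compo_*}}
    \prod_{i \in [|y|]} \Angle{z_i, \Br}.
\end{equation*}
The crucial observation is that $\Angle{z_i, \Br} = 1$ if $z_i \in \Rfr$ and $\Angle{z_i, \Br} = 0$ otherwise, so the product $\prod_{i \in [|y|]} \Angle{z_i, \Br}$ equals $1$ exactly when all of $z_1, \dots, z_{|y|}$ belong to $\Rfr$, and vanishes otherwise. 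Restricting the summation index accordingly yields
\begin{equation*}
    \Angle{x, \Br^{\Compo_*}} =
    \delta_{x, \Unit_{\Out(x)}} +
    \sum_{\substack{
        y \in \Bud_\Ccr(\Oca) \\
        z_1, \dots, z_{|y|} \in \Rfr \\
        x = y \circ \left[z_1, \dots, z_{|y|}\right]
    }}
    \Angle{y, \Br^{\Compo_*}},
\end{equation*}
which is exactly~\eqref{equ:composition_star_synchronous_series}.

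There is essentially no obstacle: the work has already been done in Proposition~\ref{prop:coefficients_composition_star}, and the step converting a weighted sum over $\Bud_\Ccr(\Oca)^{|y|}$ into a sum over $\Rfr^{|y|}$ is purely bookkeeping enabled by the $\{0,1\}$-valued nature of the characteristic series. The only subtlety worth double-checking is that the local finiteness of $\Bud_\Ccr(\Oca)$ really does follow from that of $\Oca$, which is immediate from the definition of $\Bud_\Ccr$ and the finiteness of $\Ccr$.
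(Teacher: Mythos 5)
Your proof is correct and follows essentially the same route as the paper's own proof: establish well-definedness via Lemma~\ref{lem:composition_star_well_defined}, then specialize Proposition~\ref{prop:coefficients_composition_star} to $\Br$ and use that the characteristic series has $\{0,1\}$ coefficients to collapse the weighted sum. Your additional remark verifying that $\Bud_\Ccr(\Oca)$ is locally finite whenever $\Oca$ is and $\Ccr$ is finite is a small but worthwhile piece of due diligence the paper leaves implicit.
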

\begin{proof}
    The proof is similar to the one of
    Lemma~\ref{lem:pre_lie_star_hook_series} and uses
    Proposition~\ref{prop:composition_star}.
\end{proof}
\medbreak

\begin{Theorem} \label{thm:synchronous_series}
    Let $\Bca := (\Oca, \Ccr, \Rfr, I, T)$ be a bud generating system
    such that $\Oca$ is a locally finite operad and $\Rfr(1)$
    finitely factorizes $\Bud_\Ccr(\Oca)$. Then, the synchronous
    generating series of $\Bca$ satisfies
    \begin{equation} \label{equ:synchronous_series}
        \Sync(\Bca)
        = \sum_{
            \substack{\Tfr \in \Free_{\Perfect}(\Rfr) \\
            \Out(\Tfr) \in I \\
            \In(\Tfr) \in T^+}}
        \Eval_{\Bud_\Ccr(\Oca)}(\Tfr).
    \end{equation}
\end{Theorem}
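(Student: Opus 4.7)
The plan is to mirror the structure of the proof of Theorem~\ref{thm:syntactic_series}, but replacing the ingredients involving the $\Compo$-inverse by their $\Compo$-star analogues, and replacing arbitrary treelike expressions by perfect ones. The key point is that $\Br$ is a characteristic series, so all its coefficients are $0$ or $1$; this collapses the products that appear in the intermediate formulas into simple indicator functions counting $\Rfr$-labeled perfect trees.

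First, since $\Oca$ is locally finite and $\Rfr(1)$ is $\Bud_\Ccr(\Oca)$-finitely factorizing, Lemma~\ref{lem:composition_star_well_defined} ensures that $\Br^{\Compo_*}$ is well-defined, and hence so is $\Sync(\Bca) = \Bi \Compo \Br^{\Compo_*} \Compo \Bt$. Next, I apply Lemma~\ref{lem:composition_powers} to each power $\Br^{\Compo_{\ell+1}}$: for every $x \in \Bud_\Ccr(\Oca)$,
\begin{equation*}
    \Angle{x, \Br^{\Compo_{\ell+1}}}
    = \sum_{\substack{\Tfr \in \Free_\Perfect(\Bud_\Ccr(\Oca)) \\ \Height(\Tfr) = \ell+1 \\ \Eval_{\Bud_\Ccr(\Oca)}(\Tfr) = x}}
    \prod_{v \in \INodes(\Tfr)} \Angle{\Label(v), \Br}.
\end{equation*}
Because $\Angle{\Label(v), \Br} \in \{0,1\}$ with value $1$ if and only if $\Label(v) \in \Rfr$, the above product is the indicator that $\Tfr \in \Free_\Perfect(\Rfr)$. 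Therefore $\Angle{x, \Br^{\Compo_{\ell+1}}}$ is exactly the number of perfect $\Rfr$-treelike expressions of $x$ of height $\ell+1$.

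Then I handle the base case $\ell = 0$ separately: by definition $\Br^{\Compo_0} = \Bu$, so $\Angle{x, \Br^{\Compo_0}} = \delta_{x, \Unit_{\Out(x)}}$, which equals the number of perfect $\Rfr$-treelike expressions of $x$ of height $0$ (namely, one if $x$ is a colored unit and zero otherwise). Summing on $\ell \geq 0$ and using the definition of $\Br^{\Compo_*}$, I obtain
\begin{equation*}
    \Angle{x, \Br^{\Compo_*}}
    = \sum_{\substack{\Tfr \in \Free_\Perfect(\Rfr) \\ \Eval_{\Bud_\Ccr(\Oca)}(\Tfr) = x}} 1.
\end{equation*}
Lemma~\ref{lem:finitely_factorizing_finite_treelike_expressions} guarantees that this sum is finite for each $x$, so the identity makes sense coefficient by coefficient.

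Finally, Lemma~\ref{lem:initial_terminal_composition} applied to $\Br^{\Compo_*}$ shows that $\Angle{x, \Sync(\Bca)}$ equals $\Angle{x, \Br^{\Compo_*}}$ when $\Out(x) \in I$ and $\In(x) \in T^+$, and vanishes otherwise; this is exactly the coefficient of $x$ in the right-hand side of~\eqref{equ:synchronous_series}. There is no real obstacle here: the proof is essentially a transcription of the argument for Theorem~\ref{thm:syntactic_series}, with the only subtlety being the proper bookkeeping of the $\ell = 0$ layer of perfect trees, which corresponds to the colored units contributed by the $\Bu$ summand in $\Br^{\Compo_*}$.
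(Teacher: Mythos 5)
Your proof is correct, but it takes a genuinely different route from the paper's. The paper's proof parallels that of Theorem~\ref{thm:syntactic_series}: it defines $\lambda_x$ as the number of perfect $\Rfr$-treelike expressions of $x$ and proves $\Angle{x, \Br^{\Compo_*}} = \lambda_x$ by induction on $\deg_\Rfr(x)$, using the one-step recurrence of Lemma~\ref{lem:composition_star_synchronous_series} (handling separately the case $x \notin \Bud_\Ccr(\Oca)^\Rfr$ and the base case $x = \Unit_a$). You instead go straight through the power expansion: you invoke Lemma~\ref{lem:composition_powers} to identify $\Angle{x, \Br^{\Compo_{\ell+1}}}$ with a weighted count of perfect $\Bud_\Ccr(\Oca)$-trees of height $\ell+1$, observe that the product of coefficients collapses to the indicator of $\Free_\Perfect(\Rfr)$ because $\Br$ is a characteristic series, and then sum over $\ell$ (grouping by height rather than degree), with finiteness of the total secured by Lemma~\ref{lem:finitely_factorizing_finite_treelike_expressions}. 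Both proofs then finish identically with Lemma~\ref{lem:initial_terminal_composition}. Your approach is arguably cleaner here because Lemma~\ref{lem:composition_powers} already expresses $\Compo$-powers as sums over perfect trees, so the $0/1$ structure of $\Br$ does all the work and no induction is needed; the paper's inductive route has the advantage of running in parallel with the syntactic case, where no direct tree-expansion lemma for $(\Bu - \Br)^{\Compo_{-1}}$ is available.
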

\begin{proof}
    Let, for any $x \in \Bud_\Ccr(\Oca)$, $\lambda_x$ be the number of
    perfect $\Rfr$-treelike expressions for $x$. Since $\Rfr(1)$
    finitely factorizes $\Bud_\Ccr(\Oca)$, by
    Lemma~\ref{lem:finitely_factorizing_sets}, all $\lambda_x$ are
    well-defined integers. Moreover, since $\Rfr(1)$ finitely factorizes
    $\Bud_\Ccr(\Oca)$, by Point~\ref{item:composition_star_1} of
    Proposition~\ref{prop:composition_star}, $\Rbf^{\Compo_*}$ is a
    well-defined series. Let us show that
    $\Angle{x, \Rbf^{\Compo_*}} = \lambda_x$. First, when $x$ does not
    belong to $\Bud_\Ccr(\Oca)^\Rfr$, by
    Lemma~\ref{lem:composition_star_synchronous_series},
    $\Angle{x, \Rbf^{\Compo_*}} = 0$. Since, in this case
    $\lambda_x = 0$, the property holds. Let us now assume that $x$
    belongs to $\Bud_\Ccr(\Oca)^\Rfr$. Again by
    Lemma~\ref{lem:finitely_factorizing_sets}, the $\Rfr$-degree of $x$
    is well-defined. Therefore, we proceed by induction on
    $\deg_\Rfr(x)$. By
    Lemma~\ref{lem:composition_star_synchronous_series}, when $x$ is a
    colored unit $\Unit_a$, $a \in \Ccr$, one has
    $\Angle{x, \Rbf^{\Compo_*}}  = 1$. Since there is exactly one
    treelike expression which is a perfect tree for $\Unit_a$, namely
    the syntax tree consisting in one leaf of output and input color
    $a$, $\lambda_{\Unit_a} = 1$ so that the base case holds. Otherwise,
    again by Lemma~\ref{lem:composition_star_synchronous_series}, we
    have, by using induction hypothesis,
    \begin{equation} \label{equ:synchronous_series_1}
        \Angle{x, \Rbf^{\Compo_*}} =
        \sum_{\substack{
            y \in \Bud_\Ccr(\Oca) \\
            z_1, \dots, z_{|y|} \in \Rfr \\
            x = y \circ \Han{z_1, \dots, z_{|y|}}}}
        \lambda_y = \lambda_x.
    \end{equation}
    Notice that one can apply the induction hypothesis to
    state~\eqref{equ:synchronous_series_1} since one has
    $\deg_\Rfr(x) \geq 1 + \deg_\Rfr(y)$.
    \smallbreak

    Now, from~\eqref{equ:synchronous_series_1} and by using
    Lemma~\ref{lem:initial_terminal_composition}, we obtain that for all
    $x \in \Bud_\Ccr(\Oca)$ such that $\Out(x) \in I$ and
    $\In(x) \in T^+$, $\Angle{x, \Sync(\Bca)} = \lambda_x$. By denoting
    by $\Fbf$ the series of the right member
    of~\eqref{equ:synchronous_series}, we have
    $\Angle{x, \Fbf} = \lambda_x$ if $\Out(x) \in I$ and
    $\In(x) \in T^*$, and $\Angle{x, \Fbf} = 0$ otherwise. This shows
    that this expression is equal to~$\Sync(\Bca)$.
\end{proof}
\medbreak

Theorem~\ref{thm:synchronous_series} implies that for any
$x \in \Bud_\Ccr(\Oca)$, the coefficient of $\Angle{x, \Sync(\Bca)}$
is the number of $\Rfr$-treelike expressions for $x$ which are perfect
trees.
\medbreak

The following result establishes a link between the synchronous
generating series of $\Bca$ and its synchronous language.
\medbreak

\begin{Proposition} \label{prop:support_synchronous_series_language}
    Let $\Bca := (\Oca, \Ccr, \Rfr, I, T)$ be a bud generating system
    such that $\Oca$ is a locally finite operad and $\Rfr(1)$ finitely
    factorizes $\Bud_\Ccr(\Oca)$. Then, the support of the synchronous
    generating series of $\Bca$ is the synchronous language of~$\Bca$.
\end{Proposition}
\begin{proof}
    This is an immediate consequence of
    Theorem~\ref{thm:synchronous_series} and
    Lemma~\ref{lem:synchronous_language_treelike_expressions}.
\end{proof}
\medbreak

We say that $\Bca$ is \Def{synchronously unambiguous} if all
coefficients of $\Sync(\Bca)$ are equal to $0$ or to $1$. This property
is important to describe the coefficients of $\Colors(\Sync(\Bca))$
for the same reasons as the ones concerning the unambiguity property
exposed in Section~\ref{subsubsec:syntactic_generating_series}.
\medbreak

Let us now describe the coefficients of $\ColorTypes(\Sync(\Bca))$, the
series of colors types of the synchronous series of $\Bca$, in the
particular case when $\Bca$ is unambiguous. We shall give two
descriptions: a first one involving a system of functional equations of
series of $\K \Han{\Han{\Ybb_\Ccr}}$, and a second one involving a
recurrence relation on the coefficients of a series of
$\K \Han{\Han{\Xbb_\Ccr \sqcup \Ybb_\Ccr}}$.
\medbreak

\begin{Lemma} \label{lem:colt_sync_coefficients_description}
    Let $\Bca := (\Oca, \Ccr, \Rfr, I, T)$ be a synchronously
    unambiguous bud generating system such that $\Oca$ is a locally
    finite operad and $\Rfr(1)$ finitely factorizes $\Bud_\Ccr(\Oca)$.
    Then, for all colors $a \in I$ and all types
    $\alpha \in \Types_\Ccr$ such that $\Ccr^\alpha \in T^+$, the
    coefficients
    $\Angle{\VarX_a \Ybb_\Ccr^\alpha, \ColorTypes(\Sync(\Bca))}$ count
    the number of elements $x$ of $\SyncLang(\Bca)$ such that
    $(\Out(x), \Type(\In(x))) = (a, \alpha)$.
\end{Lemma}
\begin{proof}
    The proof is similar to the one of
    Lemma~\ref{lem:colt_synt_coefficients_description} and
    uses~\eqref{equ:definition_color_types} and
    Proposition~\ref{prop:support_synchronous_series_language}.
\end{proof}
\medbreak

\begin{Proposition} \label{prop:functional_equation_sync}
    Let $\Bca := (\Oca, \Ccr, \Rfr, I, T)$ be a synchronously
    unambiguous bud generating system such that $\Oca$ is a locally
    finite operad and $\Rfr(1)$ finitely factorizes $\Bud_\Ccr(\Oca)$.
    For all $a \in \Ccr$, let
    $\Fbf_a\Par{\VarY_{c_1}, \dots, \VarY_{c_k}}$ be the series of
    $\K \Han{\Han{\Ybb_\Ccr}}$ satisfying
    \begin{equation}
        \Fbf_a\Par{\VarY_{c_1}, \dots, \VarY_{c_k}} =
        \VarY_a +
        \Fbf_a\Par{\Gbf_{c_1}\Par{\VarY_{c_1}, \dots, \VarY_{c_k}},
        \dots,
        \Gbf_{c_k}\Par{\VarY_{c_1}, \dots, \VarY_{c_k}}}.
    \end{equation}
    Then, for any color $a \in I$ and any type $\alpha \in \Types_\Ccr$
    such that $\Ccr^\alpha \in T^+$, the coefficients
    $\Angle{\VarX_a \Ybb_\Ccr^\alpha, \ColorTypes(\Sync(\Bca))}$
    and $\Angle{\Ybb_\Ccr^\alpha, \Fbf_a}$ are equal.
\end{Proposition}
\begin{proof}
    The proof is similar to the one of
    Proposition~\ref{prop:functional_equation_synt} and uses
    Lemma~\ref{lem:initial_terminal_composition} and
    Proposition~\ref{prop:composition_star}.
\end{proof}
\medbreak

When $\Bca$ is a bud generating system satisfying the conditions of
Proposition~\ref{prop:functional_equation_sync}, the generating series
of the synchronous language of $\Bca$ satisfies
\begin{equation} \label{equ:generating_series_sync_functional_equation}
    \GenS_{\SyncLang(\Bca)} = \sum_{a \in I} \Fbf_a^{T},
\end{equation}
where $\Fbf_a^{T}$ is the specialization of the series
$\Fbf_a(\VarY_{c_1}, \dots, \VarY_{c_k})$ at $\VarY_b := t$ for all
$b \in T$ and at $\VarY_c := 0$ for all $c \in \Ccr \setminus T$.
Therefore, the resolution of the system of equations given by
Proposition~\ref{prop:functional_equation_sync} provides a way to
compute the coefficients of $\GenS_{\SyncLang(\Bca)}$. This resolution
can be made in most cases by iteration~\cite{BLL97,FS09}%
\footnote{See example of a computation of a series
$\GenS_{\SyncLang(\Bca)}$ by iteration in
Section~\ref{subsubsec:colt_sync_bbaltree}.}
. Moreover, when $\Gca$ is a synchronous grammar~\cite{Gir12} (see also
Section~\ref{subsubsec:synchronous_grammars} for a description of these
grammars) and when $\SG(\Gca) = \Bca$, the system of functional
equations provided by Proposition~\ref{prop:functional_equation_sync}
and~\eqref{equ:generating_series_sync_functional_equation} for
$\GenS_{\SyncLang(\Bca)}$ is the same as the one which can be extracted
from~$\Gca$.
\medbreak

\begin{Theorem} \label{thm:series_color_types_sync}
    Let $\Bca := (\Oca, \Ccr, \Rfr, I, T)$ be a synchronously
    unambiguous bud generating system such that $\Oca$ is a locally
    finite operad and $\Rfr(1)$ finitely factorizes $\Bud_\Ccr(\Oca)$.
    Let $\Fbf$ be the series of
    $\K \Han{\Han{\Xbb_\Ccr \sqcup \Ybb_\Ccr}}$ satisfying, for any
    $a \in \Ccr$ and any type $\alpha \in \Types_\Ccr$,
    \begin{equation} \label{equ:series_color_types_sync}
        \Angle{\VarX_a \Ybb_\Ccr^\alpha, \Fbf}
        =
        \delta_{\alpha, \Type(a)} +
        \sum_{\substack{
            \phi : \Ccr \times \Types_\Ccr \to \N \\
            \alpha = \phi^{(c_1)} \dots \phi^{(c_k)}
        }}
        \Par{
            \prod_{b \in \Ccr}
            \phi_b !
        }
        \Par{
            \prod_{\substack{
                b \in \Ccr \\
                \gamma \in \Types_\Ccr
            }}
            \MultOutIn_{b, \gamma}^{\phi(b, \gamma)}
        }
        \Angle{\VarX_a
        \prod_{b \in \Ccr} \VarY_b^{\sum \phi_b},
        \Fbf}.
    \end{equation}
    Then, for any color $a \in I$ and any type $\alpha \in \Types_\Ccr$
    such that $\Ccr^\alpha \in T^+$, the coefficients
    $\Angle{\VarX_a \Ybb_\Ccr^\alpha, \ColorTypes(\Sync(\Bca))}$ and
    $\Angle{\VarX_a \Ybb_\Ccr^\alpha, \Fbf}$ are equal.
\end{Theorem}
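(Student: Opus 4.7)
The plan is to mirror the strategy used for Theorem~\ref{thm:series_color_types_synt}, but starting from the fixed-point equation for $\Br^{\Compo_*}$ given by Proposition~\ref{prop:equation_composition_star} instead of the one for $(\Bu - \Br)^{\Compo_{-1}}$. Since $\Oca$ is locally finite and $\Rfr(1)$ is $\Bud_\Ccr(\Oca)$-finitely factorizing, Lemma~\ref{lem:composition_star_well_defined} guarantees $\Br^{\Compo_*}$ is well-defined, and Proposition~\ref{prop:equation_composition_star} gives
\begin{equation*}
    \Br^{\Compo_*} = \Bu + \Br^{\Compo_*} \Compo \Br.
\end{equation*}
Applying $\Colors$, which by Propositions~\ref{prop:functor_colored_operads_vector_spaces} and~\ref{prop:functor_series_monoids} commutes with addition and with $\Compo$, yields the analogous identity in $\K\AAngle{\Bud_\Ccr(\As)}$. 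Extracting the coefficient at $(a, u)$ and unfolding the definition of $\Compo$ and of the partial composition in $\Bud_\Ccr(\As)$ gives
\begin{equation*}
    \Angle{(a, u), \Colors(\Br^{\Compo_*})}
    = \delta_{u, a} +
    \sum_{w \in \Ccr^+}
    \Angle{(a, w), \Colors(\Br^{\Compo_*})}
    \sum_{\substack{v^{(1)}, \dots, v^{(|w|)} \in \Ccr^+ \\ u = v^{(1)} \cdots v^{(|w|)}}}
    \prod_{i \in [|w|]} \Angle{(w_i, v^{(i)}), \Colors(\Br)}.
\end{equation*}

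Next I would sum this over all $u$ with $\Type(u) = \alpha$ to produce a recurrence for $\ColorTypes(\Br^{\Compo_*})$. The inner sum over $v^{(1)}, \dots, v^{(|w|)}$ depends on $w$ only through its Parikh vector, since $\sum_{v : \Type(v) = \gamma} \Angle{(b, v), \Colors(\Br)} = \MultOutIn_{b, \gamma}$. The crucial combinatorial step is then the reorganization: group each pair $(w, (\gamma_i)_{i \in [|w|]})$ (where $\gamma_i := \Type(v^{(i)})$) by the function
\begin{equation*}
    \phi : \Ccr \times \Types_\Ccr \to \N, \qquad
    \phi(b, \gamma) := \#\{i \in [|w|] : w_i = b \text{ and } \gamma_i = \gamma\}.
\end{equation*}
For a fixed $\phi$, the letter $b$ occurs $\sum \phi_b$ times in $w$, so $\Type(w) = (\sum \phi_{c_1}, \dots, \sum \phi_{c_k})$; the total input type is $\alpha = \phi^{(c_1)} \dots \phi^{(c_k)}$; the number of pairs $(w, (\gamma_i))$ realizing $\phi$ equals $\prod_{b \in \Ccr} \phi_b!$ (using the multinomial $\phi_b!$ from the general notations); and each such pair contributes $\prod_{b, \gamma} \MultOutIn_{b, \gamma}^{\phi(b, \gamma)}$ to the inner sum.

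Swapping the order of summation and applying the identity $\sum_{w : \Type(w) = \beta} \Angle{(a, w), \Colors(\Br^{\Compo_*})} = \Angle{\VarX_a \Ybb_\Ccr^\beta, \ColorTypes(\Br^{\Compo_*})}$, I obtain the recurrence
\begin{equation*}
    \Angle{\VarX_a \Ybb_\Ccr^\alpha, \ColorTypes(\Br^{\Compo_*})} =
    \delta_{\alpha, \Type(a)} +
    \sum_{\substack{\phi \\ \alpha = \phi^{(c_1)} \dots \phi^{(c_k)}}}
    \left(\prod_{b \in \Ccr} \phi_b!\right)
    \left(\prod_{\substack{b \in \Ccr \\ \gamma \in \Types_\Ccr}}
        \MultOutIn_{b, \gamma}^{\phi(b, \gamma)}\right)
    \Angle{\VarX_a \prod_{b \in \Ccr} \VarY_b^{\sum \phi_b},
        \ColorTypes(\Br^{\Compo_*})},
\end{equation*}
which is precisely~\eqref{equ:series_color_types_sync}. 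Hence $\Angle{\VarX_a \Ybb_\Ccr^\alpha, \ColorTypes(\Br^{\Compo_*})} = \Angle{\VarX_a \Ybb_\Ccr^\alpha, \Bf}$ for all $a$ and $\alpha$. Finally, Lemma~\ref{lem:initial_terminal_composition} gives $\Angle{\VarX_a \Ybb_\Ccr^\alpha, \ColorTypes(\Sync(\Bca))} = \Angle{\VarX_a \Ybb_\Ccr^\alpha, \ColorTypes(\Br^{\Compo_*})}$ whenever $a \in I$ and $\Ccr^\alpha \in T^+$, yielding the claimed equality. The main obstacle is the bookkeeping in the $\phi$-reorganization, in particular verifying that the multinomial count of $(w, (\gamma_i))$-pairs consistent with a given $\phi$ is exactly $\prod_b \phi_b!$ and that this reconstructs the stated recurrence without extra stray factors.
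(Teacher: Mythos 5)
Your approach mirrors the paper's own proof: invoke the fixed-point equation $\Br^{\Compo_*} = \Bu + \Br^{\Compo_*} \Compo \Br$ of Proposition~\ref{prop:equation_composition_star}, push through $\Colors$ (which commutes with $+$ and $\Compo$), pass to $\ColorTypes$, re-index the resulting recurrence by the maps $\phi : \Ccr \times \Types_\Ccr \to \N$, and finish with Lemma~\ref{lem:initial_terminal_composition}. The strategy is the same and the final formula matches.

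The bookkeeping step you flagged does need a small correction as worded. Write $\gamma$ for the type with $\gamma_b := \sum \phi_b$. The total number of pairs $(w, (\gamma_i))$ realizing a given $\phi$ is in fact $\binom{\deg(\gamma)}{\gamma_{c_1}, \dots, \gamma_{c_k}} \prod_{b} \phi_b!$, not $\prod_b \phi_b!$; and one must not count such pairs with uniform weight anyway, since the factor $\Angle{(a, w), \Colors(\Br)^{\Compo_*}}$ in the recurrence depends on $w$. The correct order of operations is: first fix a word $w$ with $\Type(w) = \gamma$; for that $w$ the number of type-sequences $(\gamma_i)_i$ realizing $\phi$ is exactly $\prod_b \phi_b!$ (partition, for each $b \in \Ccr$, the $\gamma_b$ positions of $w$ carrying letter $b$ into blocks of sizes $\phi(b, \delta)$, $\delta \in \Types_\Ccr$). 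Only then sum over $w$, and the identity $\sum_{w : \Type(w) = \gamma} \Angle{(a, w), \Colors(\Br)^{\Compo_*}} = \Angle{\VarX_a \Ybb_\Ccr^\gamma, \ColorTypes(\Br^{\Compo_*})}$ absorbs the $\binom{\deg(\gamma)}{\gamma_{c_1}, \dots, \gamma_{c_k}}$ words of that type. With this reordering, \eqref{equ:series_color_types_sync} drops out with no stray multinomials. One further point worth noting in your favor: you keep the term $w = a$ in the sum over $w$, equivalently the map $\phi$ with $\phi(a, \alpha) = 1$. That term is genuinely present in~\eqref{equ:series_color_types_sync}: the decomposition $(a, u) = \Unit_a \circ [(a, u)]$ contributes $\Angle{\Unit_a, \Colors(\Br)^{\Compo_*}} \cdot \Angle{(a,u), \Colors(\Br)} = \Angle{(a,u), \Colors(\Br)}$, which does not vanish here because the unit appears as a coefficient of $\Br^{\Compo_*}$ (equal to $1$). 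This is a meaningful contrast with the syntactic case of Theorem~\ref{thm:series_color_types_synt}, where the analogous term carries a unit-coefficient of $\Br$, hence vanishes, so that restricting to $w \ne a$ costs nothing there.
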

\begin{proof}
    The proof is similar to the one of
    Theorem~\ref{thm:series_color_types_synt} and uses
    Lemma~\ref{lem:initial_terminal_composition} and
    Proposition~\ref{prop:composition_star}.
\end{proof}
\medbreak

When $\Bca$ is a bud generating system satisfying the conditions of
Theorem~\ref{thm:series_color_types_sync} (which are the same as the
ones required by Proposition~\ref{prop:functional_equation_sync}), one
has for any $n \geq 1$,
\begin{equation} \label{equ:generating_series_sync_recurrence}
    \Angle{t^n, \GenS_{\SyncLang(\Bca)}}
    =
    \sum_{a \in I}
    \sum_{\substack{
        \alpha \in \Types_\Ccr \\
        \alpha_i = 0, c_i \in \Ccr \setminus T
    }}
    \Angle{\VarX_a \Ybb_\Ccr^\alpha, \Fbf}.
\end{equation}
Therefore, this provides an alternative and recursive way to compute the
coefficients of $\GenS_{\SyncLang(\Bca)}$, different from the one of
Proposition~\ref{prop:functional_equation_sync}%
\footnote{See examples of computations of series
$\GenS_{\SyncLang(\Bca)}$ in Sections~\ref{subsubsec:colt_sync_bbtree}
and~\ref{subsubsec:colt_sync_bbaltree}.}
.
\medbreak

%%%%%%%%%%%%%%%%%%%%%%%%%%%%%%%%%%%%%%%%%%%%%%%%%%%%%%%%%%%%%%%%%%%%%%%%
%%%%%%%%%%%%%%%%%%%%%%%%%%%%%%%%%%%%%%%%%%%%%%%%%%%%%%%%%%%%%%%%%%%%%%%%
%%%%%%%%%%%%%%%%%%%%%%%%%%%%%%%%%%%%%%%%%%%%%%%%%%%%%%%%%%%%%%%%%%%%%%%%
\section{Examples} \label{sec:examples}
This final section is devoted to illustrate the notions and the results
contained in the previous ones. We first define here some monochrome
operads, then give examples of series on colored operads, and construct
some bud generating systems. We end this section by explaining how bud
generating systems can be used as tools for enumeration. For this
purpose, we use the syntactic and synchronous generating series of
several bud generating systems to compute the generating series of
various combinatorial objects.
\medbreak

%%%%%%%%%%%%%%%%%%%%%%%%%%%%%%%%%%%%%%%%%%%%%%%%%%%%%%%%%%%%%%%%%%%%%%%%
%%%%%%%%%%%%%%%%%%%%%%%%%%%%%%%%%%%%%%%%%%%%%%%%%%%%%%%%%%%%%%%%%%%%%%%%
\subsection{Monochrome operads and bud operads}%
\label{subsec:examples_monochrome_operads}
Let us start by defining three monochrome operads involving some
classical combinatorial objects: binary trees, some words of integers,
and Motzkin paths.
\medbreak

%%%%%%%%%%%%%%%%%%%%%%%%%%%%%%%%%%%%%%%%%%%%%%%%%%%%%%%%%%%%%%%%%%%%%%%%
\subsubsection{The magmatic operad}\label{subsubsec:magmatic_operad}
A \Def{binary tree} is a planar rooted tree $\Tfr$ such that any
internal node of $\Tfr$ has two children. The \Def{magmatic operad}
$\Mag$ is the monochrome operad wherein $\Mag(n)$ is the set of all
binary trees with $n$ leaves. The partial composition
$\Sfr \circ_i \Tfr$ of two binary trees $\Sfr$ and $\Tfr$ is the binary
tree obtained by grafting the root of $\Tfr$ on the $i$-th leaf of
$\Sfr$. The only tree of $\Mag$ consisting in exactly one leaf is
denoted by $\TreeLeaf$ and is the unit of $\Mag$. Notice that $\Mag$ is
isomorphic to the operad $\Free(C)$ where $C$ is the monochrome graded
collection defined by $C := C(2) := \{\Att\}$.
\medbreak

For any set $\Ccr$ of colors, the bud operad $\Bud_\Ccr(\Mag)$ is the
$\Ccr$-graded colored collection of all binary trees $\Tfr$ where all
leaves (inputs) and the root (output) of $\Tfr$ are labeled on $\Ccr$.
For instance, in $\Bud_{\{1, 2, 3\}}(\Mag)$, one has
\begin{equation}
    \begin{tikzpicture}[xscale=.18,yscale=.15,Centering]
        \node[Leaf](0)at(0.00,-4.50){};
        \node[Leaf](2)at(2.00,-6.75){};
        \node[Leaf](4)at(4.00,-6.75){};
        \node[Leaf](6)at(6.00,-4.50){};
        \node[Leaf](8)at(8.00,-4.50){};
        \node[Node](1)at(1.00,-2.25){};
        \node[Node](3)at(3.00,-4.50){};
        \node[Node](5)at(5.00,0.00){};
        \node[Node](7)at(7.00,-2.25){};
        \draw[Edge](0)--(1);
        \draw[Edge](1)--(5);
        \draw[Edge](2)--(3);
        \draw[Edge](3)--(1);
        \draw[Edge](4)--(3);
        \draw[Edge](6)--(7);
        \draw[Edge](7)--(5);
        \draw[Edge](8)--(7);
        \node(r)at(5.00,2){};
        \draw[Edge](r)--(5);
        \node[LeafLabel,above of=r]{\begin{math}2\end{math}};
        \node[LeafLabel,below of=0]{\begin{math}2\end{math}};
        \node[LeafLabel,below of=2]{\begin{math}2\end{math}};
        \node[LeafLabel,below of=4]{\begin{math}1\end{math}};
        \node[LeafLabel,below of=6]{\begin{math}1\end{math}};
        \node[LeafLabel,below of=8]{\begin{math}3\end{math}};
    \end{tikzpicture}
    \enspace \circ_4 \enspace
    \begin{tikzpicture}[xscale=.18,yscale=.15,Centering]
        \node[Leaf](0)at(0.00,-1.75){};
        \node[Leaf](2)at(2.00,-5.25){};
        \node[Leaf](4)at(4.00,-5.25){};
        \node[Leaf](6)at(6.00,-3.50){};
        \node[Node,Mark4](1)at(1.00,0.00){};
        \node[Node,Mark4](3)at(3.00,-3.50){};
        \node[Node,Mark4](5)at(5.00,-1.75){};
        \draw[Edge](0)--(1);
        \draw[Edge](2)--(3);
        \draw[Edge](3)--(5);
        \draw[Edge](4)--(3);
        \draw[Edge](5)--(1);
        \draw[Edge](6)--(5);
        \node(r)at(1.00,2){};
        \draw[Edge](r)--(1);
        \node[LeafLabel,above of=r]{\begin{math}1\end{math}};
        \node[LeafLabel,below of=0]{\begin{math}1\end{math}};
        \node[LeafLabel,below of=2]{\begin{math}3\end{math}};
        \node[LeafLabel,below of=4]{\begin{math}3\end{math}};
        \node[LeafLabel,below of=6]{\begin{math}3\end{math}};
    \end{tikzpicture}
    \enspace = \enspace
    \begin{tikzpicture}[xscale=.17,yscale=.11,Centering]
        \node[Leaf](0)at(0.00,-5.00){};
        \node[Leaf](10)at(10.00,-12.50){};
        \node[Leaf](12)at(12.00,-10.00){};
        \node[Leaf](14)at(14.00,-5.00){};
        \node[Leaf](2)at(2.00,-7.50){};
        \node[Leaf](4)at(4.00,-7.50){};
        \node[Leaf](6)at(6.00,-7.50){};
        \node[Leaf](8)at(8.00,-12.50){};
        \node[Node](1)at(1.00,-2.50){};
        \node[Node,Mark4](11)at(11.00,-7.50){};
        \node[Node](13)at(13.00,-2.50){};
        \node[Node](3)at(3.00,-5.00){};
        \node[Node](5)at(5.00,0.00){};
        \node[Node,Mark4](7)at(7.00,-5.00){};
        \node[Node,Mark4](9)at(9.00,-10.00){};
        \draw[Edge](0)--(1);
        \draw[Edge](1)--(5);
        \draw[Edge](10)--(9);
        \draw[Edge](11)--(7);
        \draw[Edge](12)--(11);
        \draw[Edge](13)--(5);
        \draw[Edge](14)--(13);
        \draw[Edge](2)--(3);
        \draw[Edge](3)--(1);
        \draw[Edge](4)--(3);
        \draw[Edge](6)--(7);
        \draw[Edge](7)--(13);
        \draw[Edge](8)--(9);
        \draw[Edge](9)--(11);
        \node(r)at(5.00,2.5){};
        \draw[Edge](r)--(5);
        \node[LeafLabel,above of=r]{\begin{math}2\end{math}};
        \node[LeafLabel,below of=0]{\begin{math}2\end{math}};
        \node[LeafLabel,below of=2]{\begin{math}2\end{math}};
        \node[LeafLabel,below of=4]{\begin{math}1\end{math}};
        \node[LeafLabel,below of=6]{\begin{math}1\end{math}};
        \node[LeafLabel,below of=8]{\begin{math}3\end{math}};
        \node[LeafLabel,below of=10]{\begin{math}3\end{math}};
        \node[LeafLabel,below of=12]{\begin{math}3\end{math}};
        \node[LeafLabel,below of=14]{\begin{math}3\end{math}};
    \end{tikzpicture}\,.
\end{equation}
\medbreak

%%%%%%%%%%%%%%%%%%%%%%%%%%%%%%%%%%%%%%%%%%%%%%%%%%%%%%%%%%%%%%%%%%%%%%%%
\subsubsection{The pluriassociative operad}
Let $\gamma$ be a nonnegative integer. The
\Def{$\gamma$-pluriassociative operad} $\Dias_\gamma$~\cite{Gir16a} is
the monochrome operad wherein $\Dias_\gamma(n)$ is the set of all words
of length $n$ on the alphabet $\{0\} \cup [\gamma]$ with exactly one
occurrence of $0$. The partial composition $u \circ_i v$ of two such
words $u$ and $v$ consists in replacing the $i$-th letter of $u$ by
$v'$, where $v'$ is the word obtained from $v$ by replacing all its
letters $a$ by the greatest integer in $\Bra{a, u_i}$. For instance, in
$\Dias_4$, one has
\begin{equation}
    \ColA{313 {\bf 3} 21} \circ_4 \ColD{4112}
    = \ColA{313} \ColD{4333} \ColA{21}.
\end{equation}
Observe that $\Dias_0$ is the operad $\As$ and that $\Dias_1$ is the
\Def{diassociative operad} introduced by Loday~\cite{Lod01}.
\medbreak

For any set $\Ccr$ of colors, the bud operad $\Bud_\Ccr(\Dias_\gamma)$
is the $\Ccr$-colored graded collection of all words $u$ of
$\Dias_\gamma$ where all letters of $u$ (inputs) and the whole word $u$
(output) are labeled on~$\Ccr$.
\medbreak

%%%%%%%%%%%%%%%%%%%%%%%%%%%%%%%%%%%%%%%%%%%%%%%%%%%%%%%%%%%%%%%%%%%%%%%%
\subsubsection{The operad of Motzkin paths}
The \Def{operad of Motzkin paths} $\Motz$~\cite{Gir15}
is a monochrome operad where $\Motz(n)$ is the set of all Motzkin paths
consisting in $n - 1$ steps. A \Def{Motzkin path} of arity $n$ is a path
in $\N^2$ connecting the points $(0, 0)$ and $(n - 1, 0)$, and made of
steps $(1, 0)$, $(1, 1)$, and $(1, -1)$. If $\Afr$ is a Motzkin path,
the \Def{$i$-th point} of $\Afr$ is the point of $\Afr$ of abscissa
$i - 1$. The partial composition $\Afr \circ_i \Bfr$ of two Motzkin
paths $\Afr$ and $\Bfr$ consists in replacing the $i$-th point of $\Afr$
by $\Bfr$. For instance, in $\Motz$, one has
\begin{equation}
    \begin{tikzpicture}[scale=.25,Centering]
        \draw[Grid] (0,0) grid (7,3);
        \node[PathNode](0)at(0,0){};
        \node[PathNode](1)at(1,1){};
        \node[PathNode](2)at(2,1){};
        \node[PathNode,Mark6](3)at(3,2){};
        \node[PathNode](4)at(4,3){};
        \node[PathNode](5)at(5,2){};
        \node[PathNode](6)at(6,1){};
        \node[PathNode](7)at(7,0){};
        \draw[PathStep](0)--(1);
        \draw[PathStep](1)--(2);
        \draw[PathStep](2)--(3);
        \draw[PathStep](3)--(4);
        \draw[PathStep](4)--(5);
        \draw[PathStep](5)--(6);
        \draw[PathStep](6)--(7);
    \end{tikzpicture}
    \enspace \circ_4 \enspace
    \begin{tikzpicture}[scale=.25,Centering]
        \draw[Grid] (0,0) grid (6,2);
        \node[PathNode,Mark4](0)at(0,0){};
        \node[PathNode,Mark4](1)at(1,1){};
        \node[PathNode,Mark4](2)at(2,2){};
        \node[PathNode,Mark4](3)at(3,2){};
        \node[PathNode,Mark4](4)at(4,1){};
        \node[PathNode,Mark4](5)at(5,1){};
        \node[PathNode,Mark4](6)at(6,0){};
        \draw[PathStep](0)--(1);
        \draw[PathStep](1)--(2);
        \draw[PathStep](2)--(3);
        \draw[PathStep](3)--(4);
        \draw[PathStep](4)--(5);
        \draw[PathStep](5)--(6);
    \end{tikzpicture}
    \enspace = \enspace
    \begin{tikzpicture}[scale=.25,Centering]
        \draw[Grid] (0,0) grid (13,4);
        \node[PathNode](0)at(0,0){};
        \node[PathNode](1)at(1,1){};
        \node[PathNode](2)at(2,1){};
        \node[PathNode,Mark4](3)at(3,2){};
        \node[PathNode,Mark4](4)at(4,3){};
        \node[PathNode,Mark4](5)at(5,4){};
        \node[PathNode,Mark4](6)at(6,4){};
        \node[PathNode,Mark4](7)at(7,3){};
        \node[PathNode,Mark4](8)at(8,3){};
        \node[PathNode,Mark4](9)at(9,2){};
        \node[PathNode](10)at(10,3){};
        \node[PathNode](11)at(11,2){};
        \node[PathNode](12)at(12,1){};
        \node[PathNode](13)at(13,0){};
        \draw[PathStep](0)--(1);
        \draw[PathStep](1)--(2);
        \draw[PathStep](2)--(3);
        \draw[PathStep](3)--(4);
        \draw[PathStep](4)--(5);
        \draw[PathStep](5)--(6);
        \draw[PathStep](6)--(7);
        \draw[PathStep](7)--(8);
        \draw[PathStep](8)--(9);
        \draw[PathStep](9)--(10);
        \draw[PathStep](10)--(11);
        \draw[PathStep](11)--(12);
        \draw[PathStep](12)--(13);
    \end{tikzpicture}\,.
\end{equation}
\medbreak

For any set $\Ccr$ of colors, the bud operad $\Bud_\Ccr(\Motz)$ is the
$\Ccr$-colored graded collection of all Motzkin paths $\Afr$ where all
points of $\Afr$ (inputs) and the whole path $\Afr$ (output) are
labeled on~$\Ccr$.
\medbreak

%%%%%%%%%%%%%%%%%%%%%%%%%%%%%%%%%%%%%%%%%%%%%%%%%%%%%%%%%%%%%%%%%%%%%%%%
%%%%%%%%%%%%%%%%%%%%%%%%%%%%%%%%%%%%%%%%%%%%%%%%%%%%%%%%%%%%%%%%%%%%%%%%
\subsection{Series on colored operads}
Here, some examples of series on colored operads are constructed, as
well as examples of series of colors, series of color types, and pruned
series.
\medbreak

%%%%%%%%%%%%%%%%%%%%%%%%%%%%%%%%%%%%%%%%%%%%%%%%%%%%%%%%%%%%%%%%%%%%%%%%
\subsubsection{Series of trees}%
\label{subsubsec:example_series_trees}
Let $\Cca$ be the free $\Ccr$-colored operad over $C$ where
$\Ccr := \{1, 2\}$ and $C$ is the $\Ccr$-graded collection defined by
$C := C(2) \sqcup C(3)$ with $C(2) := \{\Att\}$, $C(3) := \{\Btt\}$,
$\Out(\Att) := 1$, $\Out(\Btt) := 2$, $\In(\Att) := 21$, and
$\In(\Btt) := 121$. Let $\Fbf_\Att$ (resp. $\Fbf_\Btt$) be the series of
$\K \AAngle{\Cca}$ where for any syntax tree $\Tfr$ of $\Cca$,
$\Angle{\Tfr, \Fbf_\Att}$ (resp. $\Angle{\Tfr, \Fbf_\Btt}$) is the
number of internal nodes of $\Tfr$ labeled by $\Att$ (resp. $\Btt$).
The series $\Fbf_\Att$ and $\Fbf_\Btt$ are of the form
\begin{subequations}
\begin{equation}
    \Fbf_\Att =
    \begin{tikzpicture}[xscale=.3,yscale=.28,Centering]
        \node(0)at(0.00,-1.50){};
        \node(2)at(2.00,-1.50){};
        \node[NodeST](1)at(1.00,0.00){\begin{math}\Att\end{math}};
        \draw[Edge](0)--(1);
        \draw[Edge](2)--(1);
        \node[NodeST](r)at(1.00,1.5){};
        \draw[Edge](r)--(1);
        \node[LeafLabel,above of=r]{\begin{math}1\end{math}};
        \node[LeafLabel,below of=0]{\begin{math}2\end{math}};
        \node[LeafLabel,below of=2]{\begin{math}1\end{math}};
    \end{tikzpicture}
    \enspace + \enspace
    2\,
    \begin{tikzpicture}[xscale=.27,yscale=.25,Centering]
        \node(0)at(0.00,-1.67){};
        \node(2)at(2.00,-3.33){};
        \node(4)at(4.00,-3.33){};
        \node[NodeST](1)at(1.00,0.00){\begin{math}\Att\end{math}};
        \node(3)at(3.00,-1.67){\begin{math}\Att\end{math}};
        \draw[Edge](0)--(1);
        \draw[Edge](2)--(3);
        \draw[Edge](3)--(1);
        \draw[Edge](4)--(3);
        \node[NodeST](r)at(1.00,1.35){};
        \draw[Edge](r)--(1);
        \node[LeafLabel,above of=r]{\begin{math}1\end{math}};
        \node[LeafLabel,below of=0]{\begin{math}2\end{math}};
        \node[LeafLabel,below of=2]{\begin{math}2\end{math}};
        \node[LeafLabel,below of=4]{\begin{math}1\end{math}};
    \end{tikzpicture}
    \enspace + \enspace
    3\,
    \begin{tikzpicture}[xscale=.25,yscale=.23,Centering]
        \node(0)at(0.00,-1.75){};
        \node(2)at(2.00,-3.50){};
        \node(4)at(4.00,-5.25){};
        \node(6)at(6.00,-5.25){};
        \node[NodeST](1)at(1.00,0.00){\begin{math}\Att\end{math}};
        \node[NodeST](3)at(3.00,-1.75){\begin{math}\Att\end{math}};
        \node[NodeST](5)at(5.00,-3.50){\begin{math}\Att\end{math}};
        \draw[Edge](0)--(1);
        \draw[Edge](2)--(3);
        \draw[Edge](3)--(1);
        \draw[Edge](4)--(5);
        \draw[Edge](5)--(3);
        \draw[Edge](6)--(5);
        \node[NodeST](r)at(1.00,1.5){};
        \draw[Edge](r)--(1);
        \node[LeafLabel,above of=r]{\begin{math}1\end{math}};
        \node[LeafLabel,below of=0]{\begin{math}2\end{math}};
        \node[LeafLabel,below of=2]{\begin{math}2\end{math}};
        \node[LeafLabel,below of=4]{\begin{math}2\end{math}};
        \node[LeafLabel,below of=6]{\begin{math}1\end{math}};
    \end{tikzpicture}
    \enspace + \enspace
    \begin{tikzpicture}[xscale=.3,yscale=.24,Centering]
        \node(0)at(0.00,-4.00){};
        \node(2)at(2.00,-4.00){};
        \node(4)at(3.00,-2.00){};
        \node(5)at(4.00,-2.00){};
        \node[NodeST](1)at(1.00,-2.00){\begin{math}\Att\end{math}};
        \node[NodeST](3)at(3.00,0.00){\begin{math}\Btt\end{math}};
        \draw[Edge](0)--(1);
        \draw[Edge](1)--(3);
        \draw[Edge](2)--(1);
        \draw[Edge](4)--(3);
        \draw[Edge](5)--(3);
        \node[NodeST](r)at(3.00,1.6){};
        \draw[Edge](r)--(3);
        \node[LeafLabel,above of=r]{\begin{math}2\end{math}};
        \node[LeafLabel,below of=0]{\begin{math}2\end{math}};
        \node[LeafLabel,below of=2]{\begin{math}1\end{math}};
        \node[LeafLabel,below of=4]{\begin{math}2\end{math}};
        \node[LeafLabel,below of=5]{\begin{math}1\end{math}};
    \end{tikzpicture}
    \enspace + \enspace
    \begin{tikzpicture}[xscale=.3,yscale=.24,Centering]
        \node(0)at(0.00,-2.00){};
        \node(2)at(1.00,-2.00){};
        \node(3)at(2.00,-4.00){};
        \node(5)at(4.00,-4.00){};
        \node[NodeST](1)at(1.00,0.00){\begin{math}\Btt\end{math}};
        \node[NodeST](4)at(3.00,-2.00){\begin{math}\Att\end{math}};
        \draw[Edge](0)--(1);
        \draw[Edge](2)--(1);
        \draw[Edge](3)--(4);
        \draw[Edge](4)--(1);
        \draw[Edge](5)--(4);
        \node[NodeST](r)at(1.00,1.5){};
        \draw[Edge](r)--(1);
        \node[LeafLabel,above of=r]{\begin{math}2\end{math}};
        \node[LeafLabel,below of=0]{\begin{math}1\end{math}};
        \node[LeafLabel,below of=2]{\begin{math}2\end{math}};
        \node[LeafLabel,below of=3]{\begin{math}2\end{math}};
        \node[LeafLabel,below of=5]{\begin{math}1\end{math}};
    \end{tikzpicture}
    \enspace + \enspace
    \begin{tikzpicture}[xscale=.27,yscale=.24,Centering]
        \node(0)at(0.00,-4.00){};
        \node(2)at(1.00,-4.00){};
        \node(3)at(2.00,-4.00){};
        \node(5)at(4.00,-2.00){};
        \node[NodeST](1)at(1.00,-2.00){\begin{math}\Btt\end{math}};
        \node[NodeST](4)at(3.00,0.00){\begin{math}\Att\end{math}};
        \draw[Edge](0)--(1);
        \draw[Edge](1)--(4);
        \draw[Edge](2)--(1);
        \draw[Edge](3)--(1);
        \draw[Edge](5)--(4);
        \node[NodeST](r)at(3.00,1.50){};
        \draw[Edge](r)--(4);
        \node[LeafLabel,above of=r]{\begin{math}1\end{math}};
        \node[LeafLabel,below of=0]{\begin{math}1\end{math}};
        \node[LeafLabel,below of=2]{\begin{math}2\end{math}};
        \node[LeafLabel,below of=3]{\begin{math}1\end{math}};
        \node[LeafLabel,below of=5]{\begin{math}1\end{math}};
    \end{tikzpicture}
    \enspace + \cdots.
\end{equation}
\begin{equation}
    \Fbf_\Btt =
    \begin{tikzpicture}[xscale=.35,yscale=.27,Centering]
        \node(0)at(0.00,-2.00){};
        \node(2)at(1.00,-2.00){};
        \node(3)at(2.00,-2.00){};
        \node[NodeST](1)at(1.00,0.00){\begin{math}\Btt\end{math}};
        \draw[Edge](0)--(1);
        \draw[Edge](2)--(1);
        \draw[Edge](3)--(1);
        \node[NodeST](r)at(1.00,1.50){};
        \draw[Edge](r)--(1);
        \node[LeafLabel,above of=r]{\begin{math}2\end{math}};
        \node[LeafLabel,below of=0]{\begin{math}1\end{math}};
        \node[LeafLabel,below of=2]{\begin{math}2\end{math}};
        \node[LeafLabel,below of=3]{\begin{math}1\end{math}};
    \end{tikzpicture}
    \enspace + \enspace
    \begin{tikzpicture}[xscale=.3,yscale=.24,Centering]
        \node(0)at(0.00,-4.00){};
        \node(2)at(2.00,-4.00){};
        \node(4)at(3.00,-2.00){};
        \node(5)at(4.00,-2.00){};
        \node[NodeST](1)at(1.00,-2.00){\begin{math}\Att\end{math}};
        \node[NodeST](3)at(3.00,0.00){\begin{math}\Btt\end{math}};
        \draw[Edge](0)--(1);
        \draw[Edge](1)--(3);
        \draw[Edge](2)--(1);
        \draw[Edge](4)--(3);
        \draw[Edge](5)--(3);
        \node[NodeST](r)at(3.00,1.6){};
        \draw[Edge](r)--(3);
        \node[LeafLabel,above of=r]{\begin{math}2\end{math}};
        \node[LeafLabel,below of=0]{\begin{math}2\end{math}};
        \node[LeafLabel,below of=2]{\begin{math}1\end{math}};
        \node[LeafLabel,below of=4]{\begin{math}2\end{math}};
        \node[LeafLabel,below of=5]{\begin{math}1\end{math}};
    \end{tikzpicture}
    \enspace + \enspace
    \begin{tikzpicture}[xscale=.3,yscale=.24,Centering]
        \node(0)at(0.00,-2.00){};
        \node(2)at(1.00,-2.00){};
        \node(3)at(2.00,-4.00){};
        \node(5)at(4.00,-4.00){};
        \node[NodeST](1)at(1.00,0.00){\begin{math}\Btt\end{math}};
        \node[NodeST](4)at(3.00,-2.00){\begin{math}\Att\end{math}};
        \draw[Edge](0)--(1);
        \draw[Edge](2)--(1);
        \draw[Edge](3)--(4);
        \draw[Edge](4)--(1);
        \draw[Edge](5)--(4);
        \node[NodeST](r)at(1.00,1.6){};
        \draw[Edge](r)--(1);
        \node[LeafLabel,above of=r]{\begin{math}2\end{math}};
        \node[LeafLabel,below of=0]{\begin{math}1\end{math}};
        \node[LeafLabel,below of=2]{\begin{math}2\end{math}};
        \node[LeafLabel,below of=3]{\begin{math}2\end{math}};
        \node[LeafLabel,below of=5]{\begin{math}1\end{math}};
    \end{tikzpicture}
    \enspace + \enspace 2\,
    \begin{tikzpicture}[xscale=.35,yscale=.23,Centering]
        \node(0)at(0.00,-2.33){};
        \node(2)at(1.00,-4.67){};
        \node(4)at(2.00,-4.67){};
        \node(5)at(3.00,-4.67){};
        \node(6)at(4.00,-2.33){};
        \node[NodeST](1)at(2.00,0.00){\begin{math}\Btt\end{math}};
        \node[NodeST](3)at(2.00,-2.33){\begin{math}\Btt\end{math}};
        \draw[Edge](0)--(1);
        \draw[Edge](2)--(3);
        \draw[Edge](3)--(1);
        \draw[Edge](4)--(3);
        \draw[Edge](5)--(3);
        \draw[Edge](6)--(1);
        \node[NodeST](r)at(2.00,1.75){};
        \draw[Edge](r)--(1);
        \node[LeafLabel,above of=r]{\begin{math}2\end{math}};
        \node[LeafLabel,below of=0]{\begin{math}1\end{math}};
        \node[LeafLabel,below of=2]{\begin{math}1\end{math}};
        \node[LeafLabel,below of=4]{\begin{math}2\end{math}};
        \node[LeafLabel,below of=5]{\begin{math}1\end{math}};
        \node[LeafLabel,below of=6]{\begin{math}1\end{math}};
    \end{tikzpicture}
    \enspace + \cdots.
\end{equation}
\end{subequations}
The sum $\Fbf_\Att + \Fbf_\Btt$ is the series wherein the coefficient of
any syntax tree $\Tfr$ of $\Cca$ is its degree. Let also $\Fbf_{|_1}$
(resp. $\Fbf_{|_2}$) be the series of $\K \AAngle{\Cca}$ where for any
syntax tree $\Tfr$ of $\Cca$, $\Angle{\Tfr, \Fbf_{|_1}}$ (resp.
$\Angle{\Tfr, \Fbf_{|_2}}$) is the number of inputs colors $1$ (resp.
$2$) of $\Tfr$. The sum $\Fbf_{|_1} + \Fbf_{|_2}$ is the series wherein
the coefficient of any syntax tree $\Tfr$ of $\Cca$ is its arity.
Moreover, the series $\Fbf_\Att + \Fbf_\Btt + \Fbf_{|_1} + \Fbf_{|_2}$
is the series wherein the coefficient of any syntax tree $\Tfr$ of
$\Cca$ is its total number of nodes.
\medbreak

The series of colors of $\Fbf_\Att$ is of the form
\begin{equation}
    \Colors(\Fbf_\Att) = (1, 21) + 2\, (1, 221) + 3\, (1, 2221)
    + (2, 2121) + (2, 1221) + (1, 1211) + \cdots,
\end{equation}
and the series of color types of $\Fbf$ is of the form
\begin{equation}
    \ColorTypes(\Fbf_\Att) =
    \VarX_1\VarY_1\VarY_2 + 2\VarX_1\VarY_1\VarY_2^2
    + \VarX_1\VarY_1^3\VarY_2
    + 3\VarX_1\VarY_1\VarY_2^3
    + 2\VarX_2\VarY_1^2\VarY_2^2 + \cdots.
\end{equation}
\medbreak

%%%%%%%%%%%%%%%%%%%%%%%%%%%%%%%%%%%%%%%%%%%%%%%%%%%%%%%%%%%%%%%%%%%%%%%%
\subsubsection{Series of Motzkin paths}%
\label{subsubsec:example_series_motzkin_paths}
Let $\Cca$ be the $\Ccr$-bud operad $\Bud_\Ccr(\Motz)$, where
$\Ccr := \{-1, 1\}$. Let $\Fbf$ be the series of $\K \AAngle{\Cca}$
defined for any Motzkin path $\Bfr$, input color $a \in \Ccr$, and word
of input colors $u \in \Ccr^{|\Bfr|}$ by
\begin{equation}
    \Angle{(a, \Bfr, u), \Fbf} :=
    \frac{1}{2^{|\Bfr| + 1}}
    \Par{\prod_{i \in [|\Bfr|]} q_{\Height_\Bfr(i)}^{u_i}}^a,
\end{equation}
where $\Height_\Bfr(i)$ is the ordinate of the $i$-th point of $\Bfr$.
One has for instance, where the notation $\bar 1$ stands for~$-1$,
\begin{small}
\begin{subequations}
\begin{equation}
    \Angle{\Par{1,
    \begin{tikzpicture}[scale=.24,Centering]
        \draw[PathGrid] (0,0) grid (6,2);
        \node[PathNode](0)at(0,0){};
        \node[PathNode](1)at(1,0){};
        \node[PathNode](2)at(2,1){};
        \node[PathNode](3)at(3,1){};
        \node[PathNode](4)at(4,2){};
        \node[PathNode](5)at(5,1){};
        \node[PathNode](6)at(6,0){};
        \draw[PathStep](0)--(1);
        \draw[PathStep](1)--(2);
        \draw[PathStep](2)--(3);
        \draw[PathStep](3)--(4);
        \draw[PathStep](4)--(5);
        \draw[PathStep](5)--(6);
    \end{tikzpicture}\,,
    1 \bar 1 1 1 1 \bar 1 1}, \Fbf} =
    \frac{1}{2^8}
    \Par{q_0 q_0^{\bar 1} q_1 q_1 q_2 q_1^{ \bar1} q_0}^1
    = \frac{q_0 q_1 q_2}{2^8},
\end{equation}
\begin{equation}
    \Angle{\Par{\bar 1,
    \begin{tikzpicture}[scale=.24,Centering]
        \draw[PathGrid] (0,0) grid (9,2);
        \node[PathNode](0)at(0,0){};
        \node[PathNode](1)at(1,1){};
        \node[PathNode](2)at(2,0){};
        \node[PathNode](3)at(3,1){};
        \node[PathNode](4)at(4,2){};
        \node[PathNode](5)at(5,2){};
        \node[PathNode](6)at(6,1){};
        \node[PathNode](7)at(7,1){};
        \node[PathNode](8)at(8,0){};
        \draw[PathStep](0)--(1);
        \draw[PathStep](1)--(2);
        \draw[PathStep](2)--(3);
        \draw[PathStep](3)--(4);
        \draw[PathStep](4)--(5);
        \draw[PathStep](5)--(6);
        \draw[PathStep](6)--(7);
        \draw[PathStep](7)--(8);
    \end{tikzpicture}\,,
    \bar 1 1 \bar 1 1 1 \bar 1 1 \bar 1 1 }, \Fbf} =
    \frac{1}{2^{10}} \Par{q_0^{\bar 1} q_1 q_0^{ \bar 1} q_1
    q_2 q_2^{\bar 1} q_1 q_1^{\bar 1} q_0}^{\bar 1}
    = \frac{q_0}{2^{10} q_1^2}.
\end{equation}
\end{subequations}
\end{small}
\medbreak

Moreover, the coefficients of the pruned series of $\Fbf$ satisfy, by
definition of $\Prune$ and $\Fbf$,
\begin{equation}
    \Angle{\Bfr, \Prune(\Fbf)} =
    \sum_{\substack{
        a \in \Bra{\bar 1, 1} \\
        u \in \Bra{\bar 1, 1}^{|\Bfr|}
    }}
    \Angle{(a, \Bfr, u), \Fbf}
    =
    \frac{1}{2^{|\Bfr| + 1}}
    \sum_{u \in \Bra{\bar 1, 1}^{|\Bfr|}}
    \Par{
    \Par{\prod_{i \in [|\Bfr|]} q_{\Height_\Bfr(i)}^{u_i}}
    +
    \Par{\prod_{i \in [|\Bfr|]} q_{\Height_\Bfr(i)}^{-u_i}}
    }.
\end{equation}
These coefficients seem to factorize nicely. For instance,
\begin{multicols}{2}\small
\begin{subequations}
\begin{equation}
    \Angle{
    \begin{tikzpicture}[scale=.25,Centering]
        \node[PathNode](0)at(0,0){};
    \end{tikzpicture}\,,
    \Prune(\Fbf)} =
    \frac{1 + q_0^2}{2 q_0},
\end{equation}
\begin{equation}
    \Angle{
    \begin{tikzpicture}[scale=.25,Centering]
        \draw[PathGrid] (0,0) grid (1,0);
        \node[PathNode](0)at(0,0){};
        \node[PathNode](1)at(1,0){};
        \draw[PathStep](0)--(1);
    \end{tikzpicture}\,,
    \Prune(\Fbf)} =
    \frac{{\Par{1 + q_0^2}}^2}{4 q_0^2},
\end{equation}
\begin{equation}
    \Angle{
    \begin{tikzpicture}[scale=.25,Centering]
        \draw[PathGrid] (0,0) grid (2,0);
        \node[PathNode](0)at(0,0){};
        \node[PathNode](1)at(1,0){};
        \node[PathNode](2)at(2,0){};
        \draw[PathStep](0)--(1);
        \draw[PathStep](1)--(2);
    \end{tikzpicture}\,,
    \Prune(\Fbf)} =
    \frac{{\Par{1 + q_0^2}}^3}{8 q_0^3},
\end{equation}
\begin{equation}
    \Angle{
    \begin{tikzpicture}[scale=.25,Centering]
        \draw[PathGrid] (0,0) grid (2,1);
        \node[PathNode](0)at(0,0){};
        \node[PathNode](1)at(1,1){};
        \node[PathNode](2)at(2,0){};
        \draw[PathStep](0)--(1);
        \draw[PathStep](1)--(2);
    \end{tikzpicture}\,,
    \Prune(\Fbf)}
    = \frac{{(1 + q_0^2)}^2 \Par{1 + q_1^2}}{8 q_0^2 q_1},
\end{equation}
\begin{equation}
    \Angle{
    \begin{tikzpicture}[scale=.25,Centering]
        \draw[PathGrid] (0,0) grid (3,0);
        \node[PathNode](0)at(0,0){};
        \node[PathNode](1)at(1,0){};
        \node[PathNode](2)at(2,0){};
        \node[PathNode](3)at(3,0){};
        \draw[PathStep](0)--(1);
        \draw[PathStep](1)--(2);
        \draw[PathStep](2)--(3);
    \end{tikzpicture}\,,
    \Prune(\Fbf)} =
    \frac{{\Par{1 + q_0^2}}^4}{16 q_0^4},
\end{equation}
\begin{equation}
    \Angle{
    \begin{tikzpicture}[scale=.25,Centering]
        \draw[PathGrid] (0,0) grid (3,1);
        \node[PathNode](0)at(0,0){};
        \node[PathNode](1)at(1,0){};
        \node[PathNode](2)at(2,1){};
        \node[PathNode](3)at(3,0){};
        \draw[PathStep](0)--(1);
        \draw[PathStep](1)--(2);
        \draw[PathStep](2)--(3);
    \end{tikzpicture}\,,
    \Prune(\Fbf)} =
    \frac{{\Par{1 + q_0^2}}^3 \Par{1 + q_1^2}}{16 q_0^3 q_1},
\end{equation}
\begin{equation}
    \Angle{
    \begin{tikzpicture}[scale=.25,Centering]
        \draw[PathGrid] (0,0) grid (3,1);
        \node[PathNode](0)at(0,0){};
        \node[PathNode](1)at(1,1){};
        \node[PathNode](2)at(2,0){};
        \node[PathNode](3)at(3,0){};
        \draw[PathStep](0)--(1);
        \draw[PathStep](1)--(2);
        \draw[PathStep](2)--(3);
    \end{tikzpicture}\,,
    \Prune(\Fbf)} =
    \frac{{\Par{1 + q_0^2}}^3 \Par{1 + q_1^2}}{16 q_0^3 q_1},
\end{equation}
\begin{equation}
    \Angle{
    \begin{tikzpicture}[scale=.25,Centering]
        \draw[PathGrid] (0,0) grid (3,1);
        \node[PathNode](0)at(0,0){};
        \node[PathNode](1)at(1,1){};
        \node[PathNode](2)at(2,1){};
        \node[PathNode](3)at(3,0){};
        \draw[PathStep](0)--(1);
        \draw[PathStep](1)--(2);
        \draw[PathStep](2)--(3);
    \end{tikzpicture}\,,
    \Prune(\Fbf)} =
    \frac{{\Par{1 + q_0^2}}^2 {\Par{1 + q_1^2}}^2}
        {16 q_0^2 q_1^2},
\end{equation}
\begin{equation}
    \Angle{
    \begin{tikzpicture}[scale=.25,Centering]
        \draw[PathGrid] (0,0) grid (4,0);
        \node[PathNode](0)at(0,0){};
        \node[PathNode](1)at(1,0){};
        \node[PathNode](2)at(2,0){};
        \node[PathNode](3)at(3,0){};
        \node[PathNode](4)at(4,0){};
        \draw[PathStep](0)--(1);
        \draw[PathStep](1)--(2);
        \draw[PathStep](2)--(3);
        \draw[PathStep](3)--(4);
    \end{tikzpicture}\,,
    \Prune(\Fbf)} =
    \frac{{\Par{1 + q_0^2}}^5}
        {32 q_0^5},
\end{equation}

\begin{equation}
    \Angle{
    \begin{tikzpicture}[scale=.25,Centering]
        \draw[PathGrid] (0,0) grid (4,1);
        \node[PathNode](0)at(0,0){};
        \node[PathNode](1)at(1,0){};
        \node[PathNode](2)at(2,0){};
        \node[PathNode](3)at(3,1){};
        \node[PathNode](4)at(4,0){};
        \draw[PathStep](0)--(1);
        \draw[PathStep](1)--(2);
        \draw[PathStep](2)--(3);
        \draw[PathStep](3)--(4);
    \end{tikzpicture}\,,
    \Prune(\Fbf)} =
    \frac{{\Par{1 + q_0^2}}^4 {\Par{1 + q_1^2}}}
        {32 q_0^4 q_1},
\end{equation}
\begin{equation}
    \Angle{
    \begin{tikzpicture}[scale=.25,Centering]
        \draw[PathGrid] (0,0) grid (4,1);
        \node[PathNode](0)at(0,0){};
        \node[PathNode](1)at(1,0){};
        \node[PathNode](2)at(2,1){};
        \node[PathNode](3)at(3,0){};
        \node[PathNode](4)at(4,0){};
        \draw[PathStep](0)--(1);
        \draw[PathStep](1)--(2);
        \draw[PathStep](2)--(3);
        \draw[PathStep](3)--(4);
    \end{tikzpicture}\,,
    \Prune(\Fbf)} =
    \frac{{\Par{1 + q_0^2}}^4 {\Par{1 + q_1^2}}}
        {32 q_0^4 q_1},
\end{equation}
\begin{equation}
    \Angle{
    \begin{tikzpicture}[scale=.25,Centering]
        \draw[PathGrid] (0,0) grid (4,1);
        \node[PathNode](0)at(0,0){};
        \node[PathNode](1)at(1,0){};
        \node[PathNode](2)at(2,1){};
        \node[PathNode](3)at(3,1){};
        \node[PathNode](4)at(4,0){};
        \draw[PathStep](0)--(1);
        \draw[PathStep](1)--(2);
        \draw[PathStep](2)--(3);
        \draw[PathStep](3)--(4);
    \end{tikzpicture}\,,
    \Prune(\Fbf)} =
    \frac{{\Par{1 + q_0^2}}^3 {\Par{1 + q_1^2}}^2}
        {32 q_0^3 q_1^2},
\end{equation}
\begin{equation}
    \Angle{
    \begin{tikzpicture}[scale=.25,Centering]
        \draw[PathGrid] (0,0) grid (4,1);
        \node[PathNode](0)at(0,0){};
        \node[PathNode](1)at(1,1){};
        \node[PathNode](2)at(2,0){};
        \node[PathNode](3)at(3,0){};
        \node[PathNode](4)at(4,0){};
        \draw[PathStep](0)--(1);
        \draw[PathStep](1)--(2);
        \draw[PathStep](2)--(3);
        \draw[PathStep](3)--(4);
    \end{tikzpicture}\,,
    \Prune(\Fbf)} =
    \frac{{\Par{1 + q_0^2}}^4 {\Par{1 + q_1^2}}}
        {32 q_0^4 q_1},
\end{equation}
\begin{equation}
    \Angle{
    \begin{tikzpicture}[scale=.25,Centering]
        \draw[PathGrid] (0,0) grid (4,1);
        \node[PathNode](0)at(0,0){};
        \node[PathNode](1)at(1,1){};
        \node[PathNode](2)at(2,0){};
        \node[PathNode](3)at(3,1){};
        \node[PathNode](4)at(4,0){};
        \draw[PathStep](0)--(1);
        \draw[PathStep](1)--(2);
        \draw[PathStep](2)--(3);
        \draw[PathStep](3)--(4);
    \end{tikzpicture}\,,
    \Prune(\Fbf)} =
    \frac{{\Par{1 + q_0^2}}^3 {\Par{1 + q_1^2}}^2}
        {32 q_0^3 q_1^2},
\end{equation}
\begin{equation}
    \Angle{
    \begin{tikzpicture}[scale=.25,Centering]
        \draw[PathGrid] (0,0) grid (4,1);
        \node[PathNode](0)at(0,0){};
        \node[PathNode](1)at(1,1){};
        \node[PathNode](2)at(2,1){};
        \node[PathNode](3)at(3,0){};
        \node[PathNode](4)at(4,0){};
        \draw[PathStep](0)--(1);
        \draw[PathStep](1)--(2);
        \draw[PathStep](2)--(3);
        \draw[PathStep](3)--(4);
    \end{tikzpicture}\,,
    \Prune(\Fbf)} =
    \frac{{\Par{1 + q_0^2}}^3 {\Par{1 + q_1^2}}^2}
        {32 q_0^3 q_1^2},
\end{equation}
\begin{equation}
    \Angle{
    \begin{tikzpicture}[scale=.25,Centering]
        \draw[PathGrid] (0,0) grid (4,1);
        \node[PathNode](0)at(0,0){};
        \node[PathNode](1)at(1,1){};
        \node[PathNode](2)at(2,1){};
        \node[PathNode](3)at(3,1){};
        \node[PathNode](4)at(4,0){};
        \draw[PathStep](0)--(1);
        \draw[PathStep](1)--(2);
        \draw[PathStep](2)--(3);
        \draw[PathStep](3)--(4);
    \end{tikzpicture}\,,
    \Prune(\Fbf)} =
    \frac{{\Par{1 + q_0^2}}^2 {\Par{1 + q_1^2}}^3}
        {32 q_0^2 q_1^3},
\end{equation}
\begin{equation}
    \Angle{
    \begin{tikzpicture}[scale=.25,Centering]
        \draw[PathGrid] (0,0) grid (4,2);
        \node[PathNode](0)at(0,0){};
        \node[PathNode](1)at(1,1){};
        \node[PathNode](2)at(2,2){};
        \node[PathNode](3)at(3,1){};
        \node[PathNode](4)at(4,0){};
        \draw[PathStep](0)--(1);
        \draw[PathStep](1)--(2);
        \draw[PathStep](2)--(3);
        \draw[PathStep](3)--(4);
    \end{tikzpicture}\,,
    \Prune(\Fbf)} =
    \frac{{\Par{1 + q_0^2}}^2 {\Par{1 + q_1^2}}^2
            \Par{1 + q_2^2}}
        {32 q_0^2 q_1^2 q_2}.
\end{equation}
\end{subequations}
\end{multicols}
\noindent Observe that the specializations at $q_0 := 1$, $q_1 := 1$,
and $q_2 := 1$ of all these coefficients are equal to~$1$.
\medbreak

%%%%%%%%%%%%%%%%%%%%%%%%%%%%%%%%%%%%%%%%%%%%%%%%%%%%%%%%%%%%%%%%%%%%%%%%
%%%%%%%%%%%%%%%%%%%%%%%%%%%%%%%%%%%%%%%%%%%%%%%%%%%%%%%%%%%%%%%%%%%%%%%%
\subsection{Bud generating systems}%
\label{subsec:examples_bud_generating_systems}
We rely on the monochrome operads defined in
Section~\ref{subsec:examples_monochrome_operads} to construct several
bud generating systems. We review some properties of these, leaving
the proofs to the reader.
\medbreak

%%%%%%%%%%%%%%%%%%%%%%%%%%%%%%%%%%%%%%%%%%%%%%%%%%%%%%%%%%%%%%%%%%%%%%%%
\subsubsection{Monochrome bud generating systems from $\Dias_\gamma$}%
\label{subsubsec:bud_generating_system_dias_gamma}
Let $\gamma$ be a nonnegative integer and consider the monochrome bud
generating system $\BDias{\gamma} := \Par{\Dias_\gamma, \Rfr_\gamma}$
where
\begin{equation}
    \Rfr_\gamma := \{0a, a0 : a \in [\gamma]\}.
\end{equation}
The derivation graph of $\BDias{1}$ is depicted by
Figure~\ref{fig:derivation_graph_bdias_1} and the one of $\BDias{2}$, by
Figure~\ref{fig:derivation_graph_bdias_2}.
\begin{figure}[ht]
    \centering
    \begin{tikzpicture}[xscale=.9,yscale=.8,Centering,font=\small]
        \node(0)at(0,0){\begin{math}0\end{math}};
        \node(01)at(-1,-1){\begin{math}01\end{math}};
        \node(10)at(1,-1){\begin{math}10\end{math}};
        \node(011)at(-2,-2){\begin{math}011\end{math}};
        \node(101)at(0,-2){\begin{math}101\end{math}};
        \node(110)at(2,-2){\begin{math}110\end{math}};
        \node(0111)at(-3,-3){\begin{math}0111\end{math}};
        \node(1011)at(-1,-3){\begin{math}1011\end{math}};
        \node(1101)at(1,-3){\begin{math}1101\end{math}};
        \node(1110)at(3,-3){\begin{math}1110\end{math}};
        \node(01111)at(-4,-4){\begin{math}01111\end{math}};
        \node(10111)at(-2,-4){\begin{math}10111\end{math}};
        \node(11011)at(0,-4){\begin{math}11011\end{math}};
        \node(11101)at(2,-4){\begin{math}11101\end{math}};
        \node(11110)at(4,-4){\begin{math}11110\end{math}};
        \draw[EdgeDeriv](0)
            edge[]node[anchor=south,font=\scriptsize,color=ColBlack]
            {}(01);
        \draw[EdgeDeriv](0)
            edge[]node[anchor=south,font=\scriptsize,color=ColBlack]
            {}(10);
        \draw[EdgeDeriv](01)
            edge[]node[anchor=south,font=\scriptsize,color=ColBlack]
            {\begin{math}3\end{math}}(011);
        \draw[EdgeDeriv](01)
            edge[]node[anchor=south,font=\scriptsize,color=ColBlack]
            {}(101);
        \draw[EdgeDeriv](10)
            edge[]node[anchor=south,font=\scriptsize,color=ColBlack]
            {}(101);
        \draw[EdgeDeriv](10)
            edge[]node[anchor=south,font=\scriptsize,color=ColBlack]
            {\begin{math}3\end{math}}(110);
        \draw[EdgeDeriv](011)
            edge[]node[anchor=south,font=\scriptsize,color=ColBlack]
            {\begin{math}5\end{math}}(0111);
        \draw[EdgeDeriv](011)
            edge[]node[anchor=south,font=\scriptsize,color=ColBlack]
            {}(1011);
        \draw[EdgeDeriv](101)
            edge[]node[anchor=south,font=\scriptsize,color=ColBlack]
            {\begin{math}3\end{math}}(1011);
        \draw[EdgeDeriv](101)
            edge[]node[anchor=south,font=\scriptsize,color=ColBlack]
            {\begin{math}3\end{math}}(1101);
        \draw[EdgeDeriv](110)
            edge[]node[anchor=south,font=\scriptsize,color=ColBlack]
            {}(1101);
        \draw[EdgeDeriv](110)
            edge[]node[anchor=south,font=\scriptsize,color=ColBlack]
            {\begin{math}5\end{math}}(1110);
        \draw[EdgeDeriv](0111)
            edge[]node[anchor=south,font=\scriptsize,color=ColBlack]
            {\begin{math}7\end{math}}(01111);
        \draw[EdgeDeriv](0111)
            edge[]node[anchor=south,font=\scriptsize,color=ColBlack]
            {}(10111);
        \draw[EdgeDeriv](1011)
            edge[]node[anchor=south,font=\scriptsize,color=ColBlack]
            {\begin{math}5\end{math}}(10111);
        \draw[EdgeDeriv](1011)
            edge[]node[anchor=south,font=\scriptsize,color=ColBlack]
            {\begin{math}3\end{math}}(11011);
        \draw[EdgeDeriv](1101)
            edge[]node[anchor=south,font=\scriptsize,color=ColBlack]
            {\begin{math}3\end{math}}(11011);
        \draw[EdgeDeriv](1101)
            edge[]node[anchor=south,font=\scriptsize,color=ColBlack]
            {\begin{math}5\end{math}}(11101);
        \draw[EdgeDeriv](1110)
            edge[]node[anchor=south,font=\scriptsize,color=ColBlack]
            {}(11101);
        \draw[EdgeDeriv](1110)
            edge[]node[anchor=south,font=\scriptsize,color=ColBlack]
            {\begin{math}7\end{math}}(11110);
        \draw[EdgeDeriv,dotted,shorten >=4mm](01111)--(-4,-5.25);
        \draw[EdgeDeriv,dotted,shorten >=4mm](10111)--(-2,-5.25);
        \draw[EdgeDeriv,dotted,shorten >=4mm](11011)--(0,-5.25);
        \draw[EdgeDeriv,dotted,shorten >=4mm](11101)--(2,-5.25);
        \draw[EdgeDeriv,dotted,shorten >=4mm](11110)--(4,-5.25);
    \end{tikzpicture}
    \vspace{-1.5em}
    \caption{\footnotesize The derivation graph of $\BDias{1}$.}
    \label{fig:derivation_graph_bdias_1}
\end{figure}
\begin{figure}[ht]
    \centering
    \begin{tikzpicture}[xscale=.42,yscale=1.5,Centering,font=\small]
        \node(0)at(0,0){\begin{math}0\end{math}};
        \node(02)at(-3,-1){\begin{math}02\end{math}};
        \node(01)at(-9,-1){\begin{math}01\end{math}};
        \node(10)at(9,-1){\begin{math}10\end{math}};
        \node(20)at(3,-1){\begin{math}20\end{math}};
        \node(022)at(-3,-2){\begin{math}022\end{math}};
        \node(021)at(-15,-2){\begin{math}021\end{math}};
        \node(012)at(-9,-2){\begin{math}012\end{math}};
        \node(011)at(-12,-2){\begin{math}011\end{math}};
        \node(202)at(0,-1.85){\begin{math}202\end{math}};
        \node(201)at(-6,-2){\begin{math}201\end{math}};
        \node(102)at(6,-2){\begin{math}102\end{math}};
        \node(101)at(0,-2.15){\begin{math}101\end{math}};
        \node(220)at(3,-2){\begin{math}220\end{math}};
        \node(120)at(15,-2){\begin{math}120\end{math}};
        \node(210)at(9,-2){\begin{math}210\end{math}};
        \node(110)at(12,-2){\begin{math}110\end{math}};
        \draw[EdgeDeriv](0)
            edge[]node[anchor=south,font=\scriptsize,color=ColBlack]
            {}(01);
        \draw[EdgeDeriv](0)
            edge[]node[anchor=south,font=\scriptsize,color=ColBlack]
            {}(02);
        \draw[EdgeDeriv](0)
            edge[]node[anchor=south,font=\scriptsize,color=ColBlack]
            {}(20);
        \draw[EdgeDeriv](0)
            edge[]node[anchor=south,font=\scriptsize,color=ColBlack]
            {}(10);
        \draw[EdgeDeriv](01)
            edge[]node[anchor=south,font=\scriptsize,color=ColBlack]
            {\begin{math}3\end{math}}(011);
        \draw[EdgeDeriv](01)
            edge[]node[anchor=south,font=\scriptsize,color=ColBlack]
            {\begin{math}2\end{math}}(021);
        \draw[EdgeDeriv](01)
            edge[]node[anchor=west,font=\scriptsize,color=ColBlack]
            {}(012);
        \draw[EdgeDeriv](01)
            edge[]node[anchor=south,font=\scriptsize,color=ColBlack]
            {}(101);
        \draw[EdgeDeriv](01)
            edge[]node[anchor=south,font=\scriptsize,color=ColBlack]
            {}(201);
        \draw[EdgeDeriv](02)
            edge[]node[anchor=south,font=\scriptsize,color=ColBlack]
            {}(012);
        \draw[EdgeDeriv](02)
            edge[]node[anchor=east,font=\scriptsize,color=ColBlack]
            {\begin{math}5\end{math}}(022);
        \draw[EdgeDeriv](02)
            edge[]node[anchor=south,font=\scriptsize,color=ColBlack]
            {}(102);
        \draw[EdgeDeriv](02)
            edge[]node[anchor=east,font=\scriptsize,color=ColBlack]
            {}(202);
        \draw[EdgeDeriv](10)
            edge[]node[anchor=south,font=\scriptsize,color=ColBlack]
            {}(101);
        \draw[EdgeDeriv](10)
            edge[]node[anchor=south,font=\scriptsize,color=ColBlack]
            {}(102);
        \draw[EdgeDeriv](10)
            edge[]node[anchor=south,font=\scriptsize,color=ColBlack]
            {\begin{math}2\end{math}}(120);
        \draw[EdgeDeriv](10)
            edge[]node[anchor=south,font=\scriptsize,color=ColBlack]
            {\begin{math}3\end{math}}(110);
        \draw[EdgeDeriv](10)
            edge[]node[anchor=west,font=\scriptsize,color=ColBlack]
            {}(210);
        \draw[EdgeDeriv](20)
            edge[]node[anchor=south,font=\scriptsize,color=ColBlack]
            {}(201);
        \draw[EdgeDeriv](20)
            edge[]node[anchor=west,font=\scriptsize,color=ColBlack]
            {}(202);
        \draw[EdgeDeriv](20)
            edge[]node[anchor=south,font=\scriptsize,color=ColBlack]
            {}(210);
        \draw[EdgeDeriv](20)
            edge[]node[anchor=west,font=\scriptsize,color=ColBlack]
            {\begin{math}5\end{math}}(220);
        \draw[EdgeDeriv,dotted,shorten >=4mm](021)--(-15,-2.75);
        \draw[EdgeDeriv,dotted,shorten >=4mm](011)--(-12,-2.75);
        \draw[EdgeDeriv,dotted,shorten >=4mm](012)--(-9,-2.75);
        \draw[EdgeDeriv,dotted,shorten >=4mm](201)--(-6,-2.75);
        \draw[EdgeDeriv,dotted,shorten >=4mm](022)--(-3,-2.75);
        \draw[EdgeDeriv,dotted,shorten >=4mm](101)--(0,-2.75);
        \draw[EdgeDeriv,dotted,shorten >=4mm](220)--(3,-2.75);
        \draw[EdgeDeriv,dotted,shorten >=4mm](102)--(6,-2.75);
        \draw[EdgeDeriv,dotted,shorten >=4mm](210)--(9,-2.75);
        \draw[EdgeDeriv,dotted,shorten >=4mm](110)--(12,-2.75);
        \draw[EdgeDeriv,dotted,shorten >=4mm](120)--(15,-2.75);
    \end{tikzpicture}
    \vspace{-1.5em}
    \caption{\footnotesize The derivation graph of $\BDias{2}$.}
    \label{fig:derivation_graph_bdias_2}
\end{figure}
\medbreak

\begin{Proposition} \label{prop:properties_BDias}
    For any $\gamma \geq 0$, the monochrome bud generating system
    $\BDias{\gamma}$ satisfies the following properties.
    \begin{enumerate}[label = ({\it {\roman*})}]
        \item \label{item:properties_BDias_faithful}
        It is faithful.
        \item \label{item:properties_BDias_language}
        The set $\Lang\Par{\BDias{\gamma}}$ is equal to the underlying
        monochrome graded collection of $\Dias_\gamma$.
        \item \label{item:properties_BDias_finitely_factorizing}
        The set of rules $\Rfr_\gamma(1)$ factorizes
        finitely~$\Dias_\gamma$.
    \end{enumerate}
\end{Proposition}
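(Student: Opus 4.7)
My plan is to treat the three items almost independently. Items~\ref{item:properties_BDias_faithful} and~\ref{item:properties_BDias_finitely_factorizing} are essentially formal and will follow directly from the definitions. Item~\ref{item:properties_BDias_language} is the only substantive part: the main obstacle is to exhibit, for each word $u \in \Dias_\gamma(n)$ with $n \geq 2$, a decomposition $u = v \circ_j r$ where $v \in \Dias_\gamma(n-1)$ and $r \in \Rfr_\gamma$, after which an easy induction closes the argument.

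For item~\ref{item:properties_BDias_faithful}, I will simply invoke the remark made in Section~\ref{subsubsec:generation} that any monochrome bud generating system is faithful, which applies since $\BDias{\gamma}$ is monochrome by construction. For item~\ref{item:properties_BDias_finitely_factorizing}, every rule of $\Rfr_\gamma$ has arity $2$, so $\Rfr_\gamma(1) = \emptyset$. Moreover $\Dias_\gamma(1)$ reduces to the singleton $\{0\}$ consisting of the unit. No element of $\Dias_\gamma(1)$ admits any nontrivial factorization on the empty set with respect to $\circ_1$, so the finitely factorizing condition is vacuously satisfied. Equivalently, under the locally finite reformulation given just before Lemma~\ref{lem:left_expressions_linear_extensions}, the set of $\Rfr_\gamma(1)$-syntax trees contains only the single-leaf tree, which is finite.

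For item~\ref{item:properties_BDias_language}, Proposition~\ref{prop:language_sub_operad_generated}, combined with the identification of $\Bud_\Ccr(\Dias_\gamma)$ with $\Dias_\gamma$ in the monochrome setting, reduces the statement to showing that the suboperad $\Dias_\gamma^{\Rfr_\gamma}$ of $\Dias_\gamma$ generated by $\Rfr_\gamma$ equals $\Dias_\gamma$. I will proceed by induction on the arity $n \geq 1$. The base case $n=1$ consists only of the unit $0 \in \Dias_\gamma(1)$, which lies in $\Dias_\gamma^{\Rfr_\gamma}$ by definition. For the inductive step, let $u = u_1 \dots u_n$ with $n \geq 2$ and denote by $i \in [n]$ the unique position such that $u_i = 0$. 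Since $n \geq 2$, at least one of $i-1$ and $i+1$ lies in $[n]$, so I can peel off an adjacent letter: if $i \geq 2$, I set $v := u_1 \dots u_{i-2}\, 0\, u_{i+1} \dots u_n \in \Dias_\gamma(n-1)$ and $r := u_{i-1}\, 0 \in \Rfr_\gamma$; if $i = 1$, I symmetrically set $v := 0\, u_3 \dots u_n$ and $r := 0\, u_2$. In both cases, the $\circ_1$-index used on $v$ falls on the letter $0$ of $v$, so the $\max$ substitution from the definition of the partial composition of $\Dias_\gamma$ leaves $r$ unchanged, and a direct check yields $v \circ_j r = u$ for the corresponding index $j$. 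The induction hypothesis applied to $v$ gives $v \in \Dias_\gamma^{\Rfr_\gamma}$, hence $u \in \Dias_\gamma^{\Rfr_\gamma}$, which completes the proof.
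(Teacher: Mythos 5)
Your proof is correct. Items~\ref{item:properties_BDias_faithful} and~\ref{item:properties_BDias_finitely_factorizing} are handled exactly as one would expect: the former by the remark in Section~\ref{subsubsec:generation}, the latter by the observation that $\Rfr_\gamma(1) = \emptyset$ and $\Dias_\gamma(1) = \{0\}$ is a singleton containing only the unit, so the condition holds vacuously. (A small reference slip: the colored-syntax-tree reformulation you invoke sits just before Lemma~\ref{lem:finitely_factorizing_finite_treelike_expressions}, not Lemma~\ref{lem:left_expressions_linear_extensions}; harmless.)

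Where you genuinely diverge from the paper is item~\ref{item:properties_BDias_language}. The paper reduces it, via Proposition~\ref{prop:language_sub_operad_generated}, to the claim that $\Rfr_\gamma$ generates $\Dias_\gamma$, and then disposes of that claim by citing an external reference. You instead supply a self-contained inductive proof: for a word $u$ of arity $n \geq 2$ with $u_i = 0$, you peel off the letter adjacent to the $0$ via a rule $r \in \Rfr_\gamma$ (taking $r = u_{i-1}\,0$ when $i \geq 2$, and $r = 0\,u_2$ when $i = 1$), so that $u = v \circ_{j} r$ with $v$ of arity $n-1$ and $j$ the position of $0$ in $v$. Because $v_j = 0$, the $\max$-substitution in the partial composition of $\Dias_\gamma$ acts as the identity on $r$, so the decomposition is exact and induction closes the argument. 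This is more work than the paper's citation but yields a complete proof within the present paper; both routes are valid and rest on the same reduction to the generation statement.
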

\medbreak

Property~\ref{item:properties_BDias_faithful} of
Proposition~\ref{prop:properties_BDias} is a consequence of the fact
that $\BDias{\gamma}$ is monochrome and
Property~\ref{item:properties_BDias_language} is implied by the fact
that $\Rfr_\gamma$ is a generating set of $\Dias_\gamma$~\cite{Gir15}.
Moreover, observe that since the word $\gamma 0 \gamma$ of
$\Dias_\gamma(3)$ admits exactly the two $\Rfr_\gamma$-treelike
expressions
\begin{math}
    0 \gamma \circ_1 \gamma 0
\end{math}
and
\begin{math}
    \gamma 0 \circ_2 0 \gamma,
\end{math}
by Theorem~\ref{thm:syntactic_series},
$\Angle{\gamma 0 \gamma, \Synt(\BDias{\gamma})} = 2$. Hence,
$\BDias{\gamma}$ is not unambiguous.
\medbreak

%%%%%%%%%%%%%%%%%%%%%%%%%%%%%%%%%%%%%%%%%%%%%%%%%%%%%%%%%%%%%%%%%%%%%%%%
\subsubsection{A bud generating system for Motzkin paths}%
\label{subsubsec:example_bmotz}
Consider the bud generating system
$\BMotz := (\Motz, \{1, 2\}, \Rfr, \{1\}, \{1, 2\})$ where
\begin{equation}
    \Rfr := \Bra{\Par{1, \MotzHoriz, 22},
    \Par{1, \MotzPeak, 111}}.
\end{equation}
Figure~\ref{fig:derivations_bmotz} shows a sequence of derivations in
$\BMotz$ and Figure~\ref{fig:derivation_graph_bmotz} shows the
derivation graph of~$\BMotz$.
\begin{figure}[ht]
    \centering
    \begin{equation*}
        \Unit_1
        \enspace \Deriv \enspace
        \begin{tikzpicture}[scale=.25,Centering]
            \draw[PathGrid](0,0) grid (2,1);
            \node[PathNode](1)at(0,0){};
            \node[PathNode](2)at(1,1){};
            \node[PathNode](3)at(2,0){};
            \draw[PathStep](1)--(2);
            \draw[PathStep](2)--(3);
            \node[font=\scriptsize]at(0,-1){\begin{math}1\end{math}};
            \node[font=\scriptsize]at(1,-1){\begin{math}1\end{math}};
            \node[font=\scriptsize]at(2,-1){\begin{math}1\end{math}};
        \end{tikzpicture}
        \enspace \Deriv \enspace
        \begin{tikzpicture}[scale=.25,Centering]
            \draw[PathGrid](0,0) grid (3,1);
            \node[PathNode](1)at(0,0){};
            \node[PathNode](2)at(1,0){};
            \node[PathNode](3)at(2,1){};
            \node[PathNode](4)at(3,0){};
            \draw[PathStep](1)--(2);
            \draw[PathStep](2)--(3);
            \draw[PathStep](3)--(4);
            \node[font=\scriptsize]at(0,-1){\begin{math}2\end{math}};
            \node[font=\scriptsize]at(1,-1){\begin{math}2\end{math}};
            \node[font=\scriptsize]at(2,-1){\begin{math}1\end{math}};
            \node[font=\scriptsize]at(3,-1){\begin{math}1\end{math}};
        \end{tikzpicture}
        \enspace \Deriv \enspace
        \begin{tikzpicture}[scale=.25,Centering]
            \draw[PathGrid](0,0) grid (5,2);
            \node[PathNode](1)at(0,0){};
            \node[PathNode](2)at(1,0){};
            \node[PathNode](3)at(2,1){};
            \node[PathNode](4)at(3,2){};
            \node[PathNode](5)at(4,1){};
            \node[PathNode](6)at(5,0){};
            \draw[PathStep](1)--(2);
            \draw[PathStep](2)--(3);
            \draw[PathStep](3)--(4);
            \draw[PathStep](4)--(5);
            \draw[PathStep](5)--(6);
            \node[font=\scriptsize]at(0,-1){\begin{math}2\end{math}};
            \node[font=\scriptsize]at(1,-1){\begin{math}2\end{math}};
            \node[font=\scriptsize]at(2,-1){\begin{math}1\end{math}};
            \node[font=\scriptsize]at(3,-1){\begin{math}1\end{math}};
            \node[font=\scriptsize]at(4,-1){\begin{math}1\end{math}};
            \node[font=\scriptsize]at(5,-1){\begin{math}1\end{math}};
        \end{tikzpicture}
        \enspace \Deriv \enspace
        \begin{tikzpicture}[scale=.25,Centering]
            \draw[PathGrid](0,0) grid (6,2);
            \node[PathNode](1)at(0,0){};
            \node[PathNode](2)at(1,0){};
            \node[PathNode](3)at(2,1){};
            \node[PathNode](4)at(3,2){};
            \node[PathNode](5)at(4,2){};
            \node[PathNode](6)at(5,1){};
            \node[PathNode](7)at(6,0){};
            \draw[PathStep](1)--(2);
            \draw[PathStep](2)--(3);
            \draw[PathStep](3)--(4);
            \draw[PathStep](4)--(5);
            \draw[PathStep](5)--(6);
            \draw[PathStep](6)--(7);
            \node[font=\scriptsize]at(0,-1){\begin{math}2\end{math}};
            \node[font=\scriptsize]at(1,-1){\begin{math}2\end{math}};
            \node[font=\scriptsize]at(2,-1){\begin{math}1\end{math}};
            \node[font=\scriptsize]at(3,-1){\begin{math}2\end{math}};
            \node[font=\scriptsize]at(4,-1){\begin{math}2\end{math}};
            \node[font=\scriptsize]at(5,-1){\begin{math}1\end{math}};
            \node[font=\scriptsize]at(6,-1){\begin{math}1\end{math}};
        \end{tikzpicture}
        \enspace \Deriv \enspace
        \begin{tikzpicture}[scale=.25,Centering]
            \draw[PathGrid](0,0) grid (7,2);
            \node[PathNode](1)at(0,0){};
            \node[PathNode](2)at(1,0){};
            \node[PathNode](3)at(2,1){};
            \node[PathNode](4)at(3,2){};
            \node[PathNode](5)at(4,2){};
            \node[PathNode](6)at(5,1){};
            \node[PathNode](7)at(6,1){};
            \node[PathNode](8)at(7,0){};
            \draw[PathStep](1)--(2);
            \draw[PathStep](2)--(3);
            \draw[PathStep](3)--(4);
            \draw[PathStep](4)--(5);
            \draw[PathStep](5)--(6);
            \draw[PathStep](6)--(7);
            \draw[PathStep](7)--(8);
            \node[font=\scriptsize]at(0,-1){\begin{math}2\end{math}};
            \node[font=\scriptsize]at(1,-1){\begin{math}2\end{math}};
            \node[font=\scriptsize]at(2,-1){\begin{math}1\end{math}};
            \node[font=\scriptsize]at(3,-1){\begin{math}2\end{math}};
            \node[font=\scriptsize]at(4,-1){\begin{math}2\end{math}};
            \node[font=\scriptsize]at(5,-1){\begin{math}2\end{math}};
            \node[font=\scriptsize]at(6,-1){\begin{math}2\end{math}};
            \node[font=\scriptsize]at(7,-1){\begin{math}1\end{math}};
        \end{tikzpicture}
    \end{equation*}
    \vspace{-1.5em}
    \caption{\footnotesize A sequence of derivations in $\BMotz$. The
    input colors of the elements of $\Bud_{\{1, 2\}}(\Motz)$ are
    depicted below the paths. The output color of all these elements
    is~$1$.}
    \label{fig:derivations_bmotz}
\end{figure}
\begin{figure}[ht]
    \centering
    \begin{equation*}
        \begin{tikzpicture}[yscale=1.5,xscale=.85,,Centering,
        font=\scriptsize]
            \node(Unit)at(0,0){\begin{math}\Unit_1\end{math}};
            \node(00)at(-2,-1){\begin{math}
            \begin{tikzpicture}[scale=.25]
                \draw[PathGrid](0,0) grid (1,0);
                \node[PathNode](1)at(0,0){};
                \node[PathNode](2)at(1,0){};
                \draw[PathStep](1)--(2);
                \node[font=\scriptsize]at(0,-1)
                    {\begin{math}2\end{math}};
                \node[font=\scriptsize]at(1,-1)
                    {\begin{math}2\end{math}};
            \end{tikzpicture}\end{math}};
            \node(010)at(2,-1){\begin{math}
            \begin{tikzpicture}[scale=.25]
                \draw[PathGrid](0,0) grid (2,1);
                \node[PathNode](1)at(0,0){};
                \node[PathNode](2)at(1,1){};
                \node[PathNode](3)at(2,0){};
                \draw[PathStep](1)--(2);
                \draw[PathStep](2)--(3);
                \node[font=\scriptsize]at(0,-1)
                    {\begin{math}1\end{math}};
                \node[font=\scriptsize]at(1,-1)
                    {\begin{math}1\end{math}};
                \node[font=\scriptsize]at(2,-1)
                    {\begin{math}1\end{math}};
            \end{tikzpicture}\end{math}};
            \node(0010)at(-4,-2){\begin{math}
            \begin{tikzpicture}[scale=.25]
                \draw[PathGrid](0,0) grid (3,1);
                \node[PathNode](1)at(0,0){};
                \node[PathNode](2)at(1,0){};
                \node[PathNode](3)at(2,1){};
                \node[PathNode](4)at(3,0){};
                \draw[PathStep](1)--(2);
                \draw[PathStep](2)--(3);
                \draw[PathStep](3)--(4);
                \node[font=\scriptsize]at(0,-1)
                    {\begin{math}2\end{math}};
                \node[font=\scriptsize]at(1,-1)
                    {\begin{math}2\end{math}};
                \node[font=\scriptsize]at(2,-1)
                    {\begin{math}1\end{math}};
                \node[font=\scriptsize]at(3,-1)
                    {\begin{math}1\end{math}};
            \end{tikzpicture}\end{math}};
            \node(0100)at(-1,-2){\begin{math}
            \begin{tikzpicture}[scale=.25]
                \draw[PathGrid](0,0) grid (3,1);
                \node[PathNode](1)at(0,0){};
                \node[PathNode](2)at(1,1){};
                \node[PathNode](3)at(2,0){};
                \node[PathNode](4)at(3,0){};
                \draw[PathStep](1)--(2);
                \draw[PathStep](2)--(3);
                \draw[PathStep](3)--(4);
                \node[font=\scriptsize]at(0,-1)
                    {\begin{math}1\end{math}};
                \node[font=\scriptsize]at(1,-1)
                    {\begin{math}1\end{math}};
                \node[font=\scriptsize]at(2,-1)
                    {\begin{math}2\end{math}};
                \node[font=\scriptsize]at(3,-1)
                    {\begin{math}2\end{math}};
            \end{tikzpicture}\end{math}};
            \node(0110)at(2,-2){\begin{math}
            \begin{tikzpicture}[scale=.25]
                \draw[PathGrid](0,0) grid (3,1);
                \node[PathNode](1)at(0,0){};
                \node[PathNode](2)at(1,1){};
                \node[PathNode](3)at(2,1){};
                \node[PathNode](4)at(3,0){};
                \draw[PathStep](1)--(2);
                \draw[PathStep](2)--(3);
                \draw[PathStep](3)--(4);
                \node[font=\scriptsize]at(0,-1)
                    {\begin{math}1\end{math}};
                \node[font=\scriptsize]at(1,-1)
                    {\begin{math}2\end{math}};
                \node[font=\scriptsize]at(2,-1)
                    {\begin{math}2\end{math}};
                \node[font=\scriptsize]at(3,-1)
                    {\begin{math}1\end{math}};
            \end{tikzpicture}\end{math}};
            \node(01010)at(5,-2){\begin{math}
            \begin{tikzpicture}[scale=.25]
                \draw[PathGrid](0,0) grid (3,1);
                \node[PathNode](1)at(0,0){};
                \node[PathNode](2)at(1,1){};
                \node[PathNode](3)at(2,0){};
                \node[PathNode](4)at(3,1){};
                \node[PathNode](5)at(4,0){};
                \draw[PathStep](1)--(2);
                \draw[PathStep](2)--(3);
                \draw[PathStep](3)--(4);
                \draw[PathStep](4)--(5);
                \node[font=\scriptsize]at(0,-1)
                    {\begin{math}1\end{math}};
                \node[font=\scriptsize]at(1,-1)
                    {\begin{math}1\end{math}};
                \node[font=\scriptsize]at(2,-1)
                    {\begin{math}1\end{math}};
                \node[font=\scriptsize]at(3,-1)
                    {\begin{math}1\end{math}};
                \node[font=\scriptsize]at(4,-1)
                    {\begin{math}1\end{math}};
            \end{tikzpicture}\end{math}};
            \node(01210)at(8,-2){\begin{math}
            \begin{tikzpicture}[scale=.25]
                \draw[PathGrid](0,0) grid (4,2);
                \node[PathNode](1)at(0,0){};
                \node[PathNode](2)at(1,1){};
                \node[PathNode](3)at(2,2){};
                \node[PathNode](4)at(3,1){};
                \node[PathNode](5)at(4,0){};
                \draw[PathStep](1)--(2);
                \draw[PathStep](2)--(3);
                \draw[PathStep](3)--(4);
                \draw[PathStep](4)--(5);
                \node[font=\scriptsize]at(0,-1)
                    {\begin{math}1\end{math}};
                \node[font=\scriptsize]at(1,-1)
                    {\begin{math}1\end{math}};
                \node[font=\scriptsize]at(2,-1)
                    {\begin{math}1\end{math}};
                \node[font=\scriptsize]at(3,-1)
                    {\begin{math}1\end{math}};
                \node[font=\scriptsize]at(4,-1)
                    {\begin{math}1\end{math}};
            \end{tikzpicture}\end{math}};
            \node(00100)at(-2.5,-3){\begin{math}
            \begin{tikzpicture}[scale=.25]
                \draw[PathGrid](0,0) grid (4,1);
                \node[PathNode](1)at(0,0){};
                \node[PathNode](2)at(1,0){};
                \node[PathNode](3)at(2,1){};
                \node[PathNode](4)at(3,0){};
                \node[PathNode](5)at(4,0){};
                \draw[PathStep](1)--(2);
                \draw[PathStep](2)--(3);
                \draw[PathStep](3)--(4);
                \draw[PathStep](4)--(5);
                \node[font=\scriptsize]at(0,-1)
                    {\begin{math}2\end{math}};
                \node[font=\scriptsize]at(1,-1)
                    {\begin{math}2\end{math}};
                \node[font=\scriptsize]at(2,-1)
                    {\begin{math}1\end{math}};
                \node[font=\scriptsize]at(3,-1)
                    {\begin{math}2\end{math}};
                \node[font=\scriptsize]at(4,-1)
                    {\begin{math}2\end{math}};
            \end{tikzpicture}\end{math}};
            \draw[EdgeDeriv](Unit)--(00);
            \draw[EdgeDeriv](Unit)--(010);
            \draw[EdgeDeriv](010)--(0010);
            \draw[EdgeDeriv](010)--(0110);
            \draw[EdgeDeriv](010)--(0100);
            \draw[EdgeDeriv](010)
                edge[]
                node[anchor=north,font=\scriptsize,color=ColBlack]
                {\begin{math}2\end{math}}(01010);
            \draw[EdgeDeriv](010)--(01210);
            \draw[EdgeDeriv](0010)--(00100);
            \draw[EdgeDeriv](0100)--(00100);
            \draw[EdgeDeriv,dotted,shorten >=6mm](0010)--(-4,-3);
            \draw[EdgeDeriv,dotted,shorten >=6mm](0110)--(2,-3);
            \draw[EdgeDeriv,dotted,shorten >=6mm](0100)--(-1,-3);
            \draw[EdgeDeriv,dotted,shorten >=6mm](01010)--(5,-3);
            \draw[EdgeDeriv,dotted,shorten >=6mm](01210)--(8,-3);
            \draw[EdgeDeriv,dotted,shorten >=6mm](00100)--(-2.5,-4);
        \end{tikzpicture}
    \end{equation*}
    \vspace{-1.5em}
    \caption{\footnotesize The derivation graph of $\BMotz$. The input
    colors of the elements of $\Bud_{\{1, 2\}}(\Motz)$ are depicted
    below the paths. The output color of all these elements is~$1$.}
    \label{fig:derivation_graph_bmotz}
\end{figure}
\medbreak

Let $L_{\BMotz}$ be the set of Motzkin paths with no consecutive
horizontal steps.
\medbreak

\begin{Proposition} \label{prop:properties_BMotz}
    The bud generating system $\BMotz$ satisfies the following
    properties.
    \begin{enumerate}[label = ({\it {\roman*})}]
        \item \label{item:properties_BMotz_faithful}
        It is faithful.
        \item \label{item:properties_BMotz_language}
        The restriction of the pruning map $\Prune$ on the domain
        $\Lang\Par{\BMotz}$
        is a bijection between $\Lang\Par{\BMotz}$ and $L_{\BMotz}$.
        \item \label{item:properties_BASchr_finitely_factorizing}
        The set of rules $\Rfr(1)$ finitely factorizes
        $\Bud_{\{1, 2\}}(\Motz)$.
    \end{enumerate}
\end{Proposition}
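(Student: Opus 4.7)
My plan has three parts corresponding to the three properties.

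\textbf{Property (iii)} is immediate: the two rules have arity $2$ and $3$ respectively, so $\Rfr(1) = \emptyset$ and the finite-factorization condition holds vacuously.

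\textbf{Property (ii)} is the main content. The first step is to characterize the coloring of elements of $\Lang(\BMotz)$. Since both rules in $\Rfr$ have output color $1$, any partial composition can only act at a position of input color $1$. The horizontal rule introduces two new positions of color $2$, and such positions are then frozen against all subsequent derivations, whereas the peak rule creates color-$1$ positions. A straightforward induction on the length of a derivation then shows that, in any $(1, \Afr, u) \in \Lang(\BMotz)$, a position is colored $2$ if and only if it is an endpoint of a horizontal step of $\Afr$. This fully determines $u$ from $\Afr$ and yields injectivity of $\Prune$ restricted to $\Lang(\BMotz)$.

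Next, I identify the image as $L_{\BMotz}$. For the inclusion $\Prune(\Lang(\BMotz)) \subseteq L_{\BMotz}$, I invoke Lemma~\ref{lem:language_treelike_expressions} to fix an $\Rfr$-treelike expression $\Tfr$ of any element of $\Lang(\BMotz)$. The leaves of $\Tfr$ are in bijection with the points of the path, and each step of the path corresponds to a pair of consecutive leaves whose lowest common ancestor is labeled by the generating rule of that step. Since the two children of a type-H corolla have input color $2$ and no rule has output color $2$, these children must be leaves of $\Tfr$. If two consecutive horizontal steps existed in the path, the shared middle point would have to be simultaneously the second child of one type-H corolla and the first child of another, which is impossible since a leaf has a unique parent. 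For the reverse inclusion, I induct on the arity of $\Afr \in L_{\BMotz}$: the cases $|\Afr| = 1$ and $|\Afr| = 2$ are handled by $\Unit_1$ and a single application of the horizontal rule. For $|\Afr| \geq 3$, if $\Afr$ starts with an up step, I write $\Afr = U \Bfr D \Cfr$ where the indicated down step marks the first return to height $0$ and apply the peak rule to $(\Unit_1, \Bfr, \Cfr)$; if $\Afr$ starts with a horizontal step, then its second step must be up (horizontal is excluded by the $L_{\BMotz}$ hypothesis, down by nonnegativity), so $\Afr = H U \Bfr' D \Cfr'$ and I apply the peak rule to $(\MotzHoriz, \Bfr', \Cfr')$. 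The sub-paths, being contiguous sub-sequences of $\Afr$, inherit the no-consecutive-horizontal property and have strictly smaller arity, enabling the induction.

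\textbf{Property (i)} then follows from (ii): injectivity of $\Prune$ on $\Lang(\BMotz)$ means every coefficient of the pruned characteristic series of $\Lang(\BMotz)$ is $0$ or $1$. The main obstacle is the structural lemma that type-H children are always leaves of the syntax tree, together with the resulting parent-uniqueness argument for the no-consecutive-horizontal property; once this is cleanly set up, the remainder amounts to a classical first-return decomposition of Motzkin paths.
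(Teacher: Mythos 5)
The paper explicitly omits the proofs of the propositions in Section~\ref{subsec:examples_bud_generating_systems} (``We review some properties of these with omitting their proofs''), so there is no author argument to compare against; your proposal must be judged on its own. It is correct. Property~(iii) does hold vacuously since both rules have arity $\geq 2$, so $\Rfr(1)=\emptyset$ and the only $\Rfr(1)$-syntax trees are the single leaves. The coloring invariant is sound: a rule is composed only at a color-$1$ position, the horizontal rule turns it into two color-$2$ endpoints of a fresh horizontal step, and the peak rule turns it into three color-$1$ points that are not endpoints of horizontal steps; induction on derivation length gives exactly ``color $2$ iff endpoint of a horizontal step,'' hence injectivity of $\Prune$ and then~(i). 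The treelike-expression argument for $\Prune(\Lang(\BMotz))\subseteq L_{\BMotz}$ is correct: a horizontal step comes from a horiz-labeled node whose two children must be leaves (no rule has output color $2$), so a shared middle point of two consecutive horizontal steps would have two distinct parents. The first-return decomposition for the reverse inclusion is standard and applies because both subpaths inherit the no-consecutive-horizontal property.

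One remark: the invariant you set up for injectivity already yields the forward inclusion without the treelike-expression detour. Since compositions happen only at color-$1$ positions, and color-$1$ positions are precisely those not incident to a horizontal step, a freshly inserted horizontal step is never adjacent to a pre-existing one; so ``no two consecutive horizontal steps'' is itself preserved by $\Deriv$. Your LCA argument is equally valid, and has the pleasant feature of reading the property directly off any $\Rfr$-treelike expression rather than off a chosen derivation, but the shorter route is available.
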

\medbreak

Properties~\ref{item:properties_BMotz_faithful}
and~\ref{item:properties_BMotz_language} of
Proposition~\ref{prop:properties_BMotz} together say that the sequence
enumerating the elements of $\Lang(\BMotz)$ with respect to their arity
is the one enumerating the Motzkin paths with no consecutive horizontal
steps. This sequence is Sequence~\OEIS{A104545} of~\cite{Slo}, starting
by
\begin{equation}
    1, 1, 1, 3, 5, 11, 25, 55, 129, 303, 721, 1743, 4241, 10415,
    25761, 64095.
\end{equation}
Moreover, since the Motzkin path
\begin{math}
    \begin{tikzpicture}[scale=.2,Centering]
        \draw[PathGrid](0,0) grid (3,1);
        \node[PathNode](1)at(0,0){};
        \node[PathNode](2)at(1,1){};
        \node[PathNode](3)at(2,0){};
        \node[PathNode](4)at(3,1){};
        \node[PathNode](5)at(4,0){};
        \draw[PathStep](1)--(2);
        \draw[PathStep](2)--(3);
        \draw[PathStep](3)--(4);
        \draw[PathStep](4)--(5);
    \end{tikzpicture}
\end{math}
of $\Motz(5)$ admits exactly the two $\Rfr$-treelike expressions
\begin{math}
    \MotzPeak \circ_1 \MotzPeak
\end{math}
and
\begin{math}
    \MotzPeak \circ_3 \MotzPeak,
\end{math}
by Theorem~\ref{thm:syntactic_series},
\begin{math}
    \Angle{\Par{1,
    \begin{tikzpicture}[scale=.23,Centering]
        \draw[PathGrid](0,0) grid (3,1);
        \node[PathNode](1)at(0,0){};
        \node[PathNode](2)at(1,1){};
        \node[PathNode](3)at(2,0){};
        \node[PathNode](4)at(3,1){};
        \node[PathNode](5)at(4,0){};
        \draw[PathStep](1)--(2);
        \draw[PathStep](2)--(3);
        \draw[PathStep](3)--(4);
        \draw[PathStep](4)--(5);
    \end{tikzpicture},
    11111},
    \Synt(\BMotz)}
    = 2.
\end{math}
Hence $\BMotz$ is not unambiguous.
\medbreak

%%%%%%%%%%%%%%%%%%%%%%%%%%%%%%%%%%%%%%%%%%%%%%%%%%%%%%%%%%%%%%%%%%%%%%%%
\subsubsection{A bud generating system for unary-binary trees}%
\label{subsubsec:example_bubtree}
Let $C$ be the monochrome graded collection defined by
$C := C(1) \sqcup C(2)$ where $C(1) := \{\Att, \Btt\}$ and
$C(2) := \{\Ctt\}$. Let
$\BUBTree := (\Free(C), \{1, 2\}, \Rfr, \{1\}, \{2\})$ be the bud
generating system where
\begin{equation}
    \Rfr := \Bra{
        \Par{1, \FreeCorollaOne{\Att}, 2},
        \Par{1, \FreeCorollaOne{\Btt}, 2},
        \Par{2, \FreeCorollaTwo{\Ctt}, 11}
    }.
\end{equation}
Figure~\ref{fig:derivations_unary_binary_trees} shows a sequence of
derivations in $\BUBTree$.
\begin{figure}[ht]
    \centering
    \begin{equation*}
        \Unit_1
        \enspace \Deriv \enspace
        \begin{tikzpicture}[xscale=.25,yscale=.45,Centering]
            \node(1)at(0.00,-1.00){};
            \node[NodeST](0)at(0.00,0.00){\begin{math}\Btt\end{math}};
            \draw[Edge](1)--(0);
            \node(r)at(0.00,1){};
            \draw[Edge](r)--(0);
            \node[LeafLabel,below of=1]{\begin{math}2\end{math}};
        \end{tikzpicture}
        \enspace \Deriv \enspace
        \begin{tikzpicture}[xscale=.22,yscale=.35,Centering]
            \node(1)at(0.00,-2.67){};
            \node(3)at(2.00,-2.67){};
            \node[NodeST](0)at(1.00,0.00)
                {\begin{math}\Btt\end{math}};
            \node[NodeST](2)at(1.00,-1.33)
                {\begin{math}\Ctt\end{math}};
            \draw[Edge](1)--(2);
            \draw[Edge](2)--(0);
            \draw[Edge](3)--(2);
            \node(r)at(1.00,1.25){};
            \draw[Edge](r)--(0);
            \node[LeafLabel,below of=1]
                {\begin{math}1\end{math}};
            \node[LeafLabel,below of=3]
                {\begin{math}1\end{math}};
        \end{tikzpicture}
        \enspace \Deriv \enspace
        \begin{tikzpicture}[xscale=.26,yscale=.35,Centering]
            \node(1)at(0.00,-2.50){};
            \node(4)at(2.00,-3.75){};
            \node[NodeST](0)at(1.00,0.00)
                {\begin{math}\Btt\end{math}};
            \node[NodeST](2)at(1.00,-1.25)
                {\begin{math}\Ctt\end{math}};
            \node[NodeST](3)at(2.00,-2.50)
                {\begin{math}\Att\end{math}};
            \draw[Edge](1)--(2);
            \draw[Edge](2)--(0);
            \draw[Edge](3)--(2);
            \draw[Edge](4)--(3);
            \node(r)at(1.00,01.25){};
            \draw[Edge](r)--(0);
            \node[LeafLabel,below of=1]
                {\begin{math}1\end{math}};
            \node[LeafLabel,below of=4]
                {\begin{math}2\end{math}};
        \end{tikzpicture}
        \enspace \Deriv \enspace
        \begin{tikzpicture}[xscale=.24,yscale=.3,Centering]
            \node(1)at(0.00,-2.80){};
            \node(4)at(2.00,-5.60){};
            \node(6)at(4.00,-5.60){};
            \node[NodeST](0)at(2.00,0.00)
                {\begin{math}\Btt\end{math}};
            \node[NodeST](2)at(1.00,-1.40)
                {\begin{math}\Ctt\end{math}};
            \node[NodeST](3)at(3.00,-2.80)
                {\begin{math}\Att\end{math}};
            \node[NodeST](5)at(3.00,-4.20)
                {\begin{math}\Ctt\end{math}};
            \draw[Edge](1)--(2);
            \draw[Edge](2)--(0);
            \draw[Edge](3)--(2);
            \draw[Edge](4)--(5);
            \draw[Edge](5)--(3);
            \draw[Edge](6)--(5);
            \node(r)at(2.00,1.25){};
            \draw[Edge](r)--(0);
            \node[LeafLabel,below of=1]
                {\begin{math}1\end{math}};
            \node[LeafLabel,below of=4]
                {\begin{math}1\end{math}};
            \node[LeafLabel,below of=6]
                {\begin{math}1\end{math}};
        \end{tikzpicture}
        \enspace \Deriv \enspace
        \begin{tikzpicture}[xscale=.28,yscale=.3,Centering]
            \node(2)at(0.00,-4.80){};
            \node(5)at(2.00,-6.40){};
            \node(7)at(4.00,-6.40){};
            \node[NodeST](0)at(2.00,0.00)
                {\begin{math}\Btt\end{math}};
            \node[NodeST](1)at(0.00,-3.20)
                {\begin{math}\Btt\end{math}};
            \node[NodeST](3)at(1.00,-1.60)
                {\begin{math}\Ctt\end{math}};
            \node[NodeST](4)at(3.00,-3.20)
                {\begin{math}\Att\end{math}};
            \node[NodeST](6)at(3.00,-4.80)
                {\begin{math}\Ctt\end{math}};
            \draw[Edge](1)--(3);
            \draw[Edge](2)--(1);
            \draw[Edge](3)--(0);
            \draw[Edge](4)--(3);
            \draw[Edge](5)--(6);
            \draw[Edge](6)--(4);
            \draw[Edge](7)--(6);
            \node(r)at(2.00,1.5){};
            \draw[Edge](r)--(0);
            \node[LeafLabel,below of=2]
                {\begin{math}2\end{math}};
            \node[LeafLabel,below of=5]
                {\begin{math}1\end{math}};
            \node[LeafLabel,below of=7]
                {\begin{math}1\end{math}};
        \end{tikzpicture}
        \enspace \Deriv \enspace
        \begin{tikzpicture}[xscale=.28,yscale=.29,Centering]
            \node(2)at(0.00,-4.50){};
            \node(6)at(2.00,-7.50){};
            \node(8)at(4.00,-6.00){};
            \node[NodeST](0)at(2.00,0.00)
                {\begin{math}\Btt\end{math}};
            \node[NodeST](1)at(0.00,-3.00)
                {\begin{math}\Btt\end{math}};
            \node[NodeST](3)at(1.00,-1.50)
                {\begin{math}\Ctt\end{math}};
            \node[NodeST](4)at(3.00,-3.00)
                {\begin{math}\Att\end{math}};
            \node[NodeST](5)at(2.00,-6.00)
                {\begin{math}\Att\end{math}};
            \node[NodeST](7)at(3.00,-4.50)
                {\begin{math}\Ctt\end{math}};
            \draw[Edge](1)--(3);
            \draw[Edge](2)--(1);
            \draw[Edge](3)--(0);
            \draw[Edge](4)--(3);
            \draw[Edge](5)--(7);
            \draw[Edge](6)--(5);
            \draw[Edge](7)--(4);
            \draw[Edge](8)--(7);
            \node(r)at(2.00,1.5){};
            \draw[Edge](r)--(0);
            \node[LeafLabel,below of=2]
                {\begin{math}2\end{math}};
            \node[LeafLabel,below of=6]
                {\begin{math}2\end{math}};
            \node[LeafLabel,below of=8]
                {\begin{math}1\end{math}};
        \end{tikzpicture}
        \enspace \Deriv \enspace
        \begin{tikzpicture}[xscale=.32,yscale=.25,Centering]
            \node(2)at(0.00,-5.00){};
            \node(6)at(2.00,-8.33){};
            \node(9)at(4.00,-8.33){};
            \node[NodeST](0)at(2.00,0.00)
                {\begin{math}\Btt\end{math}};
            \node[NodeST](1)at(0.00,-3.33)
                {\begin{math}\Btt\end{math}};
            \node[NodeST](3)at(1.00,-1.67)
                {\begin{math}\Ctt\end{math}};
            \node[NodeST](4)at(3.00,-3.33)
                {\begin{math}\Att\end{math}};
            \node[NodeST](5)at(2.00,-6.67)
                {\begin{math}\Att\end{math}};
            \node[NodeST](7)at(3.00,-5.00)
                {\begin{math}\Ctt\end{math}};
            \node[NodeST](8)at(4.00,-6.67)
                {\begin{math}\Btt\end{math}};
            \draw[Edge](1)--(3);
            \draw[Edge](2)--(1);
            \draw[Edge](3)--(0);
            \draw[Edge](4)--(3);
            \draw[Edge](5)--(7);
            \draw[Edge](6)--(5);
            \draw[Edge](7)--(4);
            \draw[Edge](8)--(7);
            \draw[Edge](9)--(8);
            \node(r)at(2.00,1.5){};
            \draw[Edge](r)--(0);
            \node[LeafLabel,below of=2]
                {\begin{math}2\end{math}};
            \node[LeafLabel,below of=6]
                {\begin{math}2\end{math}};
            \node[LeafLabel,below of=9]
                {\begin{math}2\end{math}};
        \end{tikzpicture}
    \end{equation*}
    \vspace{-1.5em}
    \caption{\footnotesize A sequence of derivations in $\BUBTree$. The
    input colors of the elements of $\Bud_{\{1, 2\}}(\Free(C))$ are
    depicted below the leaves. The output color of all these elements is
    $1$. Since all input colors of the last tree are $2$, this tree is
    in~$\Lang(\BUBTree)$.}
    \label{fig:derivations_unary_binary_trees}
\end{figure}
\medbreak

A \Def{unary-binary tree} is a planar rooted tree $\Tfr$ such that all
internal nodes of $\Tfr$ are of arities $1$ or $2$, all nodes of $\Tfr$
of arity $1$ have a child which is an internal node of arity $2$ or is a
leaf, and all nodes of $\Tfr$ of arity $2$ have two children which are
internal nodes of arity $1$ or are leaves.
\medbreak

Let $L_{\BUBTree}$ be the set of unary-binary trees with a root of arity
$1$, all parents of the leaves are of arity $1$, and unary nodes are
labeled by $\Att$ or~$\Btt$.
\medbreak

\begin{Proposition} \label{prop:properties_BUBTree}
    The bud generating system $\BUBTree$ satisfies the following
    properties.
    \begin{enumerate}[label = ({\it {\roman*})}]
        \item \label{item:properties_BUBTree_faithful}
        It is faithful.
        \item \label{item:properties_BUBTree_unambiguous}
        It is unambiguous.
        \item \label{item:properties_BUBTree_language}
        The restriction of the pruning map $\Prune$ on the domain
        $\Lang\Par{\BUBTree}$ is a bijection between
        $\Lang\Par{\BUBTree}$ and $L_{\BUBTree}$.
        \item \label{item:properties_BUBTree_finitely_factorizing}
        The set of rules $\Rfr(1)$ finitely factorizes
        $\Bud_{\{1, 2\}}(\Free(C))$.
    \end{enumerate}
\end{Proposition}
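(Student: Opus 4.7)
The plan is to handle the four items in an order that lets each build on the previous. First, item~(iv) is almost immediate: the two rules in $\Rfr(1)$, namely $(1, \La, 2)$ and $(1, \Lb, 2)$, both have output color~$1$ and input color~$2$, so any partial composition $s_1 \circ_1 s_2$ with $s_1, s_2 \in \Rfr(1)$ would require $\Out(s_2) = \In_1(s_1)$, that is $1 = 2$, which is impossible. Hence no factorization on $\Rfr(1)$ has length exceeding one, and every element of $\Bud_{\{1, 2\}}(\Free(C))(1)$ admits at most one such factorization.

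For item~(iii), I would invoke Proposition~\ref{prop:language_sub_operad_generated} to identify $\Lang(\BUBTree)$ with the set of elements of the suboperad of $\Bud_{\{1, 2\}}(\Free(C))$ generated by $\Rfr$ whose output color is~$1$ and whose input colors are all~$2$. The coloring constraints then propagate as follows. Starting from $\Unit_1$, the only applicable rules are $r_\La := (1, \La, 2)$ and $r_\Lb := (1, \Lb, 2)$, since $r_\Lc := (2, \Lc, 11)$ has output color $2$ and does not match the color $1$ of $\Unit_1$. Applying such a unary rule creates a new leaf of color~$2$, which in turn can only be expanded by $r_\Lc$. The two children of that node have color~$1$ and must then be expanded by $r_\La$ or $r_\Lb$, and so on, strictly alternating. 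Consequently, pruning an element of $\Lang(\BUBTree)$ yields a unary-binary tree whose root is a unary node labeled by $\La$ or $\Lb$, whose internal nodes of arity~$2$ are labeled by $\Lc$, and whose leaf parents are all unary. This is exactly $L_{\BUBTree}$. Conversely, any $\Tfr \in L_{\BUBTree}$ admits a coloring by assigning color~$1$ to the output of the root, color~$2$ to every edge below a unary node, and color~$1$ to every edge below a $\Lc$ node. By the alternating structure of $L_{\BUBTree}$, this coloring is consistent and respects all rule constraints, placing the element in $\Lang(\BUBTree)$.

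Item~(i) then follows as a byproduct of~(iii): the coloring above is uniquely determined by the underlying tree $\Tfr$, so the restriction of $\Prune$ to $\Lang(\BUBTree)$ is injective, and all coefficients of the pruned characteristic series lie in $\{0, 1\}$. For item~(ii), the three rules of $\Rfr$ are distinct corollas of distinct arities and distinct internal-node labels, so given $x = (a, \Tfr, u) \in \Lang(\BUBTree)$, any $\Rfr$-treelike expression of $x$ must have the same underlying shape as $\Tfr$, with each internal node labeled by the unique rule of $\Rfr$ whose root label matches that of the corresponding node in $\Tfr$. Such an expression therefore exists and is unique, so all coefficients of $\Synt(\BUBTree)$ are in $\{0, 1\}$.

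The only delicate point in the whole argument is the forward direction of~(iii), where one has to verify that every element obtained from $\Unit_1$ by iterated partial compositions with rules of $\Rfr$ indeed has the claimed alternating structure; this is carried out by induction on the number of compositions, using the colored composition constraints recalled above. Everything else reduces to inspecting the three generators and tracking how the colors propagate.
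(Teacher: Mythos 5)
Your argument is correct; note that the paper deliberately omits the proof of this proposition (the preamble to Section~\ref{subsec:examples_bud_generating_systems} states that the properties of the example bud generating systems are "straightforward and left to the reader"), so there is no printed proof to compare against. Your direct color-tracking verification is exactly the kind of argument the paper has in mind: item~(iv) falls out immediately from the fact that both unary rules have output color~$1$ but input color~$2$, so no chain of length~$\geq 2$ can form; item~(iii) follows from the observation that the colors force strict alternation between $\{\La,\Lb\}$-labeled unary nodes (output~$1$, input~$2$) and $\Lc$-labeled binary nodes (output~$2$, inputs~$11$), together with the terminal set $T = \{2\}$ forbidding any leaf of color~$1$; and items~(i) and~(ii) then reduce to the uniqueness of the coloring and of the $\Rfr$-treelike expression, both of which are forced because the three rules are single corollas with pairwise distinct (label, arity) data. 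The only thing I would tighten slightly is the justification for item~(ii): rather than reasoning about derivations, invoke Theorem~\ref{thm:syntactic_series} (or Lemma~\ref{lem:language_treelike_expressions}) directly so that the coefficient $\Angle{x, \Synt(\BUBTree)}$ is literally the number of $\Rfr$-treelike expressions of $x$; for $x \notin \Lang(\BUBTree)$ this is~$0$ by Proposition~\ref{prop:support_syntactic_series_language}, and for $x \in \Lang(\BUBTree)$ your corolla-shape argument gives exactly~$1$.
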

\medbreak

%%%%%%%%%%%%%%%%%%%%%%%%%%%%%%%%%%%%%%%%%%%%%%%%%%%%%%%%%%%%%%%%%%%%%%%%
\subsubsection{A bud generating system for $B$-perfect trees}%
\label{subsubsec:example_bbtree}
Let $B$ be a finite set of positive integers and $C_B$ be the monochrome
graded collection defined by
\begin{math}
    C_B := \bigsqcup_{n \in B} C_B(n)
    := \bigsqcup_{n \in B} \Bra{\Att_n}.
\end{math}
We consider the monochrome bud generating system
$\BBTree{B} := \Par{\Free\Par{C_B}, \Rfr_B}$ where $\Rfr_B$ is the set
of all corollas of $\Free\Par{C_B}(1)$.
Figure~\ref{fig:derivation_graph_bbtree} shows the synchronous
derivation graph of $\BBTree{\{2, 3\}}$.
\begin{figure}[ht]
    \centering
    \begin{equation*}
        \scalebox{.88}{
        \begin{tikzpicture}[yscale=1.1,xscale=.7,font=\scriptsize]
            \node(Unit)at(2.625,.45){\begin{math}\Unit\end{math}};
            \node(200)at(-3.25,-.5){\begin{math}
            \begin{tikzpicture}[xscale=.25,yscale=.23,Centering]
                \node(0)at(0.00,-1.50){};
                \node(2)at(2.00,-1.50){};
                \node[NodeST](1)at(1.00,0.00)
                    {\begin{math}\Att_2\end{math}};
                \draw[Edge](0)--(1);
                \draw[Edge](2)--(1);
                \node(r)at(1.00,1.5){};
                \draw[Edge](r)--(1);
            \end{tikzpicture}\end{math}};
            \node(3000)at(8.5,-.5){\begin{math}
            \begin{tikzpicture}[xscale=.25,yscale=.23,Centering]
                \node(0)at(0.00,-1.50){};
                \node(2)at(1.00,-1.50){};
                \node(3)at(2.00,-1.50){};
                \node[NodeST](1)at(1.00,0.00)
                    {\begin{math}\Att_3\end{math}};
                \draw[Edge](0)--(1);
                \draw[Edge](2)--(1);
                \draw[Edge](3)--(1);
                \node(r)at(1.00,1.5){};
                \draw[Edge](r)--(1);
            \end{tikzpicture}\end{math}};
            \node(2200200)at(-5.5,-2){\begin{math}
            \begin{tikzpicture}[xscale=.13,yscale=.18,Centering]
                \node(0)at(0.00,-4.67){};
                \node(2)at(2.00,-4.67){};
                \node(4)at(4.00,-4.67){};
                \node(6)at(6.00,-4.67){};
                \node[NodeST](1)at(1.00,-2.33)
                    {\begin{math}\Att_2\end{math}};
                \node[NodeST](3)at(3.00,0.00)
                    {\begin{math}\Att_2\end{math}};
                \node[NodeST](5)at(5.00,-2.33)
                    {\begin{math}\Att_2\end{math}};
                \draw[Edge](0)--(1);
                \draw[Edge](1)--(3);
                \draw[Edge](2)--(1);
                \draw[Edge](4)--(5);
                \draw[Edge](5)--(3);
                \draw[Edge](6)--(5);
                \node(r)at(3.00,2){};
                \draw[Edge](r)--(3);
            \end{tikzpicture}\end{math}};
            \node(22003000)at(-4,-2){\begin{math}
            \begin{tikzpicture}[xscale=.13,yscale=.18,Centering]
                \node(0)at(0.00,-4.67){};
                \node(2)at(2.00,-4.67){};
                \node(4)at(4.00,-4.67){};
                \node(6)at(5.00,-4.67){};
                \node(7)at(6.00,-4.67){};
                \node[NodeST](1)at(1.00,-2.33)
                    {\begin{math}\Att_2\end{math}};
                \node[NodeST](3)at(3.00,0.00)
                    {\begin{math}\Att_2\end{math}};
                \node[NodeST](5)at(5.00,-2.33)
                    {\begin{math}\Att_3\end{math}};
                \draw[Edge](0)--(1);
                \draw[Edge](1)--(3);
                \draw[Edge](2)--(1);
                \draw[Edge](4)--(5);
                \draw[Edge](5)--(3);
                \draw[Edge](6)--(5);
                \draw[Edge](7)--(5);
                \node(r)at(3.00,2.00){};
                \draw[Edge](r)--(3);
            \end{tikzpicture}\end{math}};
            \node(23000200)at(-2.5,-2){\begin{math}
            \begin{tikzpicture}[xscale=.13,yscale=.18,Centering]
                \node(0)at(0.00,-4.67){};
                \node(2)at(1.00,-4.67){};
                \node(3)at(2.00,-4.67){};
                \node(5)at(4.00,-4.67){};
                \node(7)at(6.00,-4.67){};
                \node[NodeST](1)at(1.00,-2.33)
                    {\begin{math}\Att_3\end{math}};
                \node[NodeST](4)at(3.00,0.00)
                    {\begin{math}\Att_2\end{math}};
                \node[NodeST](6)at(5.00,-2.33)
                    {\begin{math}\Att_2\end{math}};
                \draw[Edge](0)--(1);
                \draw[Edge](1)--(4);
                \draw[Edge](2)--(1);
                \draw[Edge](3)--(1);
                \draw[Edge](5)--(6);
                \draw[Edge](6)--(4);
                \draw[Edge](7)--(6);
                \node(r)at(3.00,2.00){};
                \draw[Edge](r)--(4);
            \end{tikzpicture}\end{math}};
            \node(230003000)at(-1,-2){\begin{math}
            \begin{tikzpicture}[xscale=.13,yscale=.18,Centering]
                \node(0)at(0.00,-4.67){};
                \node(2)at(1.00,-4.67){};
                \node(3)at(2.00,-4.67){};
                \node(5)at(4.00,-4.67){};
                \node(7)at(5.00,-4.67){};
                \node(8)at(6.00,-4.67){};
                \node[NodeST](1)at(1.00,-2.33)
                    {\begin{math}\Att_3\end{math}};
                \node[NodeST](4)at(3.00,0.00)
                    {\begin{math}\Att_2\end{math}};
                \node[NodeST](6)at(5.00,-2.33)
                    {\begin{math}\Att_3\end{math}};
                \draw[Edge](0)--(1);
                \draw[Edge](1)--(4);
                \draw[Edge](2)--(1);
                \draw[Edge](3)--(1);
                \draw[Edge](5)--(6);
                \draw[Edge](6)--(4);
                \draw[Edge](7)--(6);
                \draw[Edge](8)--(6);
                \node(r)at(3.00,2){};
                \draw[Edge](r)--(4);
            \end{tikzpicture}\end{math}};
            \node(3200200200)at(1.5,-2){\begin{math}
            \begin{tikzpicture}[xscale=.15,yscale=.15,Centering]
                \node(0)at(0.00,-6){};
                \node(2)at(2.00,-6){};
                \node(4)at(3.00,-6){};
                \node(6)at(5.00,-6){};
                \node(7)at(6.00,-6){};
                \node(9)at(8.00,-6){};
                \node[NodeST](1)at(1.00,-3)
                    {\begin{math}\Att_2\end{math}};
                \node[NodeST](3)at(4.00,0.00)
                    {\begin{math}\Att_3\end{math}};
                \node[NodeST](5)at(4.00,-3)
                    {\begin{math}\Att_2\end{math}};
                \node[NodeST](8)at(7.00,-3)
                    {\begin{math}\Att_2\end{math}};
                \draw[Edge](0)--(1);
                \draw[Edge](1)--(3);
                \draw[Edge](2)--(1);
                \draw[Edge](4)--(5);
                \draw[Edge](5)--(3);
                \draw[Edge](6)--(5);
                \draw[Edge](7)--(8);
                \draw[Edge](8)--(3);
                \draw[Edge](9)--(8);
                \node(r)at(4.00,2.25){};
                \draw[Edge](r)--(3);
            \end{tikzpicture}\end{math}};
            \node(32002003000)at(3.5,-2){\begin{math}
            \begin{tikzpicture}[xscale=.15,yscale=.15,Centering]
                \node(0)at(0.00,-6){};
                \node(10)at(8.00,-6){};
                \node(2)at(2.00,-6){};
                \node(4)at(3.00,-6){};
                \node(6)at(5.00,-6){};
                \node(7)at(6.00,-6){};
                \node(9)at(7.00,-6){};
                \node[NodeST](1)at(1.00,-3)
                    {\begin{math}\Att_2\end{math}};
                \node[NodeST](3)at(4.00,0.00)
                    {\begin{math}\Att_3\end{math}};
                \node[NodeST](5)at(4.00,-3)
                    {\begin{math}\Att_2\end{math}};
                \node[NodeST](8)at(7.00,-3)
                    {\begin{math}\Att_3\end{math}};
                \draw[Edge](0)--(1);
                \draw[Edge](1)--(3);
                \draw[Edge](10)--(8);
                \draw[Edge](2)--(1);
                \draw[Edge](4)--(5);
                \draw[Edge](5)--(3);
                \draw[Edge](6)--(5);
                \draw[Edge](7)--(8);
                \draw[Edge](8)--(3);
                \draw[Edge](9)--(8);
                \node(r)at(4.00,2.25){};
                \draw[Edge](r)--(3);
            \end{tikzpicture}\end{math}};
            \node(32003000200)at(5.5,-2){\begin{math}
            \begin{tikzpicture}[xscale=.15,yscale=.15,Centering]
                \node(0)at(0.00,-6){};
                \node(10)at(8.00,-6){};
                \node(2)at(2.00,-6){};
                \node(4)at(3.00,-6){};
                \node(6)at(4.00,-6){};
                \node(7)at(5.00,-6){};
                \node(8)at(6.00,-6){};
                \node[NodeST](1)at(1.00,-3)
                    {\begin{math}\Att_2\end{math}};
                \node[NodeST](3)at(4.00,0.00)
                    {\begin{math}\Att_3\end{math}};
                \node[NodeST](5)at(4.00,-3)
                    {\begin{math}\Att_3\end{math}};
                \node[NodeST](9)at(7.00,-3)
                    {\begin{math}\Att_2\end{math}};
                \draw[Edge](0)--(1);
                \draw[Edge](1)--(3);
                \draw[Edge](10)--(9);
                \draw[Edge](2)--(1);
                \draw[Edge](4)--(5);
                \draw[Edge](5)--(3);
                \draw[Edge](6)--(5);
                \draw[Edge](7)--(5);
                \draw[Edge](8)--(9);
                \draw[Edge](9)--(3);
                \node(r)at(4.00,2.25){};
                \draw[Edge](r)--(3);
            \end{tikzpicture}\end{math}};
            \node(320030003000)at(7.5,-2){\begin{math}
            \begin{tikzpicture}[xscale=.15,yscale=.15,Centering]
                \node(0)at(0.00,-6){};
                \node(10)at(7.00,-6){};
                \node(11)at(8.00,-6){};
                \node(2)at(2.00,-6){};
                \node(4)at(3.00,-6){};
                \node(6)at(4.00,-6){};
                \node(7)at(5.00,-6){};
                \node(8)at(6.00,-6){};
                \node[NodeST](1)at(1.00,-3)
                    {\begin{math}\Att_2\end{math}};
                \node[NodeST](3)at(4.00,0.00)
                    {\begin{math}\Att_3\end{math}};
                \node[NodeST](5)at(4.00,-3)
                    {\begin{math}\Att_3\end{math}};
                \node[NodeST](9)at(7.00,-3)
                    {\begin{math}\Att_3\end{math}};
                \draw[Edge](0)--(1);
                \draw[Edge](1)--(3);
                \draw[Edge](10)--(9);
                \draw[Edge](11)--(9);
                \draw[Edge](2)--(1);
                \draw[Edge](4)--(5);
                \draw[Edge](5)--(3);
                \draw[Edge](6)--(5);
                \draw[Edge](7)--(5);
                \draw[Edge](8)--(9);
                \draw[Edge](9)--(3);
                \node(r)at(4.00,2.25){};
                \draw[Edge](r)--(3);
            \end{tikzpicture}\end{math}};
            \node(33000200200)at(9.5,-2){\begin{math}
            \begin{tikzpicture}[xscale=.15,yscale=.15,Centering]
                \node(0)at(0.00,-6){};
                \node(10)at(8.00,-6){};
                \node(2)at(1.00,-6){};
                \node(3)at(2.00,-6){};
                \node(5)at(3.00,-6){};
                \node(7)at(5.00,-6){};
                \node(8)at(6.00,-6){};
                \node[NodeST](1)at(1.00,-3)
                    {\begin{math}\Att_3\end{math}};
                \node[NodeST](4)at(4.00,0.00)
                    {\begin{math}\Att_3\end{math}};
                \node[NodeST](6)at(4.00,-3)
                    {\begin{math}\Att_2\end{math}};
                \node[NodeST](9)at(7.00,-3)
                    {\begin{math}\Att_2\end{math}};
                \draw[Edge](0)--(1);
                \draw[Edge](1)--(4);
                \draw[Edge](10)--(9);
                \draw[Edge](2)--(1);
                \draw[Edge](3)--(1);
                \draw[Edge](5)--(6);
                \draw[Edge](6)--(4);
                \draw[Edge](7)--(6);
                \draw[Edge](8)--(9);
                \draw[Edge](9)--(4);
                \node(r)at(4.00,2.25){};
                \draw[Edge](r)--(4);
            \end{tikzpicture}\end{math}};
            \node(330002003000)at(11.5,-2){\begin{math}
            \begin{tikzpicture}[xscale=.15,yscale=.15,Centering]
                \node(0)at(0.00,-6){};
                \node(10)at(7.00,-6){};
                \node(11)at(8.00,-6){};
                \node(2)at(1.00,-6){};
                \node(3)at(2.00,-6){};
                \node(5)at(3.00,-6){};
                \node(7)at(5.00,-6){};
                \node(8)at(6.00,-6){};
                \node[NodeST](1)at(1.00,-3.00)
                    {\begin{math}\Att_3\end{math}};
                \node[NodeST](4)at(4.00,0.00)
                    {\begin{math}\Att_3\end{math}};
                \node[NodeST](6)at(4.00,-3.00)
                    {\begin{math}\Att_2\end{math}};
                \node[NodeST](9)at(7.00,-3.00)
                    {\begin{math}\Att_3\end{math}};
                \draw[Edge](0)--(1);
                \draw[Edge](1)--(4);
                \draw[Edge](10)--(9);
                \draw[Edge](11)--(9);
                \draw[Edge](2)--(1);
                \draw[Edge](3)--(1);
                \draw[Edge](5)--(6);
                \draw[Edge](6)--(4);
                \draw[Edge](7)--(6);
                \draw[Edge](8)--(9);
                \draw[Edge](9)--(4);
                \node(r)at(4.00,2.25){};
                \draw[Edge](r)--(4);
            \end{tikzpicture}\end{math}};
            \node(330003000200)at(13.5,-2){\begin{math}
            \begin{tikzpicture}[xscale=.15,yscale=.15,Centering]
                \node(0)at(0.00,-6){};
                \node(11)at(8.00,-6){};
                \node(2)at(1.00,-6){};
                \node(3)at(2.00,-6){};
                \node(5)at(3.00,-6){};
                \node(7)at(4.00,-6){};
                \node(8)at(5.00,-6){};
                \node(9)at(6.00,-6){};
                \node[NodeST](1)at(1.00,-3.00)
                    {\begin{math}\Att_3\end{math}};
                \node[NodeST](10)at(7.00,-3.00)
                    {\begin{math}\Att_2\end{math}};
                \node[NodeST](4)at(4.00,0.00)
                    {\begin{math}\Att_3\end{math}};
                \node[NodeST](6)at(4.00,-3.00)
                    {\begin{math}\Att_3\end{math}};
                \draw[Edge](0)--(1);
                \draw[Edge](1)--(4);
                \draw[Edge](10)--(4);
                \draw[Edge](11)--(10);
                \draw[Edge](2)--(1);
                \draw[Edge](3)--(1);
                \draw[Edge](5)--(6);
                \draw[Edge](6)--(4);
                \draw[Edge](7)--(6);
                \draw[Edge](8)--(6);
                \draw[Edge](9)--(10);
                \node(r)at(4.00,2.25){};
                \draw[Edge](r)--(4);
            \end{tikzpicture}\end{math}};
            \node(3300030003000)at(15.5,-2){\begin{math}
            \begin{tikzpicture}[xscale=.15,yscale=.15,Centering]
                \node(0)at(0.00,-6){};
                \node(11)at(7.00,-6){};
                \node(12)at(8.00,-6){};
                \node(2)at(1.00,-6){};
                \node(3)at(2.00,-6){};
                \node(5)at(3.00,-6){};
                \node(7)at(4.00,-6){};
                \node(8)at(5.00,-6){};
                \node(9)at(6.00,-6){};
                \node[NodeST](1)at(1.00,-3)
                    {\begin{math}\Att_3\end{math}};
                \node[NodeST](10)at(7.00,-3)
                    {\begin{math}\Att_3\end{math}};
                \node[NodeST](4)at(4.00,0.00)
                    {\begin{math}\Att_3\end{math}};
                \node[NodeST](6)at(4.00,-3)
                    {\begin{math}\Att_3\end{math}};
                \draw[Edge](0)--(1);
                \draw[Edge](1)--(4);
                \draw[Edge](10)--(4);
                \draw[Edge](11)--(10);
                \draw[Edge](12)--(10);
                \draw[Edge](2)--(1);
                \draw[Edge](3)--(1);
                \draw[Edge](5)--(6);
                \draw[Edge](6)--(4);
                \draw[Edge](7)--(6);
                \draw[Edge](8)--(6);
                \draw[Edge](9)--(10);
                \node(r)at(4.00,2.25){};
                \draw[Edge](r)--(4);
            \end{tikzpicture}\end{math}};
            \draw[EdgeDeriv](Unit)
                edge[bend right=7]
                node[anchor=south,font=\scriptsize,color=ColBlack]
                {}(200);
            \draw[EdgeDeriv](Unit)
                edge[bend left=7]
                node[anchor=south,font=\scriptsize,color=ColBlack]
                {}(3000);
            \draw[EdgeDeriv](200)
                edge[bend right=20]
                node[anchor=east,font=\scriptsize,color=ColBlack]
                {}(2200200);
            \draw[EdgeDeriv](200)
                edge[]
                node[anchor=east,font=\scriptsize,color=ColBlack]
                {}(22003000);
            \draw[EdgeDeriv](200)
                edge[]
                node[anchor=west,font=\scriptsize,color=ColBlack]
                {}(23000200);
            \draw[EdgeDeriv](200)
                edge[bend left=20]
                node[anchor=west,font=\scriptsize,color=ColBlack]
                {}(230003000);
            \draw[EdgeDeriv](3000)
                edge[bend right=13]
                node[anchor=south,font=\scriptsize,color=ColBlack]
                {}(3200200200);
            \draw[EdgeDeriv](3000)
                edge[bend right=8]
                node[anchor=south,font=\scriptsize,color=ColBlack]
                {}(32002003000);
            \draw[EdgeDeriv](3000)
                edge[bend right=5]
                node[anchor=south,font=\scriptsize,color=ColBlack]
                {}(32003000200);
            \draw[EdgeDeriv](3000)
                edge[]
                node[anchor=east,font=\scriptsize,color=ColBlack]
                {}(320030003000);
            \draw[EdgeDeriv](3000)
                edge[]
                node[anchor=west,font=\scriptsize,color=ColBlack]
                {}(33000200200);
            \draw[EdgeDeriv](3000)
                edge[bend left=5]
                node[anchor=south,font=\scriptsize,color=ColBlack]
                {}(330002003000);
            \draw[EdgeDeriv](3000)
                edge[bend left=8]
                node[anchor=south,font=\scriptsize,color=ColBlack]
                {}(330003000200);
            \draw[EdgeDeriv](3000)
                edge[bend left=13]
                node[anchor=south,font=\scriptsize,color=ColBlack]
                {}(3300030003000);
            \draw[EdgeDeriv,dotted,shorten >=6mm](2200200)--(-5.5,-3.5);
            \draw[EdgeDeriv,dotted,shorten >=6mm](22003000)--(-4,-3.5);
            \draw[EdgeDeriv,dotted,shorten >=6mm](23000200)--(-2.5,-3.5);
            \draw[EdgeDeriv,dotted,shorten >=6mm](230003000)--(-1,-3.5);
            \draw[EdgeDeriv,dotted,shorten >=6mm]
                (3200200200)--(1.5,-3.5);
            \draw[EdgeDeriv,dotted,shorten >=6mm]
                (32002003000)--(3.5,-3.5);
            \draw[EdgeDeriv,dotted,shorten >=6mm]
                (32003000200)--(5.5,-3.5);
            \draw[EdgeDeriv,dotted,shorten >=6mm]
                (320030003000)--(7.5,-3.5);
            \draw[EdgeDeriv,dotted,shorten >=6mm]
                (33000200200)--(9.5,-3.5);
            \draw[EdgeDeriv,dotted,shorten >=6mm]
                (330002003000)--(11.5,-3.5);
            \draw[EdgeDeriv,dotted,shorten >=6mm]
                (330003000200)--(13.5,-3.5);
            \draw[EdgeDeriv,dotted,shorten >=6mm]
                (3300030003000)--(15.5,-3.5);
        \end{tikzpicture}}
    \end{equation*}
    \vspace{-1.5em}
    \caption{\footnotesize The synchronous derivation graph of
    $\BBTree{\{2, 3\}}$.}
    \label{fig:derivation_graph_bbtree}
\end{figure}
\medbreak

A \Def{$B$-perfect tree} is a planar rooted tree $\Tfr$ such that all
internal nodes of $\Tfr$ have an arity in $B$ and all paths connecting
the root of $\Tfr$ to its leaves have the same length. These trees and
their generating series have been studied for the particular case
$B := \{2, 3\}$~\cite{MPRS79,CLRS09} and appear as data structures in
computer science (see~\cite{Odl82,Knu98,FS09}).
\medbreak

\begin{Proposition} \label{prop:properties_BBTree}
    For any finite set $B$ of positive integers, the bud generating
    system $\BBTree{B}$ satisfies the following properties.
    \begin{enumerate}[label = ({\it {\roman*})}]
        \item \label{item:properties_BBTree_faithful}
        It is synchronously faithful.
        \item \label{item:properties_BBTree_unambiguous}
        It is synchronously unambiguous.
        \item \label{item:properties_BBTree_language}
        The synchronous language $\SyncLang\Par{\BBTree{B}}$ of
        $\BBTree{B}$ is the set of all $B$-perfect trees.
        \item \label{item:properties_BBTree_finitely_factorizing}
        The set of rules $\Rfr_B(1)$ finitely factorizes
        $\Free\Par{C_B}$.
        \item \label{item:properties_BBTree_finitely_locally_finite}
        When $1 \notin B$, the generating series
        $\GenS_{\SyncLang(\BBTree{B})}$ of the synchronous language
        of $\BBTree{B}$ is well-defined.
    \end{enumerate}
\end{Proposition}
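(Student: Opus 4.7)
My plan is to treat the five properties in turn, leaning on the general results already established in Sections~\ref{sec:bud_generating_systems} and~\ref{sec:series_bud_generating_systems}. Item~(i) is immediate: by the remark in Section~\ref{subsubsec:synchronous_generation}, every monochrome bud generating system is synchronously faithful, and $\BBTree{B}$ is monochrome by construction.

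For items~(ii) and~(iii), the plan is to build an explicit bijection between $B$-perfect trees and perfect $\Rfr_B$-treelike expressions and then apply the appropriate general result. By Lemma~\ref{lem:synchronous_language_treelike_expressions} (the output/input color conditions being vacuous in the monochrome case), an element $\Tfr \in \Free(C_B)$ lies in $\SyncLang(\BBTree{B})$ if and only if it admits a perfect $\Rfr_B$-treelike expression $\Sfr$. Since each rule of $\Rfr_B$ is a corolla $\La_n$ with $n \in B$, the evaluation of any such $\Sfr$ produces a tree whose internal nodes all have arities in $B$; the perfectness of $\Sfr$ translates exactly into the equal root-to-leaf path length condition, so $\Eval_{\Free(C_B)}(\Sfr)$ is a $B$-perfect tree. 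Conversely, a $B$-perfect tree $\Tfr$ is obtained from the unique perfect syntax tree whose internal node structure matches that of $\Tfr$, the node of arity $n$ being labeled by $\La_n$. This proves~(iii), and since the arity of each corolla $\La_n$ uniquely determines the label, the perfect $\Rfr_B$-treelike expression of $\Tfr$ is unique; by Theorem~\ref{thm:synchronous_series}, this yields~(ii).

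For~(iv), observe that $\Rfr_B(1)$ is empty if $1 \notin B$ and equals $\{\La_1\}$ otherwise. In the first case, $\Free(C_B)(1)$ reduces to the unit, so the finite factorization condition holds trivially; in the second case, every element of $\Free(C_B)(1)$ is a chain of copies of $\La_1$, which admits a unique factorization on $\Rfr_B(1)$. For~(v), when $1 \notin B$, every internal node of a $B$-perfect tree has arity at least $\min B \geq 2$, so a $B$-perfect tree of arity $n$ has at most $n - 1$ internal nodes and its height is bounded by $\log_{\min B}(n)$; hence there are only finitely many $B$-perfect trees of any given arity, which by~(iii) makes $\SyncLang(\BBTree{B})$ locally finite and $\GenS_{\SyncLang(\BBTree{B})}$ well-defined. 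The only step requiring any real care is the bijection in~(ii)--(iii), but this is essentially bookkeeping: the equivalence between ``perfect $\Rfr_B$-syntax tree'' and ``$B$-perfect tree'' boils down to the fact that corollas are determined by their arities and that perfectness is preserved under evaluation.
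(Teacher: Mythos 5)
The paper itself omits the proof of this proposition (stating at the start of Section~\ref{subsec:examples_bud_generating_systems} that the proofs ``are for the most straightforward and are left to the reader''), giving only a one-sentence remark for item~(v), so there is no detailed paper proof to compare against. Your proof is correct and is the natural argument one would expect the authors to have in mind. The handling of~(i) via the remark that monochrome bud generating systems are synchronously faithful, the reduction of~(ii) and~(iii) to Lemma~\ref{lem:synchronous_language_treelike_expressions} and Theorem~\ref{thm:synchronous_series} via the observation that corollas of $\Free(C_B)$ are uniquely determined by their arities and that evaluation of an $\Rfr_B$-syntax tree preserves tree shape (hence perfectness), the case split on $1 \in B$ for~(iv), and the height/arity bound for~(v) are all sound. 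Your argument for~(v) goes directly through the combinatorics of $B$-perfect trees, whereas the paper's remark reasons through local finiteness of $\Free(C_B)$; these are equivalent and both immediate once~(iii) is established. One small caution you could flag: when $1 \in B$, the operad $\Free(C_B)$ is not locally finite, and the series $\Sync(\BBTree{B})$ (and hence the formal notion of synchronous unambiguity) relies on Lemma~\ref{lem:composition_star_well_defined} whose stated hypotheses include local finiteness; your combinatorial uniqueness argument (at most one perfect $\Rfr_B$-treelike expression per element) is the substantive content of~(ii) and goes through regardless, but a careful reader would note that the paper's framework only formally defines $\Sync(\BBTree{B})$ under the hypotheses that force $1 \notin B$.
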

\medbreak

Property~\ref{item:properties_BBTree_finitely_locally_finite} of
Proposition~\ref{prop:properties_BBTree} is a consequence of the fact
that when $1 \notin B$, $\Free\Par{C_B}$ is locally finite and therefore
there are only finitely many elements in $\SyncLang\Par{\BBTree{B}}$ of
a given arity. By Property~\ref{item:properties_BBTree_language} of
Proposition~\ref{prop:properties_BBTree}, the sequences enumerating the
elements of $\SyncLang(\BBTree{B})$ with respect to their arity are, for
instance, Sequence~\OEIS{A014535} of~\cite{Slo} for $B = \{2, 3\}$ which
starts by
\begin{equation}
    1, 1, 1, 1, 2, 2, 3, 4, 5, 8, 14, 23, 32, 43, 63, 97, 149, 224, 332,
    489,
\end{equation}
and Sequence~\OEIS{A037026} of~\cite{Slo} for $B = \{2, 3, 4\}$ which
starts by
\begin{equation}
    1, 1, 1, 2, 2, 4, 5, 9, 15, 28, 45, 73, 116, 199, 345 601, 1021,
    1738, 2987, 5244.
\end{equation}
\medbreak

%%%%%%%%%%%%%%%%%%%%%%%%%%%%%%%%%%%%%%%%%%%%%%%%%%%%%%%%%%%%%%%%%%%%%%%%
\subsubsection{A bud generating system for balanced binary trees}%
\label{subsubsec:example_bbaltree}
Consider the bud generating system
$\BBalTree := (\Mag, \{1, 2\}, \Rfr, \{1\}, \{1\})$ where
\begin{equation}
    \Rfr := \Bra{
    \Par{1, \CorollaTwo{}, 11},
    \Par{1, \CorollaTwo{}, 12},
    \Par{1, \CorollaTwo{}, 21},
    \Par{2, \TreeLeaf, 1}}.
\end{equation}
Figure~\ref{fig:derivations_bal_tree} shows a sequence of synchronous
derivations in $\BBalTree$.
\begin{figure}[ht]
    \centering
    \begin{equation*}
        \begin{tikzpicture}[xscale=.13,yscale=.12,Centering]
            \node[Leaf](0)at(0.00,-1.50){};
            \draw[Edge](0)--(0,0);
            \node[LeafLabel,below of=0]{\begin{math}1\end{math}};
        \end{tikzpicture}
        \enspace \SyncDeriv \enspace
        \begin{tikzpicture}[xscale=.13,yscale=.15,Centering]
            \node[Leaf](0)at(0.00,-1.50){};
            \node[Leaf](2)at(2.00,-1.50){};
            \node[Node](1)at(1.00,0.00){};
            \draw[Edge](0)--(1);
            \draw[Edge](2)--(1);
            \node(r)at(1.00,2){};
            \draw[Edge](r)--(1);
            \node[LeafLabel,below of=0]{\begin{math}1\end{math}};
            \node[LeafLabel,below of=2]{\begin{math}2\end{math}};
        \end{tikzpicture}
        \enspace \SyncDeriv \enspace
        \begin{tikzpicture}[scale=.17,Centering]
            \node[Leaf](0)at(0.00,-3.33){};
            \node[Leaf](2)at(2.00,-3.33){};
            \node[Leaf](4)at(4.00,-1.67){};
            \node[Node](1)at(1.00,-1.67){};
            \node[Node](3)at(3.00,0.00){};
            \draw[Edge](0)--(1);
            \draw[Edge](1)--(3);
            \draw[Edge](2)--(1);
            \draw[Edge](4)--(3);
            \node(r)at(3.00,1.75){};
            \draw[Edge](r)--(3);
            \node[LeafLabel,below of=0]{\begin{math}2\end{math}};
            \node[LeafLabel,below of=2]{\begin{math}1\end{math}};
            \node[LeafLabel,below of=4]{\begin{math}1\end{math}};
        \end{tikzpicture}
        \enspace \SyncDeriv \enspace
        \begin{tikzpicture}[scale=.13,Centering]
            \node[Leaf](0)at(0.00,-4.50){};
            \node[Leaf](2)at(2.00,-6.75){};
            \node[Leaf](4)at(4.00,-6.75){};
            \node[Leaf](6)at(6.00,-4.50){};
            \node[Leaf](8)at(8.00,-4.50){};
            \node[Node](1)at(1.00,-2.25){};
            \node[Node](3)at(3.00,-4.50){};
            \node[Node](5)at(5.00,0.00){};
            \node[Node](7)at(7.00,-2.25){};
            \draw[Edge](0)--(1);
            \draw[Edge](1)--(5);
            \draw[Edge](2)--(3);
            \draw[Edge](3)--(1);
            \draw[Edge](4)--(3);
            \draw[Edge](6)--(7);
            \draw[Edge](7)--(5);
            \draw[Edge](8)--(7);
            \node(r)at(5.00,2.5){};
            \draw[Edge](r)--(5);
            \node[LeafLabel,below of=0]{\begin{math}1\end{math}};
            \node[LeafLabel,below of=2]{\begin{math}1\end{math}};
            \node[LeafLabel,below of=4]{\begin{math}1\end{math}};
            \node[LeafLabel,below of=6]{\begin{math}2\end{math}};
            \node[LeafLabel,below of=8]{\begin{math}1\end{math}};
        \end{tikzpicture}
        \enspace \SyncDeriv \enspace
        \begin{tikzpicture}[xscale=.12,yscale=.09,Centering]
            \node[Leaf](0)at(0.00,-10.20){};
            \node[Leaf](10)at(10.00,-13.60){};
            \node[Leaf](12)at(12.00,-6.80){};
            \node[Leaf](14)at(14.00,-10.20){};
            \node[Leaf](16)at(16.00,-10.20){};
            \node[Leaf](2)at(2.00,-10.20){};
            \node[Leaf](4)at(4.00,-13.60){};
            \node[Leaf](6)at(6.00,-13.60){};
            \node[Leaf](8)at(8.00,-13.60){};
            \node[Node](1)at(1.00,-6.80){};
            \node[Node](11)at(11.00,0.00){};
            \node[Node](13)at(13.00,-3.40){};
            \node[Node](15)at(15.00,-6.80){};
            \node[Node](3)at(3.00,-3.40){};
            \node[Node](5)at(5.00,-10.20){};
            \node[Node](7)at(7.00,-6.80){};
            \node[Node](9)at(9.00,-10.20){};
            \draw[Edge](0)--(1);
            \draw[Edge](1)--(3);
            \draw[Edge](10)--(9);
            \draw[Edge](12)--(13);
            \draw[Edge](13)--(11);
            \draw[Edge](14)--(15);
            \draw[Edge](15)--(13);
            \draw[Edge](16)--(15);
            \draw[Edge](2)--(1);
            \draw[Edge](3)--(11);
            \draw[Edge](4)--(5);
            \draw[Edge](5)--(7);
            \draw[Edge](6)--(5);
            \draw[Edge](7)--(3);
            \draw[Edge](8)--(9);
            \draw[Edge](9)--(7);
            \node(r)at(11.00,3.5){};
            \draw[Edge](r)--(11);
            \node[LeafLabel,below of=0]{\begin{math}1\end{math}};
            \node[LeafLabel,below of=2]{\begin{math}2\end{math}};
            \node[LeafLabel,below of=4]{\begin{math}1\end{math}};
            \node[LeafLabel,below of=6]{\begin{math}2\end{math}};
            \node[LeafLabel,below of=8]{\begin{math}1\end{math}};
            \node[LeafLabel,below of=10]{\begin{math}1\end{math}};
            \node[LeafLabel,below of=12]{\begin{math}1\end{math}};
            \node[LeafLabel,below of=14]{\begin{math}2\end{math}};
            \node[LeafLabel,below of=16]{\begin{math}1\end{math}};
        \end{tikzpicture}
        \enspace \SyncDeriv \enspace
        \begin{tikzpicture}[xscale=.11,yscale=.06,Centering]
            \node[Leaf](0)at(0.00,-19.33){};
            \node[Leaf](10)at(10.00,-19.33){};
            \node[Leaf](12)at(12.00,-24.17){};
            \node[Leaf](14)at(14.00,-24.17){};
            \node[Leaf](16)at(16.00,-24.17){};
            \node[Leaf](18)at(18.00,-24.17){};
            \node[Leaf](2)at(2.00,-19.33){};
            \node[Leaf](20)at(20.00,-14.50){};
            \node[Leaf](22)at(22.00,-14.50){};
            \node[Leaf](24)at(24.00,-14.50){};
            \node[Leaf](26)at(26.00,-19.33){};
            \node[Leaf](28)at(28.00,-19.33){};
            \node[Leaf](4)at(4.00,-14.50){};
            \node[Leaf](6)at(6.00,-24.17){};
            \node[Leaf](8)at(8.00,-24.17){};
            \node[Node](1)at(1.00,-14.50){};
            \node[Node](11)at(11.00,-9.67){};
            \node[Node](13)at(13.00,-19.33){};
            \node[Node](15)at(15.00,-14.50){};
            \node[Node](17)at(17.00,-19.33){};
            \node[Node](19)at(19.00,0.00){};
            \node[Node](21)at(21.00,-9.67){};
            \node[Node](23)at(23.00,-4.83){};
            \node[Node](25)at(25.00,-9.67){};
            \node[Node](27)at(27.00,-14.50){};
            \node[Node](3)at(3.00,-9.67){};
            \node[Node](5)at(5.00,-4.83){};
            \node[Node](7)at(7.00,-19.33){};
            \node[Node](9)at(9.00,-14.50){};
            \draw[Edge](0)--(1);
            \draw[Edge](1)--(3);
            \draw[Edge](10)--(9);
            \draw[Edge](11)--(5);
            \draw[Edge](12)--(13);
            \draw[Edge](13)--(15);
            \draw[Edge](14)--(13);
            \draw[Edge](15)--(11);
            \draw[Edge](16)--(17);
            \draw[Edge](17)--(15);
            \draw[Edge](18)--(17);
            \draw[Edge](2)--(1);
            \draw[Edge](20)--(21);
            \draw[Edge](21)--(23);
            \draw[Edge](22)--(21);
            \draw[Edge](23)--(19);
            \draw[Edge](24)--(25);
            \draw[Edge](25)--(23);
            \draw[Edge](26)--(27);
            \draw[Edge](27)--(25);
            \draw[Edge](28)--(27);
            \draw[Edge](3)--(5);
            \draw[Edge](4)--(3);
            \draw[Edge](5)--(19);
            \draw[Edge](6)--(7);
            \draw[Edge](7)--(9);
            \draw[Edge](8)--(7);
            \draw[Edge](9)--(11);
            \node(r)at(19.00,4.75){};
            \draw[Edge](r)--(19);
            \node[LeafLabel,below of=0]{\begin{math}1\end{math}};
            \node[LeafLabel,below of=2]{\begin{math}1\end{math}};
            \node[LeafLabel,below of=4]{\begin{math}1\end{math}};
            \node[LeafLabel,below of=6]{\begin{math}1\end{math}};
            \node[LeafLabel,below of=8]{\begin{math}1\end{math}};
            \node[LeafLabel,below of=10]{\begin{math}1\end{math}};
            \node[LeafLabel,below of=12]{\begin{math}1\end{math}};
            \node[LeafLabel,below of=14]{\begin{math}1\end{math}};
            \node[LeafLabel,below of=16]{\begin{math}1\end{math}};
            \node[LeafLabel,below of=18]{\begin{math}1\end{math}};
            \node[LeafLabel,below of=20]{\begin{math}1\end{math}};
            \node[LeafLabel,below of=22]{\begin{math}1\end{math}};
            \node[LeafLabel,below of=24]{\begin{math}1\end{math}};
            \node[LeafLabel,below of=26]{\begin{math}1\end{math}};
            \node[LeafLabel,below of=28]{\begin{math}1\end{math}};
        \end{tikzpicture}
    \end{equation*}
    \vspace{-1.5em}
    \caption{\footnotesize A sequence of synchronous derivations in
    $\BBalTree$. The input colors of the elements of
    $\Bud_{\{1, 2\}}(\Mag)$ are depicted below the leaves. The output
    color of all these elements is $1$. Since all input colors of the
    last tree are $1$, this tree is in~$\SyncLang(\BBalTree)$.}
    \label{fig:derivations_bal_tree}
\end{figure}
\medbreak

The \Def{height} of a binary tree $\Tfr$ is the height of $\Tfr$ seen
as a monochrome syntax tree. A \Def{balanced binary tree}~\cite{AVL62}
is a binary tree $\Tfr$ wherein, for any internal node $x$ of $\Tfr$,
the difference between the height of the left subtree and the height of
the right subtree of $x$ is $-1$, $0$, or~$1$.
\medbreak

\begin{Proposition} \label{prop:properties_BBalTree}
    The bud generating system $\BBalTree$ satisfies the following
    properties.
    \begin{enumerate}[label = ({\it {\roman*})}]
        \item \label{item:properties_BBalTree_faithful}
        It is synchronously faithful.
        \item \label{item:properties_BBalTree_unambiguous}
        It is synchronously unambiguous.
        \item \label{item:properties_BBalTree_language}
        The restriction of the pruning map $\Prune$ on the domain
        $\SyncLang\Par{\BBalTree}$
        is a bijection between $\SyncLang\Par{\BBalTree}$ and the set
        of balanced binary trees.
        \item \label{item:properties_BBalTree_finitely_factorizing}
        The set of rules $\Rfr(1)$ finitely factorizes
        $\Bud_{\{1, 2\}}(\Mag)$.
    \end{enumerate}
\end{Proposition}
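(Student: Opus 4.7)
\medskip

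\noindent\textbf{Proof plan.}
The strategy is to establish the four properties in an order that takes advantage of their logical dependencies, namely (iv), then (iii), from which (ii) and (i) follow almost immediately.

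First, Property~\ref{item:properties_BBalTree_finitely_factorizing} is the simplest. The only arity-one element of $\Rfr$ is $r := (2, \TreeLeaf, 1)$, whose output color is $2$ and input color is $1$. Hence $r \circ_1 r$ is undefined, which means the only element of $\Bud_{\{1,2\}}(\Mag)(1)$ admitting a factorization on $\{r\}$ is $r$ itself (with one factorization), while all other elements admit none. Thus $\Rfr(1)$ is $\Bud_{\{1,2\}}(\Mag)$-finitely factorizing.

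Second, I will establish Property~\ref{item:properties_BBalTree_language}. Since the set $T$ of terminal colors is $\{1\}$ and the unique initial color is $1$, any element $x \in \SyncLang(\BBalTree)$ has the form $(1, \Tfr, 1^n)$ with $n = |\Tfr|$, so the restriction of $\Prune$ is trivially injective and it suffices to identify its image with the set of balanced binary trees. By Lemma~\ref{lem:synchronous_language_treelike_expressions}, $(1, \Tfr, 1^n)$ lies in $\SyncLang(\BBalTree)$ exactly when it admits a perfect $\Rfr$-treelike expression with output color $1$ and input colors all equal to $1$. The key observation is that the three binary rules encode the height difference between subtrees: $(1, \scalebox{.7}{\CorollaTwo{}}, 11)$ corresponds to equal heights, $(1, \scalebox{.7}{\CorollaTwo{}}, 12)$ to a left subtree one higher than the right, and $(1, \scalebox{.7}{\CorollaTwo{}}, 21)$ to the symmetric case, while the rule $r = (2, \TreeLeaf, 1)$ acts as a one-level synchronization delay on the shorter side. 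I will prove by induction on the height of $\Tfr$ that (a) if $\Tfr$ is balanced, then such a perfect expression exists, and (b) conversely, the existence of such an expression forces each internal node of $\Tfr$ to have subtrees whose heights differ by at most one. In direction (a), given the height difference at the root, one selects the corresponding binary rule, grafts one or two copies of the rule $r$ on the shorter side to compensate the delay, and then recurses on the subtrees (which have strictly smaller height, giving the induction). In direction (b), one inspects the root of the perfect $\Rfr$-treelike expression: if its label is $r$, then $\Tfr$ is a leaf and there is nothing to prove; otherwise the label is a binary rule and the two immediate subtrees of the expression have the same height (by perfectness), so a straightforward height computation forces the two subtrees of $\Tfr$ to differ in height by at most one.

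Third, Property~\ref{item:properties_BBalTree_unambiguous} follows by refining the construction of (a) above: at each step the choice of binary rule is \emph{uniquely determined} by the height difference at the current node, and the decision to graft the delay rule $r$ (or not) is likewise forced by which child must wait. Thus the perfect $\Rfr$-treelike expression of a balanced binary tree $\Tfr$ is unique, and by Theorem~\ref{thm:synchronous_series} the coefficients of $\Sync(\BBalTree)$ are $0$ or $1$. Finally, Property~\ref{item:properties_BBalTree_faithful} is immediate: since each $\Tfr$ appearing in $\SyncLang(\BBalTree)$ does so in the unique triple $(1, \Tfr, 1^{|\Tfr|})$, the pruned characteristic series has coefficients in $\{0, 1\}$. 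The main technical point of the whole argument is the bookkeeping behind the inductive construction in Property~\ref{item:properties_BBalTree_language}, namely correctly tracking how the delay rule $r$ interacts with the three binary rules across a synchronous step; once this is phrased carefully, uniqueness (and hence (ii) and (i)) comes for free.
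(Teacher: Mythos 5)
Your proposal is correct, but it is worth noting that the paper does not actually prove Proposition~\ref{prop:properties_BBalTree}: the surrounding text states that the proofs of these example propositions are ``for the most straightforward and are left to the reader,'' and for items~\ref{item:properties_BBalTree_unambiguous} and~\ref{item:properties_BBalTree_language} the paper defers to the combinatorial analysis of a synchronous grammar $\Gca$ from~\cite{Gir12} satisfying $\SG(\Gca) = \BBalTree$, transferred via Proposition~\ref{prop:emulation_synchronous_grammars}. What you have written is therefore a direct, self-contained argument filling in precisely what the paper outsources. Your ordering (iv)~$\to$~(iii)~$\to$~(ii)~$\to$~(i) is sensible, and the two key observations are sound: since the only rule of arity one is $r = (2, \TreeLeaf, 1)$, the composition $r \circ_1 r$ is ill-colored, which immediately gives~\ref{item:properties_BBalTree_finitely_factorizing}; and because $I = T = \{1\}$, the pruning map on the synchronous language is automatically injective and the whole question reduces, via Lemma~\ref{lem:synchronous_language_treelike_expressions}, to characterizing the binary trees admitting a \emph{perfect} $\Rfr$-treelike expression with output~$1$ and all inputs~$1$.

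Two small points to tighten. First, in direction~(a) of your height induction you speak of ``grafts one or two copies of the rule $r$'' on the shorter side; since the height difference at any node of a balanced binary tree is at most one, at most a single $r$ is ever needed per position (it is the root of the sub-expression assigned to the delayed child), and ``two copies'' never occurs. The cleanest way to phrase the induction is to prove simultaneously that a perfect expression $\Tfr$ with output color~$1$ (resp.~$2$) evaluating to $(1,\Sfr,1^n)$ (resp.\ $(2,\Sfr,1^n)$) satisfies $\Height(\Tfr) = \Height(\Sfr)$ (resp.\ $\Height(\Tfr) = \Height(\Sfr)+1$) and that $\Sfr$ is balanced; the forced choice of $\rho \in \Rfr(2)$ at each node, as you observe, then yields both existence and uniqueness in one pass, giving~\ref{item:properties_BBalTree_language} and~\ref{item:properties_BBalTree_unambiguous} together via Theorem~\ref{thm:synchronous_series}. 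Second, your framing suggests that~\ref{item:properties_BBalTree_faithful} follows from~\ref{item:properties_BBalTree_language}, but in fact the argument you actually give for it is independent of the other items: it uses only that $I$ and $T$ are singletons, so each $\Sfr \in \Mag$ can prune from at most one element of $\SyncLang(\BBalTree)$. That is correct as stated; just don't present it as a corollary of~\ref{item:properties_BBalTree_language}.
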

\medbreak

Properties~\ref{item:properties_BBalTree_unambiguous}
and~\ref{item:properties_BBalTree_language} of
Proposition~\ref{prop:properties_BBalTree} are based upon
combinatorial properties of a synchronous grammar $\Gca$ of balanced
binary trees defined in~\cite{Gir12} and satisfying
$\SG(\Gca) = \BBalTree$ (see
Section~\ref{subsubsec:synchronous_grammars} and
Proposition~\ref{prop:emulation_synchronous_grammars}). Besides,
Properties~\ref{item:properties_BBalTree_faithful}
and~\ref{item:properties_BBalTree_language} of
Proposition~\ref{prop:properties_BBalTree} together imply that the
sequence enumerating the elements of $\SyncLang(\BBalTree)$ with respect
to their arity is the one enumerating the balanced binary trees. This
sequence in Sequence~\OEIS{A006265} of~\cite{Slo}, starting by
\begin{equation} \label{equ:first_terms_balanced_binary_trees}
    1, 1, 2, 1, 4, 6, 4, 17, 32, 44, 60, 70,
    184, 476, 872, 1553, 2720, 4288, 6312, 9004.
\end{equation}
\medbreak

%%%%%%%%%%%%%%%%%%%%%%%%%%%%%%%%%%%%%%%%%%%%%%%%%%%%%%%%%%%%%%%%%%%%%%%%
%%%%%%%%%%%%%%%%%%%%%%%%%%%%%%%%%%%%%%%%%%%%%%%%%%%%%%%%%%%%%%%%%%%%%%%%
\subsection{Series of bud generating systems}
We now consider the bud generating systems constructed in
Section~\ref{subsec:examples_bud_generating_systems} to give some
examples of hook generating series. We also put into practice what we
have exposed in Sections~\ref{subsubsec:syntactic_generating_series}
and~\ref{subsubsec:synchronous_generating_series} to compute the
generating series of languages or synchronous languages of bud
generating systems by using syntactic generating series and synchronous
generating series.
\medbreak

%%%%%%%%%%%%%%%%%%%%%%%%%%%%%%%%%%%%%%%%%%%%%%%%%%%%%%%%%%%%%%%%%%%%%%%%
\subsubsection{Hook coefficients for binary trees}%
\label{subsubsec:example_hook_coeff_binary_trees}
Let us consider the hook bud generating system $\HookSystem_{\Mag, G}$
where
\begin{math}
    G := \Bra{\scalebox{.8}{\raisebox{.3em}{\CorollaTwo{}}}}.
\end{math}
This bud generating system leads to the definition of a statistic on
binary trees, provided by the coefficients of the hook generating series
$\Hook\Par{\HookSystem_{\Mag, G}}$ which begins by
\begin{multline}
    \Hook\Par{\HookSystem_{\Mag, G}} =
    \scalebox{.8}{\begin{tikzpicture}[scale=.5,Centering]
        \node[Leaf](0)at(0.00,0.00){};
        \node(r)at(0.00,0.75){};
        \draw[Edge](r)--(0);
    \end{tikzpicture}}
    \enspace + \enspace
    \scalebox{.8}{\begin{tikzpicture}[scale=.12,Centering]
        \node[Leaf](0)at(0.00,-1.50){};
        \node[Leaf](2)at(2.00,-1.50){};
        \node[Node](1)at(1.00,0.00){};
        \draw[Edge](0)--(1);
        \draw[Edge](2)--(1);
        \node(r)at(1.00,2.5){};
        \draw[Edge](r)--(1);
    \end{tikzpicture}}
    \enspace + \enspace
    \scalebox{.8}{\begin{tikzpicture}[scale=.12,Centering]
        \node[Leaf](0)at(0.00,-3.33){};
        \node[Leaf](2)at(2.00,-3.33){};
        \node[Leaf](4)at(4.00,-1.67){};
        \node[Node](1)at(1.00,-1.67){};
        \node[Node](3)at(3.00,0.00){};
        \draw[Edge](0)--(1);
        \draw[Edge](1)--(3);
        \draw[Edge](2)--(1);
        \draw[Edge](4)--(3);
        \node(r)at(3.00,2.5){};
        \draw[Edge](r)--(3);
    \end{tikzpicture}}
    \enspace + \enspace
    \scalebox{.8}{\begin{tikzpicture}[scale=.12,Centering]
        \node[Leaf](0)at(0.00,-1.67){};
        \node[Leaf](2)at(2.00,-3.33){};
        \node[Leaf](4)at(4.00,-3.33){};
        \node[Node](1)at(1.00,0.00){};
        \node[Node](3)at(3.00,-1.67){};
        \draw[Edge](0)--(1);
        \draw[Edge](2)--(3);
        \draw[Edge](3)--(1);
        \draw[Edge](4)--(3);
        \node(r)at(1.00,2.5){};
        \draw[Edge](r)--(1);
    \end{tikzpicture}}
    \enspace + \enspace
    \scalebox{.8}{\begin{tikzpicture}[scale=.12,Centering]
        \node[Leaf](0)at(0.00,-5.25){};
        \node[Leaf](2)at(2.00,-5.25){};
        \node[Leaf](4)at(4.00,-3.50){};
        \node[Leaf](6)at(6.00,-1.75){};
        \node[Node](1)at(1.00,-3.50){};
        \node[Node](3)at(3.00,-1.75){};
        \node[Node](5)at(5.00,0.00){};
        \draw[Edge](0)--(1);
        \draw[Edge](1)--(3);
        \draw[Edge](2)--(1);
        \draw[Edge](3)--(5);
        \draw[Edge](4)--(3);
        \draw[Edge](6)--(5);
        \node(r)at(5.00,2.5){};
        \draw[Edge](r)--(5);
    \end{tikzpicture}}
    \enspace + \enspace
    2\,
    \scalebox{.8}{\begin{tikzpicture}[scale=.12,Centering]
        \node[Leaf](0)at(0.00,-4.67){};
        \node[Leaf](2)at(2.00,-4.67){};
        \node[Leaf](4)at(4.00,-4.67){};
        \node[Leaf](6)at(6.00,-4.67){};
        \node[Node](1)at(1.00,-2.33){};
        \node[Node](3)at(3.00,0.00){};
        \node[Node](5)at(5.00,-2.33){};
        \draw[Edge](0)--(1);
        \draw[Edge](1)--(3);
        \draw[Edge](2)--(1);
        \draw[Edge](4)--(5);
        \draw[Edge](5)--(3);
        \draw[Edge](6)--(5);
        \node(r)at(3.00,2.5){};
        \draw[Edge](r)--(3);
    \end{tikzpicture}}
    \enspace + \enspace
    \scalebox{.8}{\begin{tikzpicture}[scale=.12,Centering]
        \node[Leaf](0)at(0.00,-3.50){};
        \node[Leaf](2)at(2.00,-5.25){};
        \node[Leaf](4)at(4.00,-5.25){};
        \node[Leaf](6)at(6.00,-1.75){};
        \node[Node](1)at(1.00,-1.75){};
        \node[Node](3)at(3.00,-3.50){};
        \node[Node](5)at(5.00,0.00){};
        \draw[Edge](0)--(1);
        \draw[Edge](1)--(5);
        \draw[Edge](2)--(3);
        \draw[Edge](3)--(1);
        \draw[Edge](4)--(3);
        \draw[Edge](6)--(5);
        \node(r)at(5.00,2.5){};
        \draw[Edge](r)--(5);
    \end{tikzpicture}}
    \enspace + \enspace
    \scalebox{.8}{\begin{tikzpicture}[scale=.12,Centering]
        \node[Leaf](0)at(0.00,-1.75){};
        \node[Leaf](2)at(2.00,-5.25){};
        \node[Leaf](4)at(4.00,-5.25){};
        \node[Leaf](6)at(6.00,-3.50){};
        \node[Node](1)at(1.00,0.00){};
        \node[Node](3)at(3.00,-3.50){};
        \node[Node](5)at(5.00,-1.75){};
        \draw[Edge](0)--(1);
        \draw[Edge](2)--(3);
        \draw[Edge](3)--(5);
        \draw[Edge](4)--(3);
        \draw[Edge](5)--(1);
        \draw[Edge](6)--(5);
        \node(r)at(1.00,2.5){};
        \draw[Edge](r)--(1);
    \end{tikzpicture}} \\
    + \enspace
    \scalebox{.8}{\begin{tikzpicture}[scale=.12,Centering]
        \node[Leaf](0)at(0.00,-7.20){};
        \node[Leaf](2)at(2.00,-7.20){};
        \node[Leaf](4)at(4.00,-5.40){};
        \node[Leaf](6)at(6.00,-3.60){};
        \node[Leaf](8)at(8.00,-1.80){};
        \node[Node](1)at(1.00,-5.40){};
        \node[Node](3)at(3.00,-3.60){};
        \node[Node](5)at(5.00,-1.80){};
        \node[Node](7)at(7.00,0.00){};
        \draw[Edge](0)--(1);
        \draw[Edge](1)--(3);
        \draw[Edge](2)--(1);
        \draw[Edge](3)--(5);
        \draw[Edge](4)--(3);
        \draw[Edge](5)--(7);
        \draw[Edge](6)--(5);
        \draw[Edge](8)--(7);
        \node(r)at(7.00,2.5){};
        \draw[Edge](r)--(7);
    \end{tikzpicture}}
    \enspace + \enspace
    3\,
    \scalebox{.8}{\begin{tikzpicture}[scale=.12,Centering]
        \node[Leaf](0)at(0.00,-6.75){};
        \node[Leaf](2)at(2.00,-6.75){};
        \node[Leaf](4)at(4.00,-4.50){};
        \node[Leaf](6)at(6.00,-4.50){};
        \node[Leaf](8)at(8.00,-4.50){};
        \node[Node](1)at(1.00,-4.50){};
        \node[Node](3)at(3.00,-2.25){};
        \node[Node](5)at(5.00,0.00){};
        \node[Node](7)at(7.00,-2.25){};
        \draw[Edge](0)--(1);
        \draw[Edge](1)--(3);
        \draw[Edge](2)--(1);
        \draw[Edge](3)--(5);
        \draw[Edge](4)--(3);
        \draw[Edge](6)--(7);
        \draw[Edge](7)--(5);
        \draw[Edge](8)--(7);
        \node(r)at(5.00,2.5){};
        \draw[Edge](r)--(5);
    \end{tikzpicture}}
    \enspace + \enspace
    2\,
    \scalebox{.8}{\begin{tikzpicture}[scale=.12,Centering]
        \node[Leaf](0)at(0.00,-6.75){};
        \node[Leaf](2)at(2.00,-6.75){};
        \node[Leaf](4)at(4.00,-6.75){};
        \node[Leaf](6)at(6.00,-6.75){};
        \node[Leaf](8)at(8.00,-2.25){};
        \node[Node](1)at(1.00,-4.50){};
        \node[Node](3)at(3.00,-2.25){};
        \node[Node](5)at(5.00,-4.50){};
        \node[Node](7)at(7.00,0.00){};
        \draw[Edge](0)--(1);
        \draw[Edge](1)--(3);
        \draw[Edge](2)--(1);
        \draw[Edge](3)--(7);
        \draw[Edge](4)--(5);
        \draw[Edge](5)--(3);
        \draw[Edge](6)--(5);
        \draw[Edge](8)--(7);
        \node(r)at(7.00,2.5){};
        \draw[Edge](r)--(7);
    \end{tikzpicture}}
    \enspace + \enspace
    3\,
    \scalebox{.8}{\begin{tikzpicture}[scale=.12,Centering]
        \node[Leaf](0)at(0.00,-4.50){};
        \node[Leaf](2)at(2.00,-4.50){};
        \node[Leaf](4)at(4.00,-6.75){};
        \node[Leaf](6)at(6.00,-6.75){};
        \node[Leaf](8)at(8.00,-4.50){};
        \node[Node](1)at(1.00,-2.25){};
        \node[Node](3)at(3.00,0.00){};
        \node[Node](5)at(5.00,-4.50){};
        \node[Node](7)at(7.00,-2.25){};
        \draw[Edge](0)--(1);
        \draw[Edge](1)--(3);
        \draw[Edge](2)--(1);
        \draw[Edge](4)--(5);
        \draw[Edge](5)--(7);
        \draw[Edge](6)--(5);
        \draw[Edge](7)--(3);
        \draw[Edge](8)--(7);
        \node(r)at(3.00,2.5){};
        \draw[Edge](r)--(3);
    \end{tikzpicture}}
    \enspace + \enspace
    3\,
    \scalebox{.8}{\begin{tikzpicture}[scale=.12,Centering]
        \node[Leaf](0)at(0.00,-4.50){};
        \node[Leaf](2)at(2.00,-4.50){};
        \node[Leaf](4)at(4.00,-4.50){};
        \node[Leaf](6)at(6.00,-6.75){};
        \node[Leaf](8)at(8.00,-6.75){};
        \node[Node](1)at(1.00,-2.25){};
        \node[Node](3)at(3.00,0.00){};
        \node[Node](5)at(5.00,-2.25){};
        \node[Node](7)at(7.00,-4.50){};
        \draw[Edge](0)--(1);
        \draw[Edge](1)--(3);
        \draw[Edge](2)--(1);
        \draw[Edge](4)--(5);
        \draw[Edge](5)--(3);
        \draw[Edge](6)--(7);
        \draw[Edge](7)--(5);
        \draw[Edge](8)--(7);
        \node(r)at(3.00,2.5){};
        \draw[Edge](r)--(3);
    \end{tikzpicture}}
    \enspace + \enspace
    \scalebox{.8}{\begin{tikzpicture}[scale=.12,Centering]
        \node[Leaf](0)at(0.00,-5.40){};
        \node[Leaf](2)at(2.00,-7.20){};
        \node[Leaf](4)at(4.00,-7.20){};
        \node[Leaf](6)at(6.00,-3.60){};
        \node[Leaf](8)at(8.00,-1.80){};
        \node[Node](1)at(1.00,-3.60){};
        \node[Node](3)at(3.00,-5.40){};
        \node[Node](5)at(5.00,-1.80){};
        \node[Node](7)at(7.00,0.00){};
        \draw[Edge](0)--(1);
        \draw[Edge](1)--(5);
        \draw[Edge](2)--(3);
        \draw[Edge](3)--(1);
        \draw[Edge](4)--(3);
        \draw[Edge](5)--(7);
        \draw[Edge](6)--(5);
        \draw[Edge](8)--(7);
        \node(r)at(7.00,2.5){};
        \draw[Edge](r)--(7);
    \end{tikzpicture}}
    \enspace + \enspace
    3\,
    \scalebox{.8}{\begin{tikzpicture}[scale=.12,Centering]
        \node[Leaf](0)at(0.00,-4.50){};
        \node[Leaf](2)at(2.00,-6.75){};
        \node[Leaf](4)at(4.00,-6.75){};
        \node[Leaf](6)at(6.00,-4.50){};
        \node[Leaf](8)at(8.00,-4.50){};
        \node[Node](1)at(1.00,-2.25){};
        \node[Node](3)at(3.00,-4.50){};
        \node[Node](5)at(5.00,0.00){};
        \node[Node](7)at(7.00,-2.25){};
        \draw[Edge](0)--(1);
        \draw[Edge](1)--(5);
        \draw[Edge](2)--(3);
        \draw[Edge](3)--(1);
        \draw[Edge](4)--(3);
        \draw[Edge](6)--(7);
        \draw[Edge](7)--(5);
        \draw[Edge](8)--(7);
        \node(r)at(5.00,2.5){};
        \draw[Edge](r)--(5);
    \end{tikzpicture}} \\
    + \enspace
    \scalebox{.8}{\begin{tikzpicture}[scale=.12,Centering]
        \node[Leaf](0)at(0.00,-3.60){};
        \node[Leaf](2)at(2.00,-7.20){};
        \node[Leaf](4)at(4.00,-7.20){};
        \node[Leaf](6)at(6.00,-5.40){};
        \node[Leaf](8)at(8.00,-1.80){};
        \node[Node](1)at(1.00,-1.80){};
        \node[Node](3)at(3.00,-5.40){};
        \node[Node](5)at(5.00,-3.60){};
        \node[Node](7)at(7.00,0.00){};
        \draw[Edge](0)--(1);
        \draw[Edge](1)--(7);
        \draw[Edge](2)--(3);
        \draw[Edge](3)--(5);
        \draw[Edge](4)--(3);
        \draw[Edge](5)--(1);
        \draw[Edge](6)--(5);
        \draw[Edge](8)--(7);
        \node(r)at(7.00,2.5){};
        \draw[Edge](r)--(7);
    \end{tikzpicture}}
    \enspace + \enspace
    \scalebox{.8}{\begin{tikzpicture}[scale=.12,Centering]
        \node[Leaf](0)at(0.00,-3.60){};
        \node[Leaf](2)at(2.00,-5.40){};
        \node[Leaf](4)at(4.00,-7.20){};
        \node[Leaf](6)at(6.00,-7.20){};
        \node[Leaf](8)at(8.00,-1.80){};
        \node[Node](1)at(1.00,-1.80){};
        \node[Node](3)at(3.00,-3.60){};
        \node[Node](5)at(5.00,-5.40){};
        \node[Node](7)at(7.00,0.00){};
        \draw[Edge](0)--(1);
        \draw[Edge](1)--(7);
        \draw[Edge](2)--(3);
        \draw[Edge](3)--(1);
        \draw[Edge](4)--(5);
        \draw[Edge](5)--(3);
        \draw[Edge](6)--(5);
        \draw[Edge](8)--(7);
        \node(r)at(7.00,2.5){};
        \draw[Edge](r)--(7);
    \end{tikzpicture}}
    \enspace + \enspace
    \scalebox{.8}{\begin{tikzpicture}[scale=.12,Centering]
        \node[Leaf](0)at(0.00,-1.80){};
        \node[Leaf](2)at(2.00,-7.20){};
        \node[Leaf](4)at(4.00,-7.20){};
        \node[Leaf](6)at(6.00,-5.40){};
        \node[Leaf](8)at(8.00,-3.60){};
        \node[Node](1)at(1.00,0.00){};
        \node[Node](3)at(3.00,-5.40){};
        \node[Node](5)at(5.00,-3.60){};
        \node[Node](7)at(7.00,-1.80){};
        \draw[Edge](0)--(1);
        \draw[Edge](2)--(3);
        \draw[Edge](3)--(5);
        \draw[Edge](4)--(3);
        \draw[Edge](5)--(7);
        \draw[Edge](6)--(5);
        \draw[Edge](7)--(1);
        \draw[Edge](8)--(7);
        \node(r)at(1.00,2.5){};
        \draw[Edge](r)--(1);
    \end{tikzpicture}}
    \enspace + \enspace
    2\,
    \scalebox{.8}{\begin{tikzpicture}[scale=.12,Centering]
        \node[Leaf](0)at(0.00,-2.25){};
        \node[Leaf](2)at(2.00,-6.75){};
        \node[Leaf](4)at(4.00,-6.75){};
        \node[Leaf](6)at(6.00,-6.75){};
        \node[Leaf](8)at(8.00,-6.75){};
        \node[Node](1)at(1.00,0.00){};
        \node[Node](3)at(3.00,-4.50){};
        \node[Node](5)at(5.00,-2.25){};
        \node[Node](7)at(7.00,-4.50){};
        \draw[Edge](0)--(1);
        \draw[Edge](2)--(3);
        \draw[Edge](3)--(5);
        \draw[Edge](4)--(3);
        \draw[Edge](5)--(1);
        \draw[Edge](6)--(7);
        \draw[Edge](7)--(5);
        \draw[Edge](8)--(7);
        \node(r)at(1.00,2.5){};
        \draw[Edge](r)--(1);
    \end{tikzpicture}}
    \enspace + \enspace
    \scalebox{.8}{\begin{tikzpicture}[scale=.12,Centering]
        \node[Leaf](0)at(0.00,-1.80){};
        \node[Leaf](2)at(2.00,-5.40){};
        \node[Leaf](4)at(4.00,-7.20){};
        \node[Leaf](6)at(6.00,-7.20){};
        \node[Leaf](8)at(8.00,-3.60){};
        \node[Node](1)at(1.00,0.00){};
        \node[Node](3)at(3.00,-3.60){};
        \node[Node](5)at(5.00,-5.40){};
        \node[Node](7)at(7.00,-1.80){};
        \draw[Edge](0)--(1);
        \draw[Edge](2)--(3);
        \draw[Edge](3)--(7);
        \draw[Edge](4)--(5);
        \draw[Edge](5)--(3);
        \draw[Edge](6)--(5);
        \draw[Edge](7)--(1);
        \draw[Edge](8)--(7);
        \node(r)at(1.00,2.5){};
        \draw[Edge](r)--(1);
    \end{tikzpicture}}
    \enspace + \enspace
    \scalebox{.8}{\begin{tikzpicture}[scale=.12,Centering]
        \node[Leaf](0)at(0.00,-1.80){};
        \node[Leaf](2)at(2.00,-3.60){};
        \node[Leaf](4)at(4.00,-7.20){};
        \node[Leaf](6)at(6.00,-7.20){};
        \node[Leaf](8)at(8.00,-5.40){};
        \node[Node](1)at(1.00,0.00){};
        \node[Node](3)at(3.00,-1.80){};
        \node[Node](5)at(5.00,-5.40){};
        \node[Node](7)at(7.00,-3.60){};
        \draw[Edge](0)--(1);
        \draw[Edge](2)--(3);
        \draw[Edge](3)--(1);
        \draw[Edge](4)--(5);
        \draw[Edge](5)--(7);
        \draw[Edge](6)--(5);
        \draw[Edge](7)--(3);
        \draw[Edge](8)--(7);
        \node(r)at(1.00,2.5){};
        \draw[Edge](r)--(1);
    \end{tikzpicture}}
    \enspace + \enspace
    \scalebox{.8}{\begin{tikzpicture}[scale=.12,Centering]
        \node[Leaf](0)at(0.00,-1.80){};
        \node[Leaf](2)at(2.00,-3.60){};
        \node[Leaf](4)at(4.00,-5.40){};
        \node[Leaf](6)at(6.00,-7.20){};
        \node[Leaf](8)at(8.00,-7.20){};
        \node[Node](1)at(1.00,0.00){};
        \node[Node](3)at(3.00,-1.80){};
        \node[Node](5)at(5.00,-3.60){};
        \node[Node](7)at(7.00,-5.40){};
        \draw[Edge](0)--(1);
        \draw[Edge](2)--(3);
        \draw[Edge](3)--(1);
        \draw[Edge](4)--(5);
        \draw[Edge](5)--(3);
        \draw[Edge](6)--(7);
        \draw[Edge](7)--(5);
        \draw[Edge](8)--(7);
        \node(r)at(1.00,2.5){};
        \draw[Edge](r)--(1);
    \end{tikzpicture}}
    \enspace + \cdots.
\end{multline}
Theorem~\ref{thm:hook_series} implies that for any binary tree $\Tfr$,
the coefficient $\Angle{\Tfr, \Hook\Par{\HookSystem_{\Mag, G}}}$ can be
obtained by the usual hook-length formula of binary trees. This explains
the name of hook generating series for $\Hook(\Bca)$, when $\Bca$ is a
bud generating system. Alternatively, the coefficient
$\Angle{\Tfr, \Hook\Par{\HookSystem_{\Mag, G}}}$ is the cardinal of
the sylvester class~\cite{HNT05} of permutations encoded by~$\Tfr$.
\medbreak

%%%%%%%%%%%%%%%%%%%%%%%%%%%%%%%%%%%%%%%%%%%%%%%%%%%%%%%%%%%%%%%%%%%%%%%%
\subsubsection{Hook coefficients for words of $\Dias_\gamma$}%
\label{subsubsec:example_hook_coeff_dias}
Let us consider the monochrome bud generating system $\BDias{\gamma}$
and its set of rules $\Rfr_\gamma$ introduced in
Section~\ref{subsubsec:bud_generating_system_dias_gamma}. Since, by
Proposition~\ref{prop:properties_BDias}, $\Rfr_\gamma$ generates
$\Dias_\gamma$, $\BDias{\gamma}$ is a
hook bud generating system $\HookSystem_{\Dias_\gamma, \Rfr_\gamma}$
(see Section~\ref{subsubsec:hook_generating_series}). This leads to
the definition of a statistic on the words of $\Dias_\gamma$, provided
by the coefficients of the hook generating series
$\Hook\Par{\HookSystem_{\Dias_\gamma, \Rfr_\gamma}}$ of
$\HookSystem_{\Dias_\gamma, \Rfr_\gamma}$ which begins, when
$\gamma = 1$, by
\begin{small}
\begin{multline}
    \Hook\Par{\HookSystem_{\Dias_1, \Rfr_1}} =
    (0)
    \enspace + \enspace (01)
    \enspace + \enspace (10)
    \enspace + \enspace 3\, (011)
    \enspace + \enspace 2\, (101)
    \enspace + \enspace 3\, (110)
    \enspace + \enspace 15\, (0111)
    \enspace + \enspace 9\, (1011) \\
    + \enspace 9\, (1101)
    \enspace + \enspace 15\, (1110)
    \enspace + \enspace 105\, (01111)
    \enspace + \enspace 60\, (10111)
    \enspace + \enspace 54\, (11011)
    \enspace + \enspace 60\, (11101)
    \enspace + \enspace 105\, (11110) \\
    + \enspace 945\, (011111)
    \enspace + \enspace 525\, (101111)
    \enspace + \enspace 45 \, (110111)
    \enspace + \enspace 450\, (111011)
    \enspace + \enspace 525\, (111101)
    \enspace + \enspace 945\, (111110)
    \enspace + \cdots.
\end{multline}
\end{small}
\medbreak

Let us set, for all $0 \leq a \leq n - 1$,
\begin{math}
    h_{n, a} :=
    \Angle{1^a 0 1^{n - a - 1},
    \Hook\Par{\HookSystem_{\Dias_1, \Rfr_1}}}.
\end{math}
By Lemmas~\ref{lem:initial_terminal_composition}
and~\ref{lem:pre_lie_star_hook_series}, the $h_{n, a}$ satisfy the
recurrence
\begin{equation}
    h_{n, a} =
    \begin{cases}
        1 & \mbox{if } n = 1, \\
        (2a - 1) h_{n - 1, a - 1} & \mbox{if } a = n - 1, \\
        (2n - 2a - 3) h_{n - 1, a} & \mbox{if } a = 0, \\
        (2a - 1) h_{n - 1, a - 1} + (2n - 2a - 3) h_{n - 1, a}
            & \mbox{otherwise}.
    \end{cases}
\end{equation}
The numbers $h_{n, a}$ form a triangle beginning by
\begin{equation} \renewcommand{\arraystretch}{1.1}
    \begin{tabular}{llllllll}
        1 \\
        1 & 1 \\
        3 & 2 & 3 \\
        15 & 9 & 9 & 15 \\
        105 & 60 & 54 & 60 & 105 \\
        945 & 525 & 450 & 450 & 525 & 945 \\
        10395 & 5670 & 4725 & 4500 & 4725 & 5670 & 10395 \\
        135135 & 72765 & 59535 & 55125 & 55125 & 59535 & 72765 & 135135
    \end{tabular}.
\end{equation}
These numbers form Sequence~\OEIS{A059366} of~\cite{Slo}.
\medbreak

%%%%%%%%%%%%%%%%%%%%%%%%%%%%%%%%%%%%%%%%%%%%%%%%%%%%%%%%%%%%%%%%%%%%%%%%
\subsubsection{Hook coefficients for Motkzin paths}%
\label{subsubsec:example_hook_coeff_motz}
It is proven in~\cite{Gir15} that
$G := \Bra{\MotzHoriz, \MotzPeak}$ is a generating set of
$\Motz$. Hence, $\HookSystem_{\Motz, G}$ is a hook generating system.
This leads to the definition of a statistic on Motzkin paths, provided
by the coefficients of the hook generating series
$\Hook\Par{\HookSystem_{\Motz, G}}$ of $\HookSystem_{\Motz, G}$
which begins by
\begin{multline}
    \Hook\Par{\HookSystem_{\Motz, G}} =
    \begin{tikzpicture}[scale=.25,Centering]
        \draw[PathGrid](0,0) grid (0,0);
        \node[PathNode](1)at(0,0){};
    \end{tikzpicture}
    \enspace + \enspace
    \begin{tikzpicture}[scale=.25,Centering]
        \draw[PathGrid](0,0) grid (1,0);
        \node[PathNode](1)at(0,0){};
        \node[PathNode](2)at(1,0){};
        \draw[PathStep](1)--(2);
    \end{tikzpicture}
    \enspace + \enspace
    2\,
    \begin{tikzpicture}[scale=.25,Centering]
        \draw[PathGrid](0,0) grid (2,0);
        \node[PathNode](1)at(0,0){};
        \node[PathNode](2)at(1,0){};
        \node[PathNode](3)at(2,0){};
        \draw[PathStep](1)--(2);
        \draw[PathStep](2)--(3);
    \end{tikzpicture}
    \enspace + \enspace
    \begin{tikzpicture}[scale=.25,Centering]
        \draw[PathGrid](0,0) grid (2,1);
        \node[PathNode](1)at(0,0){};
        \node[PathNode](2)at(1,1){};
        \node[PathNode](3)at(2,0){};
        \draw[PathStep](1)--(2);
        \draw[PathStep](2)--(3);
    \end{tikzpicture}
    \enspace + \enspace
    6\,
    \begin{tikzpicture}[scale=.25,Centering]
        \draw[PathGrid](0,0) grid (3,0);
        \node[PathNode](1)at(0,0){};
        \node[PathNode](2)at(1,0){};
        \node[PathNode](3)at(2,0){};
        \node[PathNode](4)at(3,0){};
        \draw[PathStep](1)--(2);
        \draw[PathStep](2)--(3);
        \draw[PathStep](3)--(4);
    \end{tikzpicture}
    \enspace + \enspace
    2\,
    \begin{tikzpicture}[scale=.25,Centering]
        \draw[PathGrid](0,0) grid (3,1);
        \node[PathNode](1)at(0,0){};
        \node[PathNode](2)at(1,0){};
        \node[PathNode](3)at(2,1){};
        \node[PathNode](4)at(3,0){};
        \draw[PathStep](1)--(2);
        \draw[PathStep](2)--(3);
        \draw[PathStep](3)--(4);
    \end{tikzpicture}
    \enspace + \enspace
    2\,
    \begin{tikzpicture}[scale=.25,Centering]
        \draw[PathGrid](0,0) grid (3,1);
        \node[PathNode](1)at(0,0){};
        \node[PathNode](2)at(1,1){};
        \node[PathNode](3)at(2,0){};
        \node[PathNode](4)at(3,0){};
        \draw[PathStep](1)--(2);
        \draw[PathStep](2)--(3);
        \draw[PathStep](3)--(4);
    \end{tikzpicture}
    \\
    + \enspace
    \begin{tikzpicture}[scale=.25,Centering]
        \draw[PathGrid](0,0) grid (3,1);
        \node[PathNode](1)at(0,0){};
        \node[PathNode](2)at(1,1){};
        \node[PathNode](3)at(2,1){};
        \node[PathNode](4)at(3,0){};
        \draw[PathStep](1)--(2);
        \draw[PathStep](2)--(3);
        \draw[PathStep](3)--(4);
    \end{tikzpicture}
    \enspace + \enspace
    24\,
    \begin{tikzpicture}[scale=.25,Centering]
        \draw[PathGrid](0,0) grid (4,0);
        \node[PathNode](1)at(0,0){};
        \node[PathNode](2)at(1,0){};
        \node[PathNode](3)at(2,0){};
        \node[PathNode](4)at(3,0){};
        \node[PathNode](5)at(4,0){};
        \draw[PathStep](1)--(2);
        \draw[PathStep](2)--(3);
        \draw[PathStep](3)--(4);
        \draw[PathStep](4)--(5);
    \end{tikzpicture}
    \enspace + \enspace
    6\,
    \begin{tikzpicture}[scale=.25,Centering]
        \draw[PathGrid](0,0) grid (4,1);
        \node[PathNode](1)at(0,0){};
        \node[PathNode](2)at(1,0){};
        \node[PathNode](3)at(2,0){};
        \node[PathNode](4)at(3,1){};
        \node[PathNode](5)at(4,0){};
        \draw[PathStep](1)--(2);
        \draw[PathStep](2)--(3);
        \draw[PathStep](3)--(4);
        \draw[PathStep](4)--(5);
    \end{tikzpicture}
    \enspace + \enspace
    6\,
    \begin{tikzpicture}[scale=.25,Centering]
        \draw[PathGrid](0,0) grid (4,1);
        \node[PathNode](1)at(0,0){};
        \node[PathNode](2)at(1,0){};
        \node[PathNode](3)at(2,1){};
        \node[PathNode](4)at(3,0){};
        \node[PathNode](5)at(4,0){};
        \draw[PathStep](1)--(2);
        \draw[PathStep](2)--(3);
        \draw[PathStep](3)--(4);
        \draw[PathStep](4)--(5);
    \end{tikzpicture}
    \enspace + \enspace
    3\,
    \begin{tikzpicture}[scale=.25,Centering]
        \draw[PathGrid](0,0) grid (4,1);
        \node[PathNode](1)at(0,0){};
        \node[PathNode](2)at(1,0){};
        \node[PathNode](3)at(2,1){};
        \node[PathNode](4)at(3,1){};
        \node[PathNode](5)at(4,0){};
        \draw[PathStep](1)--(2);
        \draw[PathStep](2)--(3);
        \draw[PathStep](3)--(4);
        \draw[PathStep](4)--(5);
    \end{tikzpicture}
    \enspace + \enspace
    6\,
    \begin{tikzpicture}[scale=.25,Centering]
        \draw[PathGrid](0,0) grid (4,1);
        \node[PathNode](1)at(0,0){};
        \node[PathNode](2)at(1,1){};
        \node[PathNode](3)at(2,0){};
        \node[PathNode](4)at(3,0){};
        \node[PathNode](5)at(4,0){};
        \draw[PathStep](1)--(2);
        \draw[PathStep](2)--(3);
        \draw[PathStep](3)--(4);
        \draw[PathStep](4)--(5);
    \end{tikzpicture}
    \\
    + \enspace
    2\,
    \begin{tikzpicture}[scale=.25,Centering]
        \draw[PathGrid](0,0) grid (4,1);
        \node[PathNode](1)at(0,0){};
        \node[PathNode](2)at(1,1){};
        \node[PathNode](3)at(2,0){};
        \node[PathNode](4)at(3,1){};
        \node[PathNode](5)at(4,0){};
        \draw[PathStep](1)--(2);
        \draw[PathStep](2)--(3);
        \draw[PathStep](3)--(4);
        \draw[PathStep](4)--(5);
    \end{tikzpicture}
    \enspace + \enspace
    3\,
    \begin{tikzpicture}[scale=.25,Centering]
        \draw[PathGrid](0,0) grid (4,1);
        \node[PathNode](1)at(0,0){};
        \node[PathNode](2)at(1,1){};
        \node[PathNode](3)at(2,1){};
        \node[PathNode](4)at(3,0){};
        \node[PathNode](5)at(4,0){};
        \draw[PathStep](1)--(2);
        \draw[PathStep](2)--(3);
        \draw[PathStep](3)--(4);
        \draw[PathStep](4)--(5);
    \end{tikzpicture}
    \enspace + \enspace
    2\,
    \begin{tikzpicture}[scale=.25,Centering]
        \draw[PathGrid](0,0) grid (4,1);
        \node[PathNode](1)at(0,0){};
        \node[PathNode](2)at(1,1){};
        \node[PathNode](3)at(2,1){};
        \node[PathNode](4)at(3,1){};
        \node[PathNode](5)at(4,0){};
        \draw[PathStep](1)--(2);
        \draw[PathStep](2)--(3);
        \draw[PathStep](3)--(4);
        \draw[PathStep](4)--(5);
    \end{tikzpicture}
    \enspace + \enspace
    \begin{tikzpicture}[scale=.25,Centering]
        \draw[PathGrid](0,0) grid (4,2);
        \node[PathNode](1)at(0,0){};
        \node[PathNode](2)at(1,1){};
        \node[PathNode](3)at(2,2){};
        \node[PathNode](4)at(3,1){};
        \node[PathNode](5)at(4,0){};
        \draw[PathStep](1)--(2);
        \draw[PathStep](2)--(3);
        \draw[PathStep](3)--(4);
        \draw[PathStep](4)--(5);
    \end{tikzpicture}
    \enspace + \cdots.
\end{multline}
\medbreak

%%%%%%%%%%%%%%%%%%%%%%%%%%%%%%%%%%%%%%%%%%%%%%%%%%%%%%%%%%%%%%%%%%%%%%%%
\subsubsection{Generating series of some unary-binary trees}%
\label{subsubsec:colt_synt_bubtree}
Let us consider the bud generating system $\BUBTree$ introduced in
Section~\ref{subsubsec:example_bubtree}. We have, for all
$a \in \{1, 2\}$ and $\alpha \in \Types_{\{1, 2\}}$,
\begin{equation}
    \MultOutIn_{a, \alpha} =
    \begin{cases}
        2 & \mbox{if } (a, \alpha) = (1, 01), \\
        1 & \mbox{if } (a, \alpha) = (2, 20), \\
        0 & \mbox{otherwise},
    \end{cases}
\end{equation}
and
\begin{subequations}
\begin{multicols}{2}
    \begin{equation}
        \Gbf_1\Par{\VarY_1, \VarY_2} = 2 \VarY_2,
    \end{equation}

    \begin{equation}
        \Gbf_2\Par{\VarY_1, \VarY_2} = \VarY_1^2.
    \end{equation}
\end{multicols}
\end{subequations}
\medbreak

Since, by Proposition~\ref{prop:properties_BUBTree}, $\BUBTree$
satisfies the conditions of
Proposition~\ref{prop:functional_equation_synt}, by this last
proposition
and~\eqref{equ:generating_series_synt_functional_equation}, the
generating series $\GenS_{\Lang(\BUBTree)}$ of $\Lang(\BUBTree)$
satisfies $\GenS_{\Lang(\BUBTree)} = \Fbf_1(0, t)$ where
\begin{subequations}
\begin{multicols}{2}
    \begin{equation}
        \Fbf_1\Par{\VarY_1, \VarY_2}
        = \VarY_1 + 2 \Fbf_2\Par{\VarY_1, \VarY_2},
    \end{equation}

    \begin{equation}
        \Fbf_2\Par{\VarY_1, \VarY_2}
        = \VarY_2 + \Fbf_1\Par{\VarY_1, \VarY_2}^2.
    \end{equation}
\end{multicols}
\end{subequations}
We obtain that $\Fbf_1(\VarY_1, \VarY_2)$ satisfies the functional
equation
\begin{equation}
    \VarY_1 + 2 \VarY_2 - \Fbf_1\Par{\VarY_1, \VarY_2}
    + 2 \Fbf_1\Par{\VarY_1, \VarY_2}^2 = 0.
\end{equation}
Hence, $\GenS_{\Lang\Par{\BUBTree}}$ satisfies
\begin{equation}
    2t - \GenS_{\Lang\Par{\BUBTree}}
    + 2 \GenS_{\Lang\Par{\BUBTree}}^2 = 0,
\end{equation}
showing that the elements of $\Lang\Par{\BUBTree}$ are enumerated, arity
by arity, by Sequence \OEIS{A052707} of~\cite{Slo} starting by
\begin{equation}
    2, 8, 64, 640, 7168, 86016, 1081344, 14057472, 187432960,
    2549088256.
\end{equation}
\medbreak

Besides, since by Proposition~\ref{prop:properties_BUBTree}, $\BUBTree$
satisfies the conditions of Theorem~\ref{thm:series_color_types_synt},
by this last theorem and~\eqref{equ:generating_series_synt_recurrence},
$\GenS_{\Lang(\BUBTree)}$ satisfies, for any $n \geq 1$,
\begin{equation}
    \Angle{t^n, \GenS_{\Lang\Par{\BUBTree}}} =
    \Angle{\VarX_1 \VarY_2^n, \Fbf},
\end{equation}
where $\Fbf$ is the series satisfying, for any $a \in \Ccr$ and any type
$\alpha \in \Types_{\{1, 2\}}$, the recursive formula
\begin{multline}
    \Angle{\VarX_a \VarY^{\alpha_1} \VarY^{\alpha_2}, \Fbf}
    = \delta_{\alpha, \Type(a)}
    + \delta_{a, 1}
    2 \Angle{\VarX_2 \VarY_1^{\alpha_1} \VarY_2^{\alpha_2}, \Fbf} \\
    + \delta_{a, 2}
    \sum_{\substack{
        d_1, d_2, d_3, d_4 \in \N \\
        \alpha_1 = d_1 + d_2 \\
        \alpha_2 = d_3 + d_4 \\
        (d_1, d_3) \ne (0, 0) \ne (d_2, d_4)
    }}
    \Angle{\VarX_1 \VarY_1^{d_1} \VarY_2^{d_3}, \Fbf}
    \Angle{\VarX_2 \VarY_1^{d_2} \VarY_2^{d_4}, \Fbf}.
\end{multline}
\medbreak

%%%%%%%%%%%%%%%%%%%%%%%%%%%%%%%%%%%%%%%%%%%%%%%%%%%%%%%%%%%%%%%%%%%%%%%%
\subsubsection{Generating series of $B$-perfect trees}%
\label{subsubsec:colt_sync_bbtree}
Let us consider the monochrome bud generating system $\BBTree{B}$ and
its set of rules $\Rfr_B$ introduced in
Section~\ref{subsubsec:example_bbtree}. By
Proposition~\ref{prop:properties_BBTree}, the generating series
$\GenS_{\SyncLang(\BBTree{B})}$ is well-defined when $1 \notin B$. For
this reason, in all this section we restrict ourselves to the case where
all elements of $B$ are greater than or equal to~$2$. To maintain here
homogeneous notations with the rest of the text, we consider that the
set of colors $\Ccr$ of $\BBTree{B}$ is the singleton $\{1\}$. We have,
for all $\alpha \in \Types_{\{1\}}$,
\begin{equation}
    \MultOutIn_{1, \alpha} =
    \begin{cases}
        1 & \mbox{if } (a, \alpha) = (1, b) \mbox{ with } b \in B, \\
        0 & \mbox{otherwise},
    \end{cases}
\end{equation}
and
\begin{equation}
    \Gbf_1(\VarY_1) = \sum_{b \in B} \VarY_1^b.
\end{equation}
\medbreak

Since by Proposition~\ref{prop:properties_BBTree}, $\BBTree{B}$
satisfies the conditions of
Proposition~\ref{prop:functional_equation_sync}, by this last
proposition and~\eqref{equ:generating_series_sync_functional_equation},
the generating series $\GenS_{\SyncLang(\BBTree{B})}$ of
$\SyncLang\Par{\BBTree{B}}$ satisfies
$\GenS_{\SyncLang\Par{\BBTree{B}}} = \Fbf_1(t)$ where
\begin{equation}
    \Fbf_1\Par{\VarY_1} =
    \VarY_1 + \Fbf_1\Par{\sum_{b \in B} \VarY_1^b}.
\end{equation}
This functional equation for the generating series of $B$-perfect trees,
in the case where $B = \{2, 3\}$, is the one obtained
in~\cite{Odl82,FS09,Gir12} by different methods.
\medbreak

Besides, since by Proposition~\ref{prop:properties_BBTree}, $\BBTree{B}$
satisfies the conditions of Theorem~\ref{thm:series_color_types_sync},
by this last theorem and~\eqref{equ:generating_series_sync_recurrence},
$\GenS_{\SyncLang\Par{\BBTree{B}}}$ satisfies, for any $n \geq 1$, the
recursive formula
\begin{equation} \label{equ:recursive_enumeration_b_perfect_trees}
    \Angle{t^n, \GenS_{\SyncLang(\BBTree{B})}} =
    \delta_{n, 1} +
    \sum_{\substack{
        d_b \in \N, b \in B \\
        n = \sum_{b \in B} b\, d_b
    }}
    \lbag d_b : b \in B \rbag !
    \Angle{t^{\sum_{b \in B} d_b}, \GenS_{\SyncLang(\BBTree{B})}}
\end{equation}
For instance, for $B := \{2, 3\}$, one has
\begin{equation}
    \Angle{t^n, \GenS_{\SyncLang(\BBTree{\{2, 3\}})}} =
    \delta_{n, 1}
    + \sum_{\substack{
        d_2, d_3 \geq 0 \\
        n = 2 d_2 + 3 d_3
    }}
    \binom{d_2 + d_3}{d_2}
    \Angle{t^{d_2 + d_3}, \GenS_{\SyncLang(\BBTree{\{2, 3\}})}},
\end{equation}
which is a recursive formula to enumerate the $\{2, 3\}$-perfect
trees known from~\cite{MPRS79}, and for $B := \{2, 3, 4\}$,
\begin{equation}
    \Angle{t^n, \GenS_{\SyncLang(\BBTree{\{2, 3, 4\}})}} =
    \delta_{n, 1}
    + \sum_{\substack{
        d_2, d_3, d_4 \geq 0 \\
        n = 2 d_2 + 3 d_3 + 4 d_4
    }}
    \lbag d_2, d_3, d_4 \rbag !
    \Angle{t^{d_2 + d_3 + d_4}, \GenS_{\SyncLang(\BBTree{\{2, 3, 4\}})}}.
\end{equation}
\medbreak

Moreover, it is possible to refine the enumeration of $B$-perfect trees
to take into account of the number of internal nodes with a given arity
in the trees. For this, we consider the series $\Sbf_q$ satisfying the
recurrence
\begin{equation}
    \Angle{t^n, \Sbf_q} =
    \delta_{n, 1} +
    \sum_{\substack{
        d_b \in \N, b \geq 2 \\
        n = \sum_{b \geq 2} b\, d_b
    }}
    \lbag d_b : b \geq 2 \rbag !
    \Par{\prod_{b \geq 2} q_b^{d_b}}
    \Angle{t^{\sum_{b \geq 2} d_b}, \Sbf_q}.
\end{equation}
The coefficient of $\Par{\prod_{b \geq 2} q_b^{d_b}}t^n$ in
$\Sbf_q$ is the number of $\N \setminus \{0, 1\}$-perfect trees with $n$
leaves and with $d_b$ internal nodes of arity $b$ for all $b \geq 2$.
The specialization of $\Sbf_q$ at $q_b := 0$ for all $b \notin B$ and
$q_b := t$ for all $b \in B$ is equal to the
series~$\GenS_{\SyncLang(\BBTree{B})}$.
\medbreak

First coefficients of $\Sbf_q$ are
\begin{subequations} \small
\begin{multicols}{2}
\begin{equation}
    \Angle{t, \Sbf_q} = 1,
\end{equation}
\begin{equation}
    \Angle{t^2, \Sbf_q} = q_2,
\end{equation}
\begin{equation}
    \Angle{t^3, \Sbf_q} = q_3,
\end{equation}

\begin{equation}
    \Angle{t^4, \Sbf_q} = q_2^3 + q_4,
\end{equation}
\begin{equation}
    \Angle{t^5, \Sbf_q} = 2 q_2^2 q_3 + q_5,
\end{equation}
\begin{equation}
    \Angle{t^6, \Sbf_q} = q_2^3 q_3 + q_2 q_3^2 + 2 q_2^2 q_4 + q_6,
\end{equation}
\end{multicols}
\begin{equation}
    \Angle{t^7, \Sbf_q} =
    3 q_2^2 q_3^2 + 2 q_2 q_3 q_4 + 2 q_2^2 q_5 + q_7,
\end{equation}
\begin{equation}
    \Angle{t^8, \Sbf_q} =
    q_2^7 + q_2^4 q_4 + 3 q_2 q_3^3 + 3 q_2^2 q_3 q_4 +
    q_2 q_4^2 + 2 q_2 q_3 q_5 + 2 q_2^2 q_6 + q_8,
\end{equation}
\begin{equation}
    \Angle{t^9, \Sbf_q} =
    4 q_2^6 q_3 + 4 q_2^3 q_3 q_4 + q_3^4 + 6 q_2 q_3^2 q_4 +
    3 q_2^2 q_3 q_5 + 2 q_2 q_4 q_5 + 2 q_2 q_3 q_6 + 2 q_2^2 q_7 + q_9.
\end{equation}
\end{subequations}
\medbreak

%%%%%%%%%%%%%%%%%%%%%%%%%%%%%%%%%%%%%%%%%%%%%%%%%%%%%%%%%%%%%%%%%%%%%%%%
\subsubsection{Generating series of balanced binary trees}%
\label{subsubsec:colt_sync_bbaltree}
Let us consider the bud generating system $\BBalTree$ introduced in
Section~\ref{subsubsec:example_bbaltree}. We have
\begin{equation}
    \MultOutIn_{a, \alpha} =
    \begin{cases}
        1 & \mbox{if } (a, \alpha) = (1, 20), \\
        2 & \mbox{if } (a, \alpha) = (1, 11), \\
        1 & \mbox{if } (a, \alpha) = (2, 10), \\
        0 & \mbox{otherwise},
    \end{cases}
\end{equation}
and
\begin{subequations}
\begin{multicols}{2}
    \begin{equation}
        \Gbf_1(\VarY_1, \VarY_2) = \VarY_1^2 + 2 \VarY_1 \VarY_2,
    \end{equation}

    \begin{equation}
        \Gbf_2(\VarY_1, \VarY_2) = \VarY_1.
    \end{equation}
\end{multicols}
\end{subequations}
\medbreak

Since by Proposition~\ref{prop:properties_BBalTree}, $\BBalTree$
satisfies the conditions of
Proposition~\ref{prop:functional_equation_sync}, by this last
proposition and~\eqref{equ:generating_series_sync_functional_equation},
the generating series $\GenS_{\SyncLang\Par{\BBalTree}}$ of
$\SyncLang\Par{\BBalTree}$ satisfies
$\GenS_{\SyncLang\Par{\BBalTree}} = \Fbf_1(t, 0)$ where
\begin{equation}
    \Fbf_1\Par{\VarY_1, \VarY_2} =
    \VarY_1 + \Fbf_1\Par{\VarY_1^2 + 2 \VarY_1 \VarY_2, \VarY_1}.
\end{equation}
This functional equation for the generating series of balanced binary
trees is the one obtained in~\cite{BLL88,BLL97,Knu98,Gir12} by different
methods. As announced in
Section~\ref{subsubsec:synchronous_generating_series}, the
coefficients of $\Fbf_1$ (and hence, those of
$\GenS_{\SyncLang\Par{\BBalTree}}$) can be computed by iteration. This
consists in defining, for any $\ell \geq 0$, the polynomials
$\Fbf_1^{(\ell)}\Par{\VarY_1, \VarY_2}$ as
\begin{equation}
    \label{equ:iteration_functional_equation_balanced_binary_trees}
    \Fbf_1^{(\ell)}\Par{\VarY_1, \VarY_2} :=
    \begin{cases}
        \VarY_1 & \mbox{if } \ell = 0, \\
        \VarY_1
        + \Fbf_1^{(\ell - 1)}\Par{\VarY_1^2 + 2 \VarY_1 \VarY_2,
        \VarY_1} & \mbox{otherwise}.
    \end{cases}
\end{equation}
Since
\begin{equation}
    \Fbf_1\Par{\VarY_1, \VarY_2} =
    \lim_{\ell \to \infty} \Fbf_1^{(\ell)}\Par{\VarY_1, \VarY_2},
\end{equation}
Equation~\eqref{equ:iteration_functional_equation_balanced_binary_trees}
provides a way to compute the coefficients of
$\Fbf_1\Par{\VarY_1, \VarY_2}$. First polynomials
$\Fbf_1^{(\ell)}\Par{\VarY_1, \VarY_2}$ are
\begin{subequations} \small
\begin{multicols}{2}
\begin{equation}
    \Fbf_1^{(0)}\Par{\VarY_1, \VarY_2} = \VarY_1,
\end{equation}

\begin{equation}
    \Fbf_1^{(1)}\Par{\VarY_1, \VarY_2} = \VarY_1 + \VarY_1^2
    + 2\VarY_1\VarY_2,
\end{equation}
\end{multicols}
\begin{equation}
    \Fbf_1^{(2)}\Par{\VarY_1, \VarY_2} = \VarY_1 + \VarY_1^2
    + 2\VarY_1\VarY_2 + 2\VarY_1^3 + 4\VarY_1^2\VarY_2 + \VarY_1^4
    + 4\VarY_1^3\VarY_2 + 4\VarY_1^2\VarY_2^2,
\end{equation}
\begin{multline}
    \Fbf_1^{(3)}\Par{\VarY_1, \VarY_2} = \VarY_1 + \VarY_1^2
    + 2\VarY_1\VarY_2 + 2\VarY_1^3 + 4\VarY_1^2\VarY_2 + \VarY_1^4
    + 4\VarY_1^3\VarY_2 + 4\VarY_1^2\VarY_2^2 + 4\VarY_1^5 \\
    + 16\VarY_1^4\VarY_2
    + 16\VarY_1^3\VarY_2^2
    + 6\VarY_1^6 + 28\VarY_1^5\VarY_2 + 40\VarY_1^4\VarY_2^2
    + 16\VarY_1^3\VarY_2^3 + 4\VarY_1^7 + 24\VarY_1^6\VarY_2 \\
    + 48\VarY_1^5\VarY_2^2
    + 32\VarY_1^4\VarY_2^3 + \VarY_1^8
    + 8\VarY_1^7\VarY_2
    + 24\VarY_1^6\VarY_2^2 + 32\VarY_1^5\VarY_2^3
    + 16\VarY_1^4\VarY_2^4.
\end{multline}
\end{subequations}
\medbreak

Besides, since by Proposition~\ref{prop:properties_BBalTree},
$\BBalTree$ satisfies the conditions of
Theorem~\ref{thm:series_color_types_sync}, by this last theorem
and~\eqref{equ:generating_series_sync_recurrence},
$\GenS_{\SyncLang(\BBalTree)}$ satisfies, for any $n \geq 1$,
\begin{equation}
    \Angle{t^n, \GenS_{\SyncLang(\BBalTree)}}
    = \Angle{\VarY_1^n \VarY_2^0, \Fbf},
\end{equation}
where $\Fbf$ is the series satisfying, for any type
$\alpha \in \Types_{\{1, 2\}}$, the recursive formula
\begin{equation}
    \label{equ:recursive_enumeration_balanced_binary_trees_raw}
    \Angle{\VarY_1^{\alpha_1} \VarY_2^{\alpha_2}, \Fbf}
    = \delta_{\alpha, (1, 0)}
    +
    \sum_{\substack{
        d_1, d_2, d_3 \in \N \\
        \alpha_1 = 2 d_1 + d_2 + d_3 \\
        \alpha_2 = d_2
    }}
    \binom{d_1 + \alpha_2}{d_1}
    2^{d_2}
    \Angle{\VarY_1^{d_1 + d_2} \VarY_2^{d_3}, \Fbf}.
\end{equation}
\medbreak

%%%%%%%%%%%%%%%%%%%%%%%%%%%%%%%%%%%%%%%%%%%%%%%%%%%%%%%%%%%%%%%%%%%%%%%%
%%%%%%%%%%%%%%%%%%%%%%%%%%%%%%%%%%%%%%%%%%%%%%%%%%%%%%%%%%%%%%%%%%%%%%%%
%%%%%%%%%%%%%%%%%%%%%%%%%%%%%%%%%%%%%%%%%%%%%%%%%%%%%%%%%%%%%%%%%%%%%%%%
\section*{Conclusion and perspectives}
In this paper, we have presented a framework for the generation of
combinatorial objects by using colored operads. The described devices
for combinatorial generation, called bud generating systems, are
generalizations of context-free grammars~\cite{Har78,HMU06} generating
words, of regular tree grammars~\cite{GS84,CDGJLLTT07} generating planar
rooted trees, and of synchronous grammars~\cite{Gir12} generating some
treelike structures. We have provided tools to enumerate the objects of
the languages of bud generating systems or to define new statistics on
these by using formal power series on colored operads and several
products on these. There are many ways to extend this work. Here follow
some few further research directions.
\smallbreak

First, the notion of rationality and recognizability in usual formal
power series~\cite{Sch61,Sch63,Eil74,BR88}, in series on
monoids~\cite{Sak09}, and in series of trees~\cite{BR82} are
fundamental. For instance, a series $\Sbf \in \K \AAngle{\Mca}$ on a
monoid $\Mca$ is rational if it belongs to the closure of the set
$\K \Angle{\Mca}$ of polynomials on $\Mca$ with respect to the
addition, the multiplication, and the Kleene star operations.
Equivalently, $\Sbf$ is rational if there exists a $\K$-weighted
automaton accepting it. The equivalence between these two properties
for the rationality property is remarkable. We ask here for the
definition of an analogous and consistent notion of rationality for
series on a colored operad $\Cca$. By consistent, we mean a property
of rationality for $\Cca$-series which can be defined both by a
closure property of the set $\K \Angle{\Cca}$ of the polynomials on
$\Cca$ with respect to some operations, and, at the same time, by an
acceptance property involving a notion of a $\K$-weighted automaton
on~$\Cca$. The analogous question about the definition of a notion of
recognizable series on colored operads also seems worth investigating.
\smallbreak

A second research direction fits mostly in the contexts of computer
science and compression theory. A \Def{straight-line grammar} (see for
instance~\cite{ZL78,SS82,Ryt04}) is a context-free grammar with a
singleton as language. There exists also the analogous natural
counterpart for regular tree grammars~\cite{LM06}. One of the main
interests of straight-line grammars is that they offer a way to compress
a word (resp. a tree) by encoding it by a context-free grammar (resp. a
regular tree grammar). A word $u$ can potentially be represented by a
context-free grammar (as the unique element of its language) with less
memory than the direct representation of $u$, provided that $u$ is made
of several repeating factors. The analogous definition for bud
generating systems could potentially be used to compress a large variety
of combinatorial objects. Indeed, given a suitable monochrome operad
$\Oca$ defined on the objects we want to compress, we can encode an
object $x$ of $\Oca$ by a bud generating system $\Bca$ with $\Oca$ as
ground operad and such that the language (or the synchronous language)
of $\Bca$ is a singleton $\{y\}$ and $\Prune(y) = x$. Hence, we can hope
to obtain a new and efficient method to compress arbitrary combinatorial
objects.
\smallbreak

Let us finally describe a third extension of this work. \Def{Pros} are
algebraic structures which naturally generalize operads. Indeed, a pro
is a set of operators with several inputs and several outputs, unlike in
operads where operators have only one output (see for
instance~\cite{Mar08}). Surprisingly, pros appeared earlier than operads
in the literature~\cite{McL65}. It seems fruitful to translate the main
definitions and constructions of this work (as {\em e.g.,} bud operads,
bud generating systems, series on colored operads, pre-Lie and
composition products of series, star operations, {\em etc.}) with pros
instead of operads. We can expect to obtain an even more general class
of grammars and obtain a more general framework for combinatorial
generation.
\medbreak

%%%%%%%%%%%%%%%%%%%%%%%%%%%%%%%%%%%%%%%%%%%%%%%%%%%%%%%%%%%%%%%%%%%%%%%%
%%%%%%%%%%%%%%%%%%%%%%%%%%%%%%%%%%%%%%%%%%%%%%%%%%%%%%%%%%%%%%%%%%%%%%%%
%%%%%%%%%%%%%%%%%%%%%%%%%%%%%%%%%%%%%%%%%%%%%%%%%%%%%%%%%%%%%%%%%%%%%%%%
\begin{footnotesize}
\bibliographystyle{alpha}
\bibliography{Bibliography}
\end{footnotesize}

\end{document}